\newdefinition{theorem}{Theorem}[section]
\newdefinition{lemma}[theorem]{Lemma}
\newdefinition{corollary}[theorem]{Corollary}
\newdefinition{claim}[theorem]{Claim}
\newtheorem{definition}[theorem]{Definition}
\newdefinition{remark}[theorem]{Remark}
\newdefinition{example}[theorem]{Example}
\newdefinition{proposition}[theorem]{Proposition}
\newproof{proof}{Proof}
\journal{ }
\begin{document}

\begin{frontmatter}

%% Title, authors and addresses

%% use the tnoteref command within \title for footnotes;
%% use the tnotetext command for the associated footnote;
%% use the fnref command within \author or \address for footnotes;
%% use the fntext command for the associated footnote;
%% use the corref command within \author for corresponding author footnotes;
%% use the cortext command for the associated footnote;
%% use the ead command for the email address,
%% and the form \ead[url] for the home page:
%%
%% \title{Title\tnoteref{label1}}
%% \tnotetext[label1]{}
%% \author{Name\corref{cor1}\fnref{label2}}
%% \ead{email address}
%% \ead[url]{home page}
%% \fntext[label2]{}
%% \cortext[cor1]{}
%% \address{Address\fnref{label3}}
%% \fntext[label3]{}

\title{On random convex analysis--the analytic foundation of the module approach to conditional risk measures\tnoteref{t1}}

\tnotetext[t1]{Supported by NNSF No. 11171015}

%% use optional labels to link authors explicitly to addresses:
%% \author[label1,label2]{<author name>}
%% \address[label1]{<address>}
%% \address[label2]{<address>}

\author[gt]{Tiexin Guo\corref{cor1}}
\ead{tiexinguo@csu.edu.cn}
\cortext[cor1]{Corresponding author}
\address[gt]{School of Mathematics and Statistics, Central South University, Changsha 410075, Hunan Province, P.R. China}

\author[gs b]{Shien Zhao}
%\ead{g\_shi@ss.buaa.edu.cn}
%\ead[url]{home page}
\address[gs b]{Elementary Educational College, Capital Normal University, Beijing 100048, P.R. China}

\author[gs]{Xiaolin Zeng}
%\ead{g\_shi@ss.buaa.edu.cn}
%\ead[url]{home page}
\address[gs]{College of Mathematics and Statistics, Chongqing Technology and Business University, Chongqing 400067, P.R. China}

\begin{abstract}
%% Text of abstract
To provide a solid analytic foundation for the module approach to conditional risk measures, this paper establishes a complete random convex analysis over random locally convex modules by simultaneously considering the two kinds of topologies (namely the $(\varepsilon,\lambda)$--topology and the locally $L^0$-- convex topology). Then, we make use of the advantage of the $(\varepsilon,\lambda)$--topology and grasp the local property of $L^0$--convex conditional risk measures to prove that every $L^{0}$--convex $L^{p}$--conditional risk measure ($1\leq p\leq+\infty$) can be uniquely extended to an $L^{0}$--convex $L^{p}_{\mathcal{F}}(\mathcal{E})$--conditional risk measure and that the dual representation theorem of the former can also be regarded as a special case of that of the latter, which shows that the study of $L^p$--conditional risk measures can be incorporated into that of $L^{p}_{\mathcal{F}}(\mathcal{E})$--conditional risk measures. In particular, in the process we find that combining the countable concatenation hull of a set and the local property of conditional risk measures is a very useful analytic skill that may considerably simplify and improve the study of $L^{0}$--convex conditional risk measures.
\end{abstract}

\begin{keyword}
Random locally convex module;
$(\varepsilon,\lambda)$--topology; Locally $L^{0}$--convex topology;
Random conjugate spaces; Random convex analysis; $L^{0}$--convex lower semicontinuous proper function; $L^{0}$--convex conditional risk measure
%% keywords here, in the form: keyword \sep keyword

%% MSC codes here, in the form: \MSC code \sep code
%% or \MSC[2008] code \sep code (2000 is the default)

%\MSC[2010] 47N10  \sep 46H25  46A20 \sep 26B25

\end{keyword}

\end{frontmatter}

%%
%% Start line numbering here if you want
%%
% \linenumbers

%% main text

\noindent{\bf Contents} \\
\normalfont{
\begin{trivlist}
\item[1.] Introduction
\item[2.] Preliminaries
 \begin{enumerate}
 \item[2.1.] Some basic notions
 \item[2.2.] On some precise connections between the $(\varepsilon,\lambda)$--topology and locally $L^{0}$--convex topology for a random locally convex module
 \item[2.3.] On the precise relation between random conjugate spaces under the two kinds of topologies
 \item[2.4.] Bounded sets under the locally $L^{0}$--convex topology
 \end{enumerate}
\item[3.] Random convex analysis over random locally convex modules under the

\noindent ~~~locally $L^{0}$--convex topology
 \begin{enumerate}
 \item[3.1.] Separation under the locally $L^{0}$--convex topology
 \item[3.2.] Fenchel-Moreau dual representation theorem under the locally $L^{0}$--convex topology
 \item[3.3.] Random duality under the locally $L^{0}$--convex topology with respect to a random duality pair
  \begin{enumerate}
  \item[3.3.1] Random compatible locally $L^{0}$--convex topology
  \item[3.3.2] Random admissible locally $L^{0}$--convex topology
  \item[3.3.3] A characterization for a random locally convex module to be $L^{0}$--pre-barreled
  \end{enumerate}
 \item[3.4.] Continuity and subdifferentiability theorems in $L^{0}$--pre-barreled modules
 \end{enumerate}
\item[4.] Random convex analysis over random locally convex modules under the

\noindent ~~~$(\varepsilon,\lambda)$--topology
 \begin{enumerate}
 \item[4.1.] Lower semicontinuous $L^{0}$--convex functions under the $(\varepsilon,\lambda)$--topology
 \item[4.2.] Fenchel-Moreau dual representation theorem under the $(\varepsilon,\lambda)$--topology
 \item[4.3.] Extension theorem for $L^{\infty}$--conditional risk measures
 \item[4.4.] Extension theorem for $L^{p}$--conditional risk measures
 \end{enumerate}
\item[Acknowledgments]
\item[References]
\end{trivlist}
} \baselineskip 15pt
\renewcommand{\baselinestretch}{1.02}
\parindent=16pt  \parskip=2mm
\rm\normalsize\rm
\newpage

\section{Introduction}
Random metric theory is based on the idea of randomizing the classical space theory of functional analysis. This idea may date back to K. Menger, B. Schweizer and A. Sklar's idea of the theory of probabilistic metric spaces, cf. \cite{SS}. Random normed modules (briefly, $RN$ modules), random inner product modules (briefly, $RIP$ modules) and random locally convex modules (briefly, $RLC$ modules) have been the basic framework in random metric theory, and the theory of random conjugate spaces has been a powerful tool for the deep development of the basic framework. Random metric theory may now also be aptly called random functional analysis since it is being developed as functional analysis over the basic random framework. Motivated by the original notions of random metric spaces and random normed spaces introduced in \cite{SS}, all the basic notions such as $RN$, $RIP$ and $RLC$ modules together with their random conjugate spaces were naturally presented by Guo in the course of the development of random functional analysis, cf. \cite{TXG-Master,TXG-PHD,TXG-Extension,TXG-Module,TXG-Radon,TXG-basic,TXG-Sur,TXG-JFA}. Recently, we are pleased to learn that several other authors also ever independently presented the notions of $RN$ modules and random conjugate spaces. For example, as a tool for the study of ultrapowers of Lebesgue-Bochner function spaces, R.Haydon, M.Levy and Y.Raynaud introduced real $RN$ modules (called randomly normed $L^{0}$--modules in \cite{HLR}) and their random conjugate spaces, cf. \cite{HLR}. Motivated by financial applications, D. Filipovi\'{c}, M. Kupper and N. Vogelpoth also presented real $RN$ modules (called $L^{0}$--normed modules in \cite{FKV}), and in particular the notions of locally $L^{0}$--convex modules and the locally $L^{0}$--convex topology for random locally convex modules, cf. \cite{FKV}.

Before 2009, all the theory of $RN$ modules and $RLC$ modules together with their random conjugate spaces was developed under the $(\varepsilon,\lambda)$--topology. The $(\varepsilon,\lambda)$--topology is inherited from B.Schweizer and A. Sklar's work in 1960, where they first introduced such a kind of topology for more abstract probabilistic metric spaces. The $(\varepsilon,\lambda)$--topology is an abstract generalization of the topology of convergence in measure on the linear space of random variables, and hence the essence of the $(\varepsilon,\lambda)$--topology is locally nonconvex in general. Therefore, from a viewpoint of topological modules, all the pre-2009 work on random metric theory can be regarded a locally nonconvex generalization of classical results established for normed spaces and locally convex spaces, in the process the theory of random conjugate spaces has played an essential role and the order structure and module structure peculiar to $RN$ and $RLC$ modules not only can be fully available for the theory of random conjugate spaces but also lead to the difficulties in the study of complicated stratification structure.

In 2009, in \cite{FKV} D.Filipovi$\acute{c}$, M.Kupper and N.Vogelpoth presented the notion of a locally $L^{0}$--convex module, in company with which the notion of a locally $L^{0}$--convex topology for an $RN$ module and $RLC$ module was also introduced. This means that the $L^{0}$--norm on an $RN$ module or the family of $L^{0}$--seminorms on an $RLC$ module can also induce another kind of topology, namely the locally $L^{0}$--convex topology. Subsequently, the principal connections between some basic results derived from the two kinds of topologies for an $RN$ module or $RLC$ module were given in \cite{TXG-JFA}, based on which, lots of new and basic researches recently have been done in \cite{TXG-GS,TXG-YJY,TXG-XLZ,TXG-XZ,TXG-SEZ,MZW,SEZ-TXG}. The recent study further exposes the respective advantages and disadvantages of the two kinds of topologies. Although the locally $L^{0}$--convex topology is very similar to the classical locally convex topology, the study of the locally $L^{0}$--convex topology often requires the involved $L^{0}$--modules or their subsets to have the countable concatenation property, namely the stratification structure of a locally $L^{0}$--convex module has a remarkable effect on its topological structure, which makes the theory of the locally $L^{0}$--convex topology considerably different from that of the classical locally convex topology. Therefore, we have been hoping that combining the respective advantages of the two kinds of topologies produces a perfect random convex analysis over random locally convex modules for better financial applications, since D. Filipovi\'{c}, et.al's paper \cite{FKV} only consider the corresponding problems under the locally $L^{0}$--convex topology. The central purpose of this paper is to achieve such a goal.

Classical convex analysis (e.g, see \cite{ET}) is the analytic foundation for convex risk measures, cf. \cite{ADEH,Delbaen,FS,Follmer-S,Fritt-R}. However, it is no longer universally applicable to $L^{0}$--convex (or conditional convex) conditional risk measures (in particular, those defined on the model spaces of unbounded financial positions). Just to overcome the obstacle, D. Filipovi\'{c}, et.al presented a good idea of randomizing the initial data, which leads to an attempt to establish random convex analysis over locally $L^{0}$--convex modules in order to provide a new approach to conditional risk, called the module approach, cf, \cite{FKV,FKV-appro}. Although the notions of locally $L^{0}$--convex modules and in particular the locally $L^{0}$--convex topology are very important, we recently find out that many foundational problems induced by their paper \cite{FKV} are worth further studying and perfecting, which also makes our work in this paper urgent.

First, the premise for most of the principal results in \cite{FKV} is that the locally $L^{0}$--convex topology for every locally $L^{0}$--convex module can be induced by a family of $L^{0}$--seminorms. Although they ever gave a proof of the premise (namely, Theorem 2.4 of \cite{FKV}), there was a hole in the proof of \cite{FKV}, even according to some remarks in this paper we think that this premise may be not valid in general. In this paper we choose random locally convex modules as the framework on which random convex analysis is based so that we can avoid the theoretically difficult point involved in \cite{FKV} since the definition of a random locally convex module assumes the existence of a family of $L^{0}$--seminorms in advance and $L^{0}$--seminorms are often easily constructed in both the theoretic study and financial applications.

Second, Lemma 2.28 of \cite{FKV} played a crucial role in the proofs of the main results-Proposition 3.4, Theorems 2.8 and 3.8 of \cite{FKV}. However, in this paper we provide a counterexample to both Lemma 2.28 and Theorem 2.8 of \cite{FKV}, thus the two results and the related results should be modified, which is done in Subsections 3.1 and 3.2 of this paper, and in particular the refined version of the modified results can also be obtained since the precise relation between random conjugate spaces of a random locally convex module under the two kinds of topologies has been found in this paper, in particular the refined results will also be needed in the sequel of this paper.

Third, although D. Filipovi\'{c}, M. Kupper and N. vogepoth had presented the notion of $L^{0}$--barreled modules and established the continuity and subdifferentiability theorems for $L^{0}$--convex functions defined on $L^{0}$--barreled modules, namely Proposition 3.5 and Theorem 3.7 of \cite{FKV}, the two results can not be applied to the study of conditional risk measures since it remains open up to now whether the model space $L^{p}_{\mathcal{F}}(\mathcal{E})$ playing a crucial role in the module approach to conditional risk is an $L^{0}$--barreled module. In this paper, we overcome the difficulty by presenting the notion of an $L^{0}$--pre-barreled module. The notion of an $L^{0}$--pre-barreled module is weaker than that of an $L^{0}$--barreled module and meets the needs of financial applications. To prove this, we establish random duality theory of a random duality pair under random locally convex modules endowed with the locally $L^{0}$--convex topology so that we can give a characterization for random locally convex modules to be $L^{0}$--pre-barreled modules, in particular $L^{p}_{\mathcal{F}}(\mathcal{E})$ is $L^{0}$--pre-barreled when it is endowed with the locally $L^{0}$--convex topology. Further, we also establish the new continuity and subdifferentiability theorems based on the notion of an $L^{0}$--pre-barreled module. All these results are given in Subsections 3.3 and 3.4.

Concerning random convex analysis over random locally convex modules under the $(\varepsilon,\lambda)$--topology, although it is impossible to establish the corresponding continuity and subdifferentiability theorems under the $(\varepsilon,\lambda)$--topology since the $(\varepsilon,\lambda)$--topology is locally nonconvex in general, we can give a natural Fenchel-Moreau dual representation theorem, which has the same shape as the classical Fenchel-Moreau dual representation theorem. Since the $(\varepsilon,\lambda)$--topology is in harmony with the norm topology, for example, let $(\Omega,\mathcal{E},P)$ be a probability space, $\mathcal{F}$ a $\sigma$--subalgebra of $\mathcal{E}$ and $L^{p}_{\mathcal{F}}(\mathcal{E})$ the $RN$ module generated by the Banach space $L^{p}(\mathcal{E})$ $(1\leq p\leq+\infty)$, then $L^{p}(\mathcal{E})$ is dense in $L^{p}_{\mathcal{F}}(\mathcal{E})$ with respect to the $(\varepsilon,\lambda)$--topology on $L^{p}_{\mathcal{F}}(\mathcal{E})$ (clearly, this is not true with respect to the locally $L^{0}$--convex topology!). The simple fact motivates us to futher study the precise relations among the three kinds of conditional risk measures. The first kind was introduced independently by K.Detlefsen and G.Scandolo in \cite{DS} and J.Bion-Nadal in \cite{Nadal} as a monotone and cash-invariant function from $L^{\infty}(\mathcal{E})$ to $L^{\infty}(\mathcal{F})$ (briefly, an $L^{\infty}$--conditional risk measure). The second and third kinds were introduced by D. Filipovi\'{c}, M. Kupper and N. Vogelpoth in \cite{FKV-appro} as monotone and cash-invariant functions from $L^{p}(\mathcal{E})$ to $L^{r}(\mathcal{F})$ $(1\leq r\leq p<+\infty)$ and from $L^{p}_{\mathcal{F}}(\mathcal{E})$ to $\bar{L}^{0}(\mathcal{F})$ $(1\leq p\leq+\infty)$, respectively, (briefly, $L^{p}$-- and $L^{p}_{\mathcal{F}}(\mathcal{E})$--conditional risk measures, respectively). We show that an $L^{\infty}$--conditional risk measure can be uniquely extended to an $L^{\infty}_{\mathcal{F}}(\mathcal{E})$--conditional risk measure and the conditional convex dual representation theorem established in \cite{Nadal,DS} for the former can be regarded as a special case of that established in this paper for the latter. We further show that an $L^{0}$--convex $L^{p}$--conditional risk measure can be uniquely extended to an $L^{0}$--convex $L^{p}_{\mathcal{F}}(\mathcal{E})$--conditional risk measure $(1\leq p<+\infty)$ and the conditional convex dual representation theorem established in \cite{FKV-appro} for the former can also be regarded a special case of that established in \cite{FKV-appro} for the latter. The second extension theorem is not very easy, whose proof is constructive, since an $L^{0}$--convex $L^{p}$--conditional risk measure is not necessarily uniformly continuous with respect to the relative topology when $L^{p}(\mathcal{E})$ is regarded as a subspace of $L^{p}_{\mathcal{F}}(\mathcal{E})$ which is endowed with the $(\varepsilon,\lambda)$--topology, in particular, in the process we find that combining the countable concatenation hull of a set and the local property of conditional risk measures is a very useful analytic skill that may considerably simplify and improve the study of $L^{0}$--convex conditional risk measures. This shows that the two vector space approaches to conditional risk can be incorporated into the module approach. Thus this paper has provided a complete random convex analysis, and hence also a solid analytic foundation for the module approach to conditional risk.

Most of the main ideas and results of this paper were first announced in Guo's survey paper \cite{TXG-Recent} without the detailed proofs or at most with a sketch of proofs of a few of illustrative results, this paper includes many new results as well as the full proofs of the results announced in \cite{TXG-Recent}. Besides. the great distinction between this paper and \cite{TXG-Recent} is that almost all the results in Section 3 is stated under the framework of random locally convex modules endowed with the locally $L^{0}$--convex topology rather than the framework of locally $L^{0}$--convex modules for some reasons mentioned above.

The remainder of this paper is organized as follows. Section 2 provides some necessary preliminaries for sake of the reader's convenience and in particular includes some key new results on the precise relations between the two kinds of closures of an $L^{0}$--convex set and between the two kinds of random conjugate spaces of a random locally convex module under the $(\varepsilon,\lambda)$--topology and locally $L^{0}$--convex topology; Sections 3 and 4 present and prove our main results as stated above in the Introduction of this paper.

Throughout this paper, we always use the following notation and terminology:

$K:$ the scalar field R of real numbers or C of complex numbers.

$(\Omega,\mathcal{F},P):$ a probability space.

$L^{0}(\mathcal{F},K)=$ the algebra of equivalence classes of $K$--valued $\mathcal{F}$-- measurable random variables
on $(\Omega,\mathcal{F},P)$.

$L^{0}(\mathcal{F})=L^{0}(\mathcal{F},R)$.

$\bar{L}^{0}(\mathcal{F})=$ the set of equivalence classes of extended real-valued $\mathcal{F}$-- measurable random
variables on $(\Omega,\mathcal{F},P)$.

As usual, $\bar{L}^{0}(\mathcal{F})$ is partially ordered by $\xi\leq\eta$ iff $\xi^{0}(\omega)\leq\eta^{0}(\omega)$ for $P$--almost all $\omega\in \Omega$ (briefly, a.s.), where $\xi^0$ and $\eta^0$ are arbitrarily chosen representatives of $\xi$ and $\eta$, respectively. Then $(\bar{L}^{0}(\mathcal{F}),\leq)$ is a complete lattice, $\bigvee H$ and $\bigwedge H$ denote the supremum and infimum of a subset $H$, respectively. $(L^{0}(\mathcal{F}),\leq)$ is a conditionally complete lattice. Please refer to \cite{D-Sch} or \cite[p. 3026]{TXG-JFA} for the rich properties of the supremum and infimum of a set in $\bar{L}^{0}(\mathcal{F})$.

Let $\xi$ and $\eta$ be in $\bar{L}^{0}(\mathcal{F})$. $\xi<\eta$ is understood as usual, namely $\xi\leq\eta$ and $\xi\neq\eta$. In this paper we also use $``\xi<\eta $ (or $\xi\leq\eta$) on $A"$ for $``\xi^{0}(\omega)<\eta^{0}(\omega)$ (resp., $\xi^{0}(\omega)\leq\eta^{0}(\omega)$) for $P$--almost all $\omega\in A"$, where $A\in\mathcal{F}$, $\xi^0$ and $\eta^0$ are a representative of $\xi$ and $\eta$, respectively.

$\bar{L}^{0}_{+}(\mathcal{F})=\{\xi\in \bar{L}^{0}(\mathcal{F})~|~\xi\geq0\}$

$\L^{0}_{+}(\mathcal{F})=\{\xi\in L^{0}(\mathcal{F})~|~\xi\geq0\}$

$\bar{L}^{0}_{++}(\mathcal{F})=\{\xi\in \bar{L}^{0}(\mathcal{F})~|~\xi>0$ on $\Omega\}$

$L^{0}_{++}(\mathcal{F})=\{\xi\in L^{0}(\mathcal{F})~|~\xi>0$ on $\Omega\}$

Besides, $\tilde{I}_{A}$ always denotes the equivalence class of $I_{A}$, where $A\in \mathcal{F}$ and $I_{A}$ is the characteristic function of $A$.
When $\tilde{A}$ denotes the equivalence class of $A (\in \mathcal{F})$, namely $\tilde{A}=\{B\in\mathcal{F}~|~P(A\triangle B)=0\}$ (here, $A\triangle B=(A\setminus B)\bigcup(B\setminus A)$), we also use $I_{\tilde{A}}$ for $\tilde{I}_{A}$.

Specially, $[\xi<\eta]$ denotes the equivalence class of $\{\omega\in\Omega~|~\xi^0(\omega)<\eta^0(\omega)\}$, where $\xi^0$ and $\eta^0$ are arbitrarily chosen representatives of $\xi$ and $\eta$ in $\bar{L}^0(\mathcal{F})$, respectively, some more notations such as $[\xi=\eta]$ and $[\xi\neq\eta]$ can be similarly understood.

\section{Preliminaries}

\subsection{Some basic notions}

\begin{definition}($See$ \cite{TXG-Master,TXG-PHD,TXG-basic}.) An ordered pair $(E,\|\cdot\|)$ is called a random normed space (briefly, an $RN$ space) over $K$ with base $(\Omega,\mathcal{F},P)$ if $E$ is a linear space over $K$ and $\|\cdot\|$ is a mapping from $E$ to $L^{0}_{+}(\mathcal{F})$ such that the following are satisfied:

\noindent ($RN$--1). $\|\alpha x\|=|\alpha| \|x\|$, $\forall \alpha\in K$ and $x\in E$;

\noindent ($RN$--2). $\|x\|=0$ implies $x=\theta$ (the null element of $E$);

\noindent ($RN$--3). $\|x+y\|\leq\|x\|+\|y\|$, $\forall x,y\in E$.

\noindent Here $\|\cdot\|$ is called the random norm on $E$ and $\|x\|$ the random norm of $x\in E$ (If $\|\cdot\|$ only satisfies ($RN$--1) and ($RN$--3) above, it is called a random seminorm on $E$).

\noindent Furthermore, if, in addition, $E$ is a left module over the algebra $L^{0}(\mathcal{F},K)$ (briefly, an $L^{0}(\mathcal{F},K)$--module) such that

\noindent ($RNM$--1). $\|\xi x\|=|\xi| \|x\|$, $\forall \xi\in L^{0}(\mathcal{F},K)$ and $x\in E$.

\noindent Then $(E,\|\cdot\|)$ is called a random normed module (briefly, an $RN$ module) over $K$ with base $(\Omega,\mathcal{F},P)$, the random norm $\|\cdot\|$ with the property ($RNM$--1) is also called an $L^0$--norm on $E$ (a mapping only satisfying ($RN$--3) and ($RNM$--1) above is called an $L^0$--seminorm on E).

\end{definition}

\begin{remark} According to the original notion of an $RN$ space in \cite{SS}, $\|x\|$ is a nonnegative random variable for all $x\in E$. An $RN$ space in the sense of Definition 2.1 is almost equivalent to (in fact, slightly more general than) the original one in the sense of \cite{SS}. Definition 2.1 is not only very natural from traditional functional analysis but also easily avoids any possible ambiguities between random variables and their equivalence classes, and hence also more convenient for applications to Lebesgue-Bochner function spaces since the latter exactly consists of equivalence classes. $RN$ spaces in the sense of Definition 2.1 was essentially earlier employed in \cite{TXG-Master}. The study of random conjugate spaces (see Definition 2.3 below) of $RN$ spaces and applications of $RN$ spaces to best approximations in Lebesgue-Bochner function spaces lead Guo to the notion of an $RN$ module in \cite{TXG-PHD}. Subsequently, $RN$ modules and their random conjugate spaces were deeply developed by Guo in \cite{TXG-PHD,TXG-Extension,TXG-Module,TXG-Radon,TXG-reflesive,TXG-dual} so that Guo further presented the refined notions of $RN$ and random inner product (briefly, $RIP$) modules and compared the original notions of random metric spaces (briefly, $RM$ spaces) and $RN$ spaces with the currently used notions of $RM$ and $RN$ spaces in \cite{TXG-basic}. At almost the same time as Guo did the work \cite{TXG-Master,TXG-PHD}, $RN$ spaces and $RN$ modules were independently introduced by R. Haydon, M. Levy and Y. Raynaud in \cite{HLR}, where their notion of randomly normed $L^0(\mathcal{F})$--modules is exactly that of $RN$ modules over $R$ with base $(\Omega,\mathcal{F},P)$, in particular, they deeply studied the two classes of $RN$ modules-direct integrals and random Banach lattices (namely, random normed module equivalent of Banach lattices). Motivated by financial applications, D. Filipovi\'{c}, M. Kupper and N. Vogelpoth also independently came to the idea of $RN$ modules, their notion of $L^0$--normed  modules amounts to that of $RN$ modules over $R$ with base $(\Omega,\mathcal{F},P)$. The terminologies of ``$L^0$--norms and $L^0$--seminorms" are adopted from \cite{FKV}, previously they were still called random norms and random seminorms in Guo's work and \cite{HLR}.
\end{remark}

\begin{definition}($See$ \cite{TXG-Master,TXG-PHD,TXG-Module,TXG-basic}.) Let $(E_1,\|\cdot\|)$ and $(E_2,\|\cdot\|)$ be $RN$ spaces over $K$ with base $(\Omega,\mathcal{F},P)$. A linear operator $T$ from $E_1$ to $E_2$ is said to be a.s. bounded if there is $\xi\in L^{0}_{+}(\mathcal{F})$ such that $\|Tx\|_2\leq\xi\|x\|_1, \forall x\in E_1$. Denote by $B(E_1,E_2)$ the linear space of a.s. bounded linear operators from $E_1$ to $E_2$, define $\|\cdot\|:B(E_1,E_2)\rightarrow L^{0}_{+}(\mathcal{F})$ by $\|T\|=\bigwedge\{\xi\in L^{0}_{+}(\mathcal{F})~|~\|Tx\|_2\leq\xi\|x\|_1$ for all $x\in E_1\}$ for all $T\in B(E_1,E_2)$, then it is easy to check that $(B(E_1,E_2),\|\cdot\|)$ is also an $RN$ space over $K$ with base $(\Omega,\mathcal{F},P)$, in particular $(B(E_1,E_2),\|\cdot\|)$ is an $RN$ module if so is $E_2$. Specially, the $RN$ module $(E_1^{\ast},\|\cdot\|)$ with $E_1^{\ast}=B(E_1,L^{0}(\mathcal{F},K))$ is called the random conjugate space of $E_1$.
\end{definition}

\begin{remark} In Definition 2.3, let $A$ be a subalgebra dense in $L^0(\mathcal{F})$ with respect to the topology of convergence in measure, then it is easy to prove that every a.s. bounded linear operator $T$ between two $RN$ $A$--modules is an $A$--homomorphisms, thus in this case $B(E_1,E_2)$ is the same as in \cite{HLR}. Here, the notion of an $RN$ $A$--module was introduced in \cite{HLR}, which can obtained by replacing $L^0(\mathcal{F})$ with $A$ in the definition of an $RN$ module. Although $RN$ spaces and $RN$ $A$--modules are both more general than $RN$ modules, the history of random metric theory has proved that $RN$ modules are most important.
\end{remark}
.
\begin{definition}($See$ \cite{TXG-PHD,TXG-Module,TXG-Sur}.) An ordered pair $(E,\mathcal{P})$ is called a random locally convex space (briefly, an $RLC$ space) over $K$ with base $(\Omega,\mathcal{F},P)$ if $E$ is a linear space over $K$ and $\mathcal{P}$ a family of mappings from $E$ to $L^0_{+}(\mathcal{F})$ such that the following are satisfied:

\noindent ($RLC$--1). Every $\|\cdot\|\in \mathcal{P}$ is a random seminorm on $E$;

\noindent ($RLC$--2). $\bigvee\{\|x\|:\|\cdot\|\in\mathcal{P}\}=0$ iff $x=\theta$.

\noindent Furthermore, if, in addition, $E$ is an $L^{0}(\mathcal{F},K)$--module and each $\|\cdot\|\in \mathcal{P}$ is an $L^0$--seminorm on $E$, then $(E,\mathcal{P})$ is called a random locally convex module (briefly, an $RLC$ module) over $K$ with base $(\Omega,\mathcal{F},P)$.
\end{definition}

It is not very difficult to introduce the random conjugate space for an $RN$ space, whereas it is completely another thing to do for an $RLC$ space, at the earlier time Guo ever gave two definitions.

\begin{definition}($See$ \cite{TXG-PHD,TXG-Module,TXG-Sur}.) Let $(E,\mathcal{P})$ be an $RLC$ space over $K$ with base $(\Omega,\mathcal{F},P)$. A linear operator $f$ from $E$ to $L^0(\mathcal{F},K)$ (such an operator is also called a random linear functional on $E$) is called an a.s. bounded random linear functional of type I if there are $\xi\in L^0_+(\mathcal{F})$ and some finite subset $\mathcal{Q}$ of $\mathcal{P}$ such that $|f(x)|\leq\xi \|x\|_{\mathcal{Q}}$ for all $x\in E$, where $\|\cdot\|_{\mathcal{Q}}=\bigvee\{\|\cdot\|:\|\cdot\|\in \mathcal{Q}\}$, namely, $\|x\|_{\mathcal{Q}}=\bigvee\{\|x\|:\|\cdot\|\in \mathcal{Q}\}$ for all $x\in E$. Denote by $E^{\ast}_{I}$ the $L^0(\mathcal{F},K)$--module of a.s. bounded random linear functionals on $E$ of type I, called the first kind of random conjugate space of $(E,\mathcal{P})$, cf. \cite{TXG-PHD,TXG-Module}. A random linear functional $f$ on $E$ is called an a.s. bounded random linear functional of type II if there are $\xi\in L^0_+(\mathcal{F})$ and $\|\cdot\|\in \mathcal{P}_{cc}$ such that $|f(x)|\leq\xi\|x\|$ for all $x\in E$, where $\mathcal{P}_{cc}=\{\sum_{n=1}^{\infty}\tilde{I}_{A_n}\|\cdot\|_{\mathcal{Q}_n}~|~\{A_n,n\in N\}$ is a countable partition of $\Omega$ to $\mathcal{F}$ and $\{\mathcal{Q}_n,n\in N\}$ a sequence of finite subsets of $\mathcal{P}\}$. Denote by $E^{\ast}_{II}$ the $L^0(\mathcal{F},K)$--module of a.s. bounded random linear functionals on $E$ of type II, called the second kind of random conjugate space of $(E,\mathcal{P})$, cf. \cite{TXG-Sur}.
\end{definition}

In the sequel of this paper, given a random locally convex space $(E,\mathcal{P})$, $\mathcal{P}_f$ always denotes the family of finite subsets of $\mathcal{P}$, $\|\cdot\|_{\mathcal{Q}}$ is the same as in Definition 2.6 for each $\mathcal{Q}\in \mathcal{P}_f$ and $\mathcal{P}_{cc}$ also the same as in Definition 2.6. $\mathcal{P}_{cc}$ is called the countable concatenation hull of $\mathcal{P}$.

Following are the three important examples in random metric theory.

\begin{example} Let $L^0(\mathcal{F},B)$ be the $L^0(\mathcal{F},K)$--module of equivalence classes of $\mathcal{F}$--random variables (or, strongly $\mathcal{F}$--measurable functions) from $(\Omega,\mathcal{F},P)$ to a normed space $(B,\|\cdot\|)$ over $K$. $\|\cdot\|$ induces an $L^0$--norm (still denoted by $\|\cdot\|$) on $L^0(\mathcal{F},B)$ by $\|x\|:=$ the equivalence class of $\|x^0(\cdot)\|$ for all $x\in L^0(\mathcal{F},B)$, where $x^0(\cdot)$ is a representative of $x$. Then $(L^0(\mathcal{F},B),\|\cdot\|)$ is an $RN$ module over $K$ with base $(\Omega,\mathcal{F},P)$. Specially, $L^{0}(\mathcal{F},K)$ is an $RN$ module, the $L^0$--norm $\|\cdot\|$ on $L^{0}(\mathcal{F},K)$ is still denoted by $|\cdot|$.
\end{example}

$L^0(\mathcal{F},B)$ was deeply studied by Guo in \cite{TXG-PHD,TXG-Module,TXG-Radon,TXG-SBL,TXG-JAT,ZYY-TXG}. When the norm $\|\cdot\|$ on $B$ is not fixed, let $\{\|\cdot\|_\omega,\omega\in\Omega\}$ be an $\mathcal{F}$--measurable family of norms (namely $\|b\|_\omega$ is an $\mathcal{F}$--random variable as a function of $\omega\in\Omega$ for each fixed $b\in B$) and $B_\omega=$ the completion of $(B,\|\cdot\|_\omega)$ for each $\omega\in\Omega$. R. Haydon, et.al, introduced the notion of a generalized strongly $\mathcal{F}$--measurable function in \cite{HLR}, where an element $f$ in the product space $\Pi_{\omega\in\Omega}B_{\omega}$ was called a generalized strongly $\mathcal{F}$--measurable function on $\Omega$ if there is a sequence $\{f_n,n\in N\}$ of $B$--valued $\mathcal{F}$--simple functions on $\Omega$ such that $\|f(\omega)-f_n(\omega)\|_\omega\rightarrow 0 (n\rightarrow\infty)$ for each $\omega\in\Omega$. Denote by $\int^{\oplus}_{\Omega}B_\omega P(d\omega)$ the $L^0(\mathcal{F},K)$--module of equivalence classes of generalized strongly $\mathcal{F}$--measurable functions on $\Omega$, define the $L^0$--norm $\|\cdot\|$ by $\|x\|=$ the equivalence class of the mapping sending each $\omega$ to $\|x^0(\omega)\|_\omega$, where $x^0$ is a representative of $x\in\int^{\oplus}_{\Omega}B_\omega P(d\omega)$, then $(\int^{\oplus}_{\Omega}B_\omega P(d\omega),\|\cdot\|)$ is an $RN$ module over $K$ with base $(\Omega,\mathcal{F},P)$, called the direct integral of $\{B_\omega,\omega\in\Omega\}$, which played a key role in \cite{HLR}.

D. Filipovi$\acute{c}$, M. Kupper and N. Vogelpoth constructed important $RN$ modules $L^p_{\mathcal{F}}(\mathcal{E}) (1\leq p\leq+\infty)$ in \cite{FKV}, we will prove that they play the role of universal model spaces for $L^0$--convex conditional risk measures.

\begin{example} Let $(\Omega, {\mathcal E}, P)$ be a probability space and ${\mathcal F}$
a $\sigma$--subalgebra of ${\mathcal E}$. Define $|||\cdot|||_p\colon L^0({\mathcal E})\to {\bar L}^0_+({\mathcal F})$ by
$$|||x|||_p=\left\{
               \begin{array}{ll}
                 E[|x|^p|{\mathcal F}]^{1\over p}, & \hbox{when $1\leq p<\infty$;} \\
                 \bigwedge\{\xi\in {\bar L}^0_+({\mathcal F})~|~|x|\leq\xi\}, & \hbox{when $p=+\infty$;}
               \end{array}
             \right.
$$
for all $x\in L^0({\mathcal E})$.

Denote $L^p_{\mathcal F}({\mathcal E})=\{x\in L^0({\mathcal E})~|~|||x|||_p\in L^0_+({\mathcal F})\}$, then $(L^p_{\mathcal F}({\mathcal E}), |||\cdot|||_p)$ is an $RN$ module over $R$ with base $(\Omega, {\mathcal F}, P)$ and $L^p_{\mathcal F}({\mathcal E})=L^0({\mathcal F})\cdot L^p({\mathcal E})=\{~\xi x~|~\xi\in L^0({\mathcal F})~ \hbox{and}~ x\in L^p({\mathcal E})\}$.

\end{example}

To put some important classes of stochastic processes into the framework of $RN$ modules, Guo constructed a more general $RN$ module
$L^p_{\mathcal F}(S)$ in \cite{TXG-JFA} for each $p\in [1, +\infty]$, one can imagine that $S$ is an $RN$ module generated by a class of stochastic processes, $L^p_{\mathcal F}(S)$ can be constructed as follows.

\begin{example} Let $(S, \|\cdot\|)$ be an $RN$ module over $K$ with base $(\Omega, {\mathcal E}, P)$ and ${\mathcal F}$
a $\sigma$--subalgebra. Define $|||\cdot|||_p\colon S \to {\bar L}^0_+({\mathcal F})$ by
$$|||x|||_p=\left\{
               \begin{array}{ll}
                 E[\|x\|^p|{\mathcal F}]^{1\over p}, & \hbox{when $1\leq p<\infty$;} \\
                 \bigwedge\{\xi\in {\bar L}^0_+({\mathcal F})|~\|x\|\leq\xi\}, & \hbox{when $p=+\infty$;}
               \end{array}
             \right.
$$
for all $x\in S$.

Denote $L^p_{\mathcal F}(S)=\{x\in S~|~|||x|||_p\in L^0_+({\mathcal F})\}$, then $(L^p_{\mathcal F}(S), |||\cdot|||_p)$ is an $RN$ module over $K$ with base $(\Omega, {\mathcal F}, P)$. When $S=L^0({\mathcal E})$, $L^p_{\mathcal F}(S)$ is exactly $L^p_{\mathcal F}({\mathcal E})$.
\end{example}

\begin{definition}($See$ \cite{TXG-PHD,TXG-Module,TXG-Sur,TXG-SLP}.) Let $(E, {\mathcal P})$ be an $RLC$ space over $K$ with base $(\Omega, {\mathcal F}, P)$. For any positive numbers $\varepsilon$ and $\lambda$ with $0<\lambda<1$ and $\mathcal{Q}\in {\mathcal P}_f$, let $N_{\theta}(\mathcal{Q}, \varepsilon, \lambda)=\{x\in E~|~P\{\omega\in \Omega~|~\|x\|_\mathcal{Q}(\omega)<\varepsilon\}>1-\lambda\}$, then $\{N_{\theta}(\mathcal{Q}, \varepsilon, \lambda)~|~\mathcal{Q}\in {\mathcal P}_f, \varepsilon >0, 0<\lambda<1\}$ forms a local base at $\theta$ of some Hausdorff linear topology on $E$, called the $(\varepsilon, \lambda)$--topology induced by ${\mathcal P}$.
\end{definition}

From now on, we always denote by ${\mathcal T}_{\varepsilon, \lambda}$ the $(\varepsilon, \lambda)$--topology for every $RLC$ space if there is no possible confusion. Clearly, the $(\varepsilon, \lambda)$--topology for the special class of $RN$ modules $L^0({\mathcal F}, B)$ is exactly the ordinary topology of convergence in measure, and $(L^0({\mathcal F}, K), {\mathcal T}_{\varepsilon, \lambda})$ is a topological algebra over $K$. It is also easy to check that $(E, {\mathcal T}_{\varepsilon, \lambda})$ is a topological module over $(L^0({\mathcal F}, K), {\mathcal T}_{\varepsilon, \lambda})$ when $(E, {\mathcal P})$ is an $RLC$ module over $K$ with base $(\Omega, {\mathcal F}, P)$, namely the module multiplication operation is jointly continuous.

For an $RLC$ module $(E, {\mathcal P})$ over $K$ with base $(\Omega, {\mathcal F}, P)$, we always denote by $(E, {\mathcal P})^\ast_{\varepsilon, \lambda}$ ( or, briefly, $E^\ast_{\varepsilon, \lambda}$, whenever there is no confusion ) the $L^0({\mathcal F}, K)$--module of continuous module homomorphisms from $(E, {\mathcal T}_{\varepsilon, \lambda})$ to $(L^0({\mathcal F}, K), {\mathcal T}_{\varepsilon, \lambda})$, called the random conjugate space of $(E, {\mathcal P})$ under the  $(\varepsilon, \lambda)$--topology.

\begin{proposition}($See$ \cite{TXG-PHD,TXG-Module}.) Let $(E_1, \|\cdot\|_1)$ and $(E_2, \|\cdot\|_2)$ be two $RN$ modules over $K$ with base $(\Omega, {\mathcal F}, P)$ and $T$ a linear operator from $E_1$ to $E_2$. Then $T\in B(E_1, E_2)$ iff $T$ is a continuous module homomorphism from $(E_1, {\mathcal T}_{\varepsilon, \lambda})$ to $(E_2, {\mathcal T}_{\varepsilon, \lambda})$, in which case $\|T\|=\bigvee\{\|Tx\|_2~|~x\in E_1~\hbox{and}~ \|x\|_1\leq 1\}$.
\end{proposition}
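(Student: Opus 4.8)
The plan is to prove the two implications separately and then read off the norm identity. Throughout I write $N_\theta(\varepsilon,\lambda)$ for the basic $(\varepsilon,\lambda)$--neighbourhoods of $\theta$ attached to the single $L^0$--norm of an $RN$ module (Definition 2.11), and I use freely that each $(E_i,\mathcal T_{\varepsilon,\lambda})$ is a topological module over $(L^0(\mathcal F,K),\mathcal T_{\varepsilon,\lambda})$, so that a linear $T$ is continuous iff it is continuous at $\theta$.

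\textbf{Forward direction.} Suppose $T\in B(E_1,E_2)$, say $\|Tx\|_2\le\xi\|x\|_1$ for all $x$ with $\xi\in L^0_+(\mathcal F)$. First I would establish the local property $T(\tilde I_A x)=\tilde I_A T(x)$ for every $A\in\mathcal F$: since $\|T(\tilde I_A x)\|_2\le\xi\tilde I_A\|x\|_1$ vanishes on $A^c$, positive-definiteness of $\|\cdot\|_2$ (axiom (RN--2)) forces $\tilde I_{A^c}T(\tilde I_A x)=\theta$; applying this also to $A^c$ and combining with $K$--linearity gives the claim, which then extends to $T(\xi x)=\xi T(x)$ for every $\mathcal F$--simple $\xi$. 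Continuity is obtained by truncation: given $N_\theta(\varepsilon,\lambda)$ in $E_2$, pick $N$ with $P(\xi>N)<\lambda/2$ and put $\delta=\varepsilon/N$, $\mu=\lambda/2$; then on $\{\|x\|_1<\delta\}\cap\{\xi\le N\}$ one has $\|Tx\|_2\le\xi\|x\|_1<\varepsilon$, a set of probability $>1-\lambda$, so $T(N_\theta(\delta,\mu))\subseteq N_\theta(\varepsilon,\lambda)$. Finally, approximating an arbitrary $\xi\in L^0(\mathcal F,K)$ by simple functions in $\mathcal T_{\varepsilon,\lambda}$ and using joint continuity of scalar multiplication together with the continuity of $T$ upgrades $T(\xi x)=\xi T(x)$ to all $\xi$; hence $T$ is a continuous module homomorphism.

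\textbf{Reverse direction (the substantive one).} Assume $T$ is a continuous module homomorphism. The homomorphism property lets me normalize: for any $x$ set $\eta=\|x\|_1+\tilde I_{[\|x\|_1=0]}\in L^0_{++}(\mathcal F)$ and $y=(1/\eta)x$, so that $\|y\|_1=\tilde I_{[\|x\|_1>0]}\le 1$ and, on $[\|x\|_1>0]$, $\|Tx\|_2=\|x\|_1\,\|Ty\|_2$, while on $[\|x\|_1=0]$ the local property gives $\|Tx\|_2=0=\xi\|x\|_1$. Thus $\|Tx\|_2\le\xi\|x\|_1$ for all $x$ as soon as $\xi:=\bigvee\{\|Ty\|_2:\|y\|_1\le1\}$ lies in $L^0_+(\mathcal F)$, so the heart of the matter is to show this supremum is finite a.s. I would argue by contradiction: if $\xi=+\infty$ on some $B$ with $P(B)=b>0$, then by the countable-supremum property of $\bar L^0(\mathcal F)$ there are $y_n$ with $\|y_n\|_1\le1$ and $\bigvee_n\|Ty_n\|_2=\xi$. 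Apply continuity with $\varepsilon=1,\ \lambda=b/2$ to obtain $\delta,\mu$ with $T(N_\theta(\delta,\mu))\subseteq N_\theta(1,b/2)$. Now comes the gluing step: with $M_n=\bigvee_{k\le n}\|Ty_k\|_2\uparrow\xi$, choose $n$ so large that $P(M_n\ge 2/\delta)\ge 3b/4$, split $\{M_n\ge 2/\delta\}$ into the disjoint sets $C_k$ on which $k$ is the first index with $\|Ty_k\|_2\ge 2/\delta$, and form $y=\sum_{k\le n}\tilde I_{C_k}y_k\in E_1$. Then $\|y\|_1\le1$, and by the local property $\|Ty\|_2=\sum_{k\le n}\tilde I_{C_k}\|Ty_k\|_2\ge 2/\delta$ on $\{M_n\ge 2/\delta\}$. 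Scaling by $\delta/2$ yields $z=(\delta/2)y\in N_\theta(\delta,\mu)$, yet $\|Tz\|_2=(\delta/2)\|Ty\|_2\ge1$ on a set of probability $\ge 3b/4>b/2$, contradicting $Tz\in N_\theta(1,b/2)$. Hence $\xi\in L^0_+(\mathcal F)$ and $T\in B(E_1,E_2)$.

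\textbf{Norm formula and main obstacle.} The element $\xi$ just produced is an admissible bound, so $\|T\|\le\xi$; conversely any admissible $\xi'$ satisfies $\|Ty\|_2\le\xi'\|y\|_1\le\xi'$ whenever $\|y\|_1\le1$, whence $\xi\le\xi'$ and therefore $\xi\le\|T\|$. Thus $\|T\|=\xi=\bigvee\{\|Ty\|_2:\|y\|_1\le 1\}$, which is the asserted identity (the supremum over $\|y\|_1\le1$ agreeing with that over $\|y\|_1=1$ by the same normalization). I expect the finite-concatenation argument in the reverse direction to be the main obstacle: it is precisely there that the module structure and the order completeness of $\bar L^0(\mathcal F)$ must be combined to convert the pointwise-in-$\omega$ unboundedness of the family $\{\|Ty_n\|_2\}$ into a single element $z$ violating continuity. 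The forward direction and the norm computation, by contrast, are routine once the local property is in hand.
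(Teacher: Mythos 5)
Your proof is correct: the forward direction (local property via (RN--2), truncation of $\xi$ for continuity, density of simple functions for the module-homomorphism property) and the reverse direction (normalization, reduction to finiteness of $\bigvee\{\|Ty\|_2:\|y\|_1\le 1\}$, and the contradiction via a finite concatenation $\sum_{k\le n}\tilde I_{C_k}y_k$ scaled into a small neighbourhood) all go through, and the norm identity follows as you say. The paper itself gives no proof of Proposition 2.11 — it is quoted from Guo's earlier work — but your argument is essentially the standard one there, the finite-stratification step in the reverse direction being exactly the technique that the module structure is designed to support.
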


Proposition 2.11 is very useful, Guo use it to prove that $(B(E_1, E_2), \|\cdot\|)$ is always ${\mathcal T}_{\varepsilon, \lambda}$--complete for any two $RN$ spaces $E_1$ and $E_2$ such that $E_2$ is ${\mathcal T}_{\varepsilon, \lambda}$--complete, in particular $E^\ast$ is ${\mathcal T}_{\varepsilon, \lambda}$--complete for every $RN$ space $E$, cf. \cite{TXG-PHD,TXG-Module}. It is also clear from Proposition 2.11 that $E^\ast=E^\ast_{\varepsilon, \lambda}$ for every $RN$ module $E$, cf. \cite{TXG-PHD,TXG-Extension}. Proposition 2.11 can be extended to a more general case when $E_1$ and $E_2$ are $RLC$ modules, cf. \cite{TXG-Sur,TXG-LHZ}. But, this paper only needs a special case of the general result in \cite{TXG-Sur,TXG-LHZ}, namely Proposition 2.12 below.

\begin{proposition}($See$ \cite{TXG-Sur,TXG-LHZ}.) Let $(E, {\mathcal P})$ be an $RLC$ module $(E, {\mathcal P})$ over $K$ with base $(\Omega, {\mathcal F}, P)$ and $f$ a random linear functional on $E$. Then $f\in E^\ast_{II}$ iff $f\in E^\ast_{\varepsilon, \lambda}$, namely $E^\ast_{II}=E^\ast_{\varepsilon, \lambda}$.
\end{proposition}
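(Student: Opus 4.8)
The plan is to prove the two inclusions $E^\ast_{II}\subseteq E^\ast_{\varepsilon,\lambda}$ and $E^\ast_{\varepsilon,\lambda}\subseteq E^\ast_{II}$ separately. The first is a routine ``boundedness implies continuity'' argument; the second, ``continuity implies type-II boundedness'', is the substantive half, and it is there that the countable concatenation structure $\mathcal{P}_{cc}$ becomes unavoidable.

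For the easy inclusion, suppose $f\in E^\ast_{II}$, so $|f(x)|\leq\xi\|x\|$ for all $x$, with $\xi\in L^0_+(\mathcal{F})$ and $\|\cdot\|=\sum_{n}\tilde{I}_{A_n}\|\cdot\|_{\mathcal{Q}_n}\in\mathcal{P}_{cc}$. I would first establish $\mathcal{T}_{\varepsilon,\lambda}$-continuity at $\theta$ (hence everywhere, by $K$-linearity): given $\varepsilon>0$ and $\lambda\in(0,1)$, choose $N$ with $P(\bigcup_{n>N}A_n)<\lambda/3$ (possible since $\{A_n\}$ partitions $\Omega$) and $k>0$ with $P(\xi>k)<\lambda/3$ (since $\xi$ is a.s. finite), and put $\mathcal{Q}=\bigcup_{n\leq N}\mathcal{Q}_n\in\mathcal{P}_f$. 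Then $x\in N_\theta(\mathcal{Q},\varepsilon/k,\lambda/3)$ forces $|f(x)|<\varepsilon$ off a set of measure $<\lambda$, because on the intersection of $\{\|x\|_\mathcal{Q}<\varepsilon/k\}$, $\{\xi\leq k\}$ and $\bigcup_{n\leq N}A_n$ one has $\|x\|=\|x\|_{\mathcal{Q}_n}\leq\|x\|_\mathcal{Q}$ for the relevant $n\leq N$, whence $|f(x)|\leq\xi\|x\|<\varepsilon$. It remains to see $f$ is a module homomorphism: since each element of $\mathcal{P}_{cc}$ is an $L^0$-seminorm with $\|\tilde{I}_A x\|=\tilde{I}_A\|x\|$, the bound gives $\tilde{I}_{A^c}|f(\tilde{I}_A x)|\leq\xi\,\tilde{I}_{A^c}\tilde{I}_A\|x\|=0$, so $f(\tilde{I}_A x)=\tilde{I}_A f(x)$ for every $A\in\mathcal{F}$; $K$-linearity extends this to simple scalars, and continuity together with the joint continuity of scalar multiplication extends it to all of $L^0(\mathcal{F},K)$.

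For the hard inclusion, let $f\in E^\ast_{\varepsilon,\lambda}$, so $f$ is a continuous module homomorphism. For each $\mathcal{Q}\in\mathcal{P}_f$ set $\Xi_\mathcal{Q}=\bigvee\{|f(x)|~:~x\in E,\ \|x\|_\mathcal{Q}\leq1\}\in\bar{L}^0_+(\mathcal{F})$, which exists by completeness of the lattice $\bar{L}^0(\mathcal{F})$. Using $f(\tilde{I}_A x)=\tilde{I}_A f(x)$ and $\|\tilde{I}_A x\|_\mathcal{Q}=\tilde{I}_A\|x\|_\mathcal{Q}$ one checks that $\Xi_\mathcal{Q}$ is local, and that on $A_\mathcal{Q}:=[\Xi_\mathcal{Q}<+\infty]$ the functional is type-I bounded relative to $\mathcal{Q}$, namely $\tilde{I}_{A_\mathcal{Q}}|f(x)|\leq\tilde{I}_{A_\mathcal{Q}}\Xi_\mathcal{Q}\|x\|_\mathcal{Q}$ with $\tilde{I}_{A_\mathcal{Q}}\Xi_\mathcal{Q}\in L^0_+(\mathcal{F})$ (scaling $x$ by an $L^0_{++}$ factor reduces the general case to the unit ball of $\|\cdot\|_\mathcal{Q}$). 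Put $A^\ast=\bigvee_{\mathcal{Q}\in\mathcal{P}_f}A_\mathcal{Q}$; as the essential supremum of a family in $\mathcal{F}$ is attained along a countable subfamily, $A^\ast=\bigcup_m A_{\mathcal{Q}^{(m)}}$, and disjointifying $A^\ast=\bigsqcup_m B_m$ with $B_m\subseteq A_{\mathcal{Q}^{(m)}}$ gives $\|\cdot\|':=\sum_m\tilde{I}_{B_m}\|\cdot\|_{\mathcal{Q}^{(m)}}\in\mathcal{P}_{cc}$ and $\xi':=\sum_m\tilde{I}_{B_m}\Xi_{\mathcal{Q}^{(m)}}\in L^0_+(\mathcal{F})$ with $\tilde{I}_{A^\ast}|f(x)|\leq\xi'\|x\|'$. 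Thus $f\in E^\ast_{II}$ once one proves $A^\ast=\Omega$ a.s.

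The main obstacle is precisely this last step. If $P((A^\ast)^c)>0$, then for any target $\mu\in(0,1)$ continuity supplies $\mathcal{Q}_\mu\in\mathcal{P}_f$, $\varepsilon_\mu>0$, $\lambda_\mu\in(0,1)$ with $P(\|x\|_{\mathcal{Q}_\mu}<\varepsilon_\mu)>1-\lambda_\mu\Rightarrow P(|f(x)|<1)>1-\mu$; since $(A^\ast)^c\subseteq[\Xi_{\mathcal{Q}_\mu}=+\infty]$, I would glue, over a measurable partition of $(A^\ast)^c$, a single $x\in E$ with $\|x\|_{\mathcal{Q}_\mu}\leq1$ and $|f(x)|>2/\varepsilon_\mu$ on all of $(A^\ast)^c$, so that $y=(\varepsilon_\mu/2)x$ satisfies $\|y\|_{\mathcal{Q}_\mu}\leq\varepsilon_\mu/2<\varepsilon_\mu$ everywhere yet $|f(y)|\geq1$ on $(A^\ast)^c$; this forces $P((A^\ast)^c)\leq\mu$, and letting $\mu\to0$ yields $A^\ast=\Omega$. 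The delicate point is manufacturing one element realizing the divergence of $\Xi_{\mathcal{Q}_\mu}$ \emph{simultaneously} across the whole set $(A^\ast)^c$ while staying inside the unit ball of $\|\cdot\|_{\mathcal{Q}_\mu}$: this is exactly where the countable concatenation property is indispensable, both to perform the gluing (realizing the essential supremum by a sequence and concatenating over the partition) and to guarantee $f$ commutes with the infinite concatenation via its continuity. It is also the structural reason why the representation necessarily lands in $\mathcal{P}_{cc}$ and does \emph{not} in general give type-I boundedness against a single $\mathcal{Q}$.
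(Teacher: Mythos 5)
Your overall architecture is sound, and the easy inclusion $E^\ast_{II}\subseteq E^\ast_{\varepsilon,\lambda}$ is carried out correctly (the paper only cites \cite{TXG-Sur,TXG-LHZ} for this proposition, so there is no in-paper proof to compare against). The genuine problem is the last step of the hard inclusion, where you propose to ``glue, over a measurable partition of $(A^\ast)^c$, a single $x\in E$'' with $\|x\|_{\mathcal{Q}_\mu}\leq 1$ and $|f(x)|>2/\varepsilon_\mu$ on all of $(A^\ast)^c$, and you declare the countable concatenation property of $E$ indispensable for this. But the proposition is stated for an \emph{arbitrary} $RLC$ module: $E$ is not assumed to have the countable concatenation property (the standing assumption in Section 2 is only the uniqueness half --- a countably concatenated element, \emph{if it exists}, is unique --- not the existence half needed for gluing). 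As written, the element $x=\sum_n\tilde{I}_{C_n}x_n$ you require may simply fail to exist in $E$, so this step does not go through and, if it truly were needed, the proposition as stated would be out of reach.

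The repair is to avoid the countable gluing altogether. The set $\{|f(x)|~:~\|x\|_{\mathcal{Q}}\leq 1\}$ is upward directed: for $\|x\|_{\mathcal{Q}},\|y\|_{\mathcal{Q}}\leq 1$ put $z=\tilde{I}_Ax+\tilde{I}_{A^c}y$ with $A$ a representative of $[\,|f(x)|\geq|f(y)|\,]$; this is a \emph{finite} $L^0$-combination, available in every $L^0(\mathcal{F},K)$-module, and the $L^0$-linearity of $f$ gives $|f(z)|=|f(x)|\vee|f(y)|$ while $\|z\|_{\mathcal{Q}}\leq 1$. Hence there is a nondecreasing sequence $\{x_n\}$ in the unit ball with $|f(x_n)|\uparrow\Xi_{\mathcal{Q}_\mu}$, so that the sets $[\,|f(x_n)|>2/\varepsilon_\mu\,]\cap(A^\ast)^c$ increase to $(A^\ast)^c$ (mod null), since $\Xi_{\mathcal{Q}_\mu}=+\infty$ there. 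Applying your continuity estimate to $y_n=(\varepsilon_\mu/2)x_n$ then yields $P\bigl((A^\ast)^c\bigr)\leq\mu+\delta_n$ with $\delta_n\to 0$, and letting $\mu\to 0$ gives $A^\ast=\Omega$. (The same directedness lets you take the covering $A^\ast=\bigcup_mA_{\mathcal{Q}^{(m)}}$ increasing; the disjointification and the resulting $\|\cdot\|'\in\mathcal{P}_{cc}$ and $\xi'\in L^0_+(\mathcal{F})$ are unproblematic, since they only involve countable sums of disjointly supported elements of $L^0$, not of $E$.) With this replacement your argument is correct, and the result holds with no concatenation hypothesis on $E$ whatsoever.
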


For any $\varepsilon \in L^0_{++}({\mathcal F})$, let $U(\varepsilon)=\{\xi\in L^0({\mathcal F}, K)~|~|\xi|\leq \varepsilon\}$. A subset $G$ of $L^0({\mathcal F}, K)$ is ${\mathcal T}_c$--open if for each fixed $x\in G$ there is some $\varepsilon \in L^0_{++}({\mathcal F})$ such that $x+U(\varepsilon)\subset G$. Denote by ${\mathcal T}_c$ the family of ${\mathcal T}_c$--open subsets of $L^0({\mathcal F}, K)$, then ${\mathcal T}_c$ is a Hausdorff topology on $L^0({\mathcal F}, K)$ such that $(L^0({\mathcal F}, K), {\mathcal T}_c)$ is a topological ring, namely the addition and multiplication operations are jointly continuous. D. Filipovi\'{c}, M. Kupper and N. Vogelpoth first observed this kind of topology and further pointed out that ${\mathcal T}_c$ is not necessarily a linear topology since the mapping $\alpha\mapsto \alpha x$ ($x$ is fixed) is no longer continuous in general. These observations led them to the study of a class of topological modules over the topological ring $(L^0({\mathcal F}, K), {\mathcal T}_c)$ in \cite{FKV}, where they only considered the case when $K=R$, in fact the complex case can also similarly introduced as follows.

\begin{definition}($See$ \cite{FKV}.) An ordered pair $(E, {\mathcal T})$ is a topological $L^0({\mathcal F}, K)$--module if both $(E, {\mathcal T})$ is a topological space and $E$ is an $L^0({\mathcal F}, K)$--module such that $(E, {\mathcal T})$ is a topological module over the topological ring $(L^0({\mathcal F}, K), {\mathcal T}_c)$, namely the addition and module multiplication operations are jointly continuous.
\end{definition}

Denote by $(E, {\mathcal T})^\ast_c$ ( briefly, $E^\ast_c$ ) the $L^0({\mathcal F}, K)$--module of continuous module homomorphisms from $(E, {\mathcal T})$ to $(L^0({\mathcal F}, K), {\mathcal T}_c)$, called the random conjugate space of the topological $L^0({\mathcal F}, K)$--module $(E, {\mathcal T})$, which was first introduced in \cite{FKV}.

\begin{definition}($See$ \cite{TXG-Sur,TXG-XXC,FKV}.) Let $E$ be an $L^0({\mathcal F}, K)$--module and $A$ and $B$ two subsets of $E$. $A$ is said to be $L^0$--absorbed by $B$ if there is some $\xi \in L^0_{++}({\mathcal F})$ such that $\eta A\subset B$ for all $\eta\in L^0({\mathcal F}, K)$ with $|\eta|\leq \xi$. $B$ is $L^0$--absorbent if $B$ $L^0$--absorbs every element in $E$. $B$ is $L^0$--convex if $\xi x+(1-\xi)y\in B$ for all $x,\,y\in B$ and $\xi\in L^0_+({\mathcal F})$ with $0\leq \xi\leq 1$. $B$ is $L^0$--balanced if $\eta B\subset B$ for all $\eta\in L^0({\mathcal F}, K)$ with $|\eta|\leq 1$.
\end{definition}

\begin{remark} Clearly, when $B$ is $L^0$--balanced, $A$ is $L^0$--absorbed by $B$ iff there exists some $\xi \in L^0_{++}({\mathcal F})$ such that $A\subset \xi B$. Since $L^0({\mathcal F}, K)$ is an algebra over $K$, an $L^0({\mathcal F}, K)$--module is also a linear space over $K$, then it is clear that $B$ is balanced (resp., convex) if $B$ is $L^0$--balanced (resp., $L^0$--convex). But `` being $L^0$--absorbent '' and `` being absorbent '' may not imply each other.
\end{remark}

\begin{definition}($See$ \cite{FKV}.) A topological $L^0({\mathcal F}, K)$--module $(E, {\mathcal T})$ is called a locally $L^0$--convex $L^0({\mathcal F}, K)$--module ( briefly, a locally $L^0$--convex module when $K=R$ ), in which case ${\mathcal T}$ is called a locally $L^0$--convex topology on $E$, if ${\mathcal T}$ has a local base ${\mathcal B}$ at $\theta$ ( the null element in $E$ ) such that each member in ${\mathcal B}$ is $L^0$--balanced, $L^0$--absorbent and $L^0$--convex.
\end{definition}

\begin{definition}($See$ \cite{FKV}.) Let ${\mathcal P}$ be a family of $L^0$--seminorms on an $L^0({\mathcal F}, K)$--module $E$. For any $\varepsilon \in L^0_{++}({\mathcal F})$ and any $ Q\in {\mathcal P}_f$ (namely $Q$  is a finite subset of ${\mathcal P}$), let $N_{\theta}(Q, \varepsilon)=\{x\in E~|~\|x\|_Q\leq \varepsilon\}$, then $\{~N_{\theta}(Q, \varepsilon)~|~Q\in {\mathcal P}_f, ~\varepsilon \in L^0_{++}({\mathcal F})\}$ forms a local base at $\theta$ of some locally $L^0$--convex topology, called the locally $L^0$--convex topology induced by ${\mathcal P}$.
\end{definition}

\begin{corollary} Let $(E,{\mathcal P})$ be an $RLC$ module over $K$ with base $(\Omega, {\mathcal F}, P)$ and ${\mathcal T}_c$ the locally $L^0$--convex topology induced by ${\mathcal P}$. Then $(E, {\mathcal T}_c)$ is a Hausdorff locally $L^0$--convex $L^0({\mathcal F}, K)$--module.
\end{corollary}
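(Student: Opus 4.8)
The statement to prove breaks into two parts: that $(E,\mathcal{T}_c)$ is a locally $L^0$--convex $L^0(\mathcal{F},K)$--module in the sense of Definition 2.17, and that the topology is Hausdorff. For the first part, the plan is to run the standard seminorm machinery, now in the lattice $L^0_+(\mathcal{F})$. First I would note that for each $Q\in\mathcal{P}_f$ the functional $\|\cdot\|_Q=\bigvee\{\|\cdot\| : \|\cdot\|\in Q\}$ is again an $L^0$--seminorm, since absolute homogeneity and the triangle inequality survive the finite lattice supremum. The three geometric properties of each basic set $N_\theta(Q,\varepsilon)=\{x\in E : \|x\|_Q\leq\varepsilon\}$ then follow from routine estimates: $L^0$--convexity from $\|\xi x+(1-\xi)y\|_Q\leq\xi\|x\|_Q+(1-\xi)\|y\|_Q$ for $0\leq\xi\leq 1$, $L^0$--balancedness from $\|\eta x\|_Q=|\eta|\,\|x\|_Q\leq\|x\|_Q$ when $|\eta|\leq 1$, and $L^0$--absorbency by taking, for a prescribed $x$, the radius $\xi=\varepsilon/(1+\|x\|_Q)\in L^0_{++}(\mathcal{F})$. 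Since $N_\theta(Q_1\cup Q_2,\varepsilon_1\wedge\varepsilon_2)\subseteq N_\theta(Q_1,\varepsilon_1)\cap N_\theta(Q_2,\varepsilon_2)$, the collection is a filter base, so by Definition 2.18 it is a local base at $\theta$ of a topology $\mathcal{T}_c$ whose members are $L^0$--convex, $L^0$--balanced and $L^0$--absorbent.

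The substantive part of establishing that $(E,\mathcal{T}_c)$ is a topological $L^0(\mathcal{F},K)$--module (Definition 2.14) is joint continuity of the operations for the ring topology $\mathcal{T}_c$ on $L^0(\mathcal{F},K)$, whose base at $0$ is $\{U(\varepsilon)\}$. Continuity of addition is immediate from $N_\theta(Q,\varepsilon/2)+N_\theta(Q,\varepsilon/2)\subseteq N_\theta(Q,\varepsilon)$. I expect the main obstacle to be joint continuity of module multiplication $(\alpha,x)\mapsto\alpha x$, precisely because $(L^0(\mathcal{F},K),\mathcal{T}_c)$ is only a topological ring and the separate map $\alpha\mapsto\alpha x$ need not be continuous. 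Here I would use the decomposition $\alpha x-\alpha_0 x_0=(\alpha-\alpha_0)(x-x_0)+(\alpha-\alpha_0)x_0+\alpha_0(x-x_0)$ and the bound $\|\alpha x-\alpha_0 x_0\|_Q\leq\delta\varepsilon'+\delta\|x_0\|_Q+|\alpha_0|\varepsilon'$ valid whenever $|\alpha-\alpha_0|\leq\delta$ and $x-x_0\in N_\theta(Q,\varepsilon')$. Choosing the random radii $\delta=(\varepsilon/3)/(1+\|x_0\|_Q)\wedge 1$ and $\varepsilon'=(\varepsilon/3)/(1+|\alpha_0|)$ makes each summand at most $\varepsilon/3$, giving $\alpha x\in\alpha_0 x_0+N_\theta(Q,\varepsilon)$. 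The point that makes this go through, and that departs from the classical picture, is that the admissible scalar radius $\delta$ may itself be taken in $L^0_{++}(\mathcal{F})$, which is exactly what tames the random, possibly unbounded quantities $\|x_0\|_Q$ and $|\alpha_0|$.

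It remains to prove Hausdorffness, and this is the only place where the non--degeneracy axiom (RLC--2) of an $RLC$ module is needed. Given $x\neq\theta$, (RLC--2) gives $\bigvee\{\|x\| : \|\cdot\|\in\mathcal{P}\}\neq 0$; since a supremum of a family in $\bar{L}^0(\mathcal{F})$ is already attained along a suitable countable subfamily, there must be a single $\|\cdot\|_0\in\mathcal{P}$ with $\|x\|_0>0$ on a set of positive measure, whence $P(\|x\|_0>c)>0$ for some constant $c>0$. Taking $Q=\{\|\cdot\|_0\}$ and the constant $\varepsilon\equiv c\in L^0_{++}(\mathcal{F})$, we get $\|x\|_Q\not\leq\varepsilon$, i.e. $x\notin N_\theta(Q,\varepsilon)$. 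Hence $\bigcap\{N_\theta(Q,\varepsilon) : Q\in\mathcal{P}_f,\ \varepsilon\in L^0_{++}(\mathcal{F})\}=\{\theta\}$; because $(E,+)$ is a topological group under $\mathcal{T}_c$, this $T_1$ condition at $\theta$ upgrades to the Hausdorff property, completing the argument.
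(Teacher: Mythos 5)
Your proposal is correct, and it is essentially the argument the paper has in mind: the corollary is stated without proof because the local-base construction (filter-base property, $L^0$--convexity, $L^0$--balancedness, $L^0$--absorbency of the sets $N_\theta(Q,\varepsilon)$, and joint continuity of the operations) is imported wholesale from Definition 2.17 and \cite{FKV}, so the only content beyond that is the Hausdorff separation, which you correctly derive from axiom (RLC--2) by producing a single $\|\cdot\|_0\in\mathcal{P}$ and a constant radius $c$ with $x\notin N_\theta(\{\|\cdot\|_0\},c)$ and then invoking the topological-group upgrade from $T_1$ at $\theta$ to $T_2$. The only blemishes are trivial cross-reference slips (the induced topology is Definition 2.17, not 2.18, and the topological $L^0(\mathcal{F},K)$--module is Definition 2.13, not 2.14); the mathematics, including the choice of random radii $\delta$ and $\varepsilon'$ in $L^0_{++}(\mathcal{F})$ for the joint continuity of module multiplication, is sound.
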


From now on, we always denote by $\mathcal{T}_c$ the locally $L^0$--convex topology induced by $\mathcal{P}$ for every $RLC$ module $(E,\mathcal{P})$ if there is no risk of confusion.

Let $(E,{\mathcal P})^\ast_c=(E,{\mathcal T}_c)^\ast_c$ (briefly, $E^\ast_c$, if there is no risk of confusion), called the random conjugate space of a random locally convex module $(E,{\mathcal P})$ under the locally $L^0$--convex topology ${\mathcal T}_c$ induced by ${\mathcal P}$.

\begin{proposition}($See$ \cite{TXG-JFA}.) Let $(E,{\mathcal P})$ be a random locally convex module over $K$ with base $(\Omega, {\mathcal F}, P)$ and $f\colon E\to L^0({\mathcal F}, K)$ a random linear functional. Then $f\in E^\ast_I$ iff $f\in E^\ast_c$, namely $E^\ast_I=E^\ast_c$.
\end{proposition}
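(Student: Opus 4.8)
The plan is to prove the two inclusions $E^\ast_I\subseteq E^\ast_c$ and $E^\ast_c\subseteq E^\ast_I$ separately, in each case reducing continuity to continuity at the origin $\theta$ (using translation invariance of both topologies) and exploiting the $L^0(\mathcal{F},K)$--module structure. Throughout I write $\|\cdot\|_{\mathcal{Q}}$ for $\mathcal{Q}\in\mathcal{P}_f$ as in Definition 2.6 and use the basic neighborhoods $N_{\theta}(\mathcal{Q},\varepsilon)$ of $\theta$ and $U(\delta)=\{\xi\in L^0(\mathcal{F},K)\,|\,|\xi|\leq\delta\}$ of $0$.

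First I would treat $E^\ast_I\subseteq E^\ast_c$. Given $f\in E^\ast_I$, fix $\xi\in L^0_+(\mathcal{F})$ and $\mathcal{Q}\in\mathcal{P}_f$ with $|f(x)|\leq\xi\|x\|_{\mathcal{Q}}$ for all $x$. Before checking continuity I would secure that such an $f$ is automatically a module homomorphism: for $A\in\mathcal{F}$ the $L^0$--homogeneity $\|\tilde{I}_{A^c}x\|_{\mathcal{Q}}=\tilde{I}_{A^c}\|x\|_{\mathcal{Q}}$ gives $\tilde{I}_A|f(\tilde{I}_{A^c}x)|\leq\xi\tilde{I}_A\tilde{I}_{A^c}\|x\|_{\mathcal{Q}}=0$, so (with the symmetric estimate and the decomposition $x=\tilde{I}_Ax+\tilde{I}_{A^c}x$, using $K$--linearity) one obtains $f(\tilde{I}_Ax)=\tilde{I}_Af(x)$; passing from $\tilde{I}_A$ to simple functions and then approximating an arbitrary $\eta\in L^0(\mathcal{F},K)$ in measure (the bound forcing $f(\eta_n x)\to f(\eta x)$) upgrades this to $f(\eta x)=\eta f(x)$. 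Continuity is then immediate: given $U(\delta)$ with $\delta\in L^0_{++}(\mathcal{F})$, the neighborhood $N_{\theta}(\mathcal{Q},\delta/(1+\xi))$ is carried by $f$ into $U(\delta)$, since $\|x\|_{\mathcal{Q}}\leq\delta/(1+\xi)$ forces $|f(x)|\leq\xi\delta/(1+\xi)\leq\delta$. Continuity at $\theta$ together with the homomorphism property yields continuity everywhere, so $f\in E^\ast_c$.

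For the reverse inclusion $E^\ast_c\subseteq E^\ast_I$ I would start from continuity of $f$ at $\theta$ applied to the single neighborhood $U(1)$: there exist $\mathcal{Q}\in\mathcal{P}_f$ and $\varepsilon\in L^0_{++}(\mathcal{F})$ with the implication $\|x\|_{\mathcal{Q}}\leq\varepsilon\Rightarrow|f(x)|\leq 1$. The task is to convert this into the global estimate $|f(x)|\leq\varepsilon^{-1}\|x\|_{\mathcal{Q}}$. I would do this by an $L^0$--scaling argument: writing $\eta=\|x\|_{\mathcal{Q}}$ and $A=[\eta>0]$, set $\zeta=\tilde{I}_A\varepsilon/\eta\in L^0_+(\mathcal{F})$ (well defined since $\eta>0$ on $A$); then $\|\zeta x\|_{\mathcal{Q}}=\zeta\eta=\tilde{I}_A\varepsilon\leq\varepsilon$, so $|f(\zeta x)|\leq 1$, and since $f$ is a module homomorphism $\zeta|f(x)|\leq 1$, which on $A$ reads $|f(x)|\leq\varepsilon^{-1}\eta$. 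On $A^c$ one has $\eta=0$, and applying the implication to $n\tilde{I}_{A^c}x$ for every integer $n$ (each still lies in the neighborhood because $\|n\tilde{I}_{A^c}x\|_{\mathcal{Q}}=0$) forces $f(\tilde{I}_{A^c}x)=0$, hence $\tilde{I}_{A^c}f(x)=0$. Combining the two cases gives $|f(x)|\leq\varepsilon^{-1}\|x\|_{\mathcal{Q}}$ for all $x$, so $f\in E^\ast_I$ with witness $\xi=\varepsilon^{-1}\in L^0_{++}(\mathcal{F})$.

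The routine part is the continuity-at-$\theta$ bookkeeping in the first inclusion. The genuine obstacle is the converse, where the ordinary scalar rescaling of classical functional analysis must be replaced by multiplication by the $L^0$--scalar $\zeta=\tilde{I}_A\varepsilon/\|x\|_{\mathcal{Q}}$, and where one must separately dispose of the random set $[\|x\|_{\mathcal{Q}}=0]$ on which no rescaling is available. This is precisely the step where the module structure is indispensable, through both the identity $f(\eta x)=\eta f(x)$ and the $L^0$--homogeneity of $\|\cdot\|_{\mathcal{Q}}$, and it is also why the module-homomorphism property has to be established first.
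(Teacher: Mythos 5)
Your proof is correct and follows essentially the same route the paper takes (the paper only sketches it in Remark 2.20 by citing \cite{TXG-JFA}): for $E^\ast_I\subseteq E^\ast_c$ one first upgrades the a.s.\ bound to $L^0(\mathcal{F},K)$--linearity exactly as you do (this is the content of the cited Lemma 2.12 of \cite{TXG-JFA}) and then continuity at $\theta$ is immediate, while the converse is the standard $L^0$--rescaling by $\tilde{I}_A\varepsilon/\|x\|_{\mathcal{Q}}$ with the set $[\|x\|_{\mathcal{Q}}=0]$ handled separately. Your write-up merely fills in the details the paper delegates to the reference; no gaps.
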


\begin{remark} In \cite{TXG-JFA}, it is proved that $ E^\ast_c\subset E^\ast_I$ ( see \cite[p.3032]{TXG-JFA} ). Conversely, if $f\in E^\ast_I$, namely $f$ is a random linear functional and there are some $\xi\in L^0_+({\mathcal F})$ and $Q\in {\mathcal P}_f$ such that $|f(x)|\leq \xi \|x\|_Q$ for all $x\in E$. Lemma 2.12 of \cite{TXG-JFA} shows that $f$ must be $L^0({\mathcal F}, K)$--linear. It is also clear that $f$ is continuous from $(E,{\mathcal T}_c)$ to $(L^0({\mathcal F}, K), {\mathcal T}_c)$, and hence $f\in E^\ast_c$. Thus Proposition 2.19 has been proved in \cite{TXG-JFA}.
\end{remark}

A family ${\mathcal P}$ of random seminorms on a linear space $E$ is said to be countably concatenated (or to have the countable concatenation property) if ${\mathcal P}_{cc}={\mathcal P}$, this definition appears stronger than that given in \cite{FKV} for a family of $L^0$--seminorms on an $L^0({\mathcal F}, K)$--module since ${\mathcal P}$ must be invariant under the operation of finitely many suprema once ${\mathcal P}_{cc}={\mathcal P}$. But ${\mathcal P}$ and $\{~\|\cdot\|_Q:Q\in {\mathcal P}_f\}$ always induces the same locally $L^0$--convex topology for any family ${\mathcal P}$ of $L^0$--seminorms on an $L^0({\mathcal F}, K)$--module $E$, thus the definition is essentially equivalent to that introduced in \cite{FKV}. It is also obvious that $E^\ast_I=E^\ast_{II}$ if ${\mathcal P}_{cc}={\mathcal P}$, and hence we have the following:

\begin{corollary} ($See$ \cite{TXG-JFA}.) Let $(E,{\mathcal P})$ be an $RLC$ module over $K$ with base $(\Omega, {\mathcal F}, P)$. Then $E^\ast_c=E^\ast_{\varepsilon, \lambda}$ if ${\mathcal P}$ has the countable concatenation property. Specially, $E^\ast=E^\ast_c=E^\ast_{\varepsilon, \lambda}$ for an $RN$ module $(E,\|\cdot\|)$.
\end{corollary}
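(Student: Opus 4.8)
The plan is to obtain the chain of identities $E^\ast_c = E^\ast_I = E^\ast_{II} = E^\ast_{\varepsilon,\lambda}$ by splicing together the two identifications already recorded in Propositions 2.19 and 2.12 with the elementary equality $E^\ast_I = E^\ast_{II}$ that holds under the countable concatenation hypothesis. Propositions 2.19 and 2.12 supply the two outer equalities $E^\ast_c = E^\ast_I$ and $E^\ast_{II} = E^\ast_{\varepsilon,\lambda}$ unconditionally, so the only work is the middle equality, for which the hypothesis ${\mathcal P}_{cc} = {\mathcal P}$ is used.

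First I would check the inclusion $E^\ast_I \subseteq E^\ast_{II}$, which needs no hypothesis: given $f \in E^\ast_I$ with $|f(x)| \leq \xi\|x\|_{\mathcal Q}$ for some $\xi \in L^0_+({\mathcal F})$ and ${\mathcal Q} \in {\mathcal P}_f$, choosing any countable partition $\{A_n\}$ of $\Omega$ with every ${\mathcal Q}_n = {\mathcal Q}$ gives $\sum_{n} \tilde{I}_{A_n}\|\cdot\|_{\mathcal Q} = \|\cdot\|_{\mathcal Q}$, so $\|\cdot\|_{\mathcal Q} \in {\mathcal P}_{cc}$ and hence $f \in E^\ast_{II}$. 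For the reverse inclusion I would invoke the hypothesis: since ${\mathcal P}_{cc} = {\mathcal P}$, any $\|\cdot\| \in {\mathcal P}_{cc}$ appearing in the defining bound $|f(x)| \leq \xi\|x\|$ of $E^\ast_{II}$ is already a single member of ${\mathcal P}$, so ${\mathcal Q} = \{\|\cdot\|\} \in {\mathcal P}_f$ and $\|x\|_{\mathcal Q} = \|x\|$ yield $f \in E^\ast_I$. This settles $E^\ast_I = E^\ast_{II}$ and hence the first assertion $E^\ast_c = E^\ast_{\varepsilon,\lambda}$.

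For the special case of an $RN$ module $(E, \|\cdot\|)$ I would first verify that ${\mathcal P} = \{\|\cdot\|\}$ has the countable concatenation property: for any countable partition $\{A_n\}$ of $\Omega$ in ${\mathcal F}$, each admissible finite subset ${\mathcal Q}_n$ forces $\|\cdot\|_{{\mathcal Q}_n} = \|\cdot\|$, so $\sum_{n} \tilde{I}_{A_n}\|\cdot\|_{{\mathcal Q}_n} = \left(\sum_n \tilde{I}_{A_n}\right)\|\cdot\| = \|\cdot\|$ by the $L^0$--homogeneity of the $L^0$--norm together with $\sum_n \tilde{I}_{A_n} = 1$; thus ${\mathcal P}_{cc} = {\mathcal P}$. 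Applying the first part gives $E^\ast_c = E^\ast_{\varepsilon,\lambda}$, and combining this with the equality $E^\ast = E^\ast_{\varepsilon,\lambda}$ already noted as a consequence of Proposition 2.11 yields $E^\ast = E^\ast_c = E^\ast_{\varepsilon,\lambda}$.

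Since every step is either quoted from an earlier proposition or an immediate unwinding of the definitions of $E^\ast_I$, $E^\ast_{II}$ and ${\mathcal P}_{cc}$, I do not anticipate a genuine obstacle. The only point requiring care is making sure the bounding $L^0$--seminorm in the definition of $E^\ast_{II}$ is correctly read, under the hypothesis ${\mathcal P}_{cc} = {\mathcal P}$, as a single element of ${\mathcal P}$ rather than as a finite supremum or a concatenation that might a priori fail to belong to ${\mathcal P}$; this is exactly where the invariance of ${\mathcal P}$ under finitely many suprema, guaranteed by ${\mathcal P}_{cc} = {\mathcal P}$, is doing the work.
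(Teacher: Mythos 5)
Your proposal is correct and follows exactly the route the paper intends: splicing Proposition 2.19 ($E^\ast_c=E^\ast_I$) and Proposition 2.12 ($E^\ast_{II}=E^\ast_{\varepsilon,\lambda}$) with the middle equality $E^\ast_I=E^\ast_{II}$, which the paper dismisses as "obvious" under ${\mathcal P}_{cc}={\mathcal P}$ and which you correctly unwind from the definitions, and then invoking $E^\ast=E^\ast_{\varepsilon,\lambda}$ from Proposition 2.11 for the $RN$ module case. No gaps.
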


For an $RLC$ module $(E, {\mathcal P})$, by definition $E^\ast_I\subset E^\ast_{II}$, so $E^\ast_c\subset E^\ast_{\varepsilon, \lambda}$ by Propositions 2.12 and 2.19, see Subsection 2.3 for the precise relation between $E^\ast_c$ and $E^\ast_{\varepsilon, \lambda}$.

In the final part of this subsection, let us return to the basic problem: whether can a locally $L^0$--convex topology on an $L^0({\mathcal F}, K)$--module $E$ be induced by a family of $L^0$--seminorms on $E$? If the answer is yes, then the theory of Hausdorff locally $L^0$--convex modules is equivalent to that of random locally convex modules endowed with the locally $L^0$--convex topology, which will be a perfect counterpart of the classical result that a Hausdorff locally convex topology can be equivalently expressed by a separating family of seminorms. It is well known that classical gauge functionals play a crucial role in the proof of the classical result. Let $U$ be a balanced, absorbent and convex subset of a locally convex space $(E, {\mathcal T})$ and $p_U$ the gauge functional of $U$, then the following relation is easily verified: $$U^o\subset \{x\in E~|~p_U(x)<1\}\subset U\subset \{x\in E~|~p_U(x)\leq 1\}, \eqno(2.1)$$
It is the relation (2.1) that is key in the proof of the above classical result.

Random gauge functional was first introduced in \cite{FKV}. Let $U$ be an $L^0$--balanced, $L^0$--absorbent and $L^0$--convex subset of an $L^0({\mathcal F}, K)$--module $E$, define $p_U\colon E\to L^0_+({\mathcal F})$ by $p_U(x)=\bigwedge\{\xi\in L^0_+({\mathcal F})~|~x\in \xi U\}$ for all $x\in E$, called the random gauge functional of $U$. Furthermore, it is also proved in \cite{FKV} that $p_U(x)=\bigwedge\{\xi\in L^0_{++}({\mathcal F})~|~x\in \xi U\}$ for all $x\in E$ and $p_U$ is an $L^0$--seminorm on $E$. If, in addition, let $(E, {\mathcal T})$ be a locally $L^0$--convex $L^0({\mathcal F}, K)$--module, D. Filipovi\'{c}, M. Kupper and N. Vogelpoth already proved in \cite{FKV} the following:

\begin{proposition}($See$ \cite{FKV}.) Let $(E, {\mathcal T})$ be a locally $L^0$--convex $L^0({\mathcal F}, K)$--module and $U$ an $L^0$--balanced, $L^0$--absorbent and $L^0$--convex subset of $E$. Then the following statements hold:\\
(i).~~$U\subset \{x\in E~|~p_U(x)\leq 1\}$;\\
(ii).~~$p_U(x)\geq 1 $ on $B$ if ${\tilde I}_Ax\notin {\tilde I}_AK$ for all $A\in {\mathcal F}$ with $P(A)>0$ and $A\subset B$, where $B\in {\mathcal F}$ satisfies $P(B)>0$;\\
(iii).~~$U^o\subset \{x\in E~|~p_U(x)<1~\hbox{on}~\Omega\}$.
\end{proposition}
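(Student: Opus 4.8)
The plan is to treat the three inclusions separately, since they are of quite different depth. Part (i) is immediate from the definition of the random gauge functional: for $x\in U$ one has $x\in 1\cdot U$ with $1\in L^0_{++}(\mathcal F)$, so $1$ belongs to the set $\{\xi\in L^0_{++}(\mathcal F)\mid x\in\xi U\}$ over which the infimum defining $p_U(x)$ is taken, whence $p_U(x)\le 1$. I will record at the outset the facts, already quoted from \cite{FKV} before the statement, that $p_U$ is an $L^0$-seminorm and that $p_U(x)=\bigwedge\{\xi\in L^0_{++}(\mathcal F)\mid x\in\xi U\}$, together with the elementary observation that $\theta\in U$: this follows from $U$ being $L^0$-absorbent (Definition 2.14) applied to $\theta$, and then $L^0$-convexity of $U$ gives $\eta u=\eta u+(1-\eta)\theta\in U$ whenever $u\in U$ and $0\le\eta\le 1$, a fact I will use repeatedly.

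For part (iii) I would mimic the classical gauge argument of nudging a point slightly outward. Since $x\in U^o$ and $U^o$ is $\mathcal T$-open, $U^o-x$ is a $\mathcal T$-neighbourhood of $\theta$, so by Definition 2.17 it contains a basic $L^0$-balanced, $L^0$-absorbent, $L^0$-convex neighbourhood $V$ with $x+V\subset U^o\subset U$. Because $V$ is $L^0$-absorbent it $L^0$-absorbs $x$, so there is $\eta\in L^0_{++}(\mathcal F)$ with $\eta x\in V$; hence $(1+\eta)x=x+\eta x\in x+V\subset U$, and therefore $x\in(1+\eta)^{-1}U$ with $(1+\eta)^{-1}\in L^0_{++}(\mathcal F)$. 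Consequently $p_U(x)\le(1+\eta)^{-1}$, and since $\eta>0$ on $\Omega$ this yields $p_U(x)<1$ on $\Omega$, as required.

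Part (ii) is where the real work lies, and I expect it to be the main obstacle, because one must convert the pointwise (a.e.) information $p_U(x)<1$ on a positive-measure set into an honest local membership of the form $\tilde I_A x\in\tilde I_A U$, which is exactly what the hypothesis of (ii) rules out on positive-measure subsets of $B$. I would argue by contradiction: suppose $A_0:=[p_U(x)<1]\cap B$ has $P(A_0)>0$. Using the standard fact that an essential infimum over an arbitrary family is realised by a countable subfamily (cf. \cite{D-Sch} or \cite[p.\,3026]{TXG-JFA}), I choose a sequence $(\xi_n)\subset L^0_{++}(\mathcal F)$ with $x\in\xi_n U$ for every $n$ and $\bigwedge_n\xi_n=p_U(x)$. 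On $A_0$ one then has $\inf_n\xi_n<1$ a.e., so $A_0$ is, up to a null set, the countable disjoint union of the sets $A_n:=\{\omega\in A_0\mid\xi_n(\omega)<1,\ \xi_j(\omega)\ge 1\ \text{for}\ j<n\}$.

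On each $A_n$ I would exploit local $L^0$-convexity: writing $x=\xi_n u_n$ with $u_n\in U$ and setting $\eta_n:=\tilde I_{A_n}\xi_n$, one has $0\le\eta_n\le 1$ (since $\xi_n<1$ on $A_n$ and $\eta_n=0$ off $A_n$), so by $L^0$-convexity and $\theta\in U$ the element $\eta_n u_n=\eta_n u_n+(1-\eta_n)\theta$ lies in $U$; but $\eta_n u_n=\tilde I_{A_n}x$, whence $\tilde I_{A_n}x=\tilde I_{A_n}(\eta_n u_n)\in\tilde I_{A_n}U$. Since $P(A_0)>0$, at least one $A_n$ has positive measure, and $A_n\subset B$, so this contradicts the hypothesis of (ii); hence $[p_U(x)<1]\cap B$ is null, i.e. $p_U(x)\ge 1$ on $B$. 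The two delicate points are the countable realisation of the infimum (which lets the argument proceed without assuming any directedness or countable concatenation property of $U$) and the bookkeeping that keeps the scaling factor $\eta_n$ bounded by $1$ precisely on the piece where $\xi_n<1$, so that the convexity step stays inside $U$.
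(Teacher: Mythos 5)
Your parts (i) and (iii) are exactly the paper's treatment: the paper declares (i) clear and proves (iii) by the identical argument (find $V$ with $x+V\subset U$, pick $\delta\in L^0_{++}(\mathcal F)$ with $\delta x\in V$, conclude $x\in\frac{1}{1+\delta}U$ and $p_U(x)\le\frac{1}{1+\delta}<1$ on $\Omega$). The genuine difference is in (ii): the paper does not prove it at all, citing \cite{FKV}, whereas you supply a self-contained argument, and it is correct. You rightly read the ``$\tilde I_Ax\notin\tilde I_AK$'' in the statement as $\tilde I_Ax\notin\tilde I_AU$ (a typo in the paper, confirmed by how (ii) is used in Remark 2.25). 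Your key moves all check out: $\theta\in U$ from $L^0$-absorbency; the essential infimum $p_U(x)=\bigwedge\{\xi\in L^0_{++}(\mathcal F)\mid x\in\xi U\}$ realised along a countable subfamily $(\xi_n)$ without any directedness assumption; the disjoint stratification $A_n$ of $[p_U(x)<1]\cap B$ by the first index with $\xi_n<1$; and the convexity step $\tilde I_{A_n}x=\eta_nu_n=\eta_nu_n+(1-\eta_n)\theta\in U$ with $\eta_n=\tilde I_{A_n}\xi_n$, which gives $\tilde I_{A_n}x\in\tilde I_{A_n}U$ on a stratum of positive measure and contradicts the hypothesis. What your route buys is worth noting: it is the local, one-stratum-at-a-time contrapositive of the argument the paper uses for Proposition 2.24, and precisely because you only need \emph{one} positive-measure stratum rather than gluing all strata back into a single element of $U$, you never need $U$ to have the countable concatenation property -- consistent with the fact that (ii) of Proposition 2.22 holds for arbitrary $L^0$-balanced, $L^0$-absorbent, $L^0$-convex $U$, while the full inclusion $\{x\mid p_U(x)<1\text{ on }\Omega\}\subset U$ of Proposition 2.24 does require that extra hypothesis.
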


The most interesting part in Proposition 2.22 is (ii). In fact, (i) is clear and (iii) can be proved as follows: Given an $x\in U^o$, there is an $L^0$--balanced, $L^0$--absorbent and $L^0$--convex neighborhood $V$ of $\theta$ such that $x+V\in U$. Since there is $\delta \in L^0_{++}({\mathcal F})$ such that $\delta x\in V$, $(1+\delta)x=x+\delta x\in x+V\subset U$, so $x\in {\frac{1}{1+\delta}}U$, then $p_U(x)\leq {\frac{1}{1+\delta}}<1~\hbox{on}~\Omega$.

Then, can (ii) of Proposition 2.22 imply that $\{x\in E~|~p_U(x)< 1~\hbox{on}~\Omega\}\subset U$? Or, we can ask: does it hold that $\{x\in E~|~p_U(x)< 1~\hbox{on}~\Omega\}\subset U$? If it can not be guaranteed that $\{x\in E~|~p_U(x)< 1~\hbox{on}~\Omega\}\subset U$, then it is still not clear whether Theorem 2.4 of \cite{FKV} is valid, namely whether a locally $L^0$--convex topology can be induced by a family of $L^0$--seminorms. Proposition 2.24 below shows that it may be not a simple problem whether $\{x\in E~|~p_U(x)< 1~\hbox{on}~\Omega\}$ is contained in $U$, from this we even conjuncture that a locally $L^0$--convex topology may not necessarily be induced by a family of $L^0$--seminorms.

Let us first recall the notion of countable concatenation property of a set or an $L^0({\mathcal F}, K)$--module. The introducing of the notion utterly results from the study of the locally $L^0$--convex topology, the reader will see that this notion is ubiquitous in the theory of the locally $L^0$--convex topology.

From now on, we always suppose that all the $L^0({\mathcal F}, K)$--modules $E$ involved in this paper have the property that for any $x,~y\in E$, if there is a countable partition $\{A_n,n\in N\}$ of $\Omega$ to ${\mathcal F}$ such that ${\tilde I}_{A_n}x={\tilde I}_{A_n}y$ for each $n\in N$ then $x=y$. Guo already pointed out in \cite{TXG-JFA} that all random locally convex modules possess this property, so the assumption is not too restrictive.

\begin{definition}($See$ \cite{TXG-JFA}.) Let $E$ be an $L^0({\mathcal F}, K)$--module. A sequence $\{x_n, n\in N\}$ in $E$ is countably concatenated in $E$ with respect to a countable partition $\{A_n,n\in N\}$ of $\Omega$ to ${\mathcal F}$ if there is $x\in E$ such that ${\tilde I}_{A_n}x={\tilde I}_{A_n}x_n$ for each $n\in N$, in which case we define $\sum^{\infty}_{n=1}{\tilde I}_{A_n}x_n$ as $x$. A subset $G$ of $E$ is said to have the countable concatenation property if each sequence $\{x_n, n\in N\}$ in $G$ is countably concatenated in $E$ with respect to an arbitrary countable partition $\{A_n,n\in N\}$ of $\Omega$ to $\mathcal{F}$ and $\sum^{\infty}_{n=1}{\tilde I}_{A_n}x_n\in G$.
\end{definition}

\begin{proposition} Let $(E, {\mathcal T})$ be a locally $L^0$--convex $L^0({\mathcal F}, K)$--module and $U$ an $L^0$--balanced, $L^0$--absorbent and $L^0$--convex subset with the countable concatenation property. Then $U^o\subset \{x\in E~|~p_U(x)<1 ~\hbox{on}~\Omega\}\subset U\subset \{x\in E~|~p_U(x)\leq 1\}$, where $U^o$ denotes the ${\mathcal T}$--interior of $U$.
\end{proposition}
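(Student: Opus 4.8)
The three asserted inclusions split naturally. The outer two, namely $U^o\subset\{x\in E~|~p_U(x)<1~\text{on}~\Omega\}$ and $U\subset\{x\in E~|~p_U(x)\leq 1\}$, are exactly parts (iii) and (i) of Proposition 2.22 and require no countable concatenation, so I would simply quote them. All the work therefore goes into the middle inclusion $\{x\in E~|~p_U(x)<1~\text{on}~\Omega\}\subset U$, and this is where the countable concatenation property of $U$ must be used in an essential way.

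Fix $x$ with $p_U(x)<1$ on $\Omega$ and write $H=\{\xi\in L^0_{++}(\mathcal{F})~|~x\in\xi U\}$, so that $p_U(x)=\bigwedge H$. The plan is first to show that $H$ is downward directed: if $\xi_1,\xi_2\in H$, put $A=[\xi_1\leq\xi_2]$ and observe that $(\xi_1\wedge\xi_2)^{-1}x$ equals $\xi_1^{-1}x$ on $A$ and $\xi_2^{-1}x$ on $A^c$, i.e. it is the two-piece concatenation $\tilde{I}_A(\xi_1^{-1}x)+\tilde{I}_{A^c}(\xi_2^{-1}x)$ of two elements of $U$; since taking $\xi=\tilde{I}_A$ in the definition of $L^0$--convexity already forces such finite concatenations to lie in $U$, we get $\xi_1\wedge\xi_2\in H$. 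Downward directedness then lets me pick, via the standard sequential approximation of infima in $L^0(\mathcal{F})$ recalled in the Preliminaries, a decreasing sequence $\xi_n\in H$ with $\xi_n\downarrow p_U(x)$, and for each $n$ I set $u_n=\xi_n^{-1}x\in U$.

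The crux is that $p_U(x)<1$ on $\Omega$ does not produce a single $\xi_n$ below $1$ everywhere; it only guarantees that $\xi_n(\omega)<1$ eventually for a.e. $\omega$. I would exploit this by letting $B_n=[\xi_n<1]$, which increases to $\Omega$ since $\xi_n\downarrow p_U(x)<1$, and then forming the disjointification $A_1=B_1$, $A_n=B_n\setminus B_{n-1}$, so that $\{A_n,n\in N\}$ is a countable partition of $\Omega$ with $\xi_n<1$ on $A_n$. Putting $\zeta_n=\xi_n\wedge 1$ and using $\theta\in U$ (as $U$ is $L^0$--balanced) together with $L^0$--convexity gives $v_n:=\zeta_n u_n\in U$, while on $A_n$ one has $\zeta_n=\xi_n$, hence $\tilde{I}_{A_n}v_n=\tilde{I}_{A_n}(\xi_n u_n)=\tilde{I}_{A_n}x$.

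Finally I would invoke the countable concatenation property of $U$: applied to the sequence $\{v_n\}\subset U$ and the partition $\{A_n\}$ it yields $v=\sum_{n=1}^{\infty}\tilde{I}_{A_n}v_n\in U$, and because $\tilde{I}_{A_n}v=\tilde{I}_{A_n}v_n=\tilde{I}_{A_n}x$ for every $n$, the standing assumption on $E$ that agreement on a countable partition forces equality gives $v=x$, so $x\in U$. The main obstacle is precisely this patching step: the pointwise strict inequality $p_U(x)<1$ is genuinely weaker than the existence of a single global scalar below $1$ absorbing $x$ into $U$, and only the countable concatenation property of $U$ bridges the gap by letting the locally chosen witnesses $v_n$ be reassembled into one member of $U$.
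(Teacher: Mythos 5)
Your proof is correct and follows essentially the same route as the paper's: extract from the downward directed family $\{\xi\in L^0_{++}(\mathcal{F})~|~x\in\xi U\}$ a nonincreasing sequence converging to $p_U(x)$, produce a countable partition of $\Omega$ on each piece of which some $\xi_n<1$, and reassemble the local witnesses into a single element of $U$ via the countable concatenation property. The differences are minor: the paper obtains the partition by Egoroff whereas you exploit monotonicity to disjointify the sets $[\xi_n<1]$ (a slightly more elementary device), and the paper concatenates the $u_n$ first and then scales by $\xi=\sum_{n=1}^{\infty}\tilde{I}_{A_n}\xi_n<1$, while you scale each $u_n$ by $\xi_n\wedge 1$ before concatenating; your explicit check that the family is downward directed fills in a step the paper merely asserts.
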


\begin{proof} By Proposition 2.22, we only need to show that $\{x\in E~|~p_U(x)<1 ~\hbox{on}~\Omega\}\subset U$. Let $x_{0}$ be a point in $E$ such that $p_U(x_{0})<1~\hbox{on}~\Omega$. Since $\{\xi\in L^0_{++}({\mathcal F})~|~x_{0}\in \xi U\}$ is downward directed, there is a sequence $\{\xi_n, n\in N\}$ in $L^0_{++}({\mathcal F})$ such that it converges to $p_U(x_{0})$ in a nonincreasing way and $x_{0}\in \xi_n U$ for each $n\in N$. By the Egoroff theorem there are a countable partition $\{A_n,n\in N\}$ of $\Omega$ to ${\mathcal F}$ and a subsequence $\{\xi_{n_k},k\in N\}$ of $\{\xi_n, n\in N\}$ such that the subsequence converges to $p_U(x_{0})$ uniformly on each $A_n$. So, we can suppose that the subsequence is just $\{\xi_n, n\in N\}$ itself and each $\xi_n<1~\hbox{on}~A_n$ since $p_U(x_{0})<1 ~\hbox{on}~\Omega$. Clearly, ${\tilde I}_{A_n}x_{0}\in {\tilde I}_{A_n}\xi_n U$ for each $n\in N$, let $u_n\in U$ be such that ${\tilde I}_{A_n}x_{0}={\tilde I}_{A_n}\xi_n u_n$ for each $n\in N$ and $\xi=\sum^{\infty}_{n=1}{\tilde I}_{A_n}\xi_n$, then there is $u\in U$ such that $u=\sum^{\infty}_{n=1}{\tilde I}_{A_n}u_n$ since $U$ has the countable concatenation property. Since $x_{0}=\sum^{\infty}_{n=1}{\tilde I}_{A_n}x_{0}=\sum^{\infty}_{n=1}{\tilde I}_{A_n}\xi_n u_n=\xi u$, $x_{0}\in \xi U\subset U$ by noticing that $\xi<1~\hbox{on}~\Omega$ and $U$ is an $L^0$--convex set with $\theta\in U$.\hfill $\square$
\end{proof}

\begin{remark} We can also give another shorter proof of Proposition 2.24 as follows: If $x_{0}\notin U$, then there is a set $B\in {\mathcal F}$ with $P(B)>0$ such that ${\tilde I}_A x_{0}\notin {\tilde I}_A U$ for all $A\in {\mathcal F}$ such that $A\subset B$ and $P(A)>0$ by Theorem 3.13 of \cite{TXG-JFA} since $U$ has the countable concatenation property. Then $p_U(x_{0})\geq 1~\hbox {on}~B$ by (ii) of Proposition 2.22.
\end{remark}

The above two proofs both use the countable concatenation property of $U$. Thus we ask: Does Proposition 2.24 hold if $U$ lacks the countable concatenation property? Recently, in \cite{TXG-GS} we proved that $E$ must have the countable concatenation property if a random locally convex module $(E,{\mathcal P})$ has a ${\mathcal T}_c$--open neighborhood $U$ of $\theta$ such that $U$ has the countable concatenation property (it is also known in \cite{TXG-GS} that at this time the ${\mathcal T}_c$--interior $U^o$ of $U$ has the countable concatenation property if $U$ is a subset such that $U^o\neq\emptyset$ and $U$ has the countable concatenation property), then we naturally ask: If $E$ has the countable concatenation property, then may a locally $L^0$--convex topology ${\mathcal T}$ on $E$ have a local base ${\mathcal B}$ at $\theta$ such that each $U\in {\mathcal B}$ is an $L^0$--balanced, $L^0$--absorbent and $L^0$--convex subset with the countable concatenation property?

We conjuncture that the answers to the above two problems are both negative according to our our experience from our recent work. Thus we are forced to frequently work with the framework of random locally convex modules $(E, {\mathcal P})$ since only the framework can fully develop the power of both the family ${\mathcal P}$ of $L^0$--seminorms and the locally $L^0$--convex topology ${\mathcal T}_c$ induced by ${\mathcal P}$, which has been enough both for the theoretic development and for financial applications as far as the work of this paper is concerned.

\subsection{On some precise connections between the $(\varepsilon, \lambda)$--topology and the locally $L^0$--convex topology for a random locally convex module}

In \cite{TXG-JFA}, Guo already proved the following two results, which will be used in this paper.

\begin{proposition}($See$ \cite{TXG-JFA}.) Let $(E, {\mathcal P})$ be an $RLC$ module. Then $E$ is ${\mathcal T}_{\varepsilon, \lambda}$--complete iff both $E$ has the countable concatenation property and $E$ is ${\mathcal T}_c$--complete.
\end{proposition}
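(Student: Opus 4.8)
The plan is to prove the two implications separately, anchored on the elementary observation that the locally $L^0$--convex topology $\mathcal{T}_c$ is finer than the $(\varepsilon,\lambda)$--topology $\mathcal{T}_{\varepsilon,\lambda}$. Indeed, for any $\mathcal{Q}\in\mathcal{P}_f$, any real $a$ with $0<a<\varepsilon$ and any $0<\lambda<1$, one has $N_\theta(\mathcal{Q},a)\subset N_\theta(\mathcal{Q},\varepsilon,\lambda)$ (with $a$ read as an element of $L^0_{++}(\mathcal{F})$), since $\|x\|_{\mathcal{Q}}\leq a<\varepsilon$ a.s. forces $P(\|x\|_{\mathcal{Q}}<\varepsilon)=1$. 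Thus every $\mathcal{T}_{\varepsilon,\lambda}$--neighborhood of $\theta$ contains a $\mathcal{T}_c$--neighborhood of $\theta$, so a $\mathcal{T}_c$--Cauchy net is $\mathcal{T}_{\varepsilon,\lambda}$--Cauchy and a $\mathcal{T}_c$--convergent net is $\mathcal{T}_{\varepsilon,\lambda}$--convergent to the same point. I will also use freely that, both topologies being module topologies, multiplication by a fixed $\tilde{I}_A$ is continuous.

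For the ``only if'' part, assume $E$ is $\mathcal{T}_{\varepsilon,\lambda}$--complete. First, to obtain the countable concatenation property, given a sequence $\{x_n,n\in N\}$ in $E$ and a countable partition $\{A_n,n\in N\}$ of $\Omega$, I would form the partial sums $y_k=\sum_{n=1}^{k}\tilde{I}_{A_n}x_n$; since $y_m-y_k$ is supported on $\bigcup_{n>k}A_n$, a set of probability tending to $0$, each $\|y_m-y_k\|_{\mathcal{Q}}$ vanishes off a set of small probability, so $\{y_k\}$ is $\mathcal{T}_{\varepsilon,\lambda}$--Cauchy and converges to some $x\in E$. Continuity of multiplication by $\tilde{I}_{A_j}$ then gives $\tilde{I}_{A_j}x=\tilde{I}_{A_j}x_j$ for every $j$, i.e. $x=\sum_n\tilde{I}_{A_n}x_n\in E$. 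Second, to obtain $\mathcal{T}_c$--completeness, take a $\mathcal{T}_c$--Cauchy net $\{x_\alpha\}$; it is $\mathcal{T}_{\varepsilon,\lambda}$--Cauchy, hence $\mathcal{T}_{\varepsilon,\lambda}$--converges to some $x\in E$. To upgrade this, fix $\mathcal{Q}\in\mathcal{P}_f$ and $\varepsilon\in L^0_{++}(\mathcal{F})$, pick $\alpha_0$ with $\|x_\alpha-x_\beta\|_{\mathcal{Q}}\leq\varepsilon/2$ for $\alpha,\beta\geq\alpha_0$, and fix $\alpha\geq\alpha_0$. If $\{\|x_\alpha-x\|_{\mathcal{Q}}>\varepsilon\}$ had positive probability, the triangle inequality would force $\|x_\beta-x\|_{\mathcal{Q}}>\varepsilon/2$ on it for all $\beta\geq\alpha_0$; restricting to a subevent on which $\varepsilon/2$ is bounded below by a positive constant contradicts $\|x_\beta-x\|_{\mathcal{Q}}\to0$ in probability. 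Hence $\|x_\alpha-x\|_{\mathcal{Q}}\leq\varepsilon$ a.s., so $x_\alpha\to x$ in $\mathcal{T}_c$.

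The substantive ``if'' part assumes $E$ has the countable concatenation property and is $\mathcal{T}_c$--complete, and asks that every $\mathcal{T}_{\varepsilon,\lambda}$--Cauchy net converge. The core idea is to convert $(\varepsilon,\lambda)$--Cauchyness into $\mathcal{T}_c$--control by stratifying $\Omega$ according to the size of the seminorm increments. For the essential single--seminorm situation, from the Cauchy net I would extract a rapidly Cauchy sequence $\{x_n\}$ so that, by the Borel--Cantelli lemma, $\sum_n\|x_{n+1}-x_n\|_{\mathcal{Q}}$ is a.s. finite; splitting $\Omega$ into the countably many sets $A_m$ on which this sum is bounded by successive integers, the tails become uniformly small on each $A_m$, so $\{\tilde{I}_{A_m}x_n\}_n$ is $\mathcal{T}_c$--Cauchy on $A_m$ and, by $\mathcal{T}_c$--completeness, has a $\mathcal{T}_c$--limit $y_m$ with $\tilde{I}_{A_m}y_m=y_m$. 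The countable concatenation property then yields $x=\sum_m\tilde{I}_{A_m}y_m\in E$, and a partition--wise estimate shows the extracted sequence $\mathcal{T}_{\varepsilon,\lambda}$--converges to $x$; since the original net is Cauchy, having $x$ as a cluster point forces the whole net to converge to $x$.

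The hard part will be this ``if'' direction, for two reasons. The gluing step is where the countable concatenation property is truly indispensable: without it the stratum limits $y_m$ could not be reassembled into an element of $E$. The more delicate point is uniformity across seminorm directions: the stratification above is tailored to one $\|\cdot\|_{\mathcal{Q}}$, whereas $\mathcal{T}_c$--convergence of $\{\tilde{I}_{A_m}x_n\}_n$ and $\mathcal{T}_{\varepsilon,\lambda}$--convergence of the net must hold in every direction at once, and $\mathcal{P}$ may be uncountable so that $\mathcal{T}_{\varepsilon,\lambda}$ need not be metrizable. Closing this gap --- arranging a single countable stratification that simultaneously controls the relevant seminorms, exploiting that a $\mathcal{T}_c$--limit already converges in all directions, and verifying via the net's Cauchy property that the constructed $x$ is the limit in every direction and not only in those seen by the extracted sequence --- is where the real work lies, and it is exactly here that the interplay of the countable concatenation property with $\mathcal{T}_c$--completeness is decisive.
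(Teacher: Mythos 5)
The paper itself does not prove this proposition --- it is quoted from \cite{TXG-JFA} --- so I am judging your argument on its own terms. Your ``only if'' direction is sound: the inclusion $N_\theta(\mathcal{Q},a)\subset N_\theta(\mathcal{Q},\varepsilon,\lambda)$ correctly shows $\mathcal{T}_c\supset\mathcal{T}_{\varepsilon,\lambda}$, the partial-sum argument for the countable concatenation property is fine, and the ``upgrade'' of the $\mathcal{T}_{\varepsilon,\lambda}$--limit of a $\mathcal{T}_c$--Cauchy net to a $\mathcal{T}_c$--limit via the triangle inequality and restriction to a set where $\varepsilon/2$ is bounded below is exactly right.

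The ``if'' direction, however, breaks at the step you treat as routine rather than at the one you flag as hard. You claim that once the tails of the extracted sequence are uniformly small on a stratum $A_m$, the sequence $\{\tilde{I}_{A_m}x_n\}_n$ is $\mathcal{T}_c$--Cauchy. This inference is false. Being $\mathcal{T}_c$--Cauchy means that for \emph{every} $\varepsilon\in L^0_{++}(\mathcal{F})$ there is an $N$ with $\|\tilde{I}_{A_m}(x_j-x_k)\|_{\mathcal{Q}}\leq\varepsilon$ a.s.\ for $j,k\geq N$; a bound of the tails by a positive \emph{constant} such as $2^{-N+1}$ cannot dominate an $\varepsilon\in L^0_{++}(\mathcal{F})$ whose essential infimum on $A_m$ is zero, and such $\varepsilon$ exist whenever the base space is nonatomic. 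The simplest illustration is $E=L^0(\mathcal{F})$ over a nonatomic base: the sequence $x_n=2^{-n}$ is uniformly Cauchy (and $\mathcal{T}_{\varepsilon,\lambda}$--convergent to $0$) but is \emph{not} $\mathcal{T}_c$--Cauchy --- this is precisely the phenomenon the paper alludes to when it notes that $\alpha\mapsto\alpha x$ is not $\mathcal{T}_c$--continuous. So $\mathcal{T}_c$--completeness simply cannot be invoked on the stratified sequences you construct, and the candidate limits $y_m$ never come into existence. (A secondary, fixable slip: stratifying by the size of $\sum_n\|x_{n+1}-x_n\|_{\mathcal{Q}}$ only gives a.s.\ convergence of the tails, not uniform smallness; for that you must stratify \`a la Borel--Cantelli by the first index from which $\|x_{n+1}-x_n\|_{\mathcal{Q}}<2^{-n}$ holds.)

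The missing idea is a second application of the countable concatenation property to manufacture a genuinely $\mathcal{T}_c$--Cauchy \emph{net} indexed by $L^0_{++}(\mathcal{F})$ (or by $\mathcal{P}_f\times L^0_{++}(\mathcal{F})$ in the many-seminorm case). Concretely, once one has elements $y_k$ with $\|y_j-y_k\|_{\mathcal{Q}}\leq 2^{-(j\wedge k)+1}$ on all of $\Omega$, one takes, for each $\varepsilon\in L^0_{++}(\mathcal{F})$, the partition $\{D_n\}$ of $\Omega$ on which $\varepsilon\geq 1/n$ and sets $w_\varepsilon=\sum_n\tilde{I}_{D_n}y_{k(n)}$ with $2^{-k(n)+1}\leq 1/n$; then $\|w_\varepsilon-w_{\varepsilon'}\|_{\mathcal{Q}}\leq\varepsilon\vee\varepsilon'$, so the net $\{w_\varepsilon\}$ (directed by reverse order on $\varepsilon$) is $\mathcal{T}_c$--Cauchy, and its $\mathcal{T}_c$--limit is the sought $\mathcal{T}_{\varepsilon,\lambda}$--limit. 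This construction, rather than non-metrizability of $\mathcal{T}_{\varepsilon,\lambda}$, is where the interplay of the two hypotheses is actually decisive, and it is absent from your proposal.
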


\begin{proposition}($See$ \cite{TXG-JFA}.) Let $(E, {\mathcal P})$ be an $RLC$ module and $G$ a subset of $E$ such that $G$ has the countable concatenation property. Then ${\bar G}_{\varepsilon, \lambda}={\bar G}_c$, where ${\bar G}_{\varepsilon, \lambda}$ and ${\bar G}_c$ denotes the ${\mathcal T}_{\varepsilon, \lambda}$--  and ${\mathcal T}_c$-- closures of $G$, respectively.
\end{proposition}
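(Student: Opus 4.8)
The statement asserts the equality of two closures, so the plan is to prove the two inclusions separately; only one of them will require the countable concatenation property of $G$.

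The inclusion $\bar{G}_c \subseteq \bar{G}_{\varepsilon,\lambda}$ is the routine one and holds for \emph{every} subset $G$. First I would compare the two neighborhood bases at $\theta$: for $Q\in \mathcal{P}_f$ and a constant $\varepsilon>0$, any $x$ with $\|x\|_Q\leq \varepsilon/2$ (as an element of $L^0$) satisfies $\|x\|_Q<\varepsilon$ on all of $\Omega$, so $N_\theta(Q,\varepsilon/2)\subset N_\theta(Q,\varepsilon,\lambda)$ for every $\lambda\in(0,1)$. Hence $\mathcal{T}_{\varepsilon,\lambda}\subseteq \mathcal{T}_c$, i.e. $\mathcal{T}_c$ is the finer topology, so every $\mathcal{T}_{\varepsilon,\lambda}$--closed set is $\mathcal{T}_c$--closed. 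Since $\bar{G}_{\varepsilon,\lambda}$ is a $\mathcal{T}_{\varepsilon,\lambda}$--closed set containing $G$, it is $\mathcal{T}_c$--closed and therefore contains the smallest such set $\bar{G}_c$.

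For the reverse inclusion $\bar{G}_{\varepsilon,\lambda}\subseteq \bar{G}_c$ I would fix $x\in \bar{G}_{\varepsilon,\lambda}$ and, to show $x\in \bar{G}_c$, verify that an arbitrary basic $\mathcal{T}_c$--neighborhood $x+N_\theta(Q,\varepsilon)$ (with $Q\in \mathcal{P}_f$, $\varepsilon\in L^0_{++}(\mathcal{F})$) meets $G$. Because $x\in \bar{G}_{\varepsilon,\lambda}$, for each $k$ I can pick $g_k\in G$ with $g_k\in x+N_\theta(Q,1/k,1/2^k)$, i.e. $P\{\|x-g_k\|_Q\geq 1/k\}<1/2^k$. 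These probabilities are summable, so Borel--Cantelli gives $\|x-g_k\|_Q\to 0$ a.s. Writing $\eta_k=\|x-g_k\|_Q\in L^0_+(\mathcal{F})$, the task reduces to turning this a.s. convergence into a single $g\in G$ with $\|x-g\|_Q\leq\varepsilon$ a.s. By the Egoroff theorem $\eta_k\to 0$ uniformly off sets of arbitrarily small measure, which yields a countable partition of $\Omega$ on each piece of which the convergence is uniform; refining it by a partition adapted to $\varepsilon$ (on which $\varepsilon$ is bounded below by a positive constant, obtained from the sets $[\varepsilon>1/j]$) produces a countable partition $\{E_m\}$ such that on each $E_m$ the convergence is uniform \emph{and} $\varepsilon\geq c_m$ for some constant $c_m>0$. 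Uniform convergence on $E_m$ then lets me choose an index $k_m$ with $\eta_{k_m}<c_m\leq\varepsilon$ on $E_m$, i.e. $\|x-g_{k_m}\|_Q<\varepsilon$ on $E_m$.

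Since $G$ has the countable concatenation property, $g:=\sum_m \tilde{I}_{E_m}g_{k_m}$ is a well-defined element of $G$, and the local property $\tilde{I}_{E_m}\|y\|_Q=\|\tilde{I}_{E_m}y\|_Q$ of the $L^0$--seminorm gives $\tilde{I}_{E_m}\|x-g\|_Q=\tilde{I}_{E_m}\|x-g_{k_m}\|_Q$ for every $m$, whence $\|x-g\|_Q\leq\varepsilon$ on $\Omega$. Thus $g\in (x+N_\theta(Q,\varepsilon))\cap G$, completing the inclusion. The main obstacle is precisely this last bridge: membership in $\bar{G}_{\varepsilon,\lambda}$ only supplies approximants that are good off a small-probability set and against a constant threshold, whereas membership in $\bar{G}_c$ demands an $L^0$--sized error bounded by the variable $\varepsilon$ a.s. everywhere. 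The countable concatenation property of $G$ is exactly what lets me glue the locally good approximants $g_{k_m}$ into one element of $G$; without it the argument breaks down, which is consistent with the hypothesis being imposed only on $G$.
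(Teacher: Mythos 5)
Your proof is correct. The paper does not actually reprove this proposition (it is quoted from \cite{TXG-JFA}), but your argument --- the easy inclusion from ${\mathcal T}_c\supseteq{\mathcal T}_{\varepsilon,\lambda}$, then Borel--Cantelli plus Egoroff to obtain a countable partition on which the approximants are uniformly good and $\varepsilon$ is bounded below by a constant, glued into a single element of $G$ via the countable concatenation property --- is exactly the technique of \cite{TXG-JFA} and of the paper's own proofs of the adjacent results (Proposition 2.24 and Theorem 2.28(1)), so there is nothing to add.
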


Theorem 2.28 below will play a key role in the proof of the random bipolar theorem in Subsection 3.3.1 of this paper.

\begin{theorem} Let $(E, {\mathcal P})$ be an $RLC$ module over $K$ with base $(\Omega, {\mathcal F}, P)$ such that $E$ has the countable concatenation property. Then the following are true:\\
\noindent (1). ${\bar G}_c={\bar G}_{\varepsilon, \lambda}$ has the countable concatenation property if so does $G$;\\
\noindent (2). If $G$ is $L^0$--convex, then ${\bar G}_{\varepsilon, \lambda}=[H_{cc}(G)]^-_{\varepsilon, \lambda}=[H_{cc}(G)]^-_c$ has the countable concatenation property, where $H_{cc}(G)$ denotes the countable concatenation hull of $G$, namely $H_{cc}(G)=\{\Sigma_{n=1}^{\infty}\tilde{I}_{A_n}x_n:\{x_n,n\in N\}$ is a sequence in $G$ and $\{A_n,n\in N\}$ is a countable partition of $\Omega$ to $\mathcal{F}\}$.
\end{theorem}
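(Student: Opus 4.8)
The plan is to establish statement (1) as the essential lemma and then deduce (2) from it together with one structural observation about the countable concatenation hull. Throughout I use that, since $E$ has the countable concatenation property, every relevant countable concatenation $\sum_{n=1}^{\infty}\tilde{I}_{A_n}z_n$ is a well-defined element of $E$, and that for $\mathcal{Q}\in\mathcal{P}_f$ the $L^0$-seminorm $\|\cdot\|_{\mathcal{Q}}$ distributes over disjoint concatenations, i.e. $\|\sum_{n}\tilde{I}_{A_n}z_n\|_{\mathcal{Q}}=\sum_{n}\tilde{I}_{A_n}\|z_n\|_{\mathcal{Q}}$, which follows at once from the $L^0$-homogeneity of each seminorm in $\mathcal{Q}$.

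For (1), Proposition 2.27 already gives $\bar{G}_c=\bar{G}_{\varepsilon,\lambda}$ from the hypothesis that $G$ has the countable concatenation property, so the only real task is to show that $\bar{G}_{\varepsilon,\lambda}$ has the countable concatenation property. I would take a sequence $\{x_n,n\in N\}$ in $\bar{G}_{\varepsilon,\lambda}$, a countable partition $\{A_n,n\in N\}$ of $\Omega$, and set $x=\sum_{n}\tilde{I}_{A_n}x_n\in E$; the goal is $x\in\bar{G}_{\varepsilon,\lambda}$. Fixing a basic neighbourhood $N_{\theta}(\mathcal{Q},\varepsilon,\lambda)$, for each $n$ I use that $x_n$ lies in the $(\varepsilon,\lambda)$-closure of $G$ and that $N_{\theta}(\mathcal{Q},\varepsilon,\lambda 2^{-n-1})$ is a symmetric neighbourhood of $\theta$ to choose $g_n\in G$ with $P\{\|x_n-g_n\|_{\mathcal{Q}}\geq\varepsilon\}<\lambda 2^{-n-1}$. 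Since $G$ has the countable concatenation property, $g=\sum_{n}\tilde{I}_{A_n}g_n\in G$, and the distributivity above yields $\{\|x-g\|_{\mathcal{Q}}\geq\varepsilon\}=\bigcup_{n}(A_n\cap\{\|x_n-g_n\|_{\mathcal{Q}}\geq\varepsilon\})$, whose probability is at most $\sum_{n}\lambda 2^{-n-1}=\lambda/2<\lambda$. Hence $g\in x+N_{\theta}(\mathcal{Q},\varepsilon,\lambda)$; as $\mathcal{Q},\varepsilon,\lambda$ were arbitrary, $x\in\bar{G}_{\varepsilon,\lambda}$, and the equalities of (1) follow.

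For (2), I would first record that $H_{cc}(G)$ itself has the countable concatenation property: a countable concatenation along $\{B_m\}$ of elements $y_m=\sum_{n}\tilde{I}_{A_n^{(m)}}x_n^{(m)}$ of $H_{cc}(G)$ regroups, along the countable partition $\{B_m\cap A_n^{(m)}\}_{m,n}$, into a single countable concatenation of the elements $x_n^{(m)}\in G$, hence lies in $H_{cc}(G)$. Granting this, part (1) applied with $H_{cc}(G)$ in place of $G$ delivers at once $[H_{cc}(G)]^-_{\varepsilon,\lambda}=[H_{cc}(G)]^-_c$ and the countable concatenation property of this common closure. It then remains only to prove $\bar{G}_{\varepsilon,\lambda}=[H_{cc}(G)]^-_{\varepsilon,\lambda}$; since $G\subset H_{cc}(G)$ gives $\bar{G}_{\varepsilon,\lambda}\subset[H_{cc}(G)]^-_{\varepsilon,\lambda}$ for free, the crux is the single inclusion $H_{cc}(G)\subset\bar{G}_{\varepsilon,\lambda}$.

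This last inclusion is exactly where $L^0$-convexity of $G$ is used: taking $\xi=\tilde{I}_A$ in the definition of $L^0$-convexity shows an $L^0$-convex set is closed under finite concatenation, so $\sum_{n=1}^{N}\tilde{I}_{A_n}x_n$ (completed by any fixed $x_1\in G$ on the tail) lies in $G$. Given $y=\sum_{n}\tilde{I}_{A_n}x_n\in H_{cc}(G)$ and $N_{\theta}(\mathcal{Q},\varepsilon,\lambda)$, I choose $N$ with $P(\bigcup_{n>N}A_n)<\lambda$ and put $g=\tilde{I}_{\bigcup_{n>N}A_n}x_1+\sum_{n=1}^{N}\tilde{I}_{A_n}x_n\in G$; then $y-g$ vanishes off $\bigcup_{n>N}A_n$, so $\{\|y-g\|_{\mathcal{Q}}<\varepsilon\}\supset\bigcup_{n\leq N}A_n$ has probability $>1-\lambda$, giving $y\in\bar{G}_{\varepsilon,\lambda}$. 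I expect the main obstacle to be the probabilistic balancing in part (1): the summable tolerances $\lambda 2^{-n-1}$ are what keep the union of the countably many exceptional sets below $\lambda$, and this gluing succeeds only because the countable concatenation property of $G$ lets the approximants $g_n$ be fused into one element of $G$. In (2) that property is unavailable, which is precisely why one must truncate the partition and substitute the weaker finite-concatenation closure coming from $L^0$-convexity, after first passing to $H_{cc}(G)$.
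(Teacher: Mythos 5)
Your proof is correct and follows essentially the same route as the paper's: part (1) via summable tolerances $\lambda 2^{-n-1}$ glued by the countable concatenation property of $G$, and part (2) by applying (1) to $H_{cc}(G)$ and then using $L^0$--convexity to approximate elements of $H_{cc}(G)$ by finite concatenations lying in $G$ (the paper normalizes $\theta\in G$ and uses partial sums where you pad the tail with $x_1$, an immaterial difference). Your explicit verification that $H_{cc}(G)$ itself has the countable concatenation property is a small point the paper leaves implicit when it invokes (1) for $H_{cc}(G)$.
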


\begin{proof} (1). $\bar{G}_c=\bar{G}_{\varepsilon,\lambda}$ is by Proposition 2.27, and thus we only need to prove that $\bar{G}_{\varepsilon,\lambda}$ has the countable concatenation property.

Let $\{x_n,n\in N\}$ be a given sequence in $\bar{G}_{\varepsilon,\lambda}$ and $\{A_n,n\in N\}$ a countable partition of $\Omega$ to $\mathcal{F}$, then by the countable concatenation property of $E$ there is $x^{\ast}\in E$ such that $x^{\ast}=\Sigma_{n=1}^{\infty}\tilde{I}_{A_n}x_n$. We claim that $x^{\ast}\in \bar{G}_{\varepsilon,\lambda}$, namely, $(x^{\ast}+N_{\theta}(\mathcal{Q},\varepsilon^{\ast},\lambda^{\ast}))\bigcap G\neq \emptyset$ for any given $\varepsilon^{\ast}>0,\lambda^{\ast}>0$ with $0<\lambda^{\ast}<1$ and any finite subset $\mathcal{Q}$ of $\mathcal{P}$, where $N_{\theta}(\mathcal{Q},\varepsilon^{\ast},\lambda^{\ast})=\{x\in E~|~P\{\omega\in\Omega~|~\|x\|_{\mathcal{Q}}(\omega)<\varepsilon^{\ast}\}>1-\lambda^{\ast}\}$.

In fact, it is clear that there exists $\bar{x}_n$ for each $x_n\in\bar{G}_{\varepsilon,\lambda}$ such that $P\{\omega\in\Omega~|~\|x_n-\bar{x}_n\|_{\mathcal{Q}}(\omega)<\varepsilon^{\ast}\}>1-\frac{1}{2^{n+1}}\lambda^{\ast}$. By the countable concatenation property of $G$, there is $\bar{x}\in G$ such that $\bar{x}=\Sigma_{n=1}^{\infty}\tilde{I}_{A_n}\bar{x}_n$. Then $P\{\omega\in\Omega~|~\|x^{\ast}-\bar{x}\|_{\mathcal{Q}}(\omega)\geq\varepsilon^{\ast}\}=\Sigma_{n=1}^{\infty}P\{\omega\in A_n~|~\|x_n-\bar{x}_n\|_{\mathcal{Q}}(\omega)\geq\varepsilon^{\ast}\}\leq\Sigma_{n=1}^{\infty}P\{\omega\in \Omega~|~\|x_n-\bar{x}_n\|_{\mathcal{Q}}(\omega)\geq\varepsilon^{\ast}\}\leq\Sigma_{n=1}^{\infty}\frac{1}{2^{n+1}}\lambda^{\ast}=\frac{1}{2}\lambda^{\ast}$, namely $P\{\omega\in \Omega~|~\|x^{\ast}-\bar{x}\|_{\mathcal{Q}}(\omega)<\varepsilon^{\ast}\}\geq1-\frac{1}{2}\lambda^{\ast}>1-\lambda^{\ast}$, that is to say, $\bar{x}\in(x^{\ast}+N_{\theta}(\mathcal{Q},\varepsilon^{\ast},\lambda^{\ast}))\bigcap G$.

(2). By (1), $[H_{cc}(G)]^{-}_{c}=[H_{cc}(G)]^{-}_{\varepsilon,\lambda}$ has the countable concatenation property. Thus we only need to prove that $\bar{G}_{\varepsilon,\lambda}=[H_{cc}(G)]^{-}_{\varepsilon,\lambda}$. We can suppose , without loss of generality, that $\theta\in G$. Then for any $x=\Sigma_{n=1}^{\infty}\tilde{I}_{A_i}g_i\in H_{cc}(G)$, it is obvious that $\{\Sigma_{i=1}^{n}\tilde{I}_{A_i}g_i~|~n\in N\}$ is a $\mathcal{T}_{\varepsilon,\lambda}$--cauchy sequence in $G$ convergent to $x$ since $\{A_n,n\in N\}$ is a countable partition of $\Omega$ to $\mathcal{F}$, which means that $x\in \bar{G}_{\varepsilon,\lambda}$, namely $[H_{cc}(G)]^{-}_{\varepsilon,\lambda}\subset \bar{G}_{\varepsilon,\lambda}$, so $[H_{cc}(G)]^{-}_{\varepsilon,\lambda}=\bar{G}_{\varepsilon,\lambda}$. \hfill $\square$
\end{proof}

\begin{remark} We can illustrate that (1) may be not true if $E$ does not have the countable concatenation property, such an example is omitted to save space.
\end{remark}

\subsection{ On the precise relation between random conjugate spaces under the two kinds of topologies}

For any $RLC$ module $(E,\mathcal{P})$, Guo already pointed out that $E^{\ast}_{\varepsilon,\lambda}$ has the countable concatenation property, cf.\cite{TXG-JFA}, let $H_{cc}(E^{\ast}_{c})$ be the countable concatenation hull of $E^{\ast}_{c}$ in $E^{\ast}_{\varepsilon,\lambda}$. The main result of this subsection is the following:

\begin{theorem} $E^{\ast}_{\varepsilon,\lambda}:=(E,\mathcal{P})^{\ast}_{\varepsilon,\lambda}=(E,\mathcal{P}_{cc})^{\ast}_{c}=H_{cc}(E^{\ast}_{c})$, where, please recall $E^{\ast}_{c}:=(E,\mathcal{P})^{\ast}_{c}$.
\end{theorem}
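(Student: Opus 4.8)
The plan is to strip both conjugate spaces down to their concrete algebraic descriptions and then match them. By Proposition 2.12, $(E,\mathcal{P})^{\ast}_{\varepsilon,\lambda}=E^{\ast}_{II}$, i.e. the module of random linear functionals $f$ for which there are $\xi\in L^0_+(\mathcal{F})$ and a single $\|\cdot\|\in\mathcal{P}_{cc}$ with $|f(x)|\le\xi\|x\|$ for all $x\in E$; and by Proposition 2.19 the type-I/type-$c$ identification $E^{\ast}_I=E^{\ast}_c$ holds for any $RLC$ module. So I would prove the two equalities $(E,\mathcal{P})^{\ast}_{\varepsilon,\lambda}=(E,\mathcal{P}_{cc})^{\ast}_c$ and $(E,\mathcal{P})^{\ast}_{\varepsilon,\lambda}=H_{cc}(E^{\ast}_c)$ separately, in each case translating the claim into a purely algebraic statement about domination by $L^0$--seminorms, with no appeal to the topologies beyond these two propositions.

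For the first equality I would first note that $(E,\mathcal{P}_{cc})$ is itself an $RLC$ module, since $\mathcal{P}\subset\mathcal{P}_{cc}$ and hence the separation axiom ($RLC$--2) is inherited, and then apply Proposition 2.19 to it, giving $(E,\mathcal{P}_{cc})^{\ast}_c=(E,\mathcal{P}_{cc})^{\ast}_I$; the latter consists of the $f$ dominated by some $\xi\|\cdot\|_{\mathcal{Q}'}$ with $\mathcal{Q}'$ a finite subset of $\mathcal{P}_{cc}$. The key technical lemma here is that $\mathcal{P}_{cc}$ is stable under finite suprema: given finitely many elements $\sum_n\tilde{I}_{A_n^{(i)}}\|\cdot\|_{\mathcal{Q}_n^{(i)}}$ of $\mathcal{P}_{cc}$, passing to the common refinement of the partitions $\{A_n^{(i)}\}$ rewrites their supremum as a single element of $\mathcal{P}_{cc}$ (with the $\mathcal{Q}$'s unioned on each refined atom). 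Consequently domination by $\|\cdot\|_{\mathcal{Q}'}$ is the same as domination by one member of $\mathcal{P}_{cc}$, so $(E,\mathcal{P}_{cc})^{\ast}_I=E^{\ast}_{II}=(E,\mathcal{P})^{\ast}_{\varepsilon,\lambda}$.

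For the second equality I would observe that the inclusion $H_{cc}(E^{\ast}_c)\subset E^{\ast}_{\varepsilon,\lambda}$ is built into the definition, since $H_{cc}(E^{\ast}_c)$ is formed inside $E^{\ast}_{\varepsilon,\lambda}$ (legitimate because $E^{\ast}_{\varepsilon,\lambda}$ has the countable concatenation property). The substance is the reverse inclusion. Given $f\in E^{\ast}_{II}$ with $|f(x)|\le\xi\|x\|$ and $\|\cdot\|=\sum_n\tilde{I}_{A_n}\|\cdot\|_{\mathcal{Q}_n}\in\mathcal{P}_{cc}$, I would localize by setting $f_n:=\tilde{I}_{A_n}f$; multiplying the domination inequality by $\tilde{I}_{A_n}$ and using $\tilde{I}_{A_n}\|x\|=\tilde{I}_{A_n}\|x\|_{\mathcal{Q}_n}$ yields $|f_n(x)|\le(\tilde{I}_{A_n}\xi)\|x\|_{\mathcal{Q}_n}$, so each $f_n\in E^{\ast}_I=E^{\ast}_c$ with a genuinely finite $\mathcal{Q}_n\subset\mathcal{P}$. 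Since $\sum_n\tilde{I}_{A_n}=1$ gives $f=\sum_n\tilde{I}_{A_n}f_n$, this exhibits $f$ as a countable concatenation of type-$c$ functionals, i.e. $f\in H_{cc}(E^{\ast}_c)$.

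The hard part will be the reverse inclusion in the second equality: converting the single global $\mathcal{P}_{cc}$--seminorm controlling $f$ into honest type-I (finite-$\mathcal{Q}$) bounds on the localized pieces $\tilde{I}_{A_n}f$, so that the concatenation genuinely lands in the hull of $E^{\ast}_c$ rather than merely back in $E^{\ast}_{\varepsilon,\lambda}$. The stability of $\mathcal{P}_{cc}$ under finite suprema used in the first equality is the only other place requiring a small computation (the common refinement of partitions), and I expect both to be routine once set up.
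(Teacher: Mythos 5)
Your proposal is correct, and its overall skeleton (split the claim into the two equalities; for the hard inclusion, decompose a functional dominated by a $\mathcal{P}_{cc}$--seminorm along the underlying partition) matches the paper's. There are two genuine differences worth noting. For the first equality the paper argues topologically: $\mathcal{P}$ and $\mathcal{P}_{cc}$ induce the same $(\varepsilon,\lambda)$--topology, so $(E,\mathcal{P})^{\ast}_{\varepsilon,\lambda}=(E,\mathcal{P}_{cc})^{\ast}_{\varepsilon,\lambda}=(E,\mathcal{P}_{cc})^{\ast}_{c}$ by Corollary 2.21; your purely algebraic unwinding via $E^{\ast}_{I}$, $E^{\ast}_{II}$ and the finite--suprema stability of $\mathcal{P}_{cc}$ is equivalent in content (Corollary 2.21 itself rests on exactly that stability), just more explicit. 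The more interesting divergence is in the inclusion $E^{\ast}_{\varepsilon,\lambda}\subset H_{cc}(E^{\ast}_{c})$: the paper invokes Corollary 2.32, the Hahn--Banach--type decomposition lemma producing functionals $f_n$ with $|f_n|\leq \xi\|\cdot\|_{\mathcal{Q}_n}$ globally and $f=\sum_n\tilde{I}_{A_n}f_n$, whereas you take the trivial localization $f_n:=\tilde{I}_{A_n}f$ and observe that $\tilde{I}_{A_n}\|x\|=\tilde{I}_{A_n}\|x\|_{\mathcal{Q}_n}$ already yields $|f_n(x)|\leq(\tilde{I}_{A_n}\xi)\|x\|_{\mathcal{Q}_n}$, so $f_n\in E^{\ast}_{I}=E^{\ast}_{c}$. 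Your argument is sound and strictly more elementary: because the seminorms in the concatenation are supported on disjoint sets of the partition, the general decomposition machinery of Corollary 2.32 (needed when the dominating seminorms overlap) is not actually required here, and your route makes the theorem self-contained modulo Propositions 2.12 and 2.19. The step you flagged as ``the hard part'' is in fact exactly this easy localization, so nothing further is missing.
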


To prove Theorem 2.30 and meet the needs of Subsection 3.3.1, we first recall from \cite{TXG-XXC} Lemma 2.31 below and the two corollaries to it.

\begin{lemma}($See$ \cite{TXG-XXC}.) Let $X$ be a linear space over $K$, $\{p_n:X\rightarrow L^{0}_{+}(\mathcal{F})~|~n\in N\}$ a sequence of random seminorms on $X$ and $f:X\rightarrow L^{0}(\mathcal{F},K)$ a random linear functional such that $\Sigma_{n=1}^{\infty}p_n(x)$ converges in probability for each $x\in X$ and $|f(x)|\leq\Sigma_{n=1}^{\infty}p_n(x)$ for each $x\in X$. Then there is a sequence $\{f_n~|~n\in N\}$ of random linear functionals such that

\noindent (1). $|f_n(x)|\leq p_n(x)$ for all $x\in X$  and $n\in N$;

\noindent (2). $f(x)=\Sigma_{n=1}^{\infty}f_n(x)$ for all $x\in X$.
\end{lemma}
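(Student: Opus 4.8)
The plan is to reduce the infinite decomposition to a \emph{two--term} decomposition, apply that two--term result countably many times along the tails of the series, and treat the real scalar field $K=R$ as the main case. First I would isolate the following statement: if $p,q$ are random seminorms on $X$ and $g$ is a random linear functional with $g(x)\le p(x)+q(x)$ for all $x\in X$, then there is a random linear functional $h$ with $h\le p$ and $g-h\le q$; when $K=R$ the symmetry $p(-x)=p(x)$ upgrades these one--sided bounds to $|h|\le p$ and $|g-h|\le q$, since $h(-x)\le p(-x)=p(x)$ forces $-h(x)\le p(x)$. This is precisely a random \emph{sandwich} statement: I want a $K$--linear $h$ lying between the superlinear minorant $m(x):=g(x)-q(x)$ and the sublinear majorant $M(x):=p(x)$, and the needed compatibility $m\le M$ holds exactly because $g\le p+q$.

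To produce such an $h$ I would appeal to the Hahn--Banach--Kantorovich extension theorem with codomain the \emph{conditionally complete} Riesz space $L^0(\mathcal{F},R)$; the decisive point is that every infimum and supremum occurring in the classical extension argument is taken over an order--bounded subset of $L^0(\mathcal{F})$ and therefore exists by the conditional completeness of $(L^0(\mathcal{F}),\le)$. Concretely I would form the inf--convolution $s(x):=\bigwedge\{M(x+y)-m(y):y\in X\}$. Taking $y=\theta$ gives $s(x)\le M(x)$, while superadditivity of $m$ gives $s(x)\ge m(x)$, so $s$ is $L^0$--valued and already sandwiched; positive homogeneity and subadditivity of $s$ follow from those of $M$ and $-m$ by the translation invariance of $\bigwedge$ in $L^0(\mathcal{F})$. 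A single Hahn--Banach--Kantorovich extension of the zero functional dominated by $s$ then yields a linear $h\le s\le M$, and linearity delivers $h(x)=-h(-x)\ge -s(-x)\ge m(x)$, i.e. $m\le h\le M$, as required.

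With the two--term lemma in hand I would iterate along the tails $q_n(x):=\sum_{k>n}p_k(x)$. Each $q_n$ is again a random seminorm because $\sum_k p_k(x)$ converges in probability, and $q_n=p_{n+1}+q_{n+1}$. Starting from $g_0:=f$ with $|f|\le q_0=\sum_{k\ge 1}p_k$, I would successively split $g_n=f_{n+1}+g_{n+1}$ with $|f_{n+1}|\le p_{n+1}$ and $|g_{n+1}|\le q_{n+1}$, obtaining $f=f_1+\cdots+f_n+g_n$ with $|g_n(x)|\le q_n(x)$ for every $x$. Since $q_n(x)$ is the tail of a series convergent in probability, $q_n(x)\to 0$ in probability, whence $\sum_{k=1}^{n}f_k(x)=f(x)-g_n(x)\to f(x)$ in probability; that is, $f(x)=\sum_{n=1}^{\infty}f_n(x)$ in $L^0(\mathcal{F},K)$, which is conclusion (2), while conclusion (1) is built into the construction.

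For $K=C$ I would run the real decomposition on the realification of $X$ applied to $u:=\mathrm{Re}\,f$ (noting $|u|\le|f|\le\sum_k p_k$ and $p_k(ix)=p_k(x)$), obtaining real random linear functionals $u_n$ with $|u_n|\le p_n$, and then set $f_n(x):=u_n(x)-iu_n(ix)$, which is $C$--linear and satisfies $\sum_n f_n=f$. The hard part will be recovering the \emph{sharp} bound $|f_n(x)|\le p_n(x)$ rather than a $\sqrt{2}$--loss: since $f_n$ is only $C$--linear and not $L^0$--homogeneous, a single $L^0$--valued rotation of the phase is unavailable. I would instead localize: partition $\Omega$ into the measurable sets on which $\arg f_n(x)$ lies in a short arc about a \emph{constant} $\theta_j$, and use $\mathrm{Re}\big(e^{-i\theta_j}f_n(x)\big)=u_n\big(e^{-i\theta_j}x\big)\le p_n\big(e^{-i\theta_j}x\big)=p_n(x)$, valid because $e^{-i\theta_j}$ is a scalar constant; letting the arcs shrink forces $|f_n(x)|\le p_n(x)$ a.s. Thus the two genuine obstacles I anticipate are (a) making the order--theoretic sandwich/extension step rigorous with an $L^0(\mathcal{F})$--valued codomain in place of $R$, and (b) this phase--localization in the complex case; the iteration and the convergence--in--probability step are routine.
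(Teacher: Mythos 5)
Your proposal is correct, but note that the paper itself contains no proof of this lemma: it is imported verbatim from \cite{TXG-XXC}, so the comparison is with the standard argument of that source rather than with anything printed here. The usual proof applies the $L^{0}(\mathcal{F})$--valued (Kantorovich-type) Hahn--Banach theorem \emph{once}: on the sequence space $Z=\{(x_{n})\in X^{\mathbb{N}}~|~\Sigma_{n}p_{n}(x_{n})\ \hbox{converges in probability}\}$ one forms the sublinear map $P((x_{n}))=\Sigma_{n}p_{n}(x_{n})$ (the nonnegative partial sums increase, so convergence in probability is in fact a.s.\ convergence and $P$ is $L^{0}_{+}$--valued), extends $f$ from the diagonal $\{(x,x,\dots)~|~x\in X\}$ to a linear $F\leq P$ on $Z$, and sets $f_{n}(x):=F(0,\dots,0,x,0,\dots)$; then (1) is immediate and (2) follows from the tail bound $|f(x)-\Sigma_{k\leq n}f_{k}(x)|\leq\Sigma_{k>n}p_{k}(x)\to 0$. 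You instead prove a two-term sandwich lemma by inf-convolution plus one Hahn--Banach--Kantorovich extension and iterate it along the tails $q_{n}=\Sigma_{k>n}p_{k}$; this is a genuinely different organization, and every step checks out: the infima defining $s(x)=\bigwedge\{M(x+y)-m(y)~|~y\in X\}$ exist by the conditional completeness of $(L^{0}(\mathcal{F}),\leq)$ recorded in Section 1, with $s$ bounded below by $m(x)$ via superadditivity of $m$; the extension gives $h\leq s\leq M$ and $h(x)\geq -s(-x)\geq m(x)$; the symmetry upgrade from one-sided to two-sided bounds is valid because $p(-x)=p(x)$; the tails $q_{n}$ are genuine random seminorms and $q_{n}(x)\to 0$ a.s.\ by monotonicity, which settles (2). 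Your complex-case repair is also sound and is the one point where real care is needed: since $f_{n}$ is only $C$--linear (there is no module structure on $X$ at all), a measurable-phase rotation is indeed unavailable, but testing against countably many \emph{constant} rotations $e^{-i\theta}$ (rationally dense $\theta$, or your shrinking arcs) and intersecting the resulting a.s.\ events yields $|f_{n}(x)|=\bigvee\{Re(e^{-i\theta}f_{n}(x))~|~\theta\in Q\}\leq p_{n}(x)$ with no $\sqrt{2}$--loss. As for what each route buys: the one-shot extension is shorter and produces all the $f_{n}$ simultaneously, while your iteration is more elementary---only the two-seminorm case of the dominated extension theorem is ever invoked---and the sandwich formulation isolates exactly where the order-completeness of $L^{0}(\mathcal{F})$ enters.
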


\begin{corollary}($See$ \cite{TXG-XXC}.) Let $X$ be an $L^{0}(\mathcal{F},K)$--module, $f:X\rightarrow L^{0}(\mathcal{F},K)$ an $L^{0}(\mathcal{F},K)$--linear function, $\{p_n:X\rightarrow L^{0}_{+}(\mathcal{F})~|~n\in N\}$ a sequence of $L^0$--seminorms on $X$ and $\{A_n,n\in N\}$ a countable partition of $\Omega$ to $\mathcal{F}$ such that $|f(x)|\leq\Sigma_{n=1}^{\infty}\tilde{I}_{A_n}p_n(x)$ for all $x\in X$. Then there is a sequence $\{f_n:n\in N\}$ of $L^{0}(\mathcal{F},K)$--linear functions such that

\noindent (1). $|f_n(x)|\leq p_n(x)$ for all $x\in X$  and $n\in N$;

\noindent (2). $f(x)=\Sigma_{n=1}^{\infty}\tilde{I}_{A_n}(f_n(x))$ for all $x\in X$.
\end{corollary}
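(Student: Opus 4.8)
The plan is to reduce the corollary to Lemma 2.31 by absorbing the partition into the seminorms. Concretely, for each $n\in N$ I would set $q_n=\tilde{I}_{A_n}p_n$. Since $p_n$ is an $L^0$--seminorm and $\tilde{I}_{A_n}\in L^0_+(\mathcal{F})$, a direct check shows each $q_n$ is again an $L^0$--seminorm on $X$ (both the triangle inequality and the $L^0$--homogeneity pass through multiplication by $\tilde{I}_{A_n}$), hence in particular a random seminorm. Because $\{A_n,n\in N\}$ is a countable partition of $\Omega$, the terms $\tilde{I}_{A_n}p_n(x)$ have pairwise disjoint supports, so the partial sums $\sum_{k=1}^{n}q_k(x)$ increase a.s. to the finite element $\sum_{n=1}^{\infty}\tilde{I}_{A_n}p_n(x)\in L^0_+(\mathcal{F})$; in particular $\sum_{n=1}^{\infty}q_n(x)$ converges in probability for every $x\in X$. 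Together with the hypothesis $|f(x)|\leq\sum_{n=1}^{\infty}\tilde{I}_{A_n}p_n(x)=\sum_{n=1}^{\infty}q_n(x)$, and the fact that the $L^0(\mathcal{F},K)$--linear $f$ is in particular a random (i.e. $K$--linear) functional, this verifies all the hypotheses of Lemma 2.31.

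Applying Lemma 2.31 to $f$ and $\{q_n\}$ then yields random linear functionals $\{g_n,n\in N\}$ with $|g_n(x)|\leq q_n(x)=\tilde{I}_{A_n}p_n(x)$ for all $x$ and $f(x)=\sum_{n=1}^{\infty}g_n(x)$. The bound $|g_n(x)|\leq\tilde{I}_{A_n}p_n(x)$ forces $g_n(x)$ to vanish off $A_n$, i.e. $g_n(x)=\tilde{I}_{A_n}g_n(x)$ for every $x$, whence $f(x)=\sum_{n=1}^{\infty}\tilde{I}_{A_n}g_n(x)$; moreover $|g_n(x)|\leq\tilde{I}_{A_n}p_n(x)\leq p_n(x)$. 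Setting $f_n:=g_n$ would then give exactly conclusions (1) and (2), provided each $g_n$ can be upgraded from $K$--linearity to $L^0(\mathcal{F},K)$--linearity.

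This last point is where I expect the argument to need care, and I regard it as the main obstacle: Lemma 2.31 produces only random ($K$--)linear functionals, whereas the corollary asserts $L^0(\mathcal{F},K)$--linear ones. The remedy is the automatic--linearity phenomenon recalled in Remark 2.20: since $|g_n(x)|\leq q_n(x)$ with $q_n$ a single $L^0$--seminorm (and multiplier $\xi=\tilde{I}_{\Omega}$), Lemma 2.12 of \cite{TXG-JFA} applies and shows that every such a.s. bounded random linear functional is necessarily $L^0(\mathcal{F},K)$--linear. Hence each $f_n=g_n$ is $L^0(\mathcal{F},K)$--linear, which completes the proof. Alternatively, one could verify the $L^0$--linearity of $g_n$ by hand from the $L^0$--linearity of $f$ together with the essential uniqueness of the disjoint decomposition $f(x)=\sum_{n=1}^{\infty}\tilde{I}_{A_n}g_n(x)$, but invoking the established automatic--linearity result is cleaner.
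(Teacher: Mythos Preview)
Your proof is correct and follows exactly the route the paper implicitly intends: the result is stated as a corollary to Lemma 2.31 (recalled from \cite{TXG-XXC} without proof), and your reduction---setting $q_n=\tilde{I}_{A_n}p_n$, applying Lemma 2.31, and then upgrading the resulting $K$--linear $g_n$ to $L^0(\mathcal{F},K)$--linearity via the automatic--linearity result referenced in Remark 2.20---is precisely how one extracts it. Your alternative ``by hand'' argument for $L^0$--linearity (using the disjoint decomposition together with the $L^0$--linearity of $f$) is also valid and perhaps more self-contained here, since it avoids citing an external lemma.
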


\begin{corollary} ($See$ \cite{TXG-XXC}.) Let $X$ be an $L^{0}(\mathcal{F},K)$--module, $f:X\rightarrow L^{0}(\mathcal{F},K)$ an $L^{0}(\mathcal{F},K)$--linear function and $\{p_i:X\rightarrow L^{0}_+(\mathcal{F})~|~1\leq i\leq n\}$ n $L^{0}$--seminorms such that $|f(x)|\leq\Sigma_{i=1}^{n}p_i(x)$ for all $x\in X$. Then for each $1\leq i\leq n$ there is an $L^{0}(\mathcal{F},K)$--linear function $f_i$ such that

\noindent (1). $|f_i(x)|\leq p_i(x)$ for all $x\in X$  and $1\leq i\leq n$;

\noindent (2). $f(x)=\Sigma_{i=1}^{n}f_n(x)$ for all $x\in X$.
\end{corollary}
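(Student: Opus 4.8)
The plan is to deduce Corollary 2.34 from Lemma 2.31 by padding the finite family of $L^0$--seminorms to a countable one with zeros, and then to promote the resulting random linear functionals to genuinely $L^0(\mathcal{F},K)$--linear ones using the fact that a random linear functional dominated by an $L^0$--seminorm is automatically $L^0(\mathcal{F},K)$--linear.

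First I would define $p_i\equiv 0$ (the zero $L^0$--seminorm) for every $i>n$, obtaining a sequence $\{p_i:i\in N\}$ of $L^0$--seminorms, hence of random seminorms, on $X$. For each $x\in X$ the series $\Sigma_{i=1}^{\infty}p_i(x)$ is eventually constant and equals the finite sum $\Sigma_{i=1}^{n}p_i(x)$, so it trivially converges in probability. Moreover $X$, being an $L^0(\mathcal{F},K)$--module, is a linear space over $K$ (Remark 2.16), and $f$, being $L^0(\mathcal{F},K)$--linear, is in particular a random linear functional; the hypothesis $|f(x)|\leq\Sigma_{i=1}^{n}p_i(x)=\Sigma_{i=1}^{\infty}p_i(x)$ is exactly the domination required by Lemma 2.31.

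Applying Lemma 2.31 then yields a sequence $\{f_i:i\in N\}$ of random linear functionals with $|f_i(x)|\leq p_i(x)$ for all $x$ and $f(x)=\Sigma_{i=1}^{\infty}f_i(x)$. For every $i>n$ we have $|f_i(x)|\leq p_i(x)=0$, so $f_i$ is the zero functional; consequently the infinite sum collapses to $f(x)=\Sigma_{i=1}^{n}f_i(x)$, which is statement (2), while clause (1) is immediate. (Incidentally, the index ``$f_n$'' appearing in clause (2) of the printed statement should read ``$f_i$''.)

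The one remaining point---and the only genuinely non-formal step---is to check that each of the finitely many $f_i$ ($1\leq i\leq n$) is $L^0(\mathcal{F},K)$--linear rather than merely random linear, since Lemma 2.31 delivers only the latter. Here I would invoke the domination principle already used in Remark 2.20 (namely Lemma 2.12 of \cite{TXG-JFA}): a random linear functional $g$ satisfying $|g(x)|\leq p(x)$ for some $L^0$--seminorm $p$ must be $L^0(\mathcal{F},K)$--linear. Applying this with $g=f_i$ and $p=p_i$ upgrades every $f_i$ to an $L^0(\mathcal{F},K)$--linear function, completing the proof. The main obstacle is precisely this passage from random linearity to $L^0(\mathcal{F},K)$--linearity; everything else is the bookkeeping of reducing a finite sum to the countable setting of Lemma 2.31.
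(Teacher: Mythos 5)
Your proof is correct; the paper states this corollary without proof (citing \cite{TXG-XXC}), and your derivation---padding the finite family with zero $L^{0}$--seminorms, applying Lemma 2.31, discarding the vanishing tail, and then upgrading each $f_i$ from a random linear functional to an $L^{0}(\mathcal{F},K)$--linear one via the domination principle recalled in Remark 2.20---is exactly the intended reduction. The one genuinely non-trivial step, the passage from $K$--linearity to $L^{0}(\mathcal{F},K)$--linearity using $|f_i(x)|\leq p_i(x)$ with $p_i$ an $L^{0}$--seminorm, is identified and handled correctly.
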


We can now prove Theorem 2.30.

\noindent {Proof of Theorem 2.30.}  Since $\mathcal{P}$ and $\mathcal{P}_{cc}$ induces the same $(\varepsilon,\lambda)$--topology on $E$, $E^{\ast}_{\varepsilon,\lambda}:=(E,\mathcal{P})^{\ast}_{\varepsilon,\lambda}=(E,\mathcal{P}_{cc})^{\ast}_{\varepsilon,\lambda}=(E,\mathcal{P}_{cc})^{\ast}_{c}$, where the last equality comes from the countable concatenation property of $\mathcal{P}_{cc}$ by Corollary 2.21. It remains to prove that $(E,\mathcal{P}_{cc})^{\ast}_{c}=H_{cc}(E^{\ast}_c)$ and we only needs to check that $(E,\mathcal{P}_{cc})^{\ast}_{c}\subset H_{cc}(E^{\ast}_c)$.

Let $f$ be any element of $(E,\mathcal{P}_{cc})^{\ast}_{c}$. Since $\mathcal{P}_{cc}$ is invariant under the operation of finitely many suprema, then there are $\|\cdot\|\in \mathcal{P}_{cc}$ and $\xi\in L^{0}_{++}(\mathcal{F})$ such that $|f(x)|\leq\xi\|x\|$ for all $x\in E$. Let $\|\cdot\|=\Sigma_{n=1}^{\infty}\tilde{I}_{A_n}\|\cdot\|_{\mathcal{Q}_n}$, where $\{A_n,n\in N\}$ is some countable partition of $\Omega$ to $\mathcal{F}$ and each $\mathcal{Q}_n\in \mathcal{P}_f$, then by Corollary 2.32 there is a sequence $\{f_n,n\in N\}$ of $L^{0}(\mathcal{F},K)$--linear functions such that

\noindent (1). $|f_n(x)|\leq\xi\|x\|_{\mathcal{Q}_n}$ for all $x\in E$ and $n\in N$;

\noindent (2). $f(x)=\Sigma_{n=1}^{\infty}\tilde{I}_{A_n}(f_n(x))$ for all $x\in E$.

(1) shows that each $f_n\in E^{\ast}_c$ and (2) further shows that $f=\Sigma_{n=1}^{\infty}\tilde{I}_{A_n}f_n$, so $f\in H_{cc}(E^{\ast}_c)$.
\hfill $\square$

For the further study of random conjugate spaces, please refer to \cite{TXG-SEZ}.

\subsection{Bounded sets under the locally $L^0-$convex topology}

Let $(E,\mathcal{T})$ be a locally $L^{0}$--convex $L^{0}(\mathcal{F},K)$--module. $A\subset E$ is said to be $\mathcal{T}$--bounded if $A$ can be $L^0$--absorbed by every neighborhood of $\theta$. The main results in this subsection are Theorems 2.34 and 2.35 below.

\begin{theorem} $(${\bf Random resonance theorem}$)$ Let $(E_1,\|\cdot\|_1)$ and $(E_2,\|\cdot\|_2)$ be two $RN$ modules such that $E_1$ is $\mathcal{T}_c$--complete and has the countable concatenation property. For a subset $\{T_{\alpha},\alpha\in\Lambda\}$ of $B(E_1,E_2)$, then $\{T_{\alpha},\alpha\in\Lambda\}$ is $\mathcal{T}_c$--bounded in $B(E_1,E_2)$ iff $\{T_{\alpha}x,\alpha\in\Lambda\}$ is $\mathcal{T}_c$--bounded in $E_2$ for all $x\in E_1$.
\end{theorem}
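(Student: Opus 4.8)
The plan is first to translate both occurrences of $\mathcal{T}_c$-boundedness into statements about suprema of $L^0$-norms. For an $RN$ module $(E,\|\cdot\|)$ the sets $N_\theta(\varepsilon)=\{x:\|x\|\le\varepsilon\}$ ($\varepsilon\in L^0_{++}(\mathcal F)$) are $L^0$-balanced, so by Remark 2.15 a subset $A$ is $\mathcal T_c$-bounded iff $\bigvee\{\|x\|:x\in A\}\in L^0_+(\mathcal F)$ (i.e. is finite a.s.). Applying this in $B(E_1,E_2)$ and in $E_2$, and recalling from Proposition 2.11 that $\|T_\alpha\|=\bigvee\{\|T_\alpha x\|_2:\|x\|_1\le 1\}$, the theorem becomes the quantitative assertion
\[
\bigvee_{\alpha\in\Lambda}\|T_\alpha\|\in L^0_+(\mathcal F)\quad\Longleftrightarrow\quad g(x):=\bigvee_{\alpha\in\Lambda}\|T_\alpha x\|_2\in L^0_+(\mathcal F)\ \text{ for every }x\in E_1 .
\]
The forward implication is immediate: if $\zeta:=\bigvee_\alpha\|T_\alpha\|\in L^0_+(\mathcal F)$ then $\|T_\alpha x\|_2\le\|T_\alpha\|\,\|x\|_1\le\zeta\|x\|_1$, whence $g(x)\le\zeta\|x\|_1\in L^0_+(\mathcal F)$. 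All the work is in the converse.

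For sufficiency I would exploit completeness. Since $E_1$ is $\mathcal T_c$-complete and has the countable concatenation property, Proposition 2.26 gives that $(E_1,\mathcal T_{\varepsilon,\lambda})$ is $\mathcal T_{\varepsilon,\lambda}$-complete; as the $(\varepsilon,\lambda)$-topology of an $RN$ module is metrizable by a translation-invariant complete metric, $(E_1,\mathcal T_{\varepsilon,\lambda})$ is then a Baire space. Note that $g$ is an $L^0$-seminorm on $E_1$ (one checks $g(\xi x)=|\xi|g(x)$ and $g(x+y)\le g(x)+g(y)$) which by hypothesis is finite a.s., and that each map $x\mapsto\|T_\alpha x\|_2$ is $\mathcal T_{\varepsilon,\lambda}$-continuous by Proposition 2.11. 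The target is to produce $\zeta\in L^0_+(\mathcal F)$ with $g(x)\le\zeta\|x\|_1$ for all $x$.

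The decisive reduction is that it suffices to find a countable partition $\{\Omega_k:k\in N\}$ of $\Omega$ in $\mathcal F$ together with constants $M_k>0$ such that $\tilde I_{\Omega_k}\|T_\alpha\|\le M_k$ for all $\alpha$ and all $k$; indeed, by the local property of the $L^0$-norm, namely $\tilde I_{\Omega_k}\|T_\alpha\|=\bigvee\{\tilde I_{\Omega_k}\|T_\alpha x\|_2:\|x\|_1\le1\}$, one then glues the pieces into $\bigvee_\alpha\|T_\alpha\|\le\sum_{k=1}^{\infty}\tilde I_{\Omega_k}M_k\in L^0_+(\mathcal F)$, which is exactly where the countable concatenation property enters. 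Such a partition is obtained by exhaustion over $\Omega$ from the following Local Lemma: for every $A\in\mathcal F$ with $P(A)>0$ there are $B\subset A$ with $B\in\mathcal F$, $P(B)>0$, and a constant $M$ such that $\tilde I_B\|T_\alpha\|\le M$ for all $\alpha$. Granting the Local Lemma, a standard maximal/exhaustion argument (choosing a countable disjoint family of such events of maximal total measure) yields a partition as above up to a $P$-null set.

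The Local Lemma is where the substance lies, and it is the step I expect to be the main obstacle. The $L^0$-valued (fibrewise) nature blocks the naive Baire argument: the clean closed sets $\{x:g(x)\le n\ \text{a.s.}\}$ need not cover $E_1$ (a single $x$ may have $g(x)\notin L^\infty$), whereas the sets $D_n=\{x:P[g(x)\le n]\ge \tfrac34\}$ do cover $E_1$ and are $\mathcal T_{\varepsilon,\lambda}$-closed (by continuity of each $\|T_\alpha\cdot\|_2$ and a Fatou-type estimate), so some $D_{n_0}$ is nonmeager and hence has interior. Translating the interior point and using subadditivity of $g$ then bounds $g$ by $2n_0$ only off an event of measure $\le\tfrac12$, and this slack prevents a direct conclusion. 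To convert this into a genuine scalar bound on a fixed positive-measure sub-event I would argue by contradiction through a random condensation of singularities: assuming no sub-event of $A$ carries a uniform bound, one selects indices $\alpha_k$ and unit vectors $y_k$ whose images blow up on chunks of $A$, and assembles — using $\mathcal T_{\varepsilon,\lambda}$-completeness to sum an $L^0$-weighted series and the countable concatenation property to keep the successive humps supported on one fixed event of positive measure — a single $x^{*}\in E_1$ with $g(x^{*})=+\infty$ on a set of positive measure, contradicting $g(x^{*})\in L^0_+(\mathcal F)$. Making the hump construction $\sigma$-stable so that earlier terms cannot cancel the $k$-th singularity, and concentrating it on one fixed event rather than a $y$-dependent one, is the crux; the interplay of the local property of the $L^0$-norm with the countable concatenation hull is precisely the tool that should make it go through.
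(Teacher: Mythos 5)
Your two reductions are exactly the ones the paper uses: translating $\mathcal{T}_c$--boundedness into a.s.~boundedness of the $L^0$--norms (this is Lemma 2.37, which you rederive from Remark 2.15), and converting the hypotheses ``$\mathcal{T}_c$--complete $+$ countable concatenation property'' into $\mathcal{T}_{\varepsilon,\lambda}$--completeness via Proposition 2.26. At that point, however, the paper stops: Theorem 2.34 is obtained as an immediate corollary of (2) of Proposition 2.38, the a.s.--bounded resonance theorem for $\mathcal{T}_{\varepsilon,\lambda}$--complete $RN$ modules, which is quoted from Guo's earlier work \cite{TXG-PHD,TXG-Module} and not reproved. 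You instead set out to prove that underlying result from scratch, and this is where the proposal is incomplete.

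Concretely, the gap is the ``Local Lemma'' (equivalently, the statement that $\bigvee_{\alpha}\|T_{\alpha}\|=+\infty$ on a set of positive measure forces some $x^{*}$ with $g(x^{*})\notin L^{0}_{+}(\mathcal{F})$). Your Baire-category step with the sets $D_n=\{x: P[g(x)\le n]\ge \tfrac34\}$ is correct as far as it goes, but, as you yourself observe, it only yields $P[g(y)\le 2n_0]\ge\tfrac12$ on a $\mathcal{T}_{\varepsilon,\lambda}$--ball and cannot be upgraded to a bound on a fixed positive-measure event; it therefore contributes nothing to the final argument and the whole weight falls on the ``random condensation of singularities.'' That construction is only named, not carried out, and it is genuinely delicate: one must (i) use the upward directedness of $\{\|T_{\alpha}x\|_2:\|x\|_1\le 1\}$ (via the local property of the $L^0$--norm) together with an Egoroff-type argument to stratify $A$ into finitely many pieces on each of which a single $T_{\alpha_j}$ blows up past level $M$ and a single near-maximizing unit vector can be chosen; (ii) splice these vectors by countable concatenation into one unit vector $y_M$ with $g(y_M)\ge cM$ on most of $A$; (iii) make a measurable sign choice (the random analogue of $\max(\|T(x_0+y)\|,\|T(x_0-y)\|)\ge\|Ty\|$) so that the $k$-th hump survives the previously accumulated terms; and (iv) control the exceptional sets so that their union does not exhaust $A$. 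Each of these is feasible, and your closing sentence correctly identifies the tools, but none of it is on the page. As written, the proposal establishes the easy implication and the correct reductions, and defers the substance of the theorem to a step it explicitly declines to execute; either that step must be written out in full or the result must, as in the paper, be cited from \cite{TXG-PHD,TXG-Module}.
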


Let $(E_1,\|\cdot\|)$ and $(E_2,\|\cdot\|)$ be $RN$ modules over $K$ with base $(\Omega,\mathcal{F},P)$. It is easy to prove that a linear operator $T:E_1\rightarrow E_2$ belongs to $B(E_1,E_2)$ iff $T$ is a continuous module homomorphism from $(E_1,\mathcal{T}_c)$ to $(E_2,\mathcal{T}_c)$. Hence Theorem 2.34 also gives a resonance theorem for a family of continuous module homomorphisms from $(E_1,\mathcal{T}_c)$ to $(E_2,\mathcal{T}_c)$.

\begin{theorem} Let $(E,\mathcal{P})$ be an $RLC$ module over $K$ with base $(\Omega,\mathcal{F},P)$ and $A\subset E$. Then $A$ is $\mathcal{T}_c$--bounded iff $f(A)$ is $\mathcal{T}_c$--bounded in $(L^{0}(\mathcal{F},K),\mathcal{T}_c)$ for every $f\in E^{\ast}_c$.
\end{theorem}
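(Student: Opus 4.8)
The plan is to prove the two implications separately, after reducing $\mathcal{T}_c$-boundedness of $A$ to the finiteness in $L^0_+(\mathcal{F})$ of the suprema $\bigvee\{\|x\|_{\mathcal{Q}}:x\in A\}$ over all $\mathcal{Q}\in\mathcal{P}_f$, and then extracting exactly this finiteness from the hypothesis through the random resonance theorem (Theorem 2.34). First I would record the reduction: since $\{N_\theta(\mathcal{Q},\varepsilon):\mathcal{Q}\in\mathcal{P}_f,\ \varepsilon\in L^0_{++}(\mathcal{F})\}$ is a local base at $\theta$ for $\mathcal{T}_c$ and $N_\theta(\mathcal{Q},\varepsilon)=\varepsilon\,N_\theta(\mathcal{Q},1)$, a direct check shows that $A$ is $\mathcal{T}_c$-bounded iff for each $\mathcal{Q}$ there is $\xi\in L^0_{++}(\mathcal{F})$ with $\xi A\subset N_\theta(\mathcal{Q},1)$, i.e. iff $\bigvee\{\|x\|_{\mathcal{Q}}:x\in A\}\in L^0_+(\mathcal{F})$; likewise a subset $S$ of $L^0(\mathcal{F},K)$ is $\mathcal{T}_c$-bounded iff $\bigvee\{|\xi|:\xi\in S\}\in L^0_+(\mathcal{F})$. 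The necessity is then immediate: any $f\in E^\ast_c$ is a continuous module homomorphism from $(E,\mathcal{T}_c)$ to $(L^0(\mathcal{F},K),\mathcal{T}_c)$, and since $f(\eta x)=\eta f(x)$, such a map carries $L^0$-absorbed sets to $L^0$-absorbed sets, so $f(A)$ is $\mathcal{T}_c$-bounded.

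For the substantive direction I would localize. Fix $\mathcal{Q}\in\mathcal{P}_f$ and write $p=\|\cdot\|_{\mathcal{Q}}$, an $L^0$-seminorm; the goal is $\bigvee\{p(x):x\in A\}\in L^0_+(\mathcal{F})$. Let $E_p$ be the $RN$ module obtained from $(E,p)$ by factoring out $\{x:p(x)=0\}$, and let $E_p^\ast$ be its random conjugate space. By the facts recalled after Proposition 2.11, $E_p^\ast$ is $\mathcal{T}_{\varepsilon,\lambda}$-complete, hence by Proposition 2.26 it has the countable concatenation property and is $\mathcal{T}_c$-complete; thus $E_p^\ast$ is an admissible ``$E_1$'' for Theorem 2.34. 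Every $x\in A$ induces the canonical evaluation $\hat{x}\colon E_p^\ast\to L^0(\mathcal{F},K)$, $\hat{x}(f)=f(x)$, which lies in $B(E_p^\ast,L^0(\mathcal{F},K))$ with $\|\hat{x}\|\leq p(x)$; moreover the random Hahn--Banach theorem for $RN$ modules furnishes for each $x$ a norming $f$ (with $\|f\|\leq 1$ and $|f(x)|=p(x)$), so in fact $\|\hat{x}\|=p(x)$. Each such $f$, viewed on $E$, is dominated by a multiple of $p=\|\cdot\|_{\mathcal{Q}}$, hence is of type I and so lies in $E^\ast_c=E^\ast_I$ by Proposition 2.19.

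The hypothesis now says precisely that $\{\hat{x}:x\in A\}$ is pointwise $\mathcal{T}_c$-bounded: for each $f\in E_p^\ast\subset E^\ast_c$ the set $\{\hat{x}(f):x\in A\}=f(A)$ is $\mathcal{T}_c$-bounded in $L^0(\mathcal{F},K)$. Applying Theorem 2.34 with $E_1=E_p^\ast$ and $E_2=L^0(\mathcal{F},K)$ yields that $\{\hat{x}:x\in A\}$ is $\mathcal{T}_c$-bounded in $B(E_p^\ast,L^0(\mathcal{F},K))$, i.e. $\bigvee\{\|\hat{x}\|:x\in A\}=\bigvee\{p(x):x\in A\}\in L^0_+(\mathcal{F})$. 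Since $\mathcal{Q}$ was arbitrary, the reduction of the first paragraph gives that $A$ is $\mathcal{T}_c$-bounded.

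I expect the main obstacle to be the sufficiency, and within it the two structural inputs that feed Theorem 2.34: verifying that the localized conjugate module $E_p^\ast$ is $\mathcal{T}_c$-complete with the countable concatenation property (so the resonance theorem genuinely applies), and securing the exact equality $\|\hat{x}\|=p(x)$ through a random Hahn--Banach norming argument. It is the lower bound $p(x)\le\|\hat{x}\|$, not the trivial $\|\hat{x}\|\le p(x)$, that transmits the resonance conclusion back to the $L^0$-seminorms; the remaining steps (the boundedness reduction and the necessity) are routine.
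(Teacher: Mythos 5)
Your argument is correct, but it is a genuinely different route from the one the paper takes. The paper disposes of this theorem in one line: it observes that $\mathcal{T}_c$--boundedness coincides with a.s.\ boundedness (Lemma 2.37, ``clear by definition''), that $E^{\ast}_c=E^{\ast}_I$ (Proposition 2.19), and then declares the statement a corollary of Proposition 2.39(3), which is quoted from the earlier literature without proof. You instead give a self-contained derivation: after the same reduction to a.s.\ boundedness, you fix $\mathcal{Q}\in\mathcal{P}_f$, pass to the quotient $RN$ module $E_p$ of $(E,\|\cdot\|_{\mathcal{Q}})$, embed $A$ via the evaluation operators $\hat{x}$ into $B(E_p^{\ast},L^0(\mathcal{F},K))$, and feed the pointwise boundedness hypothesis into the random resonance theorem (Theorem 2.34) with $E_1=E_p^{\ast}$ --- which is legitimate, since $E_p^{\ast}$ is $\mathcal{T}_{\varepsilon,\lambda}$--complete by the remark after Proposition 2.11 and hence $\mathcal{T}_c$--complete with the countable concatenation property by Proposition 2.26; there is no circularity, as Theorem 2.34 is established independently via Proposition 2.38(2). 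The one external ingredient you need is the norming identity $\|\hat{x}\|=p(x)$ from the random Hahn--Banach theorem for $RN$ modules; the paper does not state that theorem formally but uses it in exactly this norming capacity in the proof of Theorem 3.37, so the ingredient is available. In effect you have reconstructed the proof of Proposition 2.39(3) (the classical ``weakly bounded implies bounded'' argument through the second conjugate, transplanted to the random setting), which buys self-containedness within the paper's toolkit at the cost of a longer argument; the paper's route buys brevity at the cost of outsourcing the substance to prior references.
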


Theorems 2.34 and 2.35 can be implied by the work on random resonance theorem at the earlier stage of $RLC$ spaces. To see this, let us recall:

\begin{definition} (See \cite{TXG-PHD,TXG-Module}.) Let $(E,\mathcal{P})$ be an $RLC$ space over $K$ with base $(\Omega,\mathcal{F},P)$. A set $A\subset E$ is said to be a.s. bounded if $\bigvee\{\|a\|:a\in A\}\in L^{0}_{+}(\mathcal{F})$ for each $\|\cdot\|\in\mathcal{P}$.
\end{definition}

Lemma 2.37 below is clear by definition.

\begin{lemma} Let $(E,\mathcal{P})$ be an $RLC$ module. Then a set $A$ of $E$ is $\mathcal{T}_c$--bounded iff $A$ is a.s. bounded.
\end{lemma}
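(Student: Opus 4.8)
The plan is to reduce both properties to one finiteness condition on the suprema of the relevant $L^0$--seminorms, using the explicit local base $\{N_\theta(Q,\varepsilon)\mid Q\in\mathcal{P}_f,\ \varepsilon\in L^0_{++}(\mathcal{F})\}$ for $\mathcal{T}_c$ (Definition 2.18) together with the characterization of $L^0$--absorption for $L^0$--balanced sets in Remark 2.15. First I would observe that, since $\{N_\theta(Q,\varepsilon)\}$ is a local base at $\theta$ and since $A$ being $L^0$--absorbed by a smaller neighborhood forces it to be $L^0$--absorbed by every larger one (if $|\eta|\leq\xi$ gives $\eta A\subset N_\theta(Q,\varepsilon)\subset U$, then $A$ is $L^0$--absorbed by $U$), the set $A$ is $\mathcal{T}_c$--bounded iff $A$ is $L^0$--absorbed by each basic neighborhood $N_\theta(Q,\varepsilon)$.

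Next I would make the absorption condition explicit. Each $N_\theta(Q,\varepsilon)$ is $L^0$--balanced, so by Remark 2.15, $A$ is $L^0$--absorbed by $N_\theta(Q,\varepsilon)$ iff there is $\xi\in L^0_{++}(\mathcal{F})$ with $A\subset\xi N_\theta(Q,\varepsilon)$. A short computation using $\|\xi x\|_Q=\xi\|x\|_Q$ for $\xi\in L^0_{++}(\mathcal{F})$ gives $\xi N_\theta(Q,\varepsilon)=N_\theta(Q,\xi\varepsilon)$, so the condition becomes: there is $\eta:=\xi\varepsilon\in L^0_{++}(\mathcal{F})$ with $\|a\|_Q\leq\eta$ for all $a\in A$, i.e. $\bigvee\{\|a\|_Q\mid a\in A\}\leq\eta$. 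Since an element of $\bar{L}^0_+(\mathcal{F})$ is dominated by some element of $L^0_{++}(\mathcal{F})$ exactly when it is finite almost surely (take $\eta=\zeta+1$ for the nontrivial direction), the absorption condition is equivalent to $\bigvee\{\|a\|_Q\mid a\in A\}\in L^0_+(\mathcal{F})$, independently of $\varepsilon$. Hence $A$ is $\mathcal{T}_c$--bounded iff $\bigvee\{\|a\|_Q\mid a\in A\}\in L^0_+(\mathcal{F})$ for every $Q\in\mathcal{P}_f$.

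Finally I would match this with a.s. boundedness (Definition 2.36). Taking $Q=\{\|\cdot\|\}$ shows the $\mathcal{T}_c$--boundedness condition implies $\bigvee\{\|a\|\mid a\in A\}\in L^0_+(\mathcal{F})$ for each single $\|\cdot\|\in\mathcal{P}$. For the converse, for $Q=\{\|\cdot\|_1,\dots,\|\cdot\|_m\}$ I would use $\|a\|_Q=\bigvee_{i=1}^m\|a\|_i$ together with the reordering of suprema in the complete lattice $\bar{L}^0(\mathcal{F})$ to write $\bigvee_{a\in A}\|a\|_Q=\bigvee_{i=1}^m\bigvee_{a\in A}\|a\|_i$; as a finite supremum of finitely many elements of $L^0_+(\mathcal{F})$, this again lies in $L^0_+(\mathcal{F})$, giving the $\mathcal{T}_c$--boundedness condition for every finite $Q$. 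The equivalence of the two characterizations then yields the lemma. The only point requiring any care is this last step, namely verifying that a finite supremum preserves almost-sure finiteness and that the double supremum may be reordered, but this is routine in $\bar{L}^0(\mathcal{F})$; the real content of the proof is merely the translation of the topological absorption language into the finiteness-of-supremum language carried out in the first two paragraphs.
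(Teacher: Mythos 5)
Your proof is correct; the paper itself offers no argument for this lemma beyond the remark that it is ``clear by definition,'' and your write-up is precisely the intended unwinding of the definitions (reduction to the basic neighborhoods $N_\theta(Q,\varepsilon)$, the identity $\xi N_\theta(Q,\varepsilon)=N_\theta(Q,\xi\varepsilon)$, the equivalence of domination by an element of $L^0_{++}(\mathcal{F})$ with almost-sure finiteness, and the interchange of the finite and arbitrary suprema). Every step checks out, so nothing further is needed.
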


Thus, by Proposition 2.26 one can easily see that Theorem 2.34 is a corollary to (2) of Proposition 2.38 below. Since a $\mathcal{T}_{\varepsilon,\lambda}$--complete $RN$ module is a Frech$\acute{e}$t space (namely a complete metrizable linear topological space), (1) of Proposition 2.38 can be proved with the aid of the classical resonance theorem (see \cite{Yosida}). Hence (2) of Proposition 2.38 is more interesting.

\begin{proposition} ($See$ \cite{TXG-PHD,TXG-Module}.) Let $(E_1,\|\cdot\|_1)$ and $(E_2,\|\cdot\|_2)$ be two $RN$ modules over $K$ with base $(\Omega,\mathcal{F},P)$ such that $E_1$ is $\mathcal{T}_{\varepsilon,\lambda}$--complete. Given a subset $\{T_{\alpha},\alpha\in\Lambda\}$ in $B(E_1,E_2)$, we have the following:

\noindent (1). $\{T_{\alpha},\alpha\in\Lambda\}$ is $\mathcal{T}_{\varepsilon,\lambda}$--bounded in $B(E_1,E_2)$ iff $\{T_{\alpha}x,\alpha\in\Lambda\}$ is $\mathcal{T}_{\varepsilon,\lambda}$--bounded in $E_2$ for all
$x\in E_1$.

\noindent (2). $\{T_{\alpha},\alpha\in\Lambda\}$ is a.s. bounded in $B(E_1,E_2)$ iff  $\{T_{\alpha}x,\alpha\in\Lambda\}$ is a.s. bounded in $E_2$ for all
$x\in E_1$.
\end{proposition}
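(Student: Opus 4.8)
The plan is to settle the easy implications uniformly and then handle the two sufficiency directions separately, reserving the real work for (2). For necessity in both parts, note that $\|T_\alpha x\|_2\le\|T_\alpha\|\,\|x\|_1\le\big(\bigvee_{\beta\in\Lambda}\|T_\beta\|\big)\|x\|_1$ for every $x\in E_1$ and every $\alpha\in\Lambda$, where $\|T_\alpha\|=\bigvee\{\|T_\alpha x\|_2:\|x\|_1\le 1\}$ by Proposition 2.11. Thus if $\{T_\alpha,\alpha\in\Lambda\}$ is a.s.\ bounded (resp.\ $\mathcal T_{\varepsilon,\lambda}$--bounded) in $B(E_1,E_2)$, then for each fixed $x$ the family $\{T_\alpha x,\alpha\in\Lambda\}$ is dominated by a single element of $L^0_+(\mathcal F)$ (resp.\ is bounded in probability), hence a.s.\ bounded (resp.\ $\mathcal T_{\varepsilon,\lambda}$--bounded) in $E_2$.

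For the sufficiency in (1), I would invoke the classical theory: since $E_1$ is $\mathcal T_{\varepsilon,\lambda}$--complete, $(E_1,\mathcal T_{\varepsilon,\lambda})$ is a complete metrizable topological vector space, hence a Baire space, and each $T_\alpha$ is a $\mathcal T_{\varepsilon,\lambda}$--continuous linear map by Proposition 2.11. From the pointwise $\mathcal T_{\varepsilon,\lambda}$--boundedness of $\{T_\alpha x\}$ the classical resonance theorem (see \cite{Yosida}) gives that $\{T_\alpha\}$ is equicontinuous; reading equicontinuity back through the identity $\|T_\alpha\|=\bigvee\{\|T_\alpha x\|_2:\|x\|_1\le1\}$ shows $\{\|T_\alpha\|\}$ is bounded in probability, i.e.\ $\{T_\alpha\}$ is $\mathcal T_{\varepsilon,\lambda}$--bounded in $B(E_1,E_2)$.

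The substance is the sufficiency in (2). First I would set $\phi(x)=\bigvee_{\alpha\in\Lambda}\|T_\alpha x\|_2$, which by hypothesis lies in $L^0_+(\mathcal F)$ for every $x\in E_1$; it is a lower semicontinuous $L^0$--seminorm (a supremum of the continuous maps $x\mapsto\|T_\alpha x\|_2$) and it is local, i.e.\ $\phi(\tilde I_A x)=\tilde I_A\phi(x)$ for all $A\in\mathcal F$, because each $T_\alpha$ is a module homomorphism. Interchanging suprema gives $\bigvee_{\alpha\in\Lambda}\|T_\alpha\|=\bigvee\{\phi(x):\|x\|_1\le1\}=:\eta\in\bar L^0_+(\mathcal F)$, so the whole problem reduces to proving $\eta\in L^0_+(\mathcal F)$, that is $P([\eta=+\infty])=0$. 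I would prove this by a condensation--of--singularities argument adapted to the random norm: assuming $B:=[\eta=+\infty]$ has positive probability, for each $k$ the events $[\phi(y)>4^k]$ cover $B$ up to a null set as $y$ runs over the unit ball, and using the countable--cover property of suprema in $\bar L^0(\mathcal F)$ together with the locality of $\phi$ I would extract $y_k$ with $\|y_k\|_1\le1$, an index $\alpha_k$, and a set $B_k\subset B$ with $P(B_k)\ge\delta$ for a fixed $\delta>0$, on which $\|T_{\alpha_k}y_k\|_2>4^k$. Forming $x=\sum_{k=1}^{\infty}2^{-k}y_k$, which converges in $E_1$ by $\mathcal T_{\varepsilon,\lambda}$--completeness, the weights $2^{-k}$ against the thresholds $4^k$ would force $\phi(x)=+\infty$ on the positive--measure set $\limsup_k B_k$, contradicting $\phi(x)\in L^0_+(\mathcal F)$.

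The hard part will be precisely this construction, which is where (2) genuinely exceeds (1): one cannot deduce (2) from (1), since $\mathcal T_{\varepsilon,\lambda}$--boundedness only controls $\{\|T_\alpha\|\}$ in probability while the essential supremum $\bigvee_{\alpha}\|T_\alpha\|$ over the possibly uncountable index set $\Lambda$ may still escape $L^0_+(\mathcal F)$. Two points demand care: first, the classical Baire covering by $\{x:\phi(x)\le n\}$ fails, because an a.s.\ finite $\phi(x)$ need not be bounded by a constant, so it must be replaced by a measurable stratification exploiting $\phi(\tilde I_A x)=\tilde I_A\phi(x)$; and second, the tail $\sum_{j\ne k}2^{-j}\|T_{\alpha_k}y_j\|_2$ must be absorbed uniformly on the fixed--measure sets $B_k$, which is the random analogue of Yosida's condensation lemma and the genuinely delicate estimate. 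Once these are in place the contradiction closes the proof.
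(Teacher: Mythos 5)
First, a point of reference: the paper does not prove Proposition 2.38 at all --- it is quoted from \cite{TXG-PHD,TXG-Module}, and the only guidance given is the remark that, since a $\mathcal{T}_{\varepsilon,\lambda}$--complete $RN$ module is a Fr\'echet space, part (1) follows from the classical resonance theorem, ``hence (2) is more interesting.'' Your treatment of the two necessity directions and of the sufficiency in (1) is correct and coincides with exactly the route the paper indicates. Your observation that (2) does not follow from (1) --- because $\mathcal T_{\varepsilon,\lambda}$--boundedness of $\{\|T_\alpha\|\}$ (boundedness in probability) is strictly weaker than $\bigvee_\alpha\|T_\alpha\|\in L^0_+(\mathcal F)$ --- is also correct and is precisely why the paper singles out (2).

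The sufficiency in (2), however, contains two genuine gaps, both located at the step you yourself defer. (i) The extraction of a \emph{single} index $\alpha_k$ together with a set $B_k\subset B$ of measure at least a \emph{fixed} $\delta>0$ on which $\|T_{\alpha_k}y_k\|_2>4^k$ is not available: the essential supremum $\phi(y_k)=\bigvee_\alpha\|T_{\alpha}y_k\|_2$ over a possibly uncountable $\Lambda$ is only realized through countable subfamilies, and no single index need carry a fixed proportion of $B$ (the directedness/locality trick that lets you patch the $y$'s together does not apply to the operators $T_\alpha$, which cannot be concatenated within the given family). Without a uniform $\delta$, $P(\limsup_kB_k)$ may vanish. (ii) Even granting (i), the tail estimate does not close with your parameters. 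On $B_k$ you get $\|T_{\alpha_k}x\|_2\ge 2^{-k}\|T_{\alpha_k}y_k\|_2-\|T_{\alpha_k}(x-2^{-k}y_k)\|_2$; the head $\sum_{j<k}2^{-j}T_{\alpha_k}y_j$ is indeed controlled by $\phi$ of the (already fixed) partial sum, but the future tail is bounded only by $\sum_{j>k}2^{-j}\|T_{\alpha_k}\|\,\|y_j\|_1\le 2^{-k}\|T_{\alpha_k}\|$, which is of exactly the same order as the hump $2^{-k}\|T_{\alpha_k}y_k\|_2\le 2^{-k}\|T_{\alpha_k}\|$ --- and $\|T_{\alpha_k}\|$ is itself at least $4^k$ and otherwise uncontrolled on $B_k$. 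So ``weights $2^{-k}$ against thresholds $4^k$'' does not force $\phi(x)=+\infty$. Working with $\phi$ alone fares no better: $\phi(x)\ge 2^{-k}\phi(y_k)-\phi(x-2^{-k}y_k)$ is circular, since $\phi(x-2^{-k}y_k)$ can itself be of order $2^{-k}\phi(y_k)$. The classical repair (Sokal's form of the gliding hump: successive approximations $x_k$ with $\|x_k-x_{k-1}\|_1\le 3^{-k}$, humps of size $\tfrac23 3^{-k}\|T_{\alpha_k}\|$ dominating tails of size $\tfrac12 3^{-k}\|T_{\alpha_k}\|$) requires both quantities to be proportional to the \emph{same} $\|T_{\alpha_k}\|$, which brings you straight back to problem (i). These are not routine details to be filled in; they are the substance of the result, and as written the contradiction does not follow.
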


One can also see that Theorem 2.35 is only a corollary to (3) of Proposition 2.39 below. However, (1) and (2) of Proposition 2.39 are also interesting because of the use of random conjugate spaces.

\begin{proposition} ($See$ \cite{TXG-PHD,TXG-Module,TXG-Sur}.) Let $(E,\mathcal{P})$ be an $RLC$ space over $K$ with base $(\Omega,\mathcal{F},P)$ and $A$ a subset of $E$. Then we have:

\noindent (1). $A$ is $\mathcal{T}_{\varepsilon,\lambda}$--bounded iff $f(A)$ is $\mathcal{T}_{\varepsilon,\lambda}$--bounded in $(L^0(\mathcal{F},K),\mathcal{T}_{\varepsilon,\lambda})$ for each $f\in E^{\ast}_{I}$.

\noindent (2). $A$ is $\mathcal{T}_{\varepsilon,\lambda}$--bounded iff $f(A)$ is $\mathcal{T}_{\varepsilon,\lambda}$--bounded in $(L^0(\mathcal{F},K),\mathcal{T}_{\varepsilon,\lambda})$ for each $f\in E^{\ast}_{II}$.

\noindent (3). $A$ is a.s. bounded iff $f(A)$ is a.s. bounded in $(L^0(\mathcal{F},K),|\cdot|)$ for each $f\in E^{\ast}_{I}$.

\noindent (4). $A$ is a.s. bounded iff $f(A)$ is a.s. bounded in $(L^0(\mathcal{F},K),|\cdot|)$ for each $f\in E^{\ast}_{II}$.
\end{proposition}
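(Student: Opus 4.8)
The plan is to handle the four assertions together, exploiting the inclusion $E^\ast_I\subset E^\ast_{II}$ to halve the work, and to reduce the sufficiency (``if'') directions to the random resonance theorem via the canonical embedding into the random bidual. First I would dispose of the four necessity (``only if'') directions, which are routine estimates. If $f\in E^\ast_I$ there are $\xi\in L^0_+(\mathcal{F})$ and $\mathcal{Q}\in\mathcal{P}_f$ with $|f(x)|\leq\xi\|x\|_{\mathcal{Q}}$ for all $x\in E$; hence $\bigvee_{a\in A}|f(a)|\leq\xi\bigvee_{a\in A}\|a\|_{\mathcal{Q}}$, which is a.s. finite when $A$ is a.s. bounded (giving (3)) and bounded in probability when $A$ is $\mathcal{T}_{\varepsilon,\lambda}$--bounded (giving (1)), since $\mathcal{Q}$ is finite. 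For $f\in E^\ast_{II}$ the same computation with $\|\cdot\|=\sum_{n}\tilde I_{A_n}\|\cdot\|_{\mathcal{Q}_n}\in\mathcal{P}_{cc}$ yields $\bigvee_{a}|f(a)|\leq\xi\sum_n\tilde I_{A_n}\bigvee_a\|a\|_{\mathcal{Q}_n}$, where each summand is a.s. finite (resp. bounded in probability) on $A_n$ and the $A_n$ partition $\Omega$; this gives the necessity halves of (2) and (4).

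The real content is the sufficiency direction, and here it is enough to prove the type I versions: if the hypothesis holds for all $f\in E^\ast_{II}$ then a fortiori it holds for all $f\in E^\ast_I$ (as $E^\ast_I\subset E^\ast_{II}$), so the type II sufficiency will follow at once from the type I sufficiency. Fix $\mathcal{Q}\in\mathcal{P}_f$ and set $p=\|\cdot\|_{\mathcal{Q}}$; since $A$ is a.s. bounded (resp. $\mathcal{T}_{\varepsilon,\lambda}$--bounded) precisely when $\{p(a):a\in A\}$ is a.s. finite (resp. bounded in probability) for every such $p$, it suffices to control this single family. Passing to the quotient $\tilde E=E/N_p$ with $N_p=\{x\in E:p(x)=0\}$ produces an $RN$ space $(\tilde E,\tilde p)$ with $\tilde p([a])=p(a)$, and the elements of $E^\ast_I$ subordinate to $p$ are exactly the lifts of the random conjugate space $\tilde E^\ast$. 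I would then consider the canonical map $J\colon\tilde E\to\tilde E^{\ast\ast}$, $J([a])=\widehat{[a]}$ with $\widehat{[a]}(g)=g([a])$, so that $|\widehat{[a]}(g)|\leq\|g\|\,p(a)$ and $\widehat{[a]}\in B(\tilde E^\ast,L^0(\mathcal{F},K))$; by the random Hahn--Banach norming property one has the crucial identity $\|\widehat{[a]}\|=\tilde p([a])=p(a)$. Thus $\{\widehat{[a]}:a\in A\}$ is a family in $B(\tilde E^\ast,L^0(\mathcal{F},K))$ whose operator norms are exactly $\{p(a):a\in A\}$.

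The hypothesis now reads precisely as: $\{\widehat{[a]}(g):a\in A\}=\{g([a]):a\in A\}$ is a.s. bounded (resp. bounded in probability) for every $g\in\tilde E^\ast$, i.e. $\{\widehat{[a]}\}$ is pointwise bounded on $\tilde E^\ast$. Since $\tilde E^\ast$ is itself an $RN$ module (Definition 2.3) and is $\mathcal{T}_{\varepsilon,\lambda}$--complete (random conjugate spaces of $RN$ spaces are always complete, as recorded after Proposition 2.11), I can apply the random resonance theorem, Proposition 2.38, with $E_1=\tilde E^\ast$ and $E_2=L^0(\mathcal{F},K)$: pointwise a.s. boundedness of $\{\widehat{[a]}\}$ upgrades to a.s. boundedness in $B(\tilde E^\ast,L^0(\mathcal{F},K))$ by part (2), and pointwise $\mathcal{T}_{\varepsilon,\lambda}$--boundedness upgrades to $\mathcal{T}_{\varepsilon,\lambda}$--boundedness by part (1). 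Reading off operator norms gives $\bigvee_{a}p(a)=\bigvee_a\|\widehat{[a]}\|<+\infty$ a.s. (resp. $\{p(a)\}$ bounded in probability); as $\mathcal{Q}$ was arbitrary, $A$ is a.s. bounded (resp. $\mathcal{T}_{\varepsilon,\lambda}$--bounded). This proves sufficiency for (1) and (3), and (2) and (4) follow from $E^\ast_I\subset E^\ast_{II}$ as noted.

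The step I expect to be the main obstacle is this sufficiency argument, and specifically the two inputs that make the resonance reduction legitimate: the random Hahn--Banach norming identity $\|\widehat{[a]}\|=p(a)$ (without the reverse inequality, boundedness of the embedded norms would not control $p(a)$), and the $\mathcal{T}_{\varepsilon,\lambda}$--completeness of $\tilde E^\ast$ (which is what licenses the uniform boundedness principle). One must also keep in mind throughout that every ``$\bigvee$'' is the essential supremum in $\bar L^0(\mathcal{F})$ and may be realized along a countable subfamily, but this is exactly the framework in which Proposition 2.38 is formulated, so no extra care beyond invoking it is required.
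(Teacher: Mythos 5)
The paper itself gives no proof of Proposition 2.39 --- it is quoted as a known preliminary from the cited references, with the surrounding text only remarking that Theorem 2.35 follows from part (3) and that the result is ``interesting because of the use of random conjugate spaces'' --- so there is no in-text argument to compare against. Judged on its own, your proof is correct and uses exactly the toolkit the paper assembles around the statement: the necessity halves are the routine domination estimates; the sufficiency halves go through the quotient $E/N_p$ by the null space of $p=\|\cdot\|_{\mathcal{Q}}$, the canonical embedding $[a]\mapsto\widehat{[a]}$ into $B(\tilde E^{\ast},L^{0}(\mathcal{F},K))$, the norming identity $\|\widehat{[a]}\|=p(a)$ supplied by the random Hahn--Banach theorem, and Proposition 2.38 applied to the $\mathcal{T}_{\varepsilon,\lambda}$--complete $RN$ module $\tilde E^{\ast}$; and the reduction of the type II cases to the type I cases via $E^{\ast}_{I}\subset E^{\ast}_{II}$ is sound (the type II hypothesis is the stronger one, and there is no circularity since 2.38 is independent of 2.39). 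The one imprecision worth fixing is in the necessity halves of (1) and (2): $\mathcal{T}_{\varepsilon,\lambda}$--boundedness of $f(A)$ means the family $\{f(a):a\in A\}$ is bounded in probability, which is strictly weaker than $\bigvee_{a\in A}|f(a)|$ being a.s.\ finite, so the essential supremum is the wrong object there (it is the right one only for (3) and (4)). The argument should be phrased family-wise: $|f(a)|\leq\xi\|a\|_{\mathcal{Q}}$ for each $a$, a fixed $\xi\in L^{0}_{+}(\mathcal{F})$ times a probabilistically bounded family is probabilistically bounded, and in the type II case one truncates the concatenated series $\sum_{n}\tilde I_{A_{n}}\|\cdot\|_{\mathcal{Q}_{n}}$ to finitely many $A_{n}$ carrying all but an arbitrarily small amount of probability before applying the same domination. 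This is a cosmetic repair, not a gap.
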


\begin{remark} $\mathcal{T}_c$--bounded sets (or more general a.s. bounded sets) have played a crucial role in random duality theory in this paper and in \cite{TXG-XXC}, whereas $\mathcal{T}_{\varepsilon,\lambda}$--bounded sets are important in other fields, for example, they are called ``probabilistically bounded sets" in the theory of probabilistic normed spaces (see \cite{SS}) and are called ``stochastically bounded sets" for Banach space-valued random elements in probability theory in Banach spaces.
\end{remark}

\section{Random convex analysis over random locally convex modules under the locally $L^{0}$--convex topology}

As has been mentioned in Section 1 of this paper, Lemma 2.28 of \cite{FKV} is key but false, we have to first correct it and the closely related basic results for the further development of random convex analysis. The first two parts of this section not only have corrected them but also have refreshed the related basic results based on the precise relation between $E^{\ast}_{\varepsilon,\lambda}$ and $E^{\ast}_{c}$.

\subsection{Separation under the locally $L^0$--convex topology}

First, Lemma 2.28 of \cite{FKV} should be modified as Lemma 3.1 below.

\begin{lemma} Let $(E,\mathcal{P})$ be an $RLC$ module over $K$ with base $(\Omega,\mathcal{F},P)$ such that $\mathcal{P}$ has the countable concatenation property, $M$ a $\mathcal{T}_c$--closed subset with the countable concatenation property  and $x\in E$ such that $\tilde{I}_A\{x\}\bigcap\tilde{I}_A M=\emptyset$ for all $A\in \mathcal{F}$ with $P(A)>0$. Then there is an $L^0$--convex, $L^0$--absorbent and $L^0$--balanced $\mathcal{T}_c$--neighborhood $U$ of $\theta$ such that $$\tilde{I}_A(x+U)\bigcap\tilde{I}_A(M+U)=\emptyset$$ for all $A\in\mathcal{F}$ with $P(A)>0$.
\end{lemma}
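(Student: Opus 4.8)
The plan is to imitate the classical recipe for separating a point from a closed set in a topological module---first produce a neighborhood giving a one-sided separation, then shrink it so that the two-sided separation drops out---but to carry it out ``stratifiedly'', gluing local data by means of the countable concatenation property (ccp) of both $\mathcal{P}$ and $M$. Concretely, I would first establish the one-sided statement that there is a basic $\mathcal{T}_c$--neighborhood $U_0=N_\theta(Q^{*},\varepsilon^{*})$ (automatically $L^0$--convex, $L^0$--absorbent and $L^0$--balanced) with $\tilde{I}_A(x+U_0)\cap\tilde{I}_A M=\emptyset$ for every $A\in\mathcal{F}$ with $P(A)>0$; the $U$ required by the lemma is then any symmetric $V=N_\theta(Q^{*},\varepsilon^{*}/2)$, for which $V+V\subset U_0$ and $-V=V$ (the latter by $L^0$--balancedness).

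The passage from one-sided to two-sided is routine and I would dispatch it first. If for some $A$ with $P(A)>0$ one had $\tilde{I}_A(x+v_1)=\tilde{I}_A(m+v_2)$ with $v_1,v_2\in V$ and $m\in M$, then $\tilde{I}_A m=\tilde{I}_A\big(x+(v_1-v_2)\big)$ with $v_1-v_2\in V+V\subset U_0$, so $\tilde{I}_A m\in\tilde{I}_A(x+U_0)\cap\tilde{I}_A M$, contradicting the one-sided separation. Hence $\tilde{I}_A(x+V)\cap\tilde{I}_A(M+V)=\emptyset$ for all positive--measure $A$, which is exactly the assertion with $U=V$.

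The substance, and where I expect the main obstacle, is the one-sided local separation: upgrading the purely set-theoretic ``everywhere-local'' separation in the hypothesis, together with $\mathcal{T}_c$--closedness of $M$, to a \emph{single} separating neighborhood. The first observation is that for a basic neighborhood $N_\theta(Q,\varepsilon)$ the condition ``$\tilde{I}_{A'}(x+N_\theta(Q,\varepsilon))\cap\tilde{I}_{A'}M=\emptyset$ for all positive $A'\subset A$'' is equivalent to ``$\|m-x\|_Q>\varepsilon$ a.e. on $A$ for every $m\in M$''. I would then run an exhaustion over the family $\mathcal{G}$ of all $A\in\mathcal{F}$ admitting such a witness $(Q,\varepsilon)$. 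This family is clearly hereditary, and---crucially using that $\mathcal{P}$ has ccp---it is closed under countable disjoint unions: given disjoint $A_k\in\mathcal{G}$ with witnesses $(Q_k,\varepsilon_k)$, the concatenated $L^0$--seminorm $\|\cdot\|^{*}=\sum_k\tilde{I}_{A_k}\|\cdot\|_{Q_k}$ again lies in $\mathcal{P}_{cc}=\mathcal{P}$ and, paired with $\varepsilon^{*}=\sum_k\tilde{I}_{A_k}\varepsilon_k\in L^0_{++}(\mathcal{F})$, witnesses $\bigcup_k A_k\in\mathcal{G}$. Consequently $\mathcal{G}$ has an essentially largest member $A^{*}$ with witness $(Q^{*},\varepsilon^{*})$, and it remains to prove $P\big((A^{*})^{c}\big)=0$, for then $U_0=N_\theta(Q^{*},\varepsilon^{*})$ separates everywhere.

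To finish, suppose $B:=(A^{*})^{c}$ has $P(B)>0$ and set $\rho_Q=\bigwedge\{\|m-x\|_Q:m\in M\}\in L^0_+(\mathcal{F})$ for each $Q\in\mathcal{P}_f$. If $\rho_Q>0$ on a positive-measure subset $B''\subset B$ for some $Q$, then (taking $\varepsilon=\rho_Q/2$ on $B''$) the pair $(Q,\varepsilon)$ witnesses $B''\in\mathcal{G}$, contradicting the maximality of $A^{*}$. So $\rho_Q=0$ a.e. on $B$ for every $Q$, and here is the delicate step I expect to be hardest: for each basic neighborhood $V=N_\theta(Q,\varepsilon)$, an exhaustion of $B$ glued by the ccp of $M$ produces $m_V\in M$ with $\tilde{I}_B m_V\in\tilde{I}_B x+V$, so the net $(\tilde{I}_B m_V)_V$ converges to $\tilde{I}_B x$ in $\mathcal{T}_c$. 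Continuity of multiplication by $\tilde{I}_{B^{c}}$ forces $\tilde{I}_{B^{c}}(\tilde{I}_B x)=0$; fixing any $m_0\in M$, the net $\tilde{I}_B m_V+\tilde{I}_{B^{c}}m_0$ stays in $M$ (ccp of $M$) and converges to $\tilde{I}_B x+\tilde{I}_{B^{c}}m_0$, whence $\mathcal{T}_c$--closedness of $M$ yields $\hat m\in M$ with $\tilde{I}_B\hat m=\tilde{I}_B x$, contradicting the hypothesis $\tilde{I}_B\{x\}\cap\tilde{I}_B M=\emptyset$. I want to stress that it is this net argument that neutralizes a possibly \emph{uncountable} index set $\mathcal{P}$ of directions---closedness handles the net limit while ccp of $M$ keeps the approximants inside $M$---whereas ccp of $\mathcal{P}$ is precisely what is needed to amalgamate the countably many directions arising in the exhaustion defining $\mathcal{G}$.
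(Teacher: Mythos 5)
Your argument is correct and is in substance the route the paper takes: your family $\mathcal{G}$ together with its essential supremum is a reformulation of the paper's random gap $e^{\ast}(x,M)=\bigvee\{e^{\ast}_{\mathcal{Q}}(x,M)~|~\mathcal{Q}\in\mathcal{P}_f\}$ with $e^{\ast}_{\mathcal{Q}}(x,M)=\bigwedge\{\varepsilon\in L^0_{++}(\mathcal{F})~|~U_{\mathcal{Q},\varepsilon}[x]\cap M\neq\emptyset\}$, the assembly of a single separating neighborhood by concatenating seminorms via $\mathcal{P}_{cc}=\mathcal{P}$ is the same device, and the passage from one-sided to two-sided separation via $V+V\subset U_0$ is identical to the tail of the proof of Lemma 2.28 of \cite{FKV}. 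The only real difference is that the paper obtains the key positivity $e^{\ast}(x,M)\wedge 1\in L^0_{++}(\mathcal{F})$ by citing Lemma 3.10 of \cite{TXG-JFA}, whereas your net-plus-closedness contradiction on the bad stratum $B$ (with the countable concatenation property of $M$ keeping the approximants inside $M$) proves that step from scratch, making your write-up self-contained.
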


Lemma 2.28 of \cite{FKV} only requires $M$ to satisfy the condition that $\tilde{I}_{A}M+\tilde{I}_{A^c}\subset M$ for all $A\in\mathcal{F}$, which is much weaker than the countable concatenation property as assumed in our Lemma 3.1. Example 3.2 below is a counterexample to Lemma 2.28 of \cite{FKV}.

Let us  first point out that Lemma 3.1 has been implied by Lemma 3.10 of \cite{TXG-JFA}, which can be explained as follows. Let us recalled from \cite{TXG-JFA}: for each $\mathcal{Q}\in\mathcal{P}_f$ and $\varepsilon\in L^{0}_{++}(\mathcal{F})$, let $U_{\mathcal{Q},\varepsilon}[x]=\{y\in E~|~\|x-y\|_{\mathcal{Q}}\leq\varepsilon\}$, $e^{\ast}_{\mathcal{Q}}(x,M)=\bigwedge\{\varepsilon\in L^0_{++}(\mathcal{F})~|~U_{\mathcal{Q},\varepsilon}[x]\bigcap M\neq\emptyset\}$ and $e^{\ast}(x,M)=\bigvee\{e^{\ast}_{\mathcal{Q}}(x,M)~|~\mathcal{Q}\in\mathcal{P}_{f}\}$. Then Lemma 3.10 of \cite{TXG-JFA} shows that $e^{\ast}(x,M)\bigwedge 1\in L^0_{++}(\mathcal{F})$, which is just what the proof of Lemma 2.28 of \cite{FKV} needs, the remainder of the proof of Lemma 3.1 is the same as the corresponding part of the proof of Lemma 2.28 of \cite{FKV}.

\begin{example} Let $(\Omega,\mathcal{F},P)$ be a nonatomic probability space (namely $\mathcal{F}$ does not include any $P$--atoms), $(E,\mathcal{P})=(L^{0}(\mathcal{F},R),|\cdot|)$ and $M=\{x\in E~|~$there exists a positive number $m_x$ such that $x>m_x$ on $\Omega\}$. Then Claim 3.3 below shows that $M$ is $L^0$--convex, $\mathcal{T}_c$--closed and $\mathcal{T}_c$--open. Further, Claim 3.4 below shows that $\tilde{I}_A\{0\}\bigcap\tilde{I}_A M=\emptyset$ for all $A\in\mathcal{F}$ with $P(A)>0$, but for each $L^0$--convex, $L^0$--absorbent and $L^0$--balanced $\mathcal{T}_c$--neighborhood $U$ of 0 there is an $A_U\in\mathcal{F}$ with $P(A_U)>0$ such that $$\tilde{I}_{A_U}U\bigcap\tilde{I}_{A_U}(M+U)\neq\emptyset.$$Thus this provides a counterexample to Lemma 2.28 of \cite{FKV}.
\end{example}

\begin{claim} $M$ in Example 3.2 is $L^0$--convex, $\mathcal{T}_c$--closed and $\mathcal{T}_c$--open.
\end{claim}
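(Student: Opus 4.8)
The plan is to verify the three claimed properties of $M=\{x\in E~|~\exists\, m_x>0 \text{ such that } x>m_x \text{ on }\Omega\}$ in the setting $(E,\mathcal{P})=(L^0(\mathcal{F},R),|\cdot|)$ separately, since each is of a different character. For $L^0$--convexity I would take $x,y\in M$ with constants $m_x,m_y>0$ and $\xi\in L^0_+(\mathcal{F})$ with $0\leq\xi\leq 1$; then $\xi x+(1-\xi)y$ is bounded below pointwise by $\min\{m_x,m_y\}$ on $\Omega$, because for almost every $\omega$ the value is a convex combination of two numbers each exceeding $\min\{m_x,m_y\}$. Hence $\xi x+(1-\xi)y\in M$ with witness constant $\min\{m_x,m_y\}>0$, so $M$ is $L^0$--convex. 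This step is routine.

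For the $\mathcal{T}_c$--openness, recall that a $\mathcal{T}_c$--neighborhood base at a point $x$ is given by $x+U(\varepsilon)$ with $U(\varepsilon)=\{\xi\in L^0(\mathcal{F},R)~|~|\xi|\leq\varepsilon\}$ for $\varepsilon\in L^0_{++}(\mathcal{F})$. Given $x\in M$ with constant $m_x>0$, I would choose the \emph{constant} random variable $\varepsilon=\tfrac{m_x}{2}$ (which lies in $L^0_{++}(\mathcal{F})$); then any $y=x+\eta$ with $|\eta|\leq\varepsilon$ satisfies $y>m_x-\tfrac{m_x}{2}=\tfrac{m_x}{2}$ on $\Omega$, so $y\in M$. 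Thus $x+U(\varepsilon)\subset M$ and $M$ is $\mathcal{T}_c$--open. The essential point here, and the reason $M$ behaves so differently from a classical half-line, is that a \emph{constant} $\varepsilon$ is admissible in the definition of $\mathcal{T}_c$, so uniform room below $x$ survives.

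The $\mathcal{T}_c$--closedness is the step I expect to require the most care, and I would establish it by showing the complement $M^c=\{x\in E~|~\text{for every }m>0,\ P[x\leq m]>0\}$ is $\mathcal{T}_c$--open. Given $x\in M^c$, I need $\varepsilon\in L^0_{++}(\mathcal{F})$ with $x+U(\varepsilon)\subset M^c$, i.e.\ every $y$ with $|y-x|\leq\varepsilon$ still fails to be bounded away from $0$ by a positive constant. The natural choice is a \emph{non-constant} $\varepsilon$ adapted to how $x$ approaches its essential infimum: for instance one can take $\varepsilon=\tfrac12\max\{x-(\operatorname*{ess\,inf} x),\,\delta\}$ for a suitable small constant $\delta$, or more robustly build $\varepsilon$ so that on the sets $[x\leq m]$ (which have positive probability for every $m>0$) the perturbation cannot push $x$ above a fixed level. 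The crux is that for any candidate constant $c>0$ one must exhibit a positive-probability set on which $y\leq c$; since $x$ itself visits every level $\leq m$ on a positive-probability set and $|y-x|\leq\varepsilon$ with $\varepsilon$ chosen small precisely there, $y$ stays below $c$ on such a set. Here I must be careful to handle both the case $\operatorname*{ess\,inf} x>0$ (where $x\notin M$ forces the infimum not to be attained uniformly, exploiting nonatomicity) and the case $\operatorname*{ess\,inf} x\leq 0$; the nonatomicity of $(\Omega,\mathcal{F},P)$ guarantees the relevant level sets have positive probability and can be chosen of arbitrarily small measure, which is exactly what makes $M$ simultaneously open and closed while being a proper nonempty subset.
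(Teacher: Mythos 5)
Your verification of $L^0$--convexity and of $\mathcal{T}_c$--openness is correct and coincides with the paper's argument (the paper likewise takes the constant $\varepsilon=\tfrac{1}{2}m_y$ for openness). The problem is in the closedness part, which you correctly identify as the delicate step but do not actually carry out. First, a structural point: $x\in M$ iff $\operatorname{ess\,inf}x>0$ (if the essential infimum is $a>0$ then $x>a/2$ on $\Omega$), so the case ``$\operatorname{ess\,inf}x>0$ and $x\notin M$'' that you propose to treat is vacuous; the relevant case split is by the sign behaviour of $x$. Second, and more seriously, your one concrete choice $\varepsilon=\tfrac12\max\{x-\operatorname{ess\,inf}x,\,\delta\}$ fails: take $x=0$, which lies in $E\setminus M$. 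Then $\varepsilon$ is the constant $\delta/2\in L^0_{++}(\mathcal F)$, and $y=x+\varepsilon=\delta/2$ belongs to $x+U(\varepsilon)$ and satisfies $y>\delta/4$ on $\Omega$, hence $y\in M$. So this neighborhood is not contained in the complement. The same failure occurs whenever $P([x=0])>0$ and $x\geq 0$, e.g. $x=\tilde I_{D^c}$.

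That is precisely the hard case, and the idea needed to resolve it is missing from your sketch. When $x\geq 0$ a.e. and $D=[x=0]$ has positive probability, any admissible $\varepsilon\in L^0_{++}(\mathcal F)$ must be strictly positive on $D$, and on $D$ one has $x+\varepsilon=\varepsilon$; to keep $x+U(\varepsilon)$ inside $E\setminus M$ you must arrange that $P\bigl(D\cap[\varepsilon\leq c]\bigr)>0$ for every constant $c>0$, i.e. $\varepsilon$ is strictly positive on $D$ yet not bounded below by any positive constant on any positive-measure subset exhaustion of $D$. This is where nonatomicity is really used — not to make level sets ``of arbitrarily small measure'' as you say, but to partition $D$ into countably many sets $D_n$ with $P(D_n)=2^{-n}P(D)>0$ and to set $\varepsilon=\sum_{n}\tfrac1n\tilde I_{D_n}$ on $D$, so that $x+\varepsilon\leq\tfrac1n$ on $D_n$ for every $n$. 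Your ``more robust'' alternative of choosing $\varepsilon$ small on the sets $[x\leq m]$ cannot work as stated, because those sets decrease to $[x\leq 0]$ and ``small'' there has to mean exactly the property above, which no single constant and no formula of the type you wrote achieves. The remaining cases are easy by comparison: if $P([x<0])>0$ one takes $\varepsilon=\tfrac12|x|$ on $[x<0]$, and if $x>0$ a.e. with $P([x<\tfrac1n])>0$ for all $n$ the sets $[\tfrac1{n+1}\leq x<\tfrac1n]$ already have positive measure for infinitely many $n$ and no nonatomicity is needed. As it stands, the closedness argument has a genuine gap.
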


\begin{proof} First, it is obvious that $M$ is $L^0$--convex.

Second, $M$ is $\mathcal{T}_c$--open. For any $y\in M$, by definition there is some positive number $m_y$ such that $y>m_y$ on $\Omega$. Let $\varepsilon^0\equiv\frac{1}{2}m_y$ and $\varepsilon$ be the equivalence class of $\varepsilon^0$, then $\varepsilon\in L^0_{++}(\mathcal{F})$ and hence $B(\varepsilon):=\{x\in E~|~|x|\leq\varepsilon\}$ is a $\mathcal{T}_c$--neighborhood of 0, it is also easy to check that $y+B(\varepsilon)\subset M$.

Finally, $M$ is also $\mathcal{T}_c$--closed, namely $E\setminus M$ is $\mathcal{T}_c$--open, which will be proved in the following three cases.

Case (1): when $y\in E\setminus M$ and $y\not\in L^0_+(\mathcal{F})$, there is $D\in\mathcal{F}$ with $P(D)>0$ such that $y<0$ on $D$. Let $\varepsilon=\tilde{I}_{D^c}+\frac{1}{2}\tilde{I}_{D}|y|(\in L^{0}_{++}(\mathcal{F}))$ and $B(\varepsilon)=\{x\in E~|~|x|\leq\varepsilon\}$, then $y+B(\varepsilon)\subset E\setminus M$. In fact, for any $z\in y+B(\varepsilon)$, $z-y\leq \tilde{I}_{D^c}+\frac{1}{2}\tilde{I}_{D}|y|$ implies that $z\leq y+\frac{1}{2}|y|=-\frac{1}{2}|y|<0$ on $D$, namely $z\in E\setminus M$.

Case (2): when $y\in E\setminus M$, $y\in L^0_+(\mathcal{F})$ and $y\not\in L^{0}_{++}(\mathcal{F})$, then there is $D\in\mathcal{F}$ with $P(D)>0$ such that $y=0$ on $D$. Since $(\Omega,\mathcal{F},P)$ is nonatomic, there is a countable partition $\{D_n,n\in N\}$ of $D$ to $\mathcal{F}$ such that $P(D_n)=\frac{1}{2^n}P(D)$ for each $n\in N$. Let $\varepsilon=\tilde{I}_{D^c}+\Sigma_{n=1}^{\infty}\frac{1}{n}\tilde{I}_{D_n} (\in L^{0}_{++}(\mathcal{F}))$ and $B(\varepsilon)=\{x\in E~|~|x|\leq\varepsilon\}$, then $z\leq \frac{1}{n}$ on $D_n$ for any $z\in y+B(\varepsilon)$, which implies that $P\{\omega\in\Omega~|~z(\omega)\leq\frac{1}{n}\}\geq P(D_n)>0$ for all $n\in N$, namely $y+B(\varepsilon)\subset E\setminus M$.

Case (3): when $y\in E\setminus M$ and $y\in L^0_{++}(\mathcal{F})$, then $P\{\omega\in \Omega~|~y(\omega)<\frac{1}{n}\}>0$ for each $n\in N$ by the definition of $M$. Let $H_n=[y<\frac{1}{n}]$ and $D_n=[\frac{1}{n+1}\leq y<\frac{1}{n}]$ for any $n\in N$, then $D_i\bigcap D_j=\emptyset$ for $i\neq j$ and $H_n=\Sigma_{i=n}^{\infty}D_i$. Obviously, it is impossible that there is some $k\in N$ such that $P(D_n)=0$ for all $n\geq k$. So, we can suppose, without loss of generality, that $P(D_n)>0$ for each $n\in N$. Let $D=\Sigma_{n=1}^{\infty}D_n$, $\varepsilon=I_{D^{c}}+\Sigma_{n=1}^{\infty} \frac{1}{n}I_{D_{n}}(\in L^{0}_{++}(\mathcal{F}))$ and $B(\varepsilon)
=\{x\in E|~|x|\leq \varepsilon\}$, then for any $z\in y+B(\varepsilon)$, $z\leq\frac{2}{n}$ on $D_n$, which means that $P\{\omega\in \Omega|z(\omega)\leq\frac{2}{n}\}\geq P(D_n)>0$ for each $n\in N$, and hence $z\in E\setminus M$. \hfill $\square$
\end{proof}

\begin{claim} Let $(E, {\mathcal P})$ and $M$ be the same as in Example 3.2. Then ${\tilde I}_A\{0\}\cap {\tilde I}_AM=\emptyset$ for all $A\in {\mathcal F}$ with $P(A)>0$. But for any $L^0$--convex, $L^0$--absorbent and $L^0$--balanced ${\mathcal T}_c$--neighborhood $U$ of $0$ there is always $A_U\in {\mathcal F}$ with $P(A_U)>0$ such that ${\tilde I}_{A_U}U\cap {\tilde I}_{A_U}(M+U)\neq\emptyset$.
\end{claim}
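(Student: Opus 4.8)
The first assertion I would dispose of directly from the definition of $M$. Suppose, for contradiction, that some $m\in M$ satisfied $\tilde{I}_A m=\tilde{I}_A\cdot 0=0$ for some $A\in\mathcal{F}$ with $P(A)>0$. By definition of $M$ there is a positive number $m_x$ with $m>m_x$ on $\Omega$, so $0=\tilde{I}_A m\geq m_x\tilde{I}_A$; but $m_x\tilde{I}_A=m_x>0$ on $A$ and $P(A)>0$, a contradiction. Hence no element of $M$ is annihilated by $\tilde{I}_A$, i.e. $0\notin\tilde{I}_A M$, which is exactly $\tilde{I}_A\{0\}\cap\tilde{I}_A M=\emptyset$ for every $A\in\mathcal{F}$ with $P(A)>0$.

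For the second assertion the plan is to exploit the tension between two features of this example: every member of $M$ is bounded below by a \emph{strictly positive constant}, yet those constants may be taken arbitrarily small, while any $\mathcal{T}_c$--neighbourhood $U$ of $0$ must contain a ball $B(\varepsilon)=\{x\in E~|~|x|\leq\varepsilon\}$ for some $\varepsilon\in L^0_{++}(\mathcal{F})$. First I would fix such an $\varepsilon$. Since $\varepsilon>0$ a.s., the events $[\varepsilon\geq\frac{1}{n}]$ increase to $\Omega$, so there is $n\in N$ with $P([\varepsilon\geq\frac{1}{n}])>0$; set $A_U=[\varepsilon\geq\frac{1}{n}]$. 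The decisive step is then to note that the small constant $m:=\frac{1}{n}$ lies in $M$ (take $m_x=\frac{1}{2n}$), whereas the element $u:=\frac{1}{n}\tilde{I}_{A_U}$ satisfies $|u|\leq\varepsilon$ and so $u\in B(\varepsilon)\subset U$. Because $\tilde{I}_{A_U}u=\frac{1}{n}\tilde{I}_{A_U}=\tilde{I}_{A_U}m$ and $0\in U$ (by $L^0$--balancedness, $0\cdot U\subset U$), the common value $\frac{1}{n}\tilde{I}_{A_U}$ belongs simultaneously to $\tilde{I}_{A_U}U$ and to $\tilde{I}_{A_U}(M+U)$, which gives the required nonempty intersection.

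The only point demanding care, and what I regard as the heart of the matter, is the universal quantifier: the claim ranges over \emph{all} $L^0$--convex, $L^0$--absorbent and $L^0$--balanced $\mathcal{T}_c$--neighbourhoods, not merely the balls $B(\varepsilon)$, so I must exhibit the witnessing elements inside a possibly complicated set $U$. This is handled uniformly by the single inclusion $B(\varepsilon)\subset U$, which places $u$ in $U$ and, via $0\in U$, places $m$ in $M+U$; note that the nonatomicity of $(\Omega,\mathcal{F},P)$ is not needed here (it entered only in Claim 3.3, to make $M$ closed). Finally, this construction makes transparent why the example genuinely contradicts Lemma 2.28 of \cite{FKV}: the set $M$ violates the countable concatenation property precisely because a concatenation $\sum_{n=1}^{\infty}\tilde{I}_{A_n}\frac{1}{n}$ of its constant members has infimum $0$ and so escapes $M$, and it is exactly this defect that lets $U$ and $M+U$ overlap at every scale, so that no separating $U$ can exist.
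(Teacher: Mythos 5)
Your proof is correct and follows essentially the same route as the paper: reduce $U$ to a ball $B(\varepsilon)$, locate a set $A_U$ of positive probability on which $\varepsilon\geq\frac{1}{n}$, and exhibit a member of $M$ at level $\frac{1}{n}$ whose restriction to $A_U$ also lies in $\tilde{I}_{A_U}U$ (the paper uses the witness $y_0=\tilde{I}_{A_U^c}+\tilde{I}_{A_U}\varepsilon$ where you use the constant $\frac{1}{n}$ together with $\frac{1}{n}\tilde{I}_{A_U}$, a purely cosmetic difference). Your direct verification of the first assertion and the closing remark on the failure of the countable concatenation property of $M$ are both accurate.
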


\begin{proof} There is $\varepsilon \in L^0_{++}({\mathcal F})$ for $U$ stated above such that $B(\varepsilon):=\{x\in E~|~|x|\leq\varepsilon\}\subset U$. For a representative $\varepsilon^0$ of $\varepsilon$, let $A_1=\{\omega\in \Omega~|~\varepsilon^0(\omega)\geq 1\}$ and $A_n=\{\omega\in\Omega~|~\frac{1}{n}\leq \varepsilon^0(\omega)<\frac{1}{n-1}\}$ for $n\geq 2$, then it is clear that $\sum^{\infty}_{n=1}P(A_n)=1$, and hence there is some $n_0\in N$ such that $P(A_{n_0})>0$. Let $A_U=A_{n_0}$ and $y_0={\tilde I}_{A^c_U}+{\tilde I}_{A_U}\varepsilon$, then $\frac{1}{n_0}\leq y_0<\frac{1}{n_0-1}$ on $A_U$ ( note: this is also true for $n_0=1$ ) and $y_0\geq \frac{1}{n_0}$ on $\Omega$ (namely, $y_0\in M$). Since ${\tilde I}_{A_U}y_0={\tilde I}_{A_U}\varepsilon\in {\tilde I}_{A_U}B(\varepsilon)\subset {\tilde I}_{A_U} U$ and ${\tilde I}_{A_U}y_0\in {\tilde I}_{A_U}M\subset{\tilde I}_{A_U}(M+U)$, so ${\tilde I}_{A_U}U\cap {\tilde I}_{A_U}(M+U)\neq\emptyset$. \hfill $\square$
\end{proof}

In fact, Example 3.2 is also a counterexample to Theorem 2.8 of \cite{FKV}, which can be explained as follows.

Since $(E, {\mathcal P})=(L^0({\mathcal F}, R), |\cdot|)$ is an $RN$ module, $|\cdot|$ has the countable concatenation property and $E^\ast_c=E^\ast_{\varepsilon, \lambda}$. It is obvious that $0\in {\overline{M}}_{\varepsilon, \lambda}$ ( namely, the ${\mathcal T}_{\varepsilon, \lambda}$--closure of $M$ ), and hence for each $f\in E^\ast_c=E^\ast_{\varepsilon, \lambda}$ there exists a sequence $\{y_n, n\in N\}$ in $M$ such that $\{f(y_n): n\in N\}$ converges in probability P to $0$, which means that it is impossible that there exists $f\in E^\ast_c$ such that $0=f(0)>\bigvee\{f(y): y\in M\}$ on $\Omega$.

Theorem 2.8 of \cite{FKV} should be modified as follows:

\begin{theorem} Let $(E, {\mathcal P})$ be an $RLC$ module over $K$ with base $(\Omega, {\mathcal F}, P)$ such that ${\mathcal P}$ has the countable concatenation property, $x\in E$ and $M$ a nonempty ${\mathcal T}_c$--closed $L^0$--convex subset with the countable concatenation property. If ${\tilde I}_A\{x\}\cap {\tilde I}_AM=\emptyset$ for all $A\in {\mathcal F}$ with $P(A)>0$, then there is $f\in E^\ast_c$ such that $$(Re f)(x)>\bigvee\{(Ref)(y): y\in M\} ~\hbox{on}~\Omega.$$
\end{theorem}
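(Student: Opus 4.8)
The plan is to convert the local disjointness hypothesis into a genuine gap created by a $\mathcal{T}_c$--neighborhood of $\theta$, and then to separate with an $L^0$--linear functional built from a random gauge functional together with the $L^0$--module Hahn--Banach extension theorem. Since the conclusion involves only $\operatorname{Re}f$, I would first reduce to the real case: if $K=\mathbb{C}$, produce a real $L^0$--linear continuous $f_0$ and set $f(\cdot)=f_0(\cdot)-if_0(i\,\cdot)$, which lies in $E^\ast_c$ with $\operatorname{Re}f=f_0$.

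First I would apply Lemma 3.1 to obtain an $L^0$--convex, $L^0$--absorbent and $L^0$--balanced $\mathcal{T}_c$--neighborhood $U$ of $\theta$ with $\tilde{I}_A(x+U)\cap\tilde{I}_A(M+U)=\emptyset$ for all $A\in\mathcal{F}$ with $P(A)>0$; shrinking $U$ to a basic neighborhood $N_\theta(Q,\varepsilon)$ I may assume $U$ has the countable concatenation property (these basic neighborhoods do, since $\|\sum_n\tilde{I}_{A_n}u_n\|_Q=\sum_n\tilde{I}_{A_n}\|u_n\|_Q$). Put $C=M+U$; then $C$ is $\mathcal{T}_c$--open, $L^0$--convex, and, being the sum of two countably concatenated sets in a module $E$ with that property, $C$ itself is countably concatenated. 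Fixing $m_0\in M$ and writing $D=C-m_0$, $z_0=x-m_0$, I get $\theta\in D$, with $D$ open, $L^0$--convex, countably concatenated, and $\tilde{I}_A(z_0+U)\cap\tilde{I}_A D=\emptyset$ for all $A$ with $P(A)>0$.

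Next I would form the random gauge functional $p_D$ of $D$; because $D$ is open, $L^0$--convex and countably concatenated, the open-set analogue of Proposition 2.24 gives $D=\{y:p_D(y)<1\text{ on }\Omega\}$, while the local disjointness of $z_0$ from $D$ forces $p_D(z_0)\ge 1$ on $\Omega$. Defining $g(\xi z_0)=\xi$ on the submodule $L^0(\mathcal{F})z_0$, one checks $g\le p_D$ there, and the $L^0$--module Hahn--Banach extension theorem (cf. \cite{FKV}) yields an $L^0$--linear $f_0\colon E\to L^0(\mathcal{F})$ with $f_0\le p_D$ on $E$ and $f_0(z_0)\ge 1$ on $\Omega$. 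Since $U\subset D$ forces $|f_0|\le p_D(\pm\cdot)\le p_U$, an $L^0$--seminorm of the form $\xi\|\cdot\|_Q$, the functional $f_0$ is continuous, i.e. $f_0\in E^\ast_I=E^\ast_c$ by Proposition 2.19. Finally, for every $m\in M$ and $u\in U$ one has $m+u\in C$, so $f_0(m)+f_0(u)=f_0(m+u-m_0)+f_0(m_0)\le p_D(m+u-m_0)+f_0(m_0)<1+f_0(m_0)\le f_0(z_0)+f_0(m_0)=f_0(x)$ on $\Omega$; taking the supremum over $u\in U$ and setting $\delta=\bigvee\{f_0(u):u\in U\}$ gives $f_0(x)\ge f_0(m)+\delta$ for each $m$, whence $f_0(x)\ge\bigvee\{f_0(m):m\in M\}+\delta$.

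The main obstacle is to guarantee that $\delta>0$ on all of $\Omega$, since only this makes the inequality strict everywhere rather than merely almost separating; this is exactly the point where the FKV argument (their Lemma 2.28) breaks down. Here the $L^0$--balancedness of $U$ gives $\delta=\bigvee\{|f_0(u)|:u\in U\}\ge 0$, and if $\delta=0$ on some $B$ with $P(B)>0$ then $f_0(u)=0$ on $B$ for all $u\in U$; the $L^0$--absorbency of $U$ then propagates this to $\tilde{I}_B f_0=0$, contradicting $f_0(z_0)\ge 1$ on $\Omega$. Thus $\delta>0$ on $\Omega$, and $f:=f_0$ (or its complexification) satisfies $(\operatorname{Re}f)(x)>\bigvee\{(\operatorname{Re}f)(y):y\in M\}$ on $\Omega$. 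The delicate ``on $\Omega$'' assertions for $p_D$ and the local nontriviality of $f_0$ are precisely where the countable concatenation property of $\mathcal{P}$, of $M$ and of $U$, together with the for-all-$A$ form of the hypothesis, are indispensable.
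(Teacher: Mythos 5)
Your argument is essentially the corrected Filipovi\'{c}--Kupper--Vogelpoth route (Lemma 3.1, then a random gauge functional plus the $L^0$--module Hahn--Banach theorem), whereas the paper derives Theorem 3.5 in one step from the $(\varepsilon,\lambda)$--separation theorem (Theorem 3.6): since $M$ has the countable concatenation property it is $\mathcal{T}_{\varepsilon,\lambda}$--closed by Proposition 2.27, the local disjointness hypothesis forces $d^{\ast}(x,M)>0$ on $\Omega$ (Lemma 3.10 of \cite{TXG-JFA}), and the countable concatenation property of $\mathcal{P}$ gives $E^{\ast}_{\varepsilon,\lambda}=E^{\ast}_c$ by Corollary 2.21. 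Your route is more self-contained within the locally $L^0$--convex world and makes the mechanism of strict separation (the gap $\delta=\bigvee\{f_0(u):u\in U\}>0$ on $\Omega$) completely explicit; the paper's route is shorter because it recycles a theorem already proved under the other topology. Your reduction to the real case, the Hahn--Banach step, the continuity of $f_0$ via $|f_0|\leq p_U=\varepsilon^{-1}\|\cdot\|_{Q}$, and the absorbency argument showing $\delta>0$ on $\Omega$ are all sound.

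Two points need repair. First, after shrinking $U$ to $N_\theta(Q,\varepsilon)=\{y:\|y\|_Q\leq\varepsilon\}$ you assert that $C=M+U$ is $\mathcal{T}_c$--open; this fails because the closed ball $N_\theta(Q,\varepsilon)$ is not $\mathcal{T}_c$--open (e.g.\ $M=\{\theta\}$ gives $C=N_\theta(Q,\varepsilon)$), and openness of $D$ is exactly what you use to get the crucial strict inequality $p_D(m+u-m_0)<1$ on $\Omega$. The fix is standard: shrink instead to $W=\{y:\|y\|_Q<\varepsilon\ \hbox{on}\ \Omega\}$, which is $\mathcal{T}_c$--open (take $\delta=(\varepsilon-\|y_0\|_Q)/2$), $L^0$--balanced, $L^0$--absorbent, $L^0$--convex, and still contained in the $U$ from Lemma 3.1. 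Second, your claim that $N_\theta(Q,\varepsilon)$, and hence $C$ and $D$, have the countable concatenation property tacitly assumes that $E$ does (Definition 2.23 requires the concatenations to exist in $E$), and $E$ is not assumed to have that property in Theorem 3.5. Fortunately this is harmless: the only inclusion you actually use is $D\subset\{y:p_D(y)<1\ \hbox{on}\ \Omega\}$, which follows from openness alone as in (iii) of Proposition 2.22, together with $p_D(z_0)\geq1$ on $\Omega$ from (ii); the reverse inclusion $\{p_D<1\ \hbox{on}\ \Omega\}\subset D$, the only place the countable concatenation property of $D$ would enter, is never needed. With these adjustments your proof is correct.
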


Theorem 3.5 first occurred in \cite{TXG-JFA} in its current form, which can be derived from Theorem 3.6 below. Here, we give a more simpler form of Theorem 3.6 for convenience in use, this form has been implied in the process of the proof of Theorem 3.7 of \cite{TXG-JFA}.

\begin{theorem} ($See$ \cite{TXG-JFA}.) Let $(E, {\mathcal P})$ be an random locally convex module over $K$ with base $(\Omega, {\mathcal F}, P)$, $x\in E$ and $M$ a ${\mathcal T}_{\varepsilon, \lambda}$--closed $L^0$--convex nonempty subset of $E$. If $x\notin M$, then there is $f\in E^\ast_{\varepsilon, \lambda}$ such that:\\
(1). $(Ref)(x)>\bigvee\{(Ref)(y): y\in M\}~\hbox{on}~A$;\\
(2). $(Ref)(x)=\bigvee\{(Ref)(y): y\in M\}~\hbox{on}~A^c$.\\
Here, $A$ is a representative of $[d^\ast(x, M)>0]$ and $d^\ast(x, M)=\bigvee\{d^\ast_{\mathcal{Q}}(x, M): \mathcal{Q}\in {\mathcal P}_f\}$, where $d^\ast_{\mathcal{Q}}(x, M)=\bigwedge\{\|x-y\|_{\mathcal{Q}}: y\in M\}$ for $\mathcal{Q}\in {\mathcal P}_f$. Since $x\not\in M$ iff $d^\ast(x, M)>0$, $P(A)>0$.
\end{theorem}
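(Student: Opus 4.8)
The plan is to read this as a random Hahn--Banach separation theorem under the $(\varepsilon,\lambda)$--topology, and to reduce the global statement to a sequence of single--seminorm separations that are then glued by countable concatenation. First I would reduce to the real scalar field: given a real separating module homomorphism $g$, the functional $f(x)=g(x)-ig(ix)$ again lies in $E^{\ast}_{\varepsilon,\lambda}$ and has $Re f=g$, so it suffices to produce the real $f$. Throughout I would use the identification $E^{\ast}_{\varepsilon,\lambda}=E^{\ast}_{II}$ of Proposition 2.12, which reduces ``$(\varepsilon,\lambda)$--continuity'' to the purely algebraic domination $|f|\leq\xi\|\cdot\|$ for some $\xi\in L^{0}_{+}(\mathcal{F})$ and some $\|\cdot\|\in\mathcal{P}_{cc}$; this is exactly the kind of functional the construction below manufactures, and it also makes clear why no countable concatenation hypothesis on $M$ is needed.

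Next I would exploit the order structure of $d^{\ast}$. For $\mathcal{Q}_{1}\subseteq\mathcal{Q}_{2}$ one has $\|\cdot\|_{\mathcal{Q}_{1}}\leq\|\cdot\|_{\mathcal{Q}_{2}}$, hence $\|x-y\|_{\mathcal{Q}_{2}}\geq d^{\ast}_{\mathcal{Q}_{1}}(x,M)$ for every $y\in M$ and therefore $d^{\ast}_{\mathcal{Q}_{1}}\leq d^{\ast}_{\mathcal{Q}_{2}}$, so $\{d^{\ast}_{\mathcal{Q}}(x,M):\mathcal{Q}\in\mathcal{P}_{f}\}$ is upward directed and its essential supremum $d^{\ast}(x,M)$ is attained along an increasing sequence $\mathcal{Q}_{n}$ with $d^{\ast}_{\mathcal{Q}_{n}}(x,M)\uparrow d^{\ast}(x,M)$. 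Setting $A=[d^{\ast}(x,M)>0]$, $B_{1}=A\cap[d^{\ast}_{\mathcal{Q}_{1}}>0]$ and $B_{n}=A\cap([d^{\ast}_{\mathcal{Q}_{n}}>0]\setminus[d^{\ast}_{\mathcal{Q}_{n-1}}>0])$ for $n\geq2$ gives a countable partition of $A$ on which the fixed seminorm already sees strictly positive distance: $\|x-y\|_{\mathcal{Q}_{n}}\geq d^{\ast}_{\mathcal{Q}_{n}}(x,M)>0$ on $B_{n}$ for every $y\in M$. On each $B_{n}$ I would run the single--seminorm separation: the open $L^{0}$--ball $\{z:\|z\|_{\mathcal{Q}_{n}}<d^{\ast}_{\mathcal{Q}_{n}}\}$ is, on $B_{n}$, disjoint from $x-M$, so the random gauge functional of this $L^{0}$--balanced, $L^{0}$--absorbent, $L^{0}$--convex ball together with the $L^{0}$--module form of Hahn--Banach produces an $L^{0}$--linear $f_{n}$ with $|f_{n}|\leq\xi_{n}\|\cdot\|_{\mathcal{Q}_{n}}$ and $f_{n}(x)>\bigvee\{f_{n}(y):y\in M\}$ on $B_{n}$. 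Finally I glue by setting $f=\sum_{n=1}^{\infty}\tilde{I}_{B_{n}}f_{n}$, so that $\tilde{I}_{A^{c}}f=0$. Then $|f|\leq\xi(\sum_{n}\tilde{I}_{B_{n}}\|\cdot\|_{\mathcal{Q}_{n}})$ with $\xi=\sum_{n}\tilde{I}_{B_{n}}\xi_{n}\in L^{0}_{+}(\mathcal{F})$ and $\sum_{n}\tilde{I}_{B_{n}}\|\cdot\|_{\mathcal{Q}_{n}}\in\mathcal{P}_{cc}$, whence $f\in E^{\ast}_{II}=E^{\ast}_{\varepsilon,\lambda}$; on $B_{n}$ the value $f(x)=f_{n}(x)$ strictly dominates $\bigvee\{f(y):y\in M\}=\tilde{I}_{B_{n}}\bigvee\{f_{n}(y):y\in M\}$, giving (1) on $A$, while on $A^{c}$ one has $f\equiv0$ and hence $f(x)=0=\bigvee\{f(y):y\in M\}$, giving (2).

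I expect the main obstacle to be the single--seminorm separation on each $B_{n}$, i.e. the genuinely module--theoretic Hahn--Banach step: one must separate a point from an $L^{0}$--convex set by a functional that is $L^{0}$--linear (a module homomorphism), not merely $K$--linear, and one must secure a gap that is strictly positive on \emph{all} of $B_{n}$ rather than merely nonzero. A second delicate point is proving, from the $\mathcal{T}_{\varepsilon,\lambda}$--closedness of $M$, the equivalence $x\notin M\Leftrightarrow d^{\ast}(x,M)>0$ that guarantees $P(A)>0$; this is where closedness actually enters, since on $A^{c}$ the distance degenerates and the theorem only claims equality. The reconciling device is the directed--supremum exhaustion followed by countable concatenation: it lets the construction stay inside one fixed finite seminorm on each stratum $B_{n}$, where the classical gauge argument can be carried over verbatim, and then patch the strata back together into a single type--II functional.
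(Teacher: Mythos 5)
The paper does not actually prove Theorem 3.6 --- it is quoted from \cite{TXG-JFA}, where it is extracted from the proof of Theorem 3.7 of that reference. Your architecture matches that source's route: use the upward directedness of $\{d^{\ast}_{\mathcal{Q}}(x,M):\mathcal{Q}\in\mathcal{P}_f\}$ (via unions of finite subsets) to exhaust $A=[d^{\ast}(x,M)>0]$ by strata $B_n$ on which a single finite seminorm $\|\cdot\|_{\mathcal{Q}_n}$ already sees a uniform gap, separate on each stratum, and glue into a type--II functional, which is $\mathcal{T}_{\varepsilon,\lambda}$--continuous by Proposition 2.12. The gluing, the verification that $f\in E^{\ast}_{II}$ via $\sum_n\tilde{I}_{B_n}\|\cdot\|_{\mathcal{Q}_n}\in\mathcal{P}_{cc}$, the treatment of $A^c$, and the observation that $x\notin M\Leftrightarrow d^{\ast}(x,M)>0$ rests on closedness (together with the downward directedness of $\{\|x-y\|_{\mathcal{Q}}:y\in M\}$, which uses the $L^0$--convexity of $M$) are all sound.

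The one place where your write-up defers rather than delivers is precisely the step you flag as the main obstacle: the single--seminorm strict separation on $B_n$. Two points there are not ``verbatim'' classical. First, the set $\{z:\|z\|_{\mathcal{Q}_n}<d^{\ast}_{\mathcal{Q}_n}\}$ has radius degenerating to $0$ off $B_n$ and is not a neighborhood of $\theta$ in either topology, so one must localize to $\tilde{I}_{B_n}E$ (pass to the quotient $RN$ module of $\|\cdot\|_{\mathcal{Q}_n}$) and work with the random gauge $p_V$ of $V=M+U-m_0$ for a ball $U$ of radius a fixed fraction of $d^{\ast}_{\mathcal{Q}_n}$, rather than invoking topological openness. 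Second, the strict inequality $f_n(x)>\bigvee\{f_n(y):y\in M\}$ must hold almost everywhere on $B_n$, which requires showing $p_V(x)\geq 1+\delta$ on $B_n$ for some $\delta\in L^0_{++}(B_n\cap\mathcal{F})$; this is an exceptional-set argument exploiting the uniform gap $d^{\ast}_{\mathcal{Q}_n}>0$ on $B_n$ together with $L^0$--convexity of $M$, followed by the random Hahn--Banach extension of the $L^0$--linear functional $\xi x\mapsto\xi p_V(x)$ dominated by $p_V$. These facts are true and are essentially the content of Guo's strict separation theorem for a single $L^0$--seminorm (\cite{TXG-strict}), so your reduction is valid; but since that step carries the entire mathematical weight of the theorem, a complete proof would have to execute it rather than cite the classical analogue.
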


\begin{remark} When $(E, {\mathcal P})$ is an $RN$ module, $d^\ast(x, M)$ is just the random distance from $x$ to $M$. Thus Theorem 3.6 is best possible from the degree that $f$ separates $x$ from $M$.
\end{remark}

The condition that ${\tilde I}_A\{x\}\cap {\tilde I}_AM=\emptyset$ for all $A\in {\mathcal F}$ with $P(A)>0$ is to guarantee the separation of $x$ from $M$ by $f$ with probability $1$, but the condition is too strong to be easily satisfied in applications. In fact, Theorem 3.6 also yields a kind of generalization for Theorem 3.5, namely Corollary 3.8 below.

\begin{corollary} Let $(E, {\mathcal P})$ be an $RLC$ module over $K$ with base $(\Omega, {\mathcal F}, P)$ such that ${\mathcal P}$ has the countable concatenation property, $x\in E$ and $M\subset E$ a nonempty ${\mathcal T}_c$--closed $L^0$--convex set with the countable concatenation property. If $x\notin M$, then there is $f\in E^\ast_c$ such that:\\
(1). $(Ref)(x)>\bigvee\{(Ref)(y): y\in M\}~\hbox{on}~A$;\\
(2). $(Ref)(x)=\bigvee\{(Ref)(y): y\in M\}~\hbox{on}~A^c$.\\
Here $A$ and $d^\ast(x, M)$ are the same as in Theorem 3.6.
\end{corollary}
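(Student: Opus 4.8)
The plan is to reduce Corollary 3.8 directly to Theorem 3.6 by using the two countable concatenation hypotheses to translate the statement from the locally $L^0$--convex setting into the $(\varepsilon,\lambda)$--setting, where the separation theorem has already been established. Essentially, the corollary is the $\mathcal{T}_c$--version of the $\mathcal{T}_{\varepsilon,\lambda}$--result of Theorem 3.6, and all that is needed is to verify that under the stated hypotheses the two frameworks agree on the relevant objects.

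First I would upgrade the closedness of $M$. Since $M$ is assumed to carry the countable concatenation property, Proposition 2.27 gives $\bar{M}_{\varepsilon,\lambda}=\bar{M}_c$. As $M$ is $\mathcal{T}_c$--closed we have $\bar{M}_c=M$, hence $\bar{M}_{\varepsilon,\lambda}=M$; that is, $M$ is also $\mathcal{T}_{\varepsilon,\lambda}$--closed. Thus $M$ is a nonempty $\mathcal{T}_{\varepsilon,\lambda}$--closed $L^0$--convex subset and $x\notin M$, which are precisely the hypotheses required to apply Theorem 3.6.

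Next I would invoke Theorem 3.6 verbatim: it produces an $f\in E^\ast_{\varepsilon,\lambda}$ with $(Re f)(x)>\bigvee\{(Re f)(y):y\in M\}$ on $A$ and $(Re f)(x)=\bigvee\{(Re f)(y):y\in M\}$ on $A^c$, where $A$ is a representative of $[d^\ast(x,M)>0]$ and $d^\ast(x,M)$ is defined as in Theorem 3.6. Since $A$ and $d^\ast(x,M)$ depend only on $x$, $M$ and $\mathcal{P}$ and not on which topology is named, they coincide with the quantities appearing in the corollary's statement, so conclusions (1) and (2) hold with exactly the prescribed $A$.

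Finally I would promote $f$ from $E^\ast_{\varepsilon,\lambda}$ to $E^\ast_c$. Here the remaining hypothesis, that $\mathcal{P}$ itself has the countable concatenation property, enters: by Corollary 2.21 we have $E^\ast_c=E^\ast_{\varepsilon,\lambda}$, so the functional $f$ already lies in $E^\ast_c$, which completes the reduction. The only genuinely substantive step is the first one, namely recognizing that the countable concatenation property of $M$ forces $\mathcal{T}_c$--closedness and $\mathcal{T}_{\varepsilon,\lambda}$--closedness to coincide; after that the corollary is an immediate transcription of Theorem 3.6 together with the identification $E^\ast_c=E^\ast_{\varepsilon,\lambda}$. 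The main obstacle is therefore not analytic but bookkeeping: one must check carefully that the hypotheses of Corollary 3.8 are exactly strong enough to license both Proposition 2.27 and Corollary 2.21, and that no additional assumption on $x$ (such as the much stronger condition $\tilde{I}_A\{x\}\cap\tilde{I}_AM=\emptyset$ used in Theorem 3.5) is needed, since Theorem 3.6 already handles the plain case $x\notin M$.
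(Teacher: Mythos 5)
Your proposal is correct and is exactly the reduction the paper intends (the paper states Corollary 3.8 without proof as a consequence of Theorem 3.6): the countable concatenation property of $M$ upgrades $\mathcal{T}_c$--closedness to $\mathcal{T}_{\varepsilon,\lambda}$--closedness via Proposition 2.27, Theorem 3.6 then supplies $f\in E^\ast_{\varepsilon,\lambda}$ with the stated $A$, and the countable concatenation property of $\mathcal{P}$ identifies $E^\ast_{\varepsilon,\lambda}$ with $E^\ast_c$ via Corollary 2.21. All three citations are applied under precisely their stated hypotheses, so nothing is missing.
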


Example 3.2 shows that it is essential that $M$ has the countable concatenation property in Corollary 3.8, but random duality theory in Subsection 3.3 needs another generalization of Corollary 3.8, namely Corollary 3.9 below, in which the condition that ${\mathcal P}$ has the countable concatenation property is removed but (1) of Corollary 3.8 only holds on a subset $B$ of $A$ with $P(B)>0$. In fact, Corollary 3.9 is more convenient for use and its proof is based on Theorem 2.30.

\begin{corollary} Let $(E, {\mathcal P})$ be an $RLC$ module over $K$ with base $(\Omega, {\mathcal F}, P)$, $x\in E$ and $M\subset E$ a nonempty ${\mathcal T}_c$--closed $L^0$--convex set with the countable concatenation property. If $x\notin M$, then there exist $f\in E^\ast_c$ and $B\in {\mathcal F}$ with $P(B)>0$ such that:\\
(1). $(Ref)(x)>\bigvee\{(Ref)(y): y\in M\}~\hbox{on}~B$;\\
(2). $(Ref)(x)=\bigvee\{(Ref)(y): y\in M\}~\hbox{on}~B^c$.
\end{corollary}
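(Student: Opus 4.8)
The plan is to deduce Corollary 3.9 from the separation Theorem 3.6, which does not require $\mathcal{P}$ to have the countable concatenation property, and then to convert the resulting $\mathcal{T}_{\varepsilon,\lambda}$--continuous separator into a $\mathcal{T}_c$--continuous one by means of the decomposition $E^\ast_{\varepsilon,\lambda}=H_{cc}(E^\ast_c)$ furnished by Theorem 2.30. First I would observe that, since $M$ is $\mathcal{T}_c$--closed and has the countable concatenation property, Proposition 2.27 yields $\bar{M}_{\varepsilon,\lambda}=\bar{M}_c=M$, so $M$ is also $\mathcal{T}_{\varepsilon,\lambda}$--closed. As $M$ is in addition nonempty and $L^0$--convex and $x\notin M$, Theorem 3.6 applies and produces some $f\in E^\ast_{\varepsilon,\lambda}$ together with a representative $A$ of $[d^\ast(x,M)>0]$ with $P(A)>0$ such that $(Re f)(x)>\bigvee\{(Re f)(y):y\in M\}$ on $A$ and $(Re f)(x)=\bigvee\{(Re f)(y):y\in M\}$ on $A^c$.

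The key point is that $f$ need only lie in $E^\ast_{\varepsilon,\lambda}$, not in $E^\ast_c$, precisely because $\mathcal{P}$ is no longer assumed countably concatenated. Here Theorem 2.30 enters: it gives $E^\ast_{\varepsilon,\lambda}=H_{cc}(E^\ast_c)$, so I may write $f=\sum_{n=1}^{\infty}\tilde{I}_{A_n}f_n$ for some countable partition $\{A_n,n\in N\}$ of $\Omega$ to $\mathcal{F}$ and some $f_n\in E^\ast_c$. Since $P(A)>0$ and $A=\bigcup_n(A\cap A_n)$ is a disjoint decomposition, there is an index $n_0$ with $P(A\cap A_{n_0})>0$; I then set $B=A\cap A_{n_0}$ and $g=\tilde{I}_B f_{n_0}$, which lies in $E^\ast_c$ because $E^\ast_c$ is an $L^0(\mathcal{F},K)$--module.

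It remains to check that $g$ and $B$ do the job. On $B\subset A_{n_0}$ one has $\tilde{I}_B f=\tilde{I}_B f_{n_0}=\tilde{I}_B g$, so $(Re g)(z)$ and $(Re f)(z)$ agree on $B$ for every $z\in E$; combining this with the local property $\tilde{I}_B\bigvee H=\bigvee\tilde{I}_B H$ of suprema in $\bar{L}^0(\mathcal{F})$ transports the strict inequality valid on $A$ down to $B$, which gives (1) (note that the supremum is finite on $B$, being strictly dominated there by $(Re f)(x)\in L^0(\mathcal{F})$). On $B^c$ one has $g=\tilde{I}_B f_{n_0}=0$, so both $(Re g)(x)$ and $\bigvee\{(Re g)(y):y\in M\}$ vanish there (using $M\neq\emptyset$), which gives (2).

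The main obstacle is exactly this passage from $E^\ast_{\varepsilon,\lambda}$ to $E^\ast_c$: dropping the countable concatenation hypothesis on $\mathcal{P}$ (the very hypothesis that let Corollary 3.8 conclude with the single set $A$) means the separating functional is in general only $\mathcal{T}_{\varepsilon,\lambda}$--continuous, and a genuine $\mathcal{T}_c$--continuous separator is recovered only after localizing to one stratum $A_{n_0}$ of the decomposition, which is what forces the weaker conclusion that (1) holds merely on the subset $B\subset A$. The verifications involving multiplication by $\tilde{I}_B$ and the local property of suprema are routine once the decomposition $f=\sum_{n=1}^{\infty}\tilde{I}_{A_n}f_n$ is in hand.
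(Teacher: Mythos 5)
Your proof is correct and follows essentially the same route as the paper: the paper applies Corollary 3.8 to $(E,\mathcal{P}_{cc})$ and then uses Theorem 2.30 to write the resulting functional as $\sum_{n}\tilde{I}_{A_n}g_n$ with $g_n\in E^\ast_c$, localizing to a stratum $A\cap A_{n_0}$ of positive measure exactly as you do. Your variant of invoking Theorem 3.6 directly (after noting $M$ is $\mathcal{T}_{\varepsilon,\lambda}$--closed via Proposition 2.27) is interchangeable with the paper's step, since $(E,\mathcal{P})^\ast_{\varepsilon,\lambda}=(E,\mathcal{P}_{cc})^\ast_c=H_{cc}(E^\ast_c)$ by Theorem 2.30.
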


\begin{proof} We consider the separation problem in $(E, {\mathcal P}_{cc})$. Since ${\mathcal P}_{cc}$ has the countable concatenation property and the locally $L^0$--convex topology induced by ${\mathcal P}_{cc}$ is stronger than that induced by ${\mathcal P}$. We can apply Corollary 3.8 to $(E, {\mathcal P}_{cc})$, $x$ and $M$, then there is $g\in (E, {\mathcal P}_{cc})^\ast_c$ such that:\\
(3). $(Reg)(x)>\bigvee\{(Reg)(y): y\in M\}~\hbox{on}~A$;\\
(4). $(Reg)(x)=\bigvee\{(Reg)(y): y\in M\}~\hbox{on}~A^c$.\\
Here, please note that ${\mathcal P}$ and ${\mathcal P}_{cc}$ induce the same $d^\ast(x, M)$, so $A$ is still a representative of $[d^\ast(x, M)>0]$.

Since $(E, {\mathcal P}_{cc})^\ast_c=H_{cc}(E^\ast_c)$ by Theorem 2.30, $g=\sum^\infty_{n=1}{\tilde I}_{A_n}g_n$ for some countable partition $\{A_n, n\in N\}$ of $\Omega$ to ${\mathcal F}$ and some sequence $\{g_n, n\in N\}$ in $E^\ast_c$. Let $n_0\in N$ be such that $P(A\cap A_{n_0})>0$ and further let $B=A\cap A_{n_0}$ and $f={\tilde I}_{A\cap A_{n_0}}g_{n_0}$, then $f$ and $B$ meet the needs of (1) and (2). \hfill \hfill $\square$
\end{proof}

In Theorem 3.6, $f$ belongs to $E^\ast_{\varepsilon, \lambda}$, but the study of random admissible topology in Subsection 3.3.2 requires an $f\in E^\ast_c$ to separate a point from a ${\mathcal T}_{\varepsilon, \lambda}$--closed $L^0$--convex subset. By noting that $E^\ast_{\varepsilon, \lambda}=H_{cc}(E^\ast_c)$, we can use the same reasoning as in the proof of Corollary 3.9 to obtain the following generalization of Theorem 3.6:

\begin{corollary} Let $(E, {\mathcal P})$ be an $RLC$ module over $K$ with base $(\Omega, {\mathcal F}, P)$, $x\in E$ and $M\subset E$ a nonempty ${\mathcal T}_{\varepsilon, \lambda}$--closed $L^0$--convex subset. If $x\notin M$, then there are $f\in E^\ast_c$ and some $B\in {\mathcal F}$ with $P(B)>0$ such that:\\
(1). $(Ref)(x)>\bigvee\{(Ref)(y): y\in M\}~\hbox{on}~B$;\\
(2). $(Ref)(x)=\bigvee\{(Ref)(y): y\in M\}~\hbox{on}~B^c$.
\end{corollary}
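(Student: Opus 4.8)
The plan is to mirror the proof of Corollary 3.9, but to start from Theorem 3.6 (which already separates a point from a $\mathcal{T}_{\varepsilon,\lambda}$-closed $L^0$-convex set, yet only yields a functional in $E^\ast_{\varepsilon,\lambda}$) rather than from Corollary 3.8, and then to cut the separating functional down to one living in $E^\ast_c$ by exploiting the decomposition $E^\ast_{\varepsilon,\lambda}=H_{cc}(E^\ast_c)$ furnished by Theorem 2.30.

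First I would apply Theorem 3.6 to $(E,\mathcal{P})$, $x$ and $M$ to obtain $g\in E^\ast_{\varepsilon,\lambda}$ together with a representative $A$ of $[d^\ast(x,M)>0]$ with $P(A)>0$ such that $(Reg)(x)>\bigvee\{(Reg)(y):y\in M\}$ on $A$ and $(Reg)(x)=\bigvee\{(Reg)(y):y\in M\}$ on $A^c$. By Theorem 2.30 we have $E^\ast_{\varepsilon,\lambda}=H_{cc}(E^\ast_c)$, so $g=\sum_{n=1}^{\infty}\tilde{I}_{A_n}g_n$ for some countable partition $\{A_n,n\in N\}$ of $\Omega$ to $\mathcal{F}$ and some sequence $\{g_n,n\in N\}$ in $E^\ast_c$. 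Since $P(A)>0$ and $A=\bigcup_n(A\cap A_n)$, there is $n_0\in N$ with $P(A\cap A_{n_0})>0$; I would then set $B=A\cap A_{n_0}$ and $f=\tilde{I}_B g_{n_0}$. Here $f\in E^\ast_c$ because $E^\ast_c$ is an $L^0(\mathcal{F},K)$--module and $\tilde{I}_B\in L^0(\mathcal{F},K)$.

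It then remains only to verify (1) and (2). Condition (2) is immediate: since $f=\tilde{I}_B g_{n_0}$ vanishes identically on $B^c$, both $(Ref)(x)$ and every $(Ref)(y)$ equal $0$ on $B^c$, so (as $M\neq\emptyset$) the supremum equals $(Ref)(x)$ there. For (1), note that on $B\subset A_{n_0}$ the decomposition gives $\tilde{I}_{A_{n_0}}g=\tilde{I}_{A_{n_0}}g_{n_0}$, so $g_{n_0}$ agrees with $g$ on $B$; consequently $(Ref)(x)=(Reg)(x)$ on $B$ and, using that multiplication by the nonnegative $\tilde{I}_B$ commutes with $\bigvee$, $\bigvee\{(Ref)(y):y\in M\}=\bigvee\{(Reg)(y):y\in M\}$ on $B$. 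Since $B\subset A$, the strict inequality on $A$ provided by Theorem 3.6 restricts to $B$, giving $(Ref)(x)>\bigvee\{(Ref)(y):y\in M\}$ on $B$.

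The main obstacle, though modest here, is the bookkeeping around localization: one must check that passing to $B$ is compatible both with the supremum over $M$ (commuting $\tilde{I}_B$ through $\bigvee$, valid since $\tilde{I}_B\geq 0$) and with the agreement $g_{n_0}=g$ on $B$ coming from the countable concatenation decomposition of $g$. The conceptual price of landing in $E^\ast_c$ rather than $E^\ast_{\varepsilon,\lambda}$ is that strict separation can be guaranteed only on the smaller set $B$, not on all of $A$; this is precisely the weakening flagged in the paragraph preceding the statement, and it is harmless for the random admissible topology application of Subsection 3.3.2.
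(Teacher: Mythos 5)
Your proposal is correct and is essentially the paper's own argument: the paper proves Corollary 3.10 by "the same reasoning as in the proof of Corollary 3.9," i.e.\ apply Theorem 3.6 to get $g\in E^\ast_{\varepsilon,\lambda}$, decompose $g=\sum_{n=1}^{\infty}\tilde{I}_{A_n}g_n$ via $E^\ast_{\varepsilon,\lambda}=H_{cc}(E^\ast_c)$ (Theorem 2.30), and set $B=A\cap A_{n_0}$, $f=\tilde{I}_{B}g_{n_0}$ for an $n_0$ with $P(A\cap A_{n_0})>0$. Your verification of (1) and (2), including the commutation of $\tilde{I}_B$ with $\bigvee$, is sound.
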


\begin{remark} Let $\xi$ be any element in $L^0({\mathcal F}, K)$ and $\xi_0$ a representative of $\xi$. Define $\xi_0^{-1}\colon\Omega\to K$ by $\xi_0^{-1}(\omega)=(\xi_0(\omega))^{-1}$ if $\xi_0(\omega)\neq 0$ and by $0$ if $\xi_0(\omega)=0$, then $\xi^{-1}:=$ the equivalence class of $\xi_0^{-1}$ is called the generalized inverse of $\xi$. $|\xi|^{-1}\xi$ is called the sign of $\xi$, denoted by $sgn(\xi)$, then ${\overline {sgn(\xi)}}\xi=|\xi|$, where ${\overline {sgn(\xi)}}$ stands for the complex conjugate of $sgn(\xi)$. Further, we also have that $\xi\cdot\xi^{-1}=\xi^{-1}\cdot\xi=I_{[\xi\neq 0]}$. If $M$ in Corollary 3.9 or Corollary 3.10 is also $L^0$--balanced, then one can make use of the notion of the sign for element in $L^0({\mathcal F}, K)$ to see that (1) and (2) of the two corollaries can be rewritten as ( cf. \cite{TXG-strict} ):\\
(1). $|f(x)|>\bigvee\{|f(y)|: y\in M\}~\hbox{on}~B$;\\
(2). $|f(x)|=\bigvee\{|f(y)|: y\in M\}~\hbox{on}~B^c$.
\end{remark}

Let $\xi=|f(x)|$ and $\eta=\bigvee\{|f(y)|: y\in M\}$, then multiplying the above two sides by $(\frac{\xi+\eta}{2})^{-1}$ and replacing $f$ with $(\frac{\xi+\eta}{2})^{-1}f$ ( still denoted by $f$ ) will obtain the following two relations:\\
(3). $|f(x)|>\bigvee\{|f(y)|: y\in M\}$;\\
(4). $|f(x)|\nleqslant 1$ and $\bigvee\{|f(y)|: y\in M\}\leq 1$.\\
(3) and (4) will be used in the proof of random bipolar theorem in Subsection 3.3.1.

All the above results from Theorem 3.5 to Corollary 3.10 are concerned with the separation between a point and a closed subset. Theorem 3.12 below, due to \cite{FKV}, is concerned with the separation between two $L^0$--convex sets with one of them open, which is peculiar to the locally $L^0$--convex topology since it is impossible to establish such a theorem under the $(\varepsilon, \lambda)$--topology. Since the Proof of Theorem 3.12 does not involve the problem of whether a locally $L^0$--convex topology can be induced by a family of $L^0$--seminorms, we still state it as in \cite{FKV}.

\begin{theorem}($See$ \cite{FKV}.) Let $(E, {\mathcal T})$ be a Hausdorff locally $L^0$--convex $L^0({\mathcal F}, K)$--module,
$M$ and $G$ two nonempty $L^0$--convex sets of $E$ with $G$ open. If ${\tilde I}_AM\cap {\tilde I}_AG=\emptyset$ for all $A\in {\mathcal F}$ with $P(A)>0$, then there is $f\in E^\ast_c$ such that:$$(Ref)(y)<(Ref)(z)~\hbox{on $\Omega$ for all $y\in G$ and $z\in M$}.$$
\end{theorem}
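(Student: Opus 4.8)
The plan is to mimic the classical gauge-functional proof of geometric Hahn--Banach separation, carried out in the $L^0$-module setting, after first collapsing the two-set problem to the separation of $\theta$ from a single \emph{open} $L^0$-convex set. I treat the real case $K=R$ and recover the complex case at the end by complexification. First I would put $C=G-M=\{y-z~|~y\in G,\ z\in M\}$. Since $G$ is $\mathcal{T}$-open, $C=\bigcup_{z\in M}(G-z)$ is $\mathcal{T}$-open, and since $G,M$ are $L^0$-convex so is $C$. The hypothesis transfers cleanly: if $\tilde{I}_Ac=\theta$ for some $c=y-z\in C$ and some $A$ with $P(A)>0$, then $\tilde{I}_Ay=\tilde{I}_Az\in\tilde{I}_AG\cap\tilde{I}_AM$, contradicting the disjointness. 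Hence $\tilde{I}_A\{\theta\}\cap\tilde{I}_AC=\emptyset$ for all $A$ with $P(A)>0$; in particular every $c\in C$ is a.s. nonzero. The goal then reduces to producing $f\in E^\ast_c$ with $f(c)<0$ on $\Omega$ for all $c\in C$, which is precisely $(Re f)(y)<(Re f)(z)$ on $\Omega$ for all $y\in G$, $z\in M$.

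Next I fix any $x_0\in C$ and set $K_0=C-x_0$, an open $L^0$-convex neighborhood of $\theta$; as a neighborhood of $\theta$ in a topological $L^0(\mathcal{F},R)$-module it is $L^0$-absorbent, so its random gauge $p:=p_{K_0}$ is a well-defined $L^0$-sublinear functional. Two gauge estimates are the engine of the proof, and both avoid any countable concatenation hypothesis (which is why this theorem, unlike those of Subsection 3.1, needs none). On one hand, $\tilde{I}_A(-x_0)\in\tilde{I}_AK_0$ would force $\tilde{I}_Ac=\theta$ for some $c\in C$, which is impossible; thus $\tilde{I}_A(-x_0)\notin\tilde{I}_AK_0$ for all $A$ with $P(A)>0$, and (ii) of Proposition 2.22 yields $p(-x_0)\geq 1$ on $\Omega$. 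On the other hand, openness gives $K_0=K_0^o\subset\{x\in E~|~p(x)<1~\hbox{on}~\Omega\}$ by (iii) of Proposition 2.22, so $p(c-x_0)<1$ on $\Omega$ for every $c\in C$.

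I then build the functional as in the classical argument. On the submodule $E_0=L^0(\mathcal{F})x_0$ I define $g(\xi x_0)=-\xi$; this is well defined because $x_0$ is torsion-free: if $\zeta x_0=\theta$, multiplying by the generalized inverse $\zeta^{-1}$ of Remark 3.11 gives $\tilde{I}_{[\zeta\neq 0]}x_0=\theta$, contradicting that $x_0$ is a.s. nonzero. Splitting on $A=[\xi\geq 0]$ and applying positive $L^0$-homogeneity of $p$ (with the scalars $\tilde{I}_A$ and $\tilde{I}_{A^c}$, and using $p(-x_0)\geq 1$) one gets $\tilde{I}_Ap(\xi x_0)=\tilde{I}_A\xi\, p(x_0)\geq\tilde{I}_A(-\xi)$ and $\tilde{I}_{A^c}p(\xi x_0)=\tilde{I}_{A^c}|\xi|\,p(-x_0)\geq\tilde{I}_{A^c}(-\xi)$, hence $g(\xi x_0)=-\xi\leq p(\xi x_0)$, i.e. $g\leq p$ on $E_0$. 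The $L^0$-module Hahn--Banach extension theorem of \cite{FKV} then extends $g$ to an $L^0(\mathcal{F})$-linear $f\colon E\to L^0(\mathcal{F})$ with $f\leq p$ on $E$ and $f(x_0)=-1$. Since $f\leq p$ forces $|f(x)|\leq p(x)\vee p(-x)$ and, for each $\delta\in L^0_{++}(\mathcal{F})$, $x\in\delta K_0\cap(-\delta K_0)$ gives $|f(x)|\leq\delta$, the $L^0$-linear map $f$ is $\mathcal{T}$-continuous at $\theta$, hence $f\in E^\ast_c$. Finally, for $c\in C$ the two estimates combine to $f(c)=f(c-x_0)+f(x_0)\leq p(c-x_0)-1<1-1=0$ on $\Omega$, the desired strict separation. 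For $K=C$, applying the above to $E$ as an $L^0(\mathcal{F},R)$-module yields a real continuous $u$ with $u(c)<0$ on $\Omega$; setting $f(x)=u(x)-iu(ix)$ gives $f\in E^\ast_c$ with $Re f=u$, completing the proof.

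The hard part will not be the algebraic bookkeeping but securing the two gauge inequalities together with the strictness \emph{on all of $\Omega$}: the bound $p(-x_0)\geq 1$ everywhere is exactly where the full stratified disjointness hypothesis is consumed through (ii) of Proposition 2.22, while the strict bound $p(c-x_0)<1$ on $\Omega$ rests essentially on the openness of $C=G-M$ through (iii). The remaining delicate point is to confirm that the Hahn--Banach extension is genuinely $L^0(\mathcal{F})$-linear (not merely additive and $K$-linear) and $\mathcal{T}_c$-continuous, which is what guarantees $f\in E^\ast_c$ rather than in some larger dual.
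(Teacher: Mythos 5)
Your proof is correct and follows essentially the same route as the source the paper cites for this theorem: the paper gives no proof of Theorem 3.12 at all, deferring entirely to \cite{FKV}, whose argument is exactly your reduction to the open $L^0$--convex set $C=G-M$, the random gauge of a translate $K_0=C-x_0$, and the $L^0$--module Hahn--Banach extension theorem. The one point to tidy is that $K_0$ is not $L^0$--balanced, so Proposition 2.22 as stated does not literally apply to it; but parts (ii) and (iii), together with the $L^0$--sublinearity, locality and positive $L^0_+$--homogeneity of $p_{K_0}$ that your computation uses, all hold for any $L^0$--convex, $L^0$--absorbent set containing $\theta$ by the same proofs (none of which invoke balancedness), so this is a citation mismatch rather than a gap.
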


Theorem 3.12 will be used in Section 3.4, see \cite{TXG-JFA,TXG-GS} for its slight generalizations.

\subsection{The Fenchel-Moreau dual representation theorem under the locally $L^0$--convex topology}

Let $E$ be an $L^0({\mathcal F})$--module and $f$ a function from $E$ to ${\bar L}^0({\mathcal F})$. The effective domain of $f$ is denoted by $dom(f):=\{x\in E~|~|f(x)|<+\infty~\hbox{on}~\Omega\}$ and the epigraph of $f$ by $eip(f):=\{(x,r)\in E\times L^0({\mathcal F})~|~f(x)\leq r\}$. $f$ is proper if $dom(f)\neq\emptyset$ and $f(x)>-\infty~\hbox{on}~\Omega$. $f$ is $L^0({\mathcal F})$--convex if $f(\xi x+(1-\xi)y)\leq \xi f(x)+(1-\xi)f(y)$ for all $x,~y\in E$ and $\xi\in L^0_+({\mathcal F})$ with $0\leq \xi\leq 1$, where the following convention is adopted: $0\cdot(\pm\infty)=0$ and $+\infty\pm(\pm\infty)=+\infty$. If $(E, {\mathcal T})$ is a topological $L^0({\mathcal F})$--module, a proper function $f$ is lower semicontinuous (or ${\mathcal T}$--lower semicontinuous if there is a possible confusion) if $\{x\in E~|~f(x)\leq r\}$ is closed for all $r\in L^0({\mathcal F})$.

We can now state the main result of this subsection as Theorem 3.13 below, which is a modification and improvement of Theorem 3.8 of \cite{FKV}.

\begin{theorem} Let $(E, {\mathcal P})$ be an $RLC$ module over $R$ with base $(\Omega, {\mathcal F}, P)$ such that $E$ has the countable concatenation property. If $f$ is a proper, ${\mathcal T}_c$--lower semicontinuous $L^0({\mathcal F})$--convex function from $E$ to ${\bar L}^0({\mathcal F})$, then $f^{\ast\ast}_{c}=f$. Here $f^\ast_c\colon E^\ast_c\to {\bar L}^0({\mathcal F})$ is defined by $f^\ast_c(g)=\bigvee\{g(x)-f(x)~|~x\in E\}$ for all $g\in E^\ast_c$, called the ${\mathcal T}_c$--conjugate ( or penalty ) function of $f$, and $f^{\ast\ast}_{c}\colon E \to {\bar L}^0({\mathcal F})$ by $f^{\ast\ast}_{c}(x)=\bigvee\{g(x)-f^{\ast}_{c}(g)~|~g\in E^\ast_c\}$ for all $x\in E$, called the ${\mathcal T}_c$--biconjugate function of $f$.
\end{theorem}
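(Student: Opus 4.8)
The plan is to run the classical Fenchel--Moreau argument---separate a point from the epigraph---inside the product module $E\times L^0(\mathcal{F})$, with the stratified separation theorem Corollary 3.9 playing the role of the classical Hahn--Banach separation. First I would dispose of the trivial inequality $f^{\ast\ast}_c\leq f$: for every $g\in E^\ast_c$ and $x\in E$ the definition of $f^\ast_c$ gives $g(x)-f(x)\leq f^\ast_c(g)$, hence $g(x)-f^\ast_c(g)\leq f(x)$, and taking the supremum over $g$ yields $f^{\ast\ast}_c(x)\leq f(x)$. So the real content is the reverse inequality, and since $f^{\ast\ast}_c\leq f$ it suffices to exclude the existence of an $x_0\in E$ for which $A:=[f^{\ast\ast}_c(x_0)<f(x_0)]$ has positive measure.

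Next I would set up the geometry. Equip $E\times L^0(\mathcal{F})$ with the $L^0$--seminorms $(x,r)\mapsto\|x\|_{\mathcal Q}\vee|r|$ for $\mathcal Q\in\mathcal P_f$; both factors have the countable concatenation property, hence so does the product, and its $\mathcal T_c$--dual is $E^\ast_c\times L^0(\mathcal{F})$ acting by $(g,\eta)(x,r)=g(x)+\eta r$. The set $eip(f)$ is $\mathcal T_c$--closed by lower semicontinuity and $L^0$--convex by the $L^0$--convexity of $f$. A preliminary step I would carry out is to verify that $eip(f)$ has the countable concatenation property; this rests on the local property $\tilde I_B f(x)=\tilde I_B f(\tilde I_B x+\tilde I_{B^c}y)$ of $L^0$--convex functions, which lets one concatenate epigraph points along any countable partition while preserving the defining inequality. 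With this established, Corollary 3.9 applies to $eip(f)$.

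In the principal (non-vertical) case I would pick $r_0\in L^0(\mathcal{F})$ with $f^{\ast\ast}_c(x_0)<r_0<f(x_0)$ on the part of $A$ where $f(x_0)<+\infty$ and with $(x_0,r_0)$ in $eip(f)$ wherever $f(x_0)$ is finite off that part, so that $(x_0,r_0)\notin eip(f)$. Corollary 3.9 in $E\times L^0(\mathcal{F})$ then yields $(g,\eta)\in E^\ast_c\times L^0(\mathcal{F})$ and $B\in\mathcal{F}$ with $P(B)>0$ separating $(x_0,r_0)$ from $eip(f)$ on $B$. Stability of $eip(f)$ under increasing the second coordinate forces $\eta\leq0$ on $B$; on the portion of $B$ lying in $A$ where $f(x_0)<+\infty$ one has $x_0\in dom(f)$, so $\eta\neq0$ (otherwise the separation would read $g(x_0)>\bigvee\{g(x):x\in dom(f)\}\geq g(x_0)$). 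Normalizing $\eta=-1$ and using $\bigvee\{g(x)-r:(x,r)\in eip(f)\}=f^\ast_c(g)$, the separation becomes $g(x_0)-r_0>f^\ast_c(g)$ there, whence $f^{\ast\ast}_c(x_0)\geq g(x_0)-f^\ast_c(g)>r_0>f^{\ast\ast}_c(x_0)$, the desired contradiction (and $f^\ast_c(g)<+\infty$ on $B$ is automatic from finiteness of $g(x_0)-r_0$).

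The hard part, which I expect to be the genuine obstacle, is the degenerate case, where $f(x_0)=+\infty$ on a subset of $A$ of positive measure; there $x_0\notin dom(f)$ and the separating functional is forced to be vertical ($\eta=0$), carrying no information about the values of $f$. The remedy is properness together with a perturbation: using $dom(f)\neq\emptyset$, separate a point strictly below the graph over some $x_1\in dom(f)$ to obtain a functional $g_0\in E^\ast_c$ with $f^\ast_c(g_0)$ finite on the relevant set, and combine it with the vertical functional $g$ (satisfying $g(x_0)>\bigvee\{g(x):x\in dom(f)\}=:c$ on a set $B$ of positive measure); then $f^\ast_c(g_0+Ng)\leq f^\ast_c(g_0)+Nc$ gives $(g_0+Ng)(x_0)-f^\ast_c(g_0+Ng)\geq[g_0(x_0)-f^\ast_c(g_0)]+N[g(x_0)-c]\to+\infty$ on $B$, forcing $f^{\ast\ast}_c(x_0)=+\infty=f(x_0)$ there and contradicting the strict inequality. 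The real difficulty lies in assembling $g_0$ so that it is finite on the right set, that the useful contradiction lands on a positive-measure piece of $A$, and---crucially---that the functionals produced by the \emph{localized} separations of Corollary 3.9 remain in $E^\ast_c$, since naive concatenation along the partitions that arise only lands one in $E^\ast_{\varepsilon,\lambda}=H_{cc}(E^\ast_c)$. This is exactly the point at which the countable concatenation property of $E$ and of $eip(f)$, and the local property of $f$, must be exploited to patch the pieces consistently and descend back to $E^\ast_c$.
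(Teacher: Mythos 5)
Your core strategy---separating a point below the graph from $eip(f)$ in the product module and running the classical two-case (non-vertical/vertical) Fenchel--Moreau analysis---is exactly what the paper does for its ``special case''; the paper even defers the details of that analysis to the proof of Theorem 3.8 of \cite{FKV}, so up to that point you and the paper coincide. Where you diverge is in how the separating functionals are brought back into $E^\ast_c$. You invoke the localized Corollary 3.9 at every separation step and then face the problem you yourself flag as ``the real difficulty'': each application only produces a functional valid on some positive-measure set $B$, you must force $B$ to meet the bad set $[f^{\ast\ast}_c(x_0)<f(x_0)]$ (which requires replacing $(x_0,r_0)$ off that set by a genuine epigraph point, not merely adjusting $r_0$ where $f(x_0)$ is finite), and you must then patch countably many such local functionals without leaving $E^\ast_c$. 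The paper dissolves all of this with a two-stage reduction: first prove the theorem for $(E,\mathcal P_{cc})$, where $\mathcal P_{cc}$ has the countable concatenation property, the \emph{global} separation theorem (Theorem 3.6/Corollary 3.8) is available on all of $[d^\ast>0]$, and the dual is $(E,\mathcal P_{cc})^\ast_c=H_{cc}(E^\ast_c)$ by Theorem 2.30; then observe that $u\mapsto u(x)-f^\ast_c(u)$ is a local function of $u$, so by Lemma 3.16(2) its supremum over $H_{cc}(E^\ast_c)$ equals its supremum over $E^\ast_c$. That single line is the entire ``descent'' and replaces all the patching you were worried about.

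Two concrete points you should repair if you keep your route. First, ``$eip(f)$ is $\mathcal T_c$--closed by lower semicontinuity'' is not automatic here: lower semicontinuity is defined via sublevel sets, and the equivalence with closedness of the epigraph is Proposition 3.15, which requires \emph{both} $E$ and $\mathcal P$ to have the countable concatenation property---yet another reason to pass to $\mathcal P_{cc}$ first (after which Proposition 2.27 gives that the epigraph, having the countable concatenation property, is simultaneously $\mathcal T_{\varepsilon,\lambda}$-- and $\mathcal T_c$--closed). Second, your exclusion of the vertical case reads ``$\bigvee\{g(x):x\in dom(f)\}\geq g(x_0)$'', which presupposes $x_0\in dom(f)$ globally; on a stratum where $f(x_0)<+\infty$ only locally you must first concatenate $x_0$ with a point of $dom(f)$ on the complement to produce an element of $dom(f)$ agreeing with $x_0$ there. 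Both are fixable, but as written they are gaps, and the second is precisely the kind of stratification bookkeeping the paper's $\mathcal P_{cc}$ reduction is designed to minimize.
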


As compared with Theorem 3.8 of \cite{FKV}, Theorem 3.13 requires the additional condition that $E$ has the countable concatenation property and remove the condition that ${\mathcal P}$ has the countable concatenation property. From the sequel of this subsection, one can immediately see that the additional condition is essential, while Theorem 2.30 can be used to remove the condition on ${\mathcal P}$.

To prove Theorem 3.13, let us first study the properties of an $L^0$--convex function.

Let $E$ be an $L^0({\mathcal F})$--module. $f\colon E\to {\bar L}^0({\mathcal F})$ is said to be local ( or, to have the local property ) if ${\tilde I}_Af(x)={\tilde I}_Af({\tilde I}_Ax)$ for all $x\in E$ and $A\in {\mathcal F}$. It is clear that ${\tilde I}_Af(x)=f({\tilde I}_Ax)$ for all $x\in E$ and $A\in {\mathcal F}$ iff $f$ is local, if $f(0)=0$.

\begin{lemma}($See$ \cite{FKV,FKV-appro}.) Let $E$ be an $L^0({\mathcal F})$--module. Then a function $f\colon E\to {\bar L}^0({\mathcal F})$ is $L^0$--convex iff $f$ is local and $eip(f)$ is $L^0$--convex.
\end{lemma}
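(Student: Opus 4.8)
The plan is to prove the two implications separately, extracting locality from the $L^0$--convexity inequality by specializing the scalar to $\xi=\tilde I_A$, and proving the converse by using the local property to reduce to the pieces of $\Omega$ on which $f$ is finite, where the $L^0$--convexity of $eip(f)$ can be applied to honest epigraph points.

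First I would handle the direction ``$L^0$--convex $\Rightarrow$ local and $eip(f)$ is $L^0$--convex''. For locality, fix $A\in\mathcal F$ and set $\xi=\tilde I_A$. Applying the defining inequality to the pair $x,\theta$ gives $f(\tilde I_Ax)=f(\xi x+(1-\xi)\theta)\le\tilde I_Af(x)+\tilde I_{A^c}f(\theta)$, and multiplication by $\tilde I_A$ (using $\tilde I_A\tilde I_{A^c}=0$ and $0\cdot(\pm\infty)=0$) yields $\tilde I_Af(\tilde I_Ax)\le\tilde I_Af(x)$; applying it instead to the pair $\tilde I_Ax,x$ and again multiplying by $\tilde I_A$ gives the reverse inequality, so $\tilde I_Af(x)=\tilde I_Af(\tilde I_Ax)$. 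For the epigraph, take $(x,r),(y,s)\in eip(f)$ and $\xi\in L^0_+(\mathcal F)$ with $0\le\xi\le1$; since $f(x)\le r$ and $f(y)\le s$ with $r,s$ finite, multiplication by the nonnegative scalars $\xi,1-\xi$ preserves these relations even where $f$ takes the value $-\infty$, so $f(\xi x+(1-\xi)y)\le\xi f(x)+(1-\xi)f(y)\le\xi r+(1-\xi)s$, i.e. $(\xi x+(1-\xi)y,\xi r+(1-\xi)s)\in eip(f)$.

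The converse is the substantive direction. Fix $x,y\in E$ and $\xi\in L^0_+(\mathcal F)$ with $0\le\xi\le1$, and write $z=\xi x+(1-\xi)y$. On $[\xi=0]$ one has $\tilde I_{[\xi=0]}z=\tilde I_{[\xi=0]}y$, so by locality the claim reduces to $\tilde I_{[\xi=0]}f(y)\le\tilde I_{[\xi=0]}f(y)$, with the symmetric remark on $[\xi=1]$; thus it suffices to work on $[0<\xi<1]$, and there I split $\Omega$ into the pieces where (i) $f(x)=+\infty$ or $f(y)=+\infty$, (ii) both $f(x),f(y)$ are finite, and (iii) $f(x),f(y)<+\infty$ but one equals $-\infty$. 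By locality it is enough to verify the inequality after multiplying by the indicator of each piece, and on (i) the right-hand side is $+\infty$, so nothing is needed. On a piece $A$ of type (ii) I manufacture epigraph elements agreeing with $x,y$ on $A$: fixing an anchor $(w_0,t_0)\in eip(f)$, put $u=\tilde I_Ax+\tilde I_{A^c}w_0$ and $r=\tilde I_Af(x)+\tilde I_{A^c}t_0$ (finite, as $f(x)$ is finite on $A$), and symmetrically $v,s$ from $y$; locality gives $f(u)=\tilde I_Af(x)+\tilde I_{A^c}f(w_0)\le r$, so $(u,r),(v,s)\in eip(f)$, hence $(\xi u+(1-\xi)v,\xi r+(1-\xi)s)\in eip(f)$ by hypothesis. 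Multiplying by $\tilde I_A$ and using locality once more to replace $f(\xi u+(1-\xi)v)$ by $f(z)$ on $A$ gives $\tilde I_Af(z)\le\tilde I_A(\xi f(x)+(1-\xi)f(y))$. A piece of type (iii), where the right-hand side is $-\infty$, is treated by the same construction but with the $A$--component of the level taken to be $-n$ (and a finite upper bound $s_0$ of $f(y)$ on $A$ in the other slot); since $\xi>0$ there, $\xi(-n)+(1-\xi)s_0\to-\infty$ as $n\to\infty$, forcing $f(z)=-\infty$ on $A$.

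I expect the main obstacle to be the disciplined bookkeeping of the extended-real arithmetic combined with the repeated use of locality to pass between $f$ evaluated at $x,y,z$ and at their $\tilde I_A$--truncations. Two points deserve care: the anchored construction in (ii) and (iii) tacitly uses $eip(f)\neq\emptyset$, i.e. properness of $f$ (which holds in all the intended applications, e.g. in Theorem 3.13); and case (iii) is the one place where a single application of the $L^0$--convexity of $eip(f)$ does not suffice and a limiting argument $n\to\infty$ is required.
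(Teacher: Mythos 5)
Your proof is correct, but note that the paper itself gives no argument for this lemma --- it is stated with a bare citation to \cite{FKV,FKV-appro} --- so there is no in-paper proof to measure yours against. Judged on its own, your argument is the standard one and all the steps check out: the necessity direction (specializing $\xi=\tilde I_A$ to the pairs $(x,\theta)$ and $(\tilde I_Ax,x)$ to extract locality, then bounding $f$ at the combination by the combination of the levels) is exactly right, and the sufficiency direction --- partitioning $\Omega$ according to $[\xi=0]$, $[\xi=1]$ and the finiteness pattern of $f(x)$, $f(y)$, splicing $x$ and $y$ with an anchor point of $eip(f)$ to manufacture genuine epigraph elements, and sending the level to $-\infty$ on the stratum where one of the values is $-\infty$ --- is sound, with every appeal to locality legitimate under the convention $0\cdot(\pm\infty)=0$. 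The one point worth promoting from a parenthetical to a visible hypothesis is the one you already flag: the sufficiency direction genuinely requires $eip(f)\neq\emptyset$. Indeed, if $f=\tilde I_A(+\infty)+\tilde I_{A^c}\,g$ with $P(A)>0$, $P(A^c)>0$ and $g$ local but not convex, then $eip(f)=\emptyset$ is vacuously $L^0$--convex and $f$ is local, yet $f$ is not $L^0$--convex; so the lemma as literally stated fails in this degenerate case and must be read with the tacit assumption that $f$ admits at least one point with $f<+\infty$ on $\Omega$ (as it does in every application in the paper, e.g. the proper functions of Theorem 3.13 and Proposition 3.15). With that caveat recorded, your proposal is a complete and correct proof.
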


According to Lemma 3.1, Proposition 3.4 and Lemma 3.10 of \cite{FKV} together can be modified to Proposition 3.15 below.

\begin{proposition} Let $(E, {\mathcal P})$ be an $RLC$ module over $R$ with base $(\Omega, {\mathcal F}, P)$ such that both $E$ and ${\mathcal P}$ have the countable concatenation property. If $f\colon E\to {\bar L}^0({\mathcal F})$ is a proper and local function, then the following are equivalent:\\
(1). $f$ is ${\mathcal T}_c$--lower semicontinuous.\\
(2). $eip(f)$ is closed in $(E, {\mathcal T}_c)\times (L^0({\mathcal F}), {\mathcal T}_c)$.\\
(3). $\varliminf_\alpha f(x_\alpha)\geq f(x)$ for all the nets $\{x_\alpha, \alpha\in \Gamma\}\subset E$ with $x_\alpha\to x$ with respect to ${\mathcal T}_c$ for some $x\in E$, where $\varliminf_\alpha f(x_\alpha)=\bigvee_{\beta\in\Gamma}(\bigwedge_{\alpha\geq\beta} f(x_\alpha))$.
\end{proposition}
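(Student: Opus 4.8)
The plan is to establish the three equivalences through the cycle $(1)\Rightarrow(3)\Rightarrow(2)\Rightarrow(1)$, isolating the single genuinely module-theoretic difficulty in the step $(1)\Rightarrow(3)$. The other two implications are formal and imitate the classical lower-semicontinuity theory, provided one reads the order in the complete lattice $\bar{L}^0(\mathcal{F})$ and expresses $\mathcal{T}_c$-convergence through the $L^0_{++}(\mathcal{F})$-graded neighborhoods $N_\theta(Q,\varepsilon)$. Throughout I work with nets rather than sequences, since $\mathcal{T}_c$ need not be metrizable and closedness must be tested against all net limits.

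For the two easy implications I proceed as follows. For $(2)\Rightarrow(1)$, fix $r\in L^0(\mathcal{F})$ and observe that the slice map $\iota_r\colon E\to E\times L^0(\mathcal{F})$, $x\mapsto(x,r)$, is $\mathcal{T}_c$-continuous; since $\{x\in E~|~f(x)\leq r\}=\iota_r^{-1}(eip(f))$ and $eip(f)$ is closed, the sublevel set is closed, which is exactly $(1)$. For $(3)\Rightarrow(2)$, take a net $(x_\alpha,r_\alpha)\in eip(f)$ with $(x_\alpha,r_\alpha)\to(x,r)$. From $f(x_\alpha)\leq r_\alpha$ and the monotonicity of $\bigwedge$ and $\bigvee$ one gets $\varliminf_\alpha f(x_\alpha)\leq\varliminf_\alpha r_\alpha$, and from $r_\alpha\to r$ in $\mathcal{T}_c$ one checks $\varliminf_\alpha r_\alpha=r$ (for every $\varepsilon\in L^0_{++}(\mathcal{F})$ the tail satisfies $r-\varepsilon\leq r_\alpha\leq r+\varepsilon$, and $\bigwedge_{\varepsilon}(r+\varepsilon)=r$ in $\bar{L}^0(\mathcal{F})$). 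Hypothesis $(3)$ then gives $f(x)\leq\varliminf_\alpha f(x_\alpha)\leq r$, i.e. $(x,r)\in eip(f)$.

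The heart of the matter is $(1)\Rightarrow(3)$. Suppose $x_\alpha\to x$ in $\mathcal{T}_c$, set $\eta:=\varliminf_\alpha f(x_\alpha)$, and argue by contradiction, assuming $f(x)>\eta$ on some $B\in\mathcal{F}$ with $P(B)>0$ (necessarily $\eta<+\infty$ on $B$) and choosing $r\in L^0(\mathcal{F})$ with $\eta<r<f(x)$ on $B$. Writing $g_\beta:=\bigwedge_{\alpha\geq\beta}f(x_\alpha)$, we have $g_\beta\leq\eta<r$ on $B$ for every $\beta$. For each $\beta$ the countable determination of the infimum in $\bar{L}^0(\mathcal{F})$ yields a sequence $\{\alpha^\beta_n\}\subset\{\alpha~|~\alpha\geq\beta\}$ with $\bigwedge_n f(x_{\alpha^\beta_n})=g_\beta<r$ on $B$; disjointifying the sets $B\cap[f(x_{\alpha^\beta_n})<r]$ gives a countable partition $\{C_n\}$ of $B$ on which $f(x_{\alpha^\beta_n})<r$. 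Using the countable concatenation property of $E$ and a reference point $x_\ast\in dom(f)$ (available since $f$ is proper), I glue
$$w_\beta:=\sum_{n}\tilde{I}_{C_n}x_{\alpha^\beta_n}+\tilde{I}_{B^c}x_\ast\in E.$$
The local property of $f$ gives $f(w_\beta)<r$ on $B$ and $f(w_\beta)=f(x_\ast)$ on $B^c$, whence $w_\beta\in\{f\leq r^\sharp\}$ for $r^\sharp:=\tilde{I}_B r+\tilde{I}_{B^c}(r\vee f(x_\ast))\in L^0(\mathcal{F})$. Moreover, since each $\|\cdot\|_Q$ ($Q\in\mathcal{P}_f$) is a local $L^0$-seminorm, $\|w_\beta-x^\flat\|_Q=\sum_n\tilde{I}_{C_n}\|x_{\alpha^\beta_n}-x\|_Q$ for $x^\flat:=\tilde{I}_B x+\tilde{I}_{B^c}x_\ast$; as every $\alpha^\beta_n\geq\beta$, once $\beta$ runs past the $\mathcal{T}_c$-threshold of $x_\alpha\to x$ this supremum is dominated by $\varepsilon$, so $w_\beta\to x^\flat$. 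Closedness of $\{f\leq r^\sharp\}$ from $(1)$ forces $f(x^\flat)\leq r^\sharp$, and the local property yields $f(x)=f(x^\flat)\leq r$ on $B$, contradicting $r<f(x)$ on $B$.

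The main obstacle is exactly this gluing. In the random setting the inequality $g_\beta<r$ on $B$ only asserts that, pointwise a.s. on $B$, \emph{some} member of the tail beats $r$, not that a single $x_\alpha$ does so on all of $B$; converting this stratified information into a genuine element $w_\beta\in E$ lying in one fixed sublevel set, while keeping it $\mathcal{T}_c$-convergent to the localization $x^\flat$ of $x$, is where the countable determination of the essential infimum, the countable concatenation property of $E$, and the local property of $f$ must be combined. This is the analytic skill emphasized in the introduction, and it also explains why the countable concatenation property of $E$ (rather than merely of $\mathcal{P}$) is the decisive hypothesis at this step.
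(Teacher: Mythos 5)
Your proof is correct, but it takes a genuinely different route from the paper's. The paper disposes of this proposition in a few lines: it declares $(3)\Rightarrow(2)\Rightarrow(1)$ clear, observes that when $E$ has the countable concatenation property so do the sublevel sets $\{x\in E~|~f(x)\leq r\}$ and $eip(f)$, and then refers the converse implications to the proofs of Proposition 3.4 and Lemma 3.10 of \cite{FKV}, the whole point being that the corrected separation Lemma 3.1 (which requires $\mathcal{P}_{cc}=\mathcal{P}$) can then be applied to those sets. You instead close the cycle $(1)\Rightarrow(3)\Rightarrow(2)\Rightarrow(1)$ self-containedly, and your one hard step $(1)\Rightarrow(3)$ avoids separation entirely: countable determination of the essential infimum of the tail $\{f(x_\alpha):\alpha\geq\beta\}$, disjointification of the resulting strata of $B$, gluing of the tail elements into $w_\beta$ via the countable concatenation property of $E$, and the locality of $f$ and of the $L^0$--seminorms to place $w_\beta$ in a single sublevel set $\{f\leq r^\sharp\}$ while keeping $w_\beta\to x^\flat$. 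This is precisely the ``countable concatenation hull plus local property'' skill the paper advertises and uses elsewhere (e.g. in Theorems 2.28 and 4.28), here deployed to make the proposition independent of \cite{FKV}. A small bonus of your route is that it uses only the countable concatenation property of $E$ and never that of $\mathcal{P}$ --- the latter hypothesis enters the paper's argument only through Lemma 3.1 --- so your argument in fact proves the statement under a weaker hypothesis. The two easy implications (slice-map preimage of the epigraph, and the verification that $\varliminf_\alpha r_\alpha=r$ for a $\mathcal{T}_c$--convergent net in $L^0(\mathcal{F})$) are also handled correctly.
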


\begin{proof} It is clear that (3)$\Rightarrow$(2)$\Rightarrow$(1). As to the converse implications, one can only needs to notice that both the set $\{x\in E|f(x)\leq r\}$ for any given $r\in L^0({\mathcal F})$ and $eip(f)$ have the countable concatenation property if $E$ has the property. The remainder of proof is the same as the corresponding part of proofs of Proposition 3.4 and Lemma 3.10 of \cite{FKV} since the arguments in \cite{FKV} are feasible in the case when $E$ has the countable concatenation property (in fact, only in the case Lemma 3.1 can be applied to the sets $\{x\in E|f(x)\leq r\}$ for any given $r\in L^0({\mathcal F})$ and $eip(f)$). \hfill \hfill $\square$
\end{proof}

Let $E$ be an $L^{0}(\mathcal{F},K)$--module with the countable concatenation property and $G\subset E$ a nonempty subset. $H_{cc}(G)$ always denotes the countable concatenation hull of $G$ in $E$, namely $H_{cc}(G)=\{\Sigma_{n=1}^{\infty}\tilde{I}_{A_n}g_n~|~\{A_n,n\in N\}$ is a countable partition of $\Omega$ to $\mathcal{F}$ and $\{g_n,n\in N\}$ is sequence in $G\}$. For any $x\in H_{cc}(G)$, $\Sigma_{n=1}^{\infty}\tilde{I}_{A_n}g_n$ is called a canonical representation of $x$ if $\{A_n,n\in N\}$ is a countable partition of $\Omega$ to $\mathcal{F}$ and $\{g_n,n\in N\}$ is a sequence in $G$ such that $x=\Sigma_{n=1}^{\infty}\tilde{I}_{A_n}g_n$. Lemma 3.16 below is almost obvious but frequently used in the proofs of the subsequent key theorems in Section 4 as well as in Theorem 3.13, thus we summarize and prove it as follows:

\begin{lemma} Let $E$ be an $L^{0}(\mathcal{F})$--module with the countable concatenation property. Then we have the following statements:

\noindent $(1)$. Let $f:E\rightarrow \bar{L}^{0}(\mathcal{F})$ have the local property and $x=\Sigma_{n=1}^{\infty}\tilde{I}_{A_n}x_n$ for some countable partition $\{A_n,n\in N\}$ of $\Omega$ to $\mathcal{F}$ and some sequence $\{x_n,n\in N\}$ in $E$, then $f(x)=\Sigma_{n=1}^{\infty}\tilde{I}_{A_n}f(x_n)$.

\noindent $(2)$. Let $f:E\rightarrow \bar{L}^{0}(\mathcal{F})$ have the local property and $G\subset E$ be a nonempty subset, then $\bigvee\{f(x)~|~x\in G\}=\bigvee\{f(x)~|~x\in H_{cc}(G)\}$.

\noindent $(3)$. Let $f$ and $g$ be any two functions from $E$ to $\bar{L}^{0}(\mathcal{F})$ such that they both have the local property and $G\subset E$ a nonempty subset. If $f(x)=g(x)$ for all $x\in G$, then $f(x)=g(x)$ for all $x\in H_{cc}(G)$.

\noindent $(4)$. Let $\{f_{\alpha},\alpha\in\Gamma\}$ be a family of functions from $E$ to $\bar{L}^{0}(\mathcal{F})$ such that each $f_{\alpha}$ has the locally property, then $f:E\rightarrow \bar{L}^{0}(\mathcal{F})$ defined by $f(x)=\bigvee\{f_{\alpha}(x)~|~\alpha\in\Gamma\}$ for all $x\in E$, also has the local property.
\end{lemma}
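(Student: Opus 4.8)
The plan is to prove the four statements of Lemma 3.16 in order, since each is a direct consequence of the local property combined with the countable concatenation structure of $E$. Throughout, I will repeatedly use the defining relation of the local property, namely $\tilde{I}_A f(x) = \tilde{I}_A f(\tilde{I}_A x)$ for all $x \in E$ and $A \in \mathcal{F}$, together with the assumption that $E$ has the countable concatenation property, which guarantees that the relevant countable concatenations $\Sigma_{n=1}^{\infty}\tilde{I}_{A_n}x_n$ actually exist as elements of $E$.

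First I would establish statement $(1)$, which is the engine for the rest. Given $x = \Sigma_{n=1}^{\infty}\tilde{I}_{A_n}x_n$, I would multiply through by $\tilde{I}_{A_m}$ for a fixed $m$. Since $\{A_n, n\in N\}$ is a partition, $\tilde{I}_{A_m}x = \tilde{I}_{A_m}x_m$, and hence by the local property $\tilde{I}_{A_m}f(x) = \tilde{I}_{A_m}f(\tilde{I}_{A_m}x) = \tilde{I}_{A_m}f(\tilde{I}_{A_m}x_m) = \tilde{I}_{A_m}f(x_m)$. Summing over $m$ against the partition yields $f(x) = \Sigma_{n=1}^{\infty}\tilde{I}_{A_n}f(x_n)$; one should note that this identity holds in $\bar{L}^0(\mathcal{F})$, where the countable concatenation of the extended-real-valued pieces is always well defined, so no extra hypothesis is needed on the right-hand side.

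Next I would derive statements $(2)$ and $(3)$ from $(1)$. For $(2)$, the inclusion $G \subset H_{cc}(G)$ gives one direction of the supremum inequality immediately. For the reverse, take any $x = \Sigma_{n=1}^{\infty}\tilde{I}_{A_n}g_n \in H_{cc}(G)$; by $(1)$ we have $f(x) = \Sigma_{n=1}^{\infty}\tilde{I}_{A_n}f(g_n) \leq \bigvee\{f(g)~|~g\in G\}$, since $\tilde{I}_{A_n}f(g_n) \leq f(g_n) \leq \bigvee\{f(g)~|~g\in G\}$ on each $A_n$ and the pieces are supported on disjoint sets. Taking the supremum over $x \in H_{cc}(G)$ gives the other inequality, and equality follows. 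Statement $(3)$ is equally direct: if $f$ and $g$ agree on $G$, then for $x = \Sigma_{n=1}^{\infty}\tilde{I}_{A_n}g_n \in H_{cc}(G)$ applying $(1)$ to both functions gives $f(x) = \Sigma_{n=1}^{\infty}\tilde{I}_{A_n}f(g_n) = \Sigma_{n=1}^{\infty}\tilde{I}_{A_n}g(g_n) = g(x)$.

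Finally, for statement $(4)$ I would fix $A \in \mathcal{F}$ and compute $\tilde{I}_A f(x) = \tilde{I}_A \bigvee\{f_\alpha(x)~|~\alpha\in\Gamma\}$. Using the fact that multiplication by $\tilde{I}_A$ distributes over suprema in $\bar{L}^0(\mathcal{F})$ (a standard property of the lattice operations, valid since $\tilde{I}_A \geq 0$), this equals $\bigvee\{\tilde{I}_A f_\alpha(x)~|~\alpha\in\Gamma\}$, and by the local property of each $f_\alpha$ this is $\bigvee\{\tilde{I}_A f_\alpha(\tilde{I}_A x)~|~\alpha\in\Gamma\} = \tilde{I}_A \bigvee\{f_\alpha(\tilde{I}_A x)~|~\alpha\in\Gamma\} = \tilde{I}_A f(\tilde{I}_A x)$, so $f$ is local. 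I do not expect any serious obstacle here; the only point requiring a little care is the interchange of $\tilde{I}_A(\cdot)$ with the (possibly uncountable) supremum in $(4)$ and with the countable concatenation in $(1)$, for which I would invoke the rich lattice properties of $\bar{L}^0(\mathcal{F})$ recalled in the preliminaries rather than reprove them. The genuinely substantive ingredient throughout is simply the existence of the concatenated elements, which is exactly what the standing countable concatenation hypothesis on $E$ supplies.
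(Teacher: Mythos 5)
Your proposal is correct and follows essentially the same route as the paper: (1) is proved by the same chain $\tilde I_{A_n}f(x)=\tilde I_{A_n}f(\tilde I_{A_n}x)=\tilde I_{A_n}f(x_n)$ summed over the partition, (2) and (3) are deduced from (1) exactly as in the paper, and (4) is the same direct verification (the paper merely states it is clear by definition, while you spell out the interchange of $\tilde I_A$ with the supremum).
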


\begin{proof} (1). $f(x)=(\Sigma_{n=1}^{\infty}\tilde{I}_{A_n})f(x)=\Sigma_{n=1}^{\infty}\tilde{I}_{A_n}f(x)=\Sigma_{n=1}^{\infty}\tilde{I}_{A_n}f(\tilde{I}_{A_n}x)= \Sigma_{n=1}^{\infty}\tilde{I}_{A_n}f(\tilde{I}_{A_n}x_n)=\Sigma_{n=1}^{\infty}\tilde{I}_{A_n}f(x_n)$.

(2). Let $\xi=\bigvee\{f(x)~|~x\in G\}$ and $\eta=\bigvee\{f(x)~|~x\in H_{cc}(G)\}$, then $\xi\leq\eta$ is clear, it remains to prove $\eta\leq\xi$. For any $x\in H_{cc}(G)$, let $\Sigma_{n=1}^{\infty}\tilde{I}_{A_n}g_n$ be a canonical representation of $x$, then $f(x)=\Sigma_{n=1}^{\infty}\tilde{I}_{A_n}f(g_n)\leq\xi$, so $\eta\leq\xi$.

(3). It is clear by (1).

(4). It is also clear by definition. \hfill \hfill $\square$
\end{proof}

We can now prove Theorem 3.13.

\newproof{pot}{Proof of Theorem 3.13}
\begin{pot}

We first consider the special case when ${\mathcal P}$ has the countable concatenation property. Since $f$ is $L^0$--convex, $f$ is local and $eip(f)$ is $L^0$--convex by Lemma 3.15. Further, since $E$ and $L^0({\mathcal F})$ have the countable concatenation property, $eip(f)$ also has the property. To sum up, $eip(f)$ is a ${\mathcal T}_c$--closed $L^0$--convex set with the countable concatenation property in the random locally convex module $(E\times L^0({\mathcal F}), {\tilde {\mathcal P}})$, where ${\tilde {\mathcal P}}=\{\|\cdot\|+|\cdot|:\|\cdot\|\in {\mathcal P}\}$ and $(\|\cdot\|+|\cdot|)(x, r)=\|x\|+|r|$ for all $(x, r)\in E\times L^0({\mathcal F})$ and $\|\cdot\|\in {\mathcal P}$, it is also obvious that ${\tilde {\mathcal P}}$ has the countable concatenation property and $(E\times L^0({\mathcal F}), {\tilde {\mathcal P}})^\ast_c=E^\ast_c\times L^0({\mathcal F})$. By applying Theorem 3.6 of this paper rather than Theorem 2.8 of \cite{FKV} to $E\times L^0({\mathcal F})$ and $eip(f)$, one can complete the proof of the special case along the idea of proof of Theorem 3.8 of \cite{FKV}.

Now, we consider the general case, namely ${\mathcal P}$ may not necessarily have the countable concatenation property. In fact, based on Theorem 2.30, it is easy to complete the proof of the general case. We consider the problem in $(E, {\mathcal P}_{cc})$. Since ${\mathcal P}_{cc}$ has the countable concatenation property and the locally $L^0$--convex topology induced by ${\mathcal P}_{cc}$ is stronger than that induced by ${\mathcal P}$, applying what has been proved above to $f$ and $(E, {\mathcal P}_{cc})$ we can obtain:$$f(x)=\bigvee\{u(x)-f^\ast_c(u)~|~u\in (E, {\mathcal P}_{cc})^\ast_c\}~\hbox{ for all $x\in E$.}$$
Since $f^\ast_c$ has the local property and $u(x)$ is, of course, local with respect to $u$ for a fixed $x\in E$, then $u(x)-f^{\ast}_{c}(u)$ is local with respect to $u$ when $x$ is fixed. So by (2) of Lemma 3.16, we have that $f(x)=\bigvee\{u(x)-f^{\ast}_{c}(u)~|~u\in H_{cc}(E^{\ast}_c)\}$ (by the fact that $(E, {\mathcal P}_{cc})^\ast_c=H_{cc}(E^\ast_c)$, where $E^\ast_c=(E, {\mathcal P})^\ast_c$) $=\bigvee\{u(x)-f^{\ast}_{c}(u)~|~u\in E^{\ast}_c\}$.
\hfill \hfill $\square$
\end{pot}

\subsection{Random duality under the locally $L^0$--convex topology with respect to random duality pair}

Only the classical duality theory with respect to a duality pair can give a thorough treatment of classical conjugate space theory of locally convex spaces. The theory of random conjugate spaces  occupies a central place in the study of $RN$ modules and $RLC$ modules, it is very natural that random duality theory was studied at the previous time in \cite{TXG-dual,TXG-Sur,TXG-XXC}, where many basic results and useful techniques were already obtained. Before 2009, only the $(\varepsilon, \lambda)$--topology was available, so the work in \cite{TXG-dual,TXG-Sur,TXG-XXC} was carried out under this topology, where the family of $L^0$--seminorms plays a key role. In the subsection, we will establish some basic results on random duality theory with respect to the locally $L^0$--convex topology in order to provide an enough framework for the theory of $RLC$ modules and its financial applications, which is motivated from the study of $L^0$--barreled modules. For a sake of convenience, let us introduce the following:

\begin{definition} (See \cite{FKV}.) Let $(E, {\mathcal T})$ be a locally $L^0$--convex $L^0({\mathcal F}, K)$--module. An $L^0$--balanced, $L^0$--absorbent, $L^0$--convex and closed subset of $E$ is called an $L^0$--barrel. $(E, {\mathcal T})$ is called an $L^0$--barreled module if every $L^0$--barrel is a neighborhood of $\theta$.
\end{definition}

Further, D. Filipovi\'{c}, M. Kupper and N. Vogelpoth established the continuity and subdifferentiability theorems for a proper lower semicontinuous $L^0$--convex function defined on an $L^0$--barreled module in \cite{FKV}. Then we naturally ask: what is a characterization for a locally $L^0$--convex $L^0({\mathcal F}, K)$--module to be $L^0$--barreled? In particular, is a ${\mathcal T}_c$--complete $RN$ module $L^0$--barreled under the locally $L^0$--convex topology induced by its $L^0$--norm? Specially, is $L^p_{\mathcal F}(\mathcal E)$ $L^0$--barreled? We at once realized that these problems are rather complicated. Our study leads to the following:

\begin{definition} Let $(E, {\mathcal T})$ be a locally $L^0$--convex $L^0({\mathcal F}, K)$--module. $(E,$ ${\mathcal T})$ is called an $L^0$--pre-barreled module if every $L^0$--barrel with the countable concatenation property is a neighborhood of $\theta$.
\end{definition}

The notion of an $L^0$--pre-barreled module is weaker than that of an $L^0$--barreled module. Although we have not yet provided a perfect characterization for a locally $L^0$--convex $L^0({\mathcal F}, K)$--module to be $L^0$--barreled or $L^0$--pre-barreled, in Subsection 3.3.3 we will prove that every $RLC$ module $(E, {\mathcal P})$ such that $E$ has the countable concatenation property is $L^0$--pre-barreled under the locally $L^0$--convex topology induced by ${\mathcal P}$ iff ${\mathcal T}_c$ is the topology of random uniform convergence on the family of $\sigma_c(E^\ast_c, E)$--bounded sets of $E^\ast_c$. In particular, every ${\mathcal T}_c$--complete $RN$ module $(E, \|\cdot\|)$ is $L^0$--pre-barreled under its locally $L^0$--convex topology if $E$ has the countable concatenation property. Further, in Section 3.4 we will establish the continuity and subdifferentiability theorems for a proper lower semicontinuous $L^0$--convex function defined on an $L^0$--pre-barreled module $(E, {\mathcal T})$ such that $E$ has the countable concatenation property. Thus the notion of an $L^0$--pre-barreled module is more suitable for the study of conditional risk measures.

\subsubsection{Random compatible locally $L^0$--convex topology}

Let $X$ and $Y$ be two $L^0({\mathcal F}, K)$--modules. A mapping $\langle\cdot, \cdot\rangle\colon X\times Y\to L^0({\mathcal F}, K)$ is said to be $L^0({\mathcal F}, K)$--bilinear function ( briefly, $L^0$--bilinear function if there is not any possible confusion ) if both $\langle x, \cdot\rangle\colon Y\to L^0({\mathcal F}, K)$ and $\langle\cdot, y\rangle\colon X\to L^0({\mathcal F}, K)$ are $L^0$--linear functions for all $x\in X$ and $y\in Y$.

\begin{definition} (See \cite{TXG-dual,TXG-Sur,TXG-XXC}.) Two $L^0({\mathcal F}, K)$--modules $X$ and $Y$ are called a random duality pair over $K$ with base $(\Omega, {\mathcal F}, P)$ with respect to the $L^0$--bilinear function $\langle\cdot,\cdot\rangle\colon X\times Y\to L^0({\mathcal F}, K)$ if the following are satisfied:\\
(1). $\langle x, y\rangle=0$ for all $y\in Y$ iff $x=\theta$;\\
(2). $\langle x, y\rangle=0$ for all $x\in X$ iff $y=\theta$.
\end{definition}

Usually, if $X,~Y$ and $\langle\cdot,\cdot\rangle$ satisfy Definition 3.19, then we simply say that $\langle X, Y\rangle$ is a random duality pair over $K$ with base $(\Omega, {\mathcal F}, P)$. Let $X^\#$ denote the $L^0({\mathcal F}, K)$--module of $L^0$--linear functions from an $L^0({\mathcal F}, K)$--module $X$ to $L^0({\mathcal F}, K)$. It is clear that $X^\#$ has the countable concatenation property. If $\langle X, Y\rangle$ is a random duality pair, we always identify each $x\in X$ with $\langle x, \cdot\rangle\in Y^\#$, namely regard $X$ as a submodule of $Y^\#$, thus for any subset $G\subset X$, we always use $H_{cc}(G)$ for the countable concatenation hull of $G$ in $Y^\#$, which would not cause any possible confusion.

\begin{definition} Let $\langle X, Y\rangle$ be a random duality pair over $K$ with base $(\Omega, {\mathcal F}, P)$. A family ${\mathcal P}$ of $L^0$--seminorms on $X$ is called a random compatible family with $Y$ with respect to the locally $L^0$--convex topology ${\mathcal T}_c$ induced by ${\mathcal P}$ if $(X, {\mathcal P})$ becomes an $RLC$ module over $K$ with base $(\Omega, {\mathcal F}, P)$ such that $(E, {\mathcal P})^\ast_c=Y$, in which case we also say that ${\mathcal T}_c$ is a random compatible locally $L^0$--convex topology with $Y$.
\end{definition}

\begin{remark} In \cite{TXG-XXC}, a family ${\mathcal P}$ of $L^0$--seminorms is called a random compatible family with $Y$ with respect to the $(\varepsilon, \lambda)$--topology ${\mathcal T}_{\varepsilon, \lambda}$ induced by ${\mathcal P}$ if $(X, {\mathcal P})$ becomes an $RLC$ module such that $(E, {\mathcal P})^\ast_{\varepsilon, \lambda}=Y$.
\end{remark}

Let $\langle X, Y\rangle$ be a random duality pair and $\sigma(X, Y)=\{|\langle\cdot, y\rangle|\colon y\in Y\}$, then $\sigma(X, Y)$ is a family of $L^0$--seminorms on $X$ such that $(X, \sigma(X, Y))$ becomes an $RLC$ module. In the sequel, $\sigma_c(X, Y)$ and $\sigma_{\varepsilon, \lambda}(X, Y)$ always denote the locally $L^0$--convex topology and the $(\varepsilon, \lambda)$--topology induced by $\sigma(X, Y)$, respectively.

\begin{theorem} Let $\langle X, Y\rangle$ be a random duality pair over $K$ with base $(\Omega, {\mathcal F}, P)$. Then $\sigma_c(X, Y)$ is a random compatible topology with $Y$.
\end{theorem}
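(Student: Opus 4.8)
The plan is to verify the two requirements hidden in Definition 3.20: that $(X,\sigma(X,Y))$ is an $RLC$ module, and that its random conjugate space under the locally $L^0$--convex topology equals $Y$, i.e. $(X,\sigma(X,Y))^\ast_c=Y$ as submodules of $X^\#$. The first point has already been recorded in the paragraph preceding the statement, since by condition (1) of Definition 3.19 the family $\sigma(X,Y)=\{|\langle\cdot,y\rangle|:y\in Y\}$ is a separating family of $L^0$--seminorms. The inclusion $Y\subseteq X^\ast_c$ is immediate: for each $y\in Y$ the pairing $\langle\cdot,y\rangle$ is $L^0(\mathcal F,K)$--linear by bilinearity and satisfies $|\langle x,y\rangle|\leq 1\cdot\|x\|_{\mathcal Q}$ with $\mathcal Q=\{|\langle\cdot,y\rangle|\}$ a finite subset of $\sigma(X,Y)$, so $\langle\cdot,y\rangle\in X^\ast_I=X^\ast_c$ by Proposition 2.19; the injectivity of $y\mapsto\langle\cdot,y\rangle$ comes from condition (2) of Definition 3.19. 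The real content is the reverse inclusion $X^\ast_c\subseteq Y$.

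To prove $X^\ast_c\subseteq Y$, I would start from $f\in X^\ast_c=X^\ast_I$ and extract $\xi\in L^0_+(\mathcal F)$ and finitely many $y_1,\dots,y_n\in Y$ with $|f(x)|\leq\xi\bigvee_{i=1}^n|\langle x,y_i\rangle|\leq\sum_{i=1}^n\xi|\langle x,y_i\rangle|$ for all $x\in X$. Setting $p_i(x)=\xi|\langle x,y_i\rangle|$, which is an $L^0$--seminorm, Corollary 2.33 yields $L^0(\mathcal F,K)$--linear functionals $f_1,\dots,f_n$ with $f=\sum_{i=1}^n f_i$ and $|f_i(x)|\leq\xi|\langle x,y_i\rangle|$ for all $x$ and each $i$. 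Thus the problem reduces to a single representation lemma: whenever an $L^0$--linear $h$ is dominated by $\xi|g|$ for an $L^0$--linear $g$, then $h=\eta g$ for some $\eta\in L^0(\mathcal F,K)$ with $|\eta|\leq\xi$. This is the module analogue of the classical fact that a functional dominated by $|g|$ is a scalar multiple of $g$, and it is where the work lies.

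For that lemma I would argue as follows, with $g=\langle\cdot,y_i\rangle$ and $h=f_i$. The crucial algebraic identity is that for any $x,x'\in X$ the element $z=g(x')x-g(x)x'$ satisfies $g(z)=0$, whence $|h(z)|\leq\xi|g(z)|=0$ and so $g(x')h(x)=g(x)h(x')$ holds on all of $\Omega$; this both shows that $h$ vanishes wherever every value of $g$ vanishes and guarantees the consistency of the quotient $h(x)/g(x)$ on $[g(x)\neq 0]$. Using that the essential supremum of $\{[g(x)\neq 0]:x\in X\}$ in $L^0(\mathcal F)$ is attained along a countable family $\{x_m,m\in N\}$, I would take the partition $\{C_m,m\in N\}$ of $B:=\bigcup_m[g(x_m)\neq0]$ obtained by disjointification and set $\eta=\sum_m\tilde I_{C_m}\,h(x_m)g(x_m)^{-1}$ on $B$ and $\eta=0$ on $B^c$. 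The identity above makes $\eta$ well defined, the bound $|h(x_m)|\leq\xi|g(x_m)|$ gives $|\eta|\leq\xi$, and a case check on each $C_m$ (where $g(x_m)\neq0$) and on $B^c$ (where $g(x)=0$, hence $h(x)=0$, for every $x$) yields $h=\eta g$. Applying this to each $i$ produces $\eta_i\in L^0(\mathcal F,K)$ with $f_i=\eta_i\langle\cdot,y_i\rangle$, so $f=\langle\cdot,\sum_{i=1}^n\eta_i y_i\rangle$ with $\sum_i\eta_i y_i\in Y$ since $Y$ is an $L^0(\mathcal F,K)$--module; thus $f\in Y$.

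I expect the main obstacle to be exactly this representation lemma, and specifically the passage from the pointwise, kernel-type domination to a single global multiplier $\eta\in L^0(\mathcal F,K)$: one must handle the measurable stratification carefully, since the set where $g$ is ``nonzero'' need not be all of $\Omega$, and one must invoke the countable attainment of the essential supremum so that the multiplier is genuinely one element of $L^0(\mathcal F,K)$ rather than an uncountable patchwork. Once that lemma is in hand, the remainder is a routine assembly using Proposition 2.19, Corollary 2.33, and the module structure of $Y$.
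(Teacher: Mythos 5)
Your proof is correct, but the mechanism you use for the decisive step differs from the paper's. Both arguments share the same skeleton: the inclusion $Y\subset (X,\sigma(X,Y))^{\ast}_c$ is immediate, and Proposition 2.19 reduces the converse to showing that any $f$ with $|f(x)|\leq\xi\bigvee_{i=1}^{n}|\langle x,y_i\rangle|$ is an $L^0(\mathcal F,K)$--linear combination of the pairings $\langle\cdot,y_i\rangle$. At that point the paper simply invokes Lemma 3.23 (the kernel criterion: $f=\sum_i\xi_i\langle\cdot,y_i\rangle$ iff $\bigcap_i N(\langle\cdot,y_i\rangle)\subset N(f)$, which the domination clearly gives), and is done in one line. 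You instead pass from $\bigvee$ to $\sum$, apply Corollary 2.33 to split $f=\sum_i f_i$ with $|f_i|\leq\xi|\langle\cdot,y_i\rangle|$, and then prove by hand that a functional dominated by $\xi|g|$ equals $\eta g$ for a single multiplier $\eta\in L^0(\mathcal F,K)$ with $|\eta|\leq\xi$, via the identity $g(x')h(x)=g(x)h(x')$, the countable attainment of the essential supremum of $\{[g(x)\neq 0]:x\in X\}$, and a disjointified stratification. This is in effect a self-contained re-derivation of the half of Lemma 3.23 that is needed here (your single-functional lemma is its $n=1$ case, with the extra quantitative bound on the multiplier), and your treatment of the generalized inverse and of the set where $g$ vanishes is careful and correct. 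What the paper's route buys is brevity, since Lemma 3.23 is already available as a cited tool; what your route buys is transparency about exactly where the measurable stratification enters, at the cost of relying on Corollary 2.33 instead.
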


Proof of Theorem 3.22 needs Lemma 3.23 below.

\begin{lemma} $($See \cite{TXG-dual,TXG-XXC}$.)$ Let $E$ be an $L^0({\mathcal F}, K)$--module. If $f_1,f_2,\cdots,f_n$ and $g$ are $n+1$ $L^0$--linear functions from $E$ to $L^0({\mathcal F}, K)$, then there are $\xi_1,\xi_2,\cdots,\xi_n\in L^0({\mathcal F}, K)$ such that $g=\sum^n_{i=1}\xi_if_i$ iff $\bigcap^n_{i=1}N(f_i)\subset N(g)$, where $N(f_i)=\{x\in E~|~f_i(x)=0\}$ ( $1\leq i\leq n$ ) and $N(g)=\{x\in E~|~g(x)=0\}$.
\end{lemma}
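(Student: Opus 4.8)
The forward implication is immediate: if $g=\sum_{i=1}^n\xi_i f_i$, then every $x\in\bigcap_{i=1}^n N(f_i)$ satisfies $g(x)=\sum_i\xi_i f_i(x)=0$, so $\bigcap_{i=1}^n N(f_i)\subset N(g)$. All the work lies in the converse, and my plan is to prove it by induction on $n$, with the case $n=1$ carrying the essential difficulty. The guiding idea is the classical one—a linear functional vanishing on the common kernel of $f_1,\dots,f_n$ is a combination of them—but over the ring $L^0(\mathcal F,K)$ it cannot be carried out by plain division because of zero divisors. The substitute is the generalized inverse $\eta\mapsto\eta^{-1}$ of Remark 3.11 (for which $\eta\cdot\eta^{-1}=\tilde I_{[\eta\neq 0]}$) together with a measure-algebra gluing argument.

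For $n=1$ I would assume $N(f_1)\subset N(g)$ and first record two local identities. Fix $x\in E$ and put $A_x=[f_1(x)\neq 0]$. Since $f_1(\tilde I_{A_x^c}x)=\tilde I_{A_x^c}f_1(x)=0$, the element $\tilde I_{A_x^c}x$ lies in $N(f_1)\subset N(g)$, whence $g(x)=0$ on $A_x^c$; thus $g(x)=\bigl(g(x)f_1(x)^{-1}\bigr)f_1(x)$ holds on all of $\Omega$. Next, for arbitrary $x,y$ the element $z=f_1(y)x-f_1(x)y$ satisfies $f_1(z)=f_1(y)f_1(x)-f_1(x)f_1(y)=0$ by commutativity of $L^0(\mathcal F,K)$, so $z\in N(f_1)\subset N(g)$ yields the consistency relation $f_1(y)g(x)=f_1(x)g(y)$ on $\Omega$; in particular the local quotients $\xi_x:=g(x)f_1(x)^{-1}$ agree on every overlap $A_x\cap A_y$. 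Finally I would glue: by the usual exhaustion in the measure algebra the essential supremum $A=\bigvee_{x}A_x$ is attained along a countable family $\{x_k\}$, i.e. $A=\bigcup_k A_{x_k}$ up to a null set; setting $B_1=A_{x_1}$, $B_k=A_{x_k}\setminus\bigcup_{j<k}A_{x_j}$ and $\xi=\sum_k\tilde I_{B_k}\xi_{x_k}$ (a genuine element of $L^0(\mathcal F,K)$, since these pastings always exist inside $L^0$), the two local identities give $g(x)=\xi f_1(x)$ for every $x$—first on each $B_k$ and then on $A^c$, where both sides vanish.

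For the inductive step I would pass to the submodule $M:=N(f_n)$, which is again an $L^0(\mathcal F,K)$--module. The restrictions $f_1|_M,\dots,f_{n-1}|_M,g|_M$ are $L^0$--linear, and $\bigcap_{i=1}^{n-1}\bigl(N(f_i)\cap M\bigr)=\bigcap_{i=1}^n N(f_i)\subset N(g)$ shows that the inductive hypothesis applies on $M$, producing $\eta_1,\dots,\eta_{n-1}\in L^0(\mathcal F,K)$ with $g(x)=\sum_{i=1}^{n-1}\eta_i f_i(x)$ for all $x\in N(f_n)$. Then $h:=g-\sum_{i=1}^{n-1}\eta_i f_i$ is $L^0$--linear and vanishes on $N(f_n)$, i.e. $N(f_n)\subset N(h)$, so the case $n=1$ gives $h=\xi_n f_n$; taking $\xi_i=\eta_i$ for $i<n$ completes the representation $g=\sum_{i=1}^n\xi_i f_i$.

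The main obstacle is exactly the $n=1$ gluing: zero divisors forbid dividing $g$ by $f_1$ globally, so one must reassemble a single scalar $\xi$ out of the locally defined quotients, and the two delicate points are (a) the consistency relation $f_1(y)g(x)=f_1(x)g(y)$, which is what allows the pieces to be pasted without contradiction, and (b) the realisation of the essential supremum $A$ along a countable subfamily so that $\xi$ actually lives in $L^0(\mathcal F,K)$. I would emphasise that this argument uses only the separation property assumed throughout for the modules $E$ (inherited by $M$), and \emph{not} the countable concatenation property of $E$, precisely because the concatenation producing $\xi$ takes place inside $L^0(\mathcal F,K)$ itself, where arbitrary countable pastings always exist.
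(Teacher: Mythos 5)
Your proof is correct: the forward direction is trivial, the $n=1$ case is handled soundly via the identity $f_1(y)g(x)=f_1(x)g(y)$ (from $f_1(y)x-f_1(x)y\in N(f_1)\subset N(g)$), the generalized inverse, and a countable exhaustion realizing $\mathrm{esssup}_x\,[f_1(x)\neq 0]$, and the induction step through the submodule $N(f_n)$ is valid since kernels of $L^0$--linear maps are $L^0(\mathcal F,K)$--submodules. Note that the paper itself states this lemma without proof, citing \cite{TXG-dual,TXG-XXC}; your localization--gluing argument is precisely the standard one in that line of work, and your closing observation is apt --- the countable pasting takes place inside $L^0(\mathcal F,K)$ itself, so no countable concatenation hypothesis on $E$ is needed, which matters because the lemma is applied (e.g.\ in the proof of Theorem 3.22) to arbitrary modules $X$.
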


We can now prove Theorem 3.22.

\newproof{pot1}{Proof of Theorem 3.22}

\begin{pot1}

Since it is obvious that $Y\subset (X, \sigma(X, Y))^\ast_c$, it remains to prove that $(X, \sigma(X, Y))^\ast_c \subset Y$. Let $f\in (X, \sigma(X, Y))^\ast_c$, then by Proposition 2.19 there are $\xi\in L^0_+({\mathcal F})$ and $y_1,y_2,\cdots, y_n\in Y$ such that $|f(x)|\leq \xi (\bigvee\{|\langle x, y_i\rangle|\colon 1\leq i\leq n\})$ for all $x\in X$. By Lemma 3.23, there are $\xi_1,\xi_2,\cdots,\xi_n\in L^0({\mathcal F}, K)$ such that $f=\sum^n_{i=1}\xi_i\langle x, y_i\rangle$ for all $x\in X$. Let $y=\sum^n_{i=1}\xi_i y_i$, then $f=y\in Y$.  \hfill $\square$

\end{pot1}

\begin{remark} In \cite{TXG-XXC}, since we employed the $(\varepsilon, \lambda)$--topology, we proved that $(X,\sigma(X, Y))^\ast_{\varepsilon, \lambda}=H_{cc}(Y)$, which motivates us to find out Theorem 2.30. In \cite{TXG-XXC}, $Y$ is regular with respect to $X$ if, for each sequence $\{y_n, n\in N\}$ and each countable partition $\{A_n, n\in N\}$ of $\Omega $ to ${\mathcal F}$, there is $y\in Y$ such that $\langle x, y\rangle=\sum^{\infty}_{n=1}{\tilde I}_{A_n}\langle x, y_n\rangle$ for all $x\in X$, which implies that ${\tilde I}_{A_n}\langle x, y\rangle={\tilde I}_{A_n}\langle x, y_n\rangle$ for all $x\in X$ and $n\in N$, namely ${\tilde I}_{A_n}y={\tilde I}_{A_n}y_n$ for each $n\in N$, that is to say, $Y$ has the countable concatenation property. Thus, for a random duality pair $\langle X, Y\rangle$, `` $Y$ is regular '' and `` $Y$ has the countable concatenation property '' are the same thing.
\end{remark}

\begin{theorem} Let $\langle X, Y\rangle$ be a random duality pair. Then there is a strongest one in all the random compatible locally $L^0$--convex topologies with $Y$.
\end{theorem}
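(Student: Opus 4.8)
The plan is to realise the desired strongest topology as the supremum of the whole family of compatible topologies and then verify that this supremum is itself compatible. Let $\mathscr{T}$ denote the set of all random compatible locally $L^0$--convex topologies with $Y$; it is nonempty since $\sigma_c(X,Y)\in\mathscr{T}$ by Theorem 3.22. For each $\mathcal{T}\in\mathscr{T}$ I fix a family $\mathcal{P}_{\mathcal{T}}$ of $L^0$--seminorms inducing $\mathcal{T}$ and set $\mathcal{P}^{\ast}=\bigcup_{\mathcal{T}\in\mathscr{T}}\mathcal{P}_{\mathcal{T}}$. Since each $\mathcal{P}_{\mathcal{T}}$ already separates the points of $X$, so does $\mathcal{P}^{\ast}$, hence $(X,\mathcal{P}^{\ast})$ is an $RLC$ module; write $\mathcal{T}^{\ast}$ for the locally $L^0$--convex topology it induces. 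Using $\|\cdot\|_{\mathcal{Q}_1\cup\cdots\cup\mathcal{Q}_n}=\bigvee_{i}\|\cdot\|_{\mathcal{Q}_i}$, so that $N_{\theta}(\mathcal{Q}_1\cup\cdots\cup\mathcal{Q}_n,\varepsilon)=\bigcap_i N_{\theta}(\mathcal{Q}_i,\varepsilon)$, one checks that a basic $\theta$--neighborhood for $\mathcal{T}^{\ast}$ is a finite intersection of basic neighborhoods coming from individual members of $\mathscr{T}$, so that $\mathcal{T}^{\ast}=\sup\mathscr{T}$. Thus once $\mathcal{T}^{\ast}\in\mathscr{T}$ is established it is automatically the strongest member, and the whole theorem reduces to the compatibility $(X,\mathcal{P}^{\ast})^{\ast}_c=Y$.

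For the compatibility, the inclusion $Y\subseteq (X,\mathcal{P}^{\ast})^{\ast}_c$ is immediate: $\mathcal{T}^{\ast}$ is stronger than $\sigma_c(X,Y)$, hence has at least as many continuous module homomorphisms, and $(X,\sigma(X,Y))^{\ast}_c=Y$ by Theorem 3.22. For the reverse inclusion, let $f\in (X,\mathcal{P}^{\ast})^{\ast}_c$. By Proposition 2.19, $f\in (X,\mathcal{P}^{\ast})^{\ast}_I$, so there are $\xi\in L^0_+(\mathcal{F})$ and a finite $\mathcal{Q}\subseteq\mathcal{P}^{\ast}$ with $|f(x)|\leq\xi\|x\|_{\mathcal{Q}}$ for all $x\in X$. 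As $\mathcal{Q}$ is finite, it lies in a finite union $\mathcal{Q}_1\cup\cdots\cup\mathcal{Q}_n$ with each $\mathcal{Q}_i$ a finite subset of some $\mathcal{P}_{\mathcal{T}_i}$, $\mathcal{T}_i\in\mathscr{T}$, and then $|f(x)|\leq\xi\|x\|_{\mathcal{Q}}\leq\sum_{i=1}^n\xi\|x\|_{\mathcal{Q}_i}$ for all $x$.

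The decisive step is to split $f$ accordingly. Applying Corollary 2.33 to the $L^0$--linear function $f$ and the $L^0$--seminorms $\xi\|\cdot\|_{\mathcal{Q}_i}$ yields $L^0$--linear functions $f_1,\ldots,f_n$ with $|f_i(x)|\leq\xi\|x\|_{\mathcal{Q}_i}$ for all $x$ and $f=\sum_{i=1}^n f_i$. Each $f_i$ is then $\mathcal{T}_i$--continuous, i.e. $f_i\in (X,\mathcal{T}_i)^{\ast}_c=Y$ because $\mathcal{T}_i$ is compatible, and since $Y$ is an $L^0(\mathcal{F},K)$--module it is closed under addition, whence $f=\sum_{i=1}^n f_i\in Y$. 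This gives $(X,\mathcal{P}^{\ast})^{\ast}_c=Y$, so $\mathcal{T}^{\ast}\in\mathscr{T}$ and is the strongest random compatible topology with $Y$. I expect the main obstacle to be precisely this decomposition: it is the random analogue of the classical identity that the dual of a finite supremum of compatible topologies is $Y+\cdots+Y=Y$, and it is available here only because the locally $L^0$--convex setting admits Corollary 2.33, which splits an $L^0$--linear functional dominated by a sum of $L^0$--seminorms; the subtle point requiring care is that the pieces $f_i$ must land back in $Y$ rather than merely in its countable concatenation hull (contrast the $(\varepsilon,\lambda)$--situation recorded in Remark 3.24), and this is exactly what the identity $(X,\mathcal{T}_i)^{\ast}_c=Y$ guarantees.
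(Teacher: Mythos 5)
Your proposal is correct and follows exactly the route the paper intends: the paper's proof of Theorem 3.25 is omitted with the remark that it is "completely similar to the classical case" via Corollary 2.33, and your argument is precisely that classical supremum-of-topologies construction, with Corollary 2.33 supplying the decomposition $f=\sum_i f_i$ and compatibility of each $\mathcal{T}_i$ forcing each piece back into $Y$. You have simply written out in full the details the paper leaves to the reader.
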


\begin{proof} By Corollary 2.33, one can see that the proof is completely similar to the one of the corresponding classical case, so is omitted. \hfill \hfill $\square$
\end{proof}

Let $\langle X, Y\rangle$ be a random duality pair. For a subset $A$ of $X$, $A^0:=\{y\in Y|~|\langle a, y\rangle|\leq 1~\hbox{for all $a\in A$~}\}$ is called the polar of $A$ in $Y$. Similarly, one can define the polar of a subset $B$ of $Y$ in $X$.

\begin{theorem} $(${\bf Random bipolar theorem}$)$ Let $\langle X, Y\rangle$ be a random duality pair over $K$ with base $(\Omega, {\mathcal F}, P)$ such that $X$ has the countable concatenation property. Then, for any subset $A$ of $X$, we have that $A^{00}=[H_{cc}(\Gamma (A))]^-_{\mathcal T}$ for each random compatible topology ${\mathcal T}$ with $Y$, where $\Gamma(A)$ denotes the $L^0$--balanced and $L^0$--convex hull of $A$ and $[H_{cc}(\Gamma (A))]^-_{\mathcal T}$ the ${\mathcal T}$--closure of $H_{cc}(\Gamma (A))$.
\end{theorem}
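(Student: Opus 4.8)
The plan is to prove the two inclusions $[H_{cc}(\Gamma(A))]^-_{\mathcal{T}}\subseteq A^{00}$ and $A^{00}\subseteq[H_{cc}(\Gamma(A))]^-_{\mathcal{T}}$ separately, the second being the substantial one. Throughout I write $M=[H_{cc}(\Gamma(A))]^-_{\mathcal{T}}$ and use that, $\mathcal{T}$ being compatible, $\mathcal{T}=\mathcal{T}_c$ for some family $\mathcal{P}$ of $L^0$--seminorms on $X$ with $(X,\mathcal{P})^{\ast}_c=Y$, so that the random conjugate space $X^{\ast}_c$ in any separation result is exactly $Y$.

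For the easy inclusion I would first record that the bipolar $A^{00}=\{x\in X : |\langle x,y\rangle|\le 1\text{ for all }y\in A^0\}$ is $L^0$--convex, $L^0$--balanced and $\sigma_c(X,Y)$--closed, each being immediate from the $L^0$--bilinearity of $\langle\cdot,\cdot\rangle$ and from $A^{00}$ being an intersection of preimages $\{x : |\langle x,y\rangle|\le 1\}$ under the $\sigma_c(X,Y)$--continuous maps $\langle\cdot,y\rangle$. Since $\sigma_c(X,Y)$ is the coarsest compatible locally $L^0$--convex topology (every $\langle\cdot,y\rangle$ is $\mathcal{T}$--continuous because $Y=X^{\ast}_c$, hence $\sigma_c(X,Y)\subseteq\mathcal{T}$), a $\sigma_c(X,Y)$--closed set is $\mathcal{T}$--closed, so $A^{00}$ is $\mathcal{T}$--closed. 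Using the countable concatenation property of $X$, I would then check that $A^{00}$ inherits this property: if $x=\sum_n\tilde I_{A_n}x_n$ with each $x_n\in A^{00}$, then $|\langle x,y\rangle|=\sum_n\tilde I_{A_n}|\langle x_n,y\rangle|\le 1$ for every $y\in A^0$. Finally $A\subseteq A^{00}$ is immediate, whence $\Gamma(A)\subseteq A^{00}$, then $H_{cc}(\Gamma(A))\subseteq A^{00}$, and by $\mathcal{T}$--closedness $M\subseteq A^{00}$.

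For the hard inclusion I would argue by contradiction. First one verifies that $M$ is an $L^0$--balanced, $L^0$--convex, $\mathcal{T}_c$--closed set with the countable concatenation property: $\Gamma(A)$ is $L^0$--convex and $L^0$--balanced by definition, these properties pass to $H_{cc}(\Gamma(A))$ (by refining two countable partitions) and to its $\mathcal{T}$--closure (by joint continuity of the module operations), while the countable concatenation property of $M$ follows from part (1) of Theorem 2.28, since $H_{cc}(\Gamma(A))$ already has that property and $X$ has it as well. Now suppose $x_0\in A^{00}$ but $x_0\notin M$. Applying Corollary 3.9 to $(X,\mathcal{P})$, $x_0$ and $M$ produces some $f\in X^{\ast}_c=Y$ and $B\in\mathcal{F}$ with $P(B)>0$ realizing the strict separation on $B$; as $M$ is $L^0$--balanced, Remark 3.11 lets me replace $f$ by a suitable $L^0$--multiple (still in $Y$) so that $|f(x_0)|\nleqslant 1$ while $\bigvee\{|f(y)| : y\in M\}\le 1$.

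The contradiction then closes the proof: from $\bigvee\{|f(y)| : y\in M\}\le 1$ and $A\subseteq M$ we get $|f(a)|\le 1$ for all $a\in A$, that is, the element of $Y$ representing $f$ lies in $A^0$; but $x_0\in A^{00}$ forces $|f(x_0)|\le 1$ on $\Omega$, contradicting $|f(x_0)|\nleqslant 1$. Together with the first inclusion this yields $A^{00}=M$. I expect the main obstacle to be the separation step: one must ensure that $M$ genuinely carries the countable concatenation property (so Corollary 3.9 applies) and, crucially, that the separating functional lands in $Y$ rather than in the larger $E^{\ast}_{\varepsilon,\lambda}$. This is precisely what compatibility ($X^{\ast}_c=Y$) together with the normalization of Remark 3.11 is there to guarantee; note also that the argument shows $M$ is in fact independent of the chosen compatible $\mathcal{T}$, since $A^{00}$ is.
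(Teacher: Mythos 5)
Your proposal is correct and follows essentially the same route as the paper's proof: the easy inclusion via the observation that $A^{00}$ is an $L^0$--balanced, $L^0$--convex, $\sigma_c(X,Y)$--closed set with the countable concatenation property containing $H_{cc}(\Gamma(A))$, and the hard inclusion by contradiction using Corollary 3.9 together with the normalization of Remark 3.11 to produce $y\in Y$ with $\bigvee\{|\langle a,y\rangle|:a\in A\}\leq 1$ but $|\langle x_0,y\rangle|\nleqslant 1$. Your version merely spells out the verification (via Theorem 2.28(1)) that the closure carries the countable concatenation property, which the paper leaves implicit.
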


\begin{proof} Since $(X, {\mathcal T})^\ast_c=Y$, ${\mathcal T}\supset \sigma_c(X, Y)$. On the other hand, it is obvious that $A^{00}$ is an $L^0$--balanced, $L^0$--convex and $\sigma_c(X, Y)$--closed set with the countable concatenation property, so $A^{00}\supset [H_{cc}(\Gamma (A))]^-_{\sigma_c(X, Y)}\supset [H_{cc}(\Gamma (A))]^-_{\mathcal T}$. If there is $x\in A^{00}\setminus [H_{cc}(\Gamma (A))]^-_{\mathcal T}$, then by Corollary 3.9 and Remark 3.11 there is $y\in (X, {\mathcal T})^\ast_c=Y$ such that $|\langle x, y\rangle|\nleqslant 1$ and $\bigvee\{|\langle a, y\rangle|\colon a\in A\}\leq 1$, which is impossible. \hfill \hfill $\square$
\end{proof}

\begin{remark} The classical bipolar theorem is an elegant result and hence frequently employed in the study of classical duality theory. However, the random bipolar theorem under the locally $L^0$--convex topology, namely Theorem 3.26 has the complicated form and also requires $X$ to have the countable concatenation property, so we do our best to avoid the use of it except in Subsection 3.3.3 where we are forced to use it to characterize a class of $L^0$--pre-barreled modules. In \cite{TXG-XXC} we proved a random bipolar theorem under the $(\varepsilon, \lambda)$--topology with the same shape as the classical bipolar theorem, but the countable concatenation property of $Y$ is required. To sum up, we are always forced to look for new methods in order to obtain some most refined results on random duality theory.
\end{remark}

It is time for us to speak of random compatible invariants. Corollary 3.9 shows that any closed $L^0$--convex sets ( in particular, any $L^0$--barrels ) with the countable concatenation property are random compatible invariants with respect to every random duality pair. Theorem 2.35 shows that the same is true for bounded sets in the sense of the locally $L^0$--convex topology.

\subsubsection{Random admissible topology}

\begin{definition} Let $\langle X, Y\rangle$ be a random duality pair over $K$ with base $(\Omega, {\mathcal F}, P)$ and ${\mathcal A}$ a family of $\sigma_c(Y, X)$--bounded sets of $Y$. For any $ A\in {\mathcal A}$, the $L^0$--seminorm $\|\cdot\|_A\colon X\to L^0_+({\mathcal F})$ is defined by $\|x\|_A=\bigvee\{|\langle x, a\rangle|\colon a\in A\}$ for all $x\in X$. Then the locally $L^0$--convex topology induced by the family $\{\|\cdot\|_A\colon A\in {\mathcal A}\}$ of $L^0$--seminorms, denoted by ${\mathcal T}_{\mathcal A}$, is called the topology of random uniform convergence on ${\mathcal A}$.
\end{definition}

\begin{definition} Let $\langle X, Y\rangle$ and ${\mathcal A}$ be the same as in Definition 3.28. ${\mathcal T}_{\mathcal A}$ is said to be random admissible if ${\mathcal T}_{\mathcal A}\supset \sigma_c(X, Y)$, in which case ${\mathcal A}$ is said to be random admissible. If $\mathcal{T}_{\mathscr{A}}$ is random compatible, namely $(X,\mathcal{T}_{\mathscr{A}})^{\ast}_c=Y$, then $\mathscr{A}$ is also said to be random compatible.
\end{definition}

As usual, let us first study $\mathcal{T}_{\mathscr{A}}$.

\begin{proposition} Let $\langle X,Y\rangle$ and $\mathscr{A}$ be the same as in Definition 3.29. Then the following are equivalent:

\noindent (1). $\mathcal{T}_{\mathscr{A}}$ is Hausdorff.

\noindent (2). $\bigcup\mathscr{A}:=\bigcup\{A:A\in\mathscr{A}\}$ is total, namely $\langle x,y\rangle=0$ for all $y\in \bigcup\mathscr{A}$ implies $x=\theta$.

\noindent (3). $Span\mathscr{A}$ (the submodule generated by $\bigcup\mathscr{A}$) is $\sigma_{\varepsilon,\lambda}(Y,X)$--dense in $Y$.

\noindent (4). $H_{cc}(Span\mathscr{A})$ is $\sigma_{c}(H_{cc}(Y),X)$--dense in $H_{cc}(Y)$.
\end{proposition}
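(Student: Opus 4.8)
The plan is to prove the cycle $(1)\Rightarrow(2)\Rightarrow(3)\Rightarrow(4)\Rightarrow(1)$, keeping the totality condition $(2)$ at the center since it is the one tied most directly to the defining seminorms. Throughout write $N=Span\,\mathscr{A}$ and observe that, because $\|x\|_A=\bigvee\{|\langle x,a\rangle|:a\in A\}$, one has $\bigvee\{\|x\|_A:A\in\mathscr{A}\}=\bigvee\{|\langle x,a\rangle|:a\in\bigcup\mathscr{A}\}$, and this supremum is $0$ exactly when $\langle x,a\rangle=0$ for every $a\in\bigcup\mathscr{A}$. Since the locally $L^0$--convex topology induced by a family of $L^0$--seminorms is Hausdorff precisely when that family is separating (this is $(RLC$--2) together with Corollary 2.18), the equivalence $(1)\Leftrightarrow(2)$ is immediate from this identity. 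In particular, for $(1)\Rightarrow(2)$ I argue by contraposition: if $\bigcup\mathscr{A}$ is not total there is $x\neq\theta$ with $\|x\|_A=0$ for all $A$, so $x$ lies in every $N_\theta(Q,\varepsilon)$ and $\mathcal{T}_{\mathscr{A}}$ fails to be Hausdorff.

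For $(2)\Rightarrow(3)$ I again argue by contraposition, working inside the $RLC$ module $(Y,\sigma(Y,X))$. If $N$ is not $\sigma_{\varepsilon,\lambda}(Y,X)$--dense, choose $y_0\in Y\setminus\overline{N}_{\varepsilon,\lambda}$; the closure $M:=\overline{N}_{\varepsilon,\lambda}$ is a $\sigma_{\varepsilon,\lambda}(Y,X)$--closed $L^0$--convex submodule, so Corollary 3.10 supplies $f\in(Y,\sigma(Y,X))^\ast_c$ and $B\in\mathcal{F}$ with $P(B)>0$ such that $(Ref)(y_0)>\bigvee\{(Ref)(y):y\in M\}$ on $B$. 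The decisive point is that $M$ is a submodule: replacing $y\in M$ by $t\,\overline{sgn(\langle f,y\rangle)}\,y$ for $t\in L^0_+(\mathcal{F})$ shows the right-hand supremum would be $+\infty$ on $[\langle f,y\rangle\neq0]$, so its finiteness on $B$ forces $\tilde I_B\langle f,y\rangle=0$ for every $y\in M\supseteq N$. By Theorem 3.22 applied to the duality pair $\langle Y,X\rangle$ we have $(Y,\sigma(Y,X))^\ast_c=X$, hence $\tilde I_Bf\in X$; it annihilates $N$ while $\langle \tilde I_Bf,y_0\rangle\neq0$ on $B$, so $\bigcup\mathscr{A}$ is not total, contradicting $(2)$.

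The step $(3)\Rightarrow(4)$ is where the two topologies and the two ambient modules must be reconciled, and I expect it to be the main obstacle. I will pass to the $RLC$ module $(H_{cc}(Y),\sigma(H_{cc}(Y),X))$, whose underlying module has the countable concatenation property, so that Theorem 2.28(2) applies to the $L^0$--convex set $N$ and gives $\overline{N}_{\varepsilon,\lambda}=[H_{cc}(N)]^-_{\varepsilon,\lambda}=[H_{cc}(N)]^-_c$, all with the countable concatenation property, where the closures are now taken in $H_{cc}(Y)$. Since $\sigma(H_{cc}(Y),X)$ restricts on $Y$ to $\sigma(Y,X)$, hypothesis $(3)$ yields $Y=\overline{N}^{\,\sigma_{\varepsilon,\lambda}(Y,X)}\subseteq\overline{N}^{\,\sigma_{\varepsilon,\lambda}(H_{cc}(Y),X)}=[H_{cc}(N)]^-_c$. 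As the right-hand side has the countable concatenation property and every such set containing $Y$ must contain $H_{cc}(Y)$ (concatenate sequences from $Y$), we conclude $[H_{cc}(N)]^-_c=H_{cc}(Y)$, which is $(4)$.

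Finally, $(4)\Rightarrow(1)$ goes by contraposition: if $\mathcal{T}_{\mathscr{A}}$ is not Hausdorff there is $x\neq\theta$ in $X$ with $\|x\|_A=0$ for every $A$, so $x$ annihilates $N$ and, by the local property and $L^0$--bilinearity of the pairing, annihilates $H_{cc}(N)$ as well. The $L^0$--linear functional $\langle\cdot,x\rangle$ is $\sigma_c(H_{cc}(Y),X)$--continuous and vanishes on the $\sigma_c$--dense set $H_{cc}(N)$, hence on $H_{cc}(Y)\supseteq Y$; by Definition 3.19(1) this forces $x=\theta$, a contradiction, which closes the cycle. The routine verifications I will suppress are that $N$ and $H_{cc}(N)$ are submodules, that the various closures are $L^0$--convex and closed under countable concatenation, and the elementary measure-theoretic bookkeeping in the scaling argument.
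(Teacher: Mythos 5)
Your proof is correct and follows essentially the same route as the paper: the equivalence of (1) and (2) via the separating family of $L^0$--seminorms, the step $(2)\Rightarrow(3)$ via the separation theorem (Corollary 3.10 with the scaling/sign argument of Remark 3.11) together with $(Y,\sigma(Y,X))^\ast_c=X$ from Theorem 3.22, and the step $(3)\Rightarrow(4)$ via Theorem 2.28(2) applied inside $H_{cc}(Y)$. The only difference is organizational (you close a cycle $(4)\Rightarrow(1)$ where the paper proves $(4)\Rightarrow(2)$ directly, which is the same argument), and you usefully spell out details the paper dismisses as ``the classical method.''
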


\begin{proof} (1)$\Leftrightarrow$(2), (3)$\Rightarrow$(2) and (4)$\Rightarrow$(2) are all obvious.

(2)$\Rightarrow$(3). By Corollary 3.10 and Remark 3.11, one can complete the proof by the same method as used in the classical case.

(3)$\Rightarrow$(4). By applying (2) of Theorem 2.28 to $Span\mathscr{A}$ and $(H_{cc}(Y),\sigma_{c}(H_{cc}(Y),X))$, we have the following relations:$$[H_{cc}(Span\mathscr{A})]^{-}_{\sigma_{c}(H_{cc}(X),Y)}$$$$=[H_{cc}(Span\mathscr{A})]^{-}_{\sigma_{\varepsilon,\lambda}(H_{cc}(X),Y)}$$
$$=[Span\mathscr{A}]^{-}_{\sigma_{\varepsilon,\lambda}(H_{cc}(X),Y)}~~~~~~~$$$$\supset H_{cc}(Y)~~~~~~~~~~~~~~~~~~~~~~~~$$
(by applying (3) to $H_{cc}(Y)$). \hfill $\square$
\end{proof}

Although random bipolar theorem does not necessarily hold for all random duality pairs, (2) of Lemma 3.31 below can complement this point.

\begin{lemma}\noindent Let $\langle X,Y\rangle$ be a random duality pair. Then we have:

\noindent (1). $A\subset Y$ is $\sigma_{c}(Y,X)$--bounded iff $A^{0}$ is a $\sigma_{c}(X,Y)$--$L^0$--barrel.

\noindent (2). For any $\sigma_{c}(Y,X)$--bounded set $A\subset Y$, $\|\cdot\|_B=\|\cdot\|_{B^{00}}$ (and hence $B^{00}$ is also $\sigma_{c}(Y,X)$--bounded), where $\|\cdot\|_B$ and $\|\cdot\|_{B^{00}}$ are defined as in Definition 3.27.
\end{lemma}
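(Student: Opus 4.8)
The plan is to route both assertions through the single auxiliary object $\|\cdot\|_A\colon X\to{\bar L}^0_+({\mathcal F})$ given by $\|x\|_A=\bigvee\{|\langle x,a\rangle|:a\in A\}$, which is exactly the (possibly extended-valued) $L^0$--seminorm attached to $A$ as in Definition 3.28, and to exploit the identity $A^0=\{x\in X:\|x\|_A\leq 1\}$ valid for the polar of any $A\subset Y$. (In part (2) the set is written $B$; I treat it as such throughout.)

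For (1), the first step is to note that three of the four defining properties of an $L^0$--barrel hold for \emph{every} polar $A^0$, with no boundedness hypothesis: $A^0$ is $L^0$--balanced and $L^0$--convex by the absolute homogeneity and triangle inequality of each $|\langle\cdot,a\rangle|$, while $A^0=\bigcap_{a\in A}\{x:|\langle x,a\rangle|\leq 1\}$ is $\sigma_c(X,Y)$--closed because each $\langle\cdot,a\rangle$ lies in $(X,\sigma(X,Y))^\ast_c=Y$ (Theorem 3.22) and is hence $\sigma_c(X,Y)$--continuous, and $\{\zeta\in L^0({\mathcal F},K):|\zeta|\leq 1\}$ is $\mathcal{T}_c$--closed. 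Thus (1) reduces to the equivalence ``$A^0$ is $L^0$--absorbent $\iff$ $A$ is $\sigma_c(Y,X)$--bounded''. I would restate the right-hand side, via Lemma 2.37 and Definition 2.36, as $\|x\|_A\in L^0_+({\mathcal F})$ for every $x\in X$, and then match it with absorbency: if $\|x\|_A=\eta\in L^0_+({\mathcal F})$, put $\xi=(1+\eta)^{-1}\in L^0_{++}({\mathcal F})$ so that $\|\alpha x\|_A=|\alpha|\eta\leq\xi\eta\leq 1$ whenever $|\alpha|\leq\xi$, i.e. $A^0$ $L^0$--absorbs $x$; conversely, if $A^0$ $L^0$--absorbs $x$ there is $\xi\in L^0_{++}({\mathcal F})$ with $\xi x\in A^0$, whence $\xi\|x\|_A\leq 1$ and $\|x\|_A\leq\xi^{-1}<+\infty$ a.s.

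For (2) I would first record the trivial inclusion $B\subset B^{00}$, giving $\|\cdot\|_B\leq\|\cdot\|_{B^{00}}$ at once. For the reverse inequality, fix $x\in X$ and set $\eta=\|x\|_B$, which is finite a.s. since $B$ is $\sigma_c(Y,X)$--bounded. For each $n\in N$ the element $(\eta+\frac1n)^{-1}x$ is well defined because $\eta+\frac1n\in L^0_{++}({\mathcal F})$, and $\|(\eta+\frac1n)^{-1}x\|_B=(\eta+\frac1n)^{-1}\eta\leq 1$, so $(\eta+\frac1n)^{-1}x\in B^0$. Consequently $|\langle(\eta+\frac1n)^{-1}x,y\rangle|\leq 1$ for every $y\in B^{00}$, that is $|\langle x,y\rangle|\leq\eta+\frac1n$; letting $n\to\infty$ yields $|\langle x,y\rangle|\leq\eta$ for all $y\in B^{00}$, and taking the supremum over $y\in B^{00}$ gives $\|x\|_{B^{00}}\leq\eta=\|x\|_B$. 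Since $\|\cdot\|_{B^{00}}=\|\cdot\|_B$ is then $L^0_+({\mathcal F})$--valued, the criterion used in (1) shows that $B^{00}$ is $\sigma_c(Y,X)$--bounded.

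The computations are routine; the one point demanding care is the finiteness of the gauge. Regularizing by $\eta+\frac1n$ with a \emph{constant} $\frac1n$ is what keeps the factor genuinely invertible in $L^0_{++}({\mathcal F})$, rather than resorting to the generalized inverse of $\eta$, which would force an awkward case split over $[\eta=0]$ and $[\eta>0]$; and it is exactly the $\sigma_c(Y,X)$--boundedness of $B$ that guarantees $\eta\in L^0_+({\mathcal F})$, making this device legitimate. I therefore expect the main (mild) obstacle to be the bookkeeping of extended-valued suprema and the care that all scalar factors remain in $L^0_{++}({\mathcal F})$, rather than any conceptual difficulty.
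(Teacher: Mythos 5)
Your proposal is correct and follows essentially the same route as the paper, which simply declares (1) ``clear'' and proves (2) by the unit--ball comparison ``$\|x\|_B\leq 1\Rightarrow x\in B^0\Rightarrow\|x\|_{B^{00}}\leq 1$''; your explicit verification of the four barrel properties via $A^0=\{x\in X:\|x\|_A\leq 1\}$ and your regularization by $(\eta+\tfrac1n)^{-1}$ just spell out the scaling step the paper leaves implicit. No gaps.
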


\begin{proof} (1) is clear.

(2). Since $B^{00}\supset B$, it is obvious that $\|\cdot\|_{B^{00}}\geq\|\cdot\|_B$. Conversely, if $\|x\|_B\leq 1$, then $x\in B^{0}$, and hence $\|x\|_{B^{00}}\leq 1$, which implies $\|\cdot\|_{B^{00}}\leq\|\cdot\|_B$.

\hfill\hfill $\square$
\end{proof}

\begin{definition} Let $\langle X,Y\rangle$ be a random duality pair over $K$ with base $(\Omega,\mathcal{F},P)$ and $\mathscr{B}$ a family of $\sigma_{c}(Y,X)$--bounded sets of $Y$. $\mathscr{B}$ is saturated if the following are satisfied:

\noindent (a). If $A\subset B$ for some $B\in\mathscr{B}$ , then $A\in \mathscr{B}$;

\noindent (b). $A,B\in\mathscr{B}\Rightarrow A\bigcup B\in\mathscr{B}$;

\noindent (c). $B\in\mathscr{B}\Rightarrow B^{00}\in\mathscr{B}$;

\noindent (d). $\lambda B\in\mathscr{B}$ for all $\lambda\in L^0(\mathcal{F},K)$ and $B\in\mathscr{B}$.
\end{definition}

In the classical definition of a saturated family (which amouts to the case when $\mathcal{F}=\{\Omega,\emptyset\}$), the above (c) in Definition 3.32 is defined as ``$B\in\mathscr{B}\Rightarrow [\Gamma(B)]^{-}_{\sigma(Y,X)}\in\mathscr{B}$". But, generally, we only have the relation that $B^{00}\supset[\Gamma(B)]^{-}_{\sigma_c(Y,X)}$. Although the random bipolar theorem shows that $B^{00}=[H_{cc}(\Gamma(B))]^{-}_{\sigma_{c}(Y,X)}$ if $Y$ has the countable concatenation property, we would like to introduce the notion of a saturated family for an arbitrary random duality pair, so we choose Definition 3.32 to meet all our requirements.

Let $\langle X,Y\rangle$ be a random duality pair, in this paper we always denote by $\mathscr{B}(Y,X)$ the family of $\sigma_{c}(Y,X)$--bounded sets of $Y$ and $\beta(X,Y)=\mathcal{T}_{\mathscr{B}(Y,X)}$. By (2) of Lemma 3.31, $\mathscr{B}(Y,X)$ is saturated. It is also obvious that $\beta(X,Y)$ is the strongest random admissible topology.

For a family $\mathscr{A}$ of $\sigma_{c}(Y,X)$--bounded sets of $Y$, $\mathscr{A}^{s}$ denotes the saturated hull of $\mathscr{A}$, namely the smallest saturated family containing $\mathscr{A}$. It is easy to see that $\mathscr{A}^{s}=\{B\subset Y~|~$ there are $\lambda_1,\lambda_2,\cdots,\lambda_n\in L^{0}(\mathcal{F},K)$ and $A_1,A_2,\cdots,A_n\in\mathscr{A}$ such that $B\subset(\bigcup_{i=1}^{n}\lambda_i A_i)^{00}\}$, again by (2) of Lemma 3.31 one can easily see that $\mathcal{T}_{\mathscr{A}}=\mathcal{T}_{\mathscr{A}^s}$.

\begin{proposition} Let $\langle X,Y\rangle$ be a random duality pair over $K$ with base $(\Omega,\mathcal{F},P)$ and $\mathscr{A}$ and $\mathscr{B}$ two family of $\sigma_c(Y,X)$--bounded sets of $Y$ such that $\mathscr{B}$ is saturated. Then, $\mathcal{T}_{\mathscr{A}}\subset\mathcal{T}_{\mathscr{B}}$ iff $\mathcal{A}\subset\mathcal{B}$.
\end{proposition}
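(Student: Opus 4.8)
The plan is to prove both implications, with the forward (``if'') direction immediate from the definition of the two topologies and the nontrivial direction relying on the saturation of $\mathscr{B}$ together with elementary polarity computations that are valid for any random duality pair. I first note that the conclusion should be read as $\mathscr{A}\subset\mathscr{B}$ (the ``$\mathcal{A}\subset\mathcal{B}$'' being the same two families).

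First I would dispatch the direction $\mathscr{A}\subset\mathscr{B}\Rightarrow\mathcal{T}_{\mathscr{A}}\subset\mathcal{T}_{\mathscr{B}}$. If $\mathscr{A}\subset\mathscr{B}$, then the defining family of $L^0$--seminorms $\{\|\cdot\|_A : A\in\mathscr{A}\}$ is contained in $\{\|\cdot\|_B : B\in\mathscr{B}\}$; hence every basic neighborhood $N_\theta(Q,\varepsilon)$ of $\theta$ for $\mathcal{T}_{\mathscr{A}}$ (with $Q$ a finite subset of $\{\|\cdot\|_A : A\in\mathscr{A}\}$ and $\varepsilon\in L^0_{++}(\mathcal{F})$) is already a basic neighborhood of $\theta$ for $\mathcal{T}_{\mathscr{B}}$, which gives $\mathcal{T}_{\mathscr{A}}\subset\mathcal{T}_{\mathscr{B}}$. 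Saturation of $\mathscr{B}$ plays no role here.

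For the converse, assume $\mathcal{T}_{\mathscr{A}}\subset\mathcal{T}_{\mathscr{B}}$ and fix $A\in\mathscr{A}$. I would observe that $\{x\in X : \|x\|_A\leq 1\}$ is exactly the polar $A^0$ of $A$ in $X$ and is a basic $\mathcal{T}_{\mathscr{A}}$--neighborhood of $\theta$, hence a $\mathcal{T}_{\mathscr{B}}$--neighborhood of $\theta$. Therefore it contains a basic $\mathcal{T}_{\mathscr{B}}$--neighborhood $N_\theta(\{\|\cdot\|_{B_1},\dots,\|\cdot\|_{B_n}\},\delta)$ with each $B_i\in\mathscr{B}$ and $\delta\in L^0_{++}(\mathcal{F})$. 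Using condition (b) of Definition 3.32 I would set $B=\bigcup_{i=1}^n B_i\in\mathscr{B}$, so that $\|\cdot\|_B=\bigvee_{i=1}^n\|\cdot\|_{B_i}$; then this neighborhood equals $\delta B^0$ and we obtain $\delta B^0\subset A^0$. Now I pass to polars in $Y$, which reverses inclusions, and invoke the elementary inclusion $A\subset A^{00}$ (true by definition, and in particular not requiring the random bipolar theorem) to get $A\subset A^{00}\subset(\delta B^0)^0$. Verifying the $L^0$--scaling rule $(\delta B^0)^0=\delta^{-1}B^{00}$ via the generalized inverse $\delta^{-1}\in L^0_{++}(\mathcal{F})$ of $\delta$ (Remark 3.11) yields $A\subset\delta^{-1}B^{00}$. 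Finally, conditions (c), (d) and (a) of Definition 3.32 successively give $B^{00}\in\mathscr{B}$, then $\delta^{-1}B^{00}\in\mathscr{B}$, and hence $A\in\mathscr{B}$, proving $\mathscr{A}\subset\mathscr{B}$.

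The main obstacle is not conceptual but the careful bookkeeping of polarity in the $L^0$--module setting: one must confirm that $\{x\in X:\|x\|_B\leq\delta\}=\delta B^0$ and $(\delta B^0)^0=\delta^{-1}B^{00}$ genuinely hold with $\delta\in L^0_{++}(\mathcal{F})$ and its generalized inverse, and that the only bipolar fact needed is the trivial inclusion $A\subset A^{00}$. This is exactly why the full random bipolar theorem (Theorem 3.26), which would force $X$ to have the countable concatenation property, can be avoided here, and why it suffices to assume only that $\mathscr{B}$ is saturated.
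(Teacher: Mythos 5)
Your proof is correct and follows essentially the same route as the paper: both directions are handled identically, with the nontrivial one reducing $A\in\mathscr{B}$ to the trivial inclusion $A\subset A^{00}$, a comparison of polars coming from the continuity of $\|\cdot\|_A$ in $\mathcal{T}_{\mathscr{B}}$, and properties (a)--(d) of saturation. The only (cosmetic) difference is that the paper absorbs the scalar into the set first, writing $B=\xi(\bigcup_{i=1}^n B_i)\in\mathscr{B}$ and concluding $A\subset A^{00}\subset B^{00}$, whereas you keep $\delta$ outside and scale the bipolar by $\delta^{-1}$ at the end.
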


\begin{proof} If $\mathcal{T}_{\mathscr{A}}\subset\mathcal{T}_{\mathscr{B}}$, then for each $A\in\mathscr{A}$ there are $\xi\in L^{0}_{+}(\mathcal{F})$ and a finite subfamily $\{B_i~|~1\leq i\leq n\}$ of $\mathscr{B}$ such that $\|x\|_A\leq\xi(\bigvee\{\|x\|_{B_i}:1\leq i\leq n\})=\|x\|_B$ for all $x\in X$, where $B=\xi(\bigcup_{i=1}^{n}B_i)\in \mathscr{B}$. Thus $A\subset A^{00}\subset B^{00}\in\mathscr{B}$, which has showed that $A\in\mathscr{B}$. The converse is obvious.
\hfill $\square$
\end{proof}

\begin{corollary} Let $\langle X,Y\rangle$ be a random duality pair and $\mathscr{A}$ is a saturated family of $\sigma_{c}(Y,X)$--bounded sets of $Y$.  Then $\mathcal{T}_{\mathscr{A}}$ is random admissible iff $\bigcup\mathscr{A}=Y$.
\end{corollary}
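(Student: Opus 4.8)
The plan is to reduce everything to Proposition 3.34 by recognizing $\sigma_c(X,Y)$ itself as a topology of random uniform convergence on a suitable family of $\sigma_c(Y,X)$--bounded sets. First I would observe that $\sigma_c(X,Y)$ is the locally $L^0$--convex topology induced by $\sigma(X,Y)=\{|\langle\cdot,y\rangle|:y\in Y\}$, and that for the family $\mathscr{S}=\{\{y\}:y\in Y\}$ of singletons of $Y$ one has $\|\cdot\|_{\{y\}}=\bigvee\{|\langle\cdot,a\rangle|:a\in\{y\}\}=|\langle\cdot,y\rangle|$, whence the inducing families coincide and $\sigma_c(X,Y)=\mathcal{T}_{\mathscr{S}}$. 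Since every singleton is trivially $\sigma_c(Y,X)$--bounded (each $\mathcal{T}_c$--neighborhood of $\theta$ is $L^0$--absorbent, so it $L^0$--absorbs the single point), $\mathscr{S}$ is a legitimate family of $\sigma_c(Y,X)$--bounded sets. Thus the admissibility condition $\mathcal{T}_{\mathscr{A}}\supset\sigma_c(X,Y)$ is exactly $\mathcal{T}_{\mathscr{S}}\subset\mathcal{T}_{\mathscr{A}}$.

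Next, since $\mathscr{A}$ is saturated, I would apply Proposition 3.34 with the first family taken to be $\mathscr{S}$ and the saturated family taken to be the present $\mathscr{A}$; this yields $\mathcal{T}_{\mathscr{S}}\subset\mathcal{T}_{\mathscr{A}}$ if and only if $\mathscr{S}\subset\mathscr{A}$. Hence $\mathcal{T}_{\mathscr{A}}$ is random admissible if and only if every singleton $\{y\}$ with $y\in Y$ belongs to $\mathscr{A}$.

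Finally I would translate the condition $\mathscr{S}\subset\mathscr{A}$ into $\bigcup\mathscr{A}=Y$. If every singleton lies in $\mathscr{A}$, then $Y=\bigcup\{\{y\}:y\in Y\}\subset\bigcup\mathscr{A}\subset Y$, so $\bigcup\mathscr{A}=Y$. Conversely, if $\bigcup\mathscr{A}=Y$, then for each $y\in Y$ there is some $A\in\mathscr{A}$ with $y\in A$, hence $\{y\}\subset A$, and the downward-closure axiom (a) of a saturated family forces $\{y\}\in\mathscr{A}$; therefore $\mathscr{S}\subset\mathscr{A}$. Chaining the three steps gives the desired equivalence.

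There is no deep obstacle here; the only points requiring care are the clean identification $\sigma_c(X,Y)=\mathcal{T}_{\mathscr{S}}$ and the correct bookkeeping of which family plays the saturated role in Proposition 3.34. The saturation of $\mathscr{A}$ is precisely what licenses that proposition, and it is again axiom (a) of saturation that drives the reverse implication; the remaining verifications (that singletons are $\sigma_c(Y,X)$--bounded and that $\bigcup\mathscr{A}\subset Y$ because members of $\mathscr{A}$ are subsets of $Y$) are immediate.
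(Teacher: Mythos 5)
Your proof is correct and is essentially the paper's own argument: the paper writes $\sigma_c(X,Y)=\mathcal{T}_{Y_f}$ with $Y_f$ the family of finite subsets of $Y$ instead of your family of singletons $\mathscr{S}$, and then invokes the same comparison result (Proposition 3.33 in the paper's numbering --- your reference to ``Proposition 3.34'' actually points at the corollary itself). The choice of singletons over finite subsets is purely cosmetic; it merely lets you get by with axiom (a) of saturation where the paper's version also uses axiom (b).
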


\begin{proof} Let $Y_f$ denote the family of finite subsets of $Y$, then $\sigma_{c}(X,Y)=\mathcal{T}_{Y_f}$. So, $\mathcal{T}_{\mathscr{A}}$ is random admissible iff $\mathcal{T}_{Y_f}\subset\mathcal{T}_{\mathscr{A}}$ iff $Y_f\subset \mathscr{A}$ iff $\bigcup \mathscr{A}=Y$. \hfill $\square$
\end{proof}

\begin{theorem} Let $\langle X,Y\rangle$ be a random duality pair over $K$ with base $(\Omega,\mathcal{F},P)$. Then a locally $L^{0}$--convex topology $\mathcal{T}$ on $X$ is a topology of random uniform convergence iff $\mathcal{T}$ has a local base $\mathcal{B}$ at $\theta$ such that each $U\in\mathcal{B}$ is a $\sigma_c(X,Y)$--$L^{0}$--barrel.
\end{theorem}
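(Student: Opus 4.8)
The plan is to establish this as the random--module analogue of the classical characterisation of topologies of uniform convergence (the Mackey--Arens circle of ideas), with the polar operation and Lemma 3.31 carrying out the duality bookkeeping and the random bipolar theorem (Theorem 3.26) supplying the single genuinely non-classical step. Throughout I identify $X$ with a submodule of $Y^{\#}$ through the pairing, so that polars $U^{0}\subseteq Y$ of subsets of $X$ and bipolars $U^{00}\subseteq X$ are meaningful.

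For necessity, suppose $\mathcal{T}=\mathcal{T}_{\mathscr{A}}$ for some family $\mathscr{A}$ of $\sigma_c(Y,X)$--bounded sets. Since $\mathcal{T}_{\mathscr{A}}=\mathcal{T}_{\mathscr{A}^{s}}$ I may pass to the saturated hull and assume $\mathscr{A}$ saturated. Using $\|\cdot\|_{A_1}\vee\cdots\vee\|\cdot\|_{A_n}=\|\cdot\|_{A_1\cup\cdots\cup A_n}$ together with the closure of a saturated family under finite unions and under multiplication by $L^0(\mathcal{F},K)$, every basic neighbourhood $\{x\in X~|~\|x\|_A\leq\varepsilon\}=(\varepsilon^{-1}A)^{0}$ (for $A\in\mathscr{A}^{s}$, $\varepsilon\in L^0_{++}(\mathcal{F})$) is again the polar of a member of $\mathscr{A}^{s}$. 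Hence $\mathcal{T}$ has the local base $\mathcal{B}=\{A^{0}~|~A\in\mathscr{A}^{s}\}$ at $\theta$, and by Lemma 3.31(1) each $A^{0}$ is a $\sigma_c(X,Y)$--$L^0$--barrel, as required.

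For sufficiency, suppose $\mathcal{T}$ has a local base $\mathcal{B}$ of $\sigma_c(X,Y)$--$L^0$--barrels and put $\mathscr{A}=\{U^{0}~|~U\in\mathcal{B}\}$. First I would verify that each $U^{0}$ is $\sigma_c(Y,X)$--bounded: the bipolar $U^{00}$ is $L^0$--balanced, $L^0$--convex and $\sigma_c(X,Y)$--closed as a polar, and $L^0$--absorbent because $U^{00}\supseteq U$, so it is a $\sigma_c(X,Y)$--$L^0$--barrel; applying Lemma 3.31(1) to $U^{0}\subseteq Y$ then yields that $U^{0}$ is $\sigma_c(Y,X)$--bounded, so that $\mathscr{A}$ is a family of $\sigma_c(Y,X)$--bounded sets and $\mathcal{T}_{\mathscr{A}}$ is defined. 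It remains to show $\mathcal{T}_{\mathscr{A}}=\mathcal{T}$. The inclusion $\mathcal{T}_{\mathscr{A}}\subseteq\mathcal{T}$ is immediate, since a basic $\mathcal{T}_{\mathscr{A}}$--neighbourhood $\varepsilon U^{00}$ contains the $\mathcal{T}$--neighbourhood $\varepsilon U$. For the reverse inclusion each $U\in\mathcal{B}$ must contain a $\mathcal{T}_{\mathscr{A}}$--neighbourhood, and the natural candidate $U^{00}$ succeeds exactly when $U=U^{00}$.

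The identification $U=U^{00}$ is the crux and the only place the random setting departs from the classical one. Here I would invoke the random bipolar theorem: applied to the compatible topology $\sigma_c(X,Y)$ (compatible by Theorem 3.22) it gives $U^{00}=[H_{cc}(\Gamma(U))]^{-}_{\sigma_c(X,Y)}=[H_{cc}(U)]^{-}_{\sigma_c(X,Y)}$, the last equality because $U$ is already $L^0$--balanced and $L^0$--convex, whence $\Gamma(U)=U$. As $U$ is $\sigma_c(X,Y)$--closed this reduces to $U=U^{00}$ precisely when $H_{cc}(U)\subseteq U$, that is, when $U$ has the countable concatenation property. This is the delicate issue: a $\sigma_c(X,Y)$--closed $L^0$--convex $L^0$--balanced set need not be stable under countable concatenation, so $U^{00}$ can genuinely exceed $U$, and the step is available only in the regime where $X$ (and the barrels of $\mathcal{B}$) carry the countable concatenation property, which is exactly the hypothesis under which Theorem 3.26 operates.
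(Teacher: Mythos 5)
Your necessity argument and the inclusion $\mathcal{T}_{\mathscr{A}}\subset\mathcal{T}$ in the sufficiency argument are correct and coincide with what the paper does (saturate $\mathscr{A}$, observe that the basic neighbourhoods are the polars $(\varepsilon^{-1}A)^{0}$ and invoke Lemma 3.31; respectively, note that $\|\cdot\|_{U^{0}}$ is bounded by $1$ on the $\mathcal{T}$--neighbourhood $U$ because $\{x\in X~|~\|x\|_{U^{0}}\leq 1\}=U^{00}\supset U$). The problem is the remaining inclusion $\mathcal{T}\subset\mathcal{T}_{\mathscr{A}}$. You reduce it to the identity $U=U^{00}$ and then, via Theorem 3.26, concede that this identity is available only when $X$ and the barrels in $\mathcal{B}$ carry the countable concatenation property. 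Those hypotheses are not part of the statement you were asked to prove --- they are precisely the hypotheses of the subsequent result, Theorem 3.36 --- so what you have written is a proof of a strictly weaker statement. Note also that $U=U^{00}$ is sufficient but not necessary for the inclusion: one only needs each $U\in\mathcal{B}$ to contain some basic $\mathcal{T}_{\mathscr{A}}$--neighbourhood $\varepsilon V^{00}$ (or a finite intersection of such) with $V\in\mathcal{B}$ and $\varepsilon\in L^{0}_{++}(\mathcal{F})$, and you have not ruled out that such $V$ and $\varepsilon$ exist even when $U\subsetneq U^{00}$; so declaring $U=U^{00}$ to be ``the crux'' overstates the reduction.

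That said, you have located exactly the step the paper itself does not really carry out: its ``on the other hand'' clause only records $U\subset U^{00}=(U^{0})^{0}$ and concludes that each $(U^{0})^{0}$ is a $\mathcal{T}$--neighbourhood of $\theta$, which re-derives $\mathcal{T}_{\mathscr{A}}\subset\mathcal{T}$ rather than the converse inclusion, and no bipolar identity (or substitute for it) is invoked there. So your diagnosis of the genuinely non-classical obstruction is accurate and valuable, but your proposal does not close the gap for the theorem as stated: to finish you would either have to show directly that every $\sigma_{c}(X,Y)$--$L^{0}$--barrel neighbourhood contains a set of the form $\varepsilon V^{00}$ without countable concatenation assumptions, or restrict the claim to the setting of Theorem 3.36.
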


\begin{proof}
If $\mathcal{T}=\mathcal{T}_{\mathscr{A}}$ for some family $\mathscr{A}$ of $\sigma_c(X,Y)-$bounded sets of $Y$, we can suppose that $\mathscr{A}$ is saturated, then $\mathcal{B}=\{A^{0}:A\in\mathscr{A}\}$ is a local base at $\theta$ of $\mathcal{T}$ such that each $A^{0}$ is a $\sigma_{c}(X,Y)$--$L^{0}$--barrel.

Conversely, let $\mathcal{T}$ have a local base $\mathcal{B}$ at $\theta$ such that each $U\in \mathcal{B}$ is a $\sigma_c(X,Y)$--$L^0$--barrel. Let $\mathscr{A}=\{U^{0}:U\in\mathscr{B}\}$, then each $U^{0}$ is $\sigma_{c}(Y,X)$--bounded since $(U^{0})^{0}\supset U$ is $L^0-$absorbent. Further, we show that $\mathcal{T}=\mathcal{T}_{\mathscr{A}}$ as follows. Since $\mathcal{T}_{\mathscr{A}}$ is induced by $\{\|\cdot\|_{A}:A\in\mathscr{A}\}$ and , for each $U^{0}\in\mathscr{A}$, $\{x\in X~|~\|x\|_{U^{0}}\leq 1\}=U^{00}\supset U$, which shows that $\|\cdot\|_{U^0}$ is $\mathcal{T}$--continuous, namely $\mathcal{T}_{\mathscr{A}}\subset \mathcal{T}$. On the other hand, for each $U\in\mathcal{B}$, $U\subset U^{00}=(U^{0})^{0}$, namely each element $(U^{0})^{0}$ of a local base at $\theta$ of $\mathcal{T}_{\mathscr{A}}$ is a $\mathcal{T}$--neighborhood of $\theta$, that is to say, $\mathcal{T}_{\mathscr{A}}\subset\mathcal{T}$. \hfill $\square$
\end{proof}

In the classical case, by the classical bipolar theorem it can be easily established that $\{A^{0}:A\in\mathscr{B}(Y,X)\}$ as the local base at $\theta$ of $\beta(X,Y)$ is exactly the family of $\sigma(X,Y)$--barrels. However, in the random setting, we do not know if $\{A^{0}:A\in\mathscr{B}(Y,X)\}$ as the local base at $\theta$ of $\beta(X,Y)$ is still the family of $\sigma_{c}(X,Y)$--$L^0$--barrels, we only know that for each $\sigma_{c}(X,Y)$--$L^0$--barrel $U$ there is $A(=U^0)\in\mathscr{B}(Y,X)$ such that $U\subset A^{0}$. So, we remind the reader of the following useful result:

\begin{theorem} Let $\langle X,Y\rangle$ be a random duality pair such that $X$ has the countable concatenation property. Then the family of  $\sigma_c(X,Y)$--$L^0$--barrels with the countable concatenation property forms a local base at $\theta$ of $\beta(X,Y)$.
\end{theorem}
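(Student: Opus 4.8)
The plan is to produce an explicit local base for $\beta(X,Y)$ consisting of $\sigma_c(X,Y)$--$L^0$--barrels with the countable concatenation property, and to show conversely that every such barrel already belongs to the defining local base of $\beta(X,Y)$. Recall that $\beta(X,Y)=\mathcal{T}_{\mathscr{B}(Y,X)}$, where $\mathscr{B}(Y,X)$, the family of all $\sigma_c(Y,X)$--bounded subsets of $Y$, is saturated. Using saturation (closure under finite unions and under $\lambda(\cdot)$ for $\lambda\in L^0(\mathcal{F},K)$) one reduces an arbitrary basic neighborhood $\{x:\|x\|_A\leq\varepsilon\}$ (with $\varepsilon\in L^0_{++}(\mathcal{F})$) to the form $(A/\varepsilon)^0$, so that $\{A^0:A\in\mathscr{B}(Y,X)\}$ is a local base at $\theta$ of $\beta(X,Y)$; moreover each $A^0$ is a $\sigma_c(X,Y)$--$L^0$--barrel, exactly as recorded in the proof of Theorem 3.37.

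First I would verify that every polar $A^0$ ($A\in\mathscr{B}(Y,X)$) has the countable concatenation property, which is the only new ingredient in this direction. Given a sequence $\{x_n,n\in N\}$ in $A^0$ and a countable partition $\{A_n,n\in N\}$ of $\Omega$ to $\mathcal{F}$, the hypothesis that $X$ has the countable concatenation property yields $x=\sum_{n=1}^{\infty}\tilde{I}_{A_n}x_n\in X$. Since each $\langle\cdot,a\rangle$ is $L^0$--linear and hence local, one obtains $\langle x,a\rangle=\sum_{n=1}^{\infty}\tilde{I}_{A_n}\langle x_n,a\rangle$, whence $|\langle x,a\rangle|=\sum_{n=1}^{\infty}\tilde{I}_{A_n}|\langle x_n,a\rangle|\leq 1$ for every $a\in A$; thus $x\in A^0$. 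Consequently each member of the local base $\{A^0:A\in\mathscr{B}(Y,X)\}$ is a $\sigma_c(X,Y)$--$L^0$--barrel with the countable concatenation property, so every $\beta(X,Y)$--neighborhood of $\theta$ contains such a barrel.

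For the converse --- that every $\sigma_c(X,Y)$--$L^0$--barrel $U$ with the countable concatenation property is itself a $\beta(X,Y)$--neighborhood of $\theta$ --- the key step, and the one where the assumption that $X$ has the countable concatenation property is indispensable, is the identity $U^{00}=U$. Since $U$ is $L^0$--balanced, $L^0$--convex, $\sigma_c(X,Y)$--closed and has the countable concatenation property, I would apply the random bipolar theorem (Theorem 3.26) with the random compatible topology $\mathcal{T}=\sigma_c(X,Y)$ (compatible by Theorem 3.22): then $\Gamma(U)=U$, $H_{cc}(U)=U$ and $[U]^-_{\sigma_c(X,Y)}=U$ combine to give $U^{00}=[H_{cc}(\Gamma(U))]^-_{\sigma_c(X,Y)}=U$. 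Applying Lemma 3.31(1), in the form symmetric in $X$ and $Y$, to $U^{00}=U$ shows that the polar $U^0$ is $\sigma_c(Y,X)$--bounded, i.e. $U^0\in\mathscr{B}(Y,X)$, so that $U=U^{00}=(U^0)^0$ is a basic $\beta(X,Y)$--neighborhood of $\theta$.

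Combining the two directions shows that the family of $\sigma_c(X,Y)$--$L^0$--barrels with the countable concatenation property is simultaneously fine enough and coarse enough to serve as a local base at $\theta$ of $\beta(X,Y)$. The main obstacle is the bipolar identity $U^{00}=U$: without the countable concatenation property of $X$ (and of $U$), Theorem 3.26 is unavailable and one can only assert $U\subset U^{00}$, which is insufficient to conclude that $U$ is a neighborhood; the remaining verifications --- that polars are $L^0$--barrels and that they carry the countable concatenation property --- are routine.
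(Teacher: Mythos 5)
Your proposal is correct and follows essentially the same route as the paper: show each polar $A^0$, $A\in\mathscr{B}(Y,X)$, inherits the countable concatenation property from $X$ (hence is a barrel of the required kind), and conversely recover $U=U^{00}=(U^0)^0$ for any such barrel $U$ via the random bipolar theorem (Theorem 3.26) applied to $\sigma_c(X,Y)$. Your write-up merely spells out details the paper leaves implicit (the reduction of basic $\beta(X,Y)$--neighborhoods to polars via saturation, and the boundedness of $U^0$ via Lemma 3.31), so there is nothing to correct.
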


\begin{proof} By the countable concatenation property of $X$, it is easy to see that $A^0$ has the countable concatenation property for each $A\in\mathscr{B}(Y,X)$, and hence also a $\sigma_{c}(X,Y)$--$L^0$--barrel with this property. On the other hand, for each $\sigma_{c}(X,Y)$--$L^0$--barrel $U$ with the countable concatenation property, then by Theorem 3.26 we have that $U=U^{00}=(U^{0})^{0}$. Since $U^{0}\in\mathscr{B}(Y,X)$, $U\in\{A^{0}:A\in\mathscr{B}(Y,X)\}$. To sum up, the family of $\sigma_{c}(X,Y)$--$L^{0}$--barrels with the countable concatenation property is exactly the local base $\{A^{0}:A\in\mathscr{B}(Y,X)\}$. \hfill $\square$
\end{proof}

Theorem 3.37 below shows that the study of random admissible topology is of universal interest in the theory of $RCL$ modules.

\begin{theorem} Let $(X,\mathcal{P})$ be an $RLC$ module over $K$ with base $(\Omega,\mathcal{F},P)$ and $\mathcal{E}$ the family of all the subsets $E$ of $X_{c}^{\ast}$ such that $E$ is equicontinuous from $(X,\mathcal{T}_c)$ to $L^{0}(\mathcal{F},K)$ endowed with the locally $L^{0}$--convex topology induced by $|\cdot|$. Then $\mathcal{T}_{c}=\mathcal{T}_{\mathcal{E}}$, where we consider the natural pairing $\langle X,X_{c}^{\ast}\rangle$, then $\mathcal{T}_{\mathcal{E}}$ is, clearly, a random admissible topology.
\end{theorem}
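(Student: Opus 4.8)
The plan is to transplant the classical fact that the topology of a locally convex space equals the topology of uniform convergence on the equicontinuous subsets of its dual, replacing each Hahn--Banach separation by its random counterpart and each scalar normalization by the $L^0$-module version built from the sign function. First I would record a workable description of $\mathcal{E}$: a subset $E$ of $X_c^{\ast}$ is equicontinuous (from $(X,\mathcal{T}_c)$ into $L^0(\mathcal{F},K)$ with the topology induced by $|\cdot|$) if and only if there is a $\mathcal{T}_c$-neighborhood $U$ of $\theta$ with $E\subset U^0$; one direction is immediate by taking the target neighborhood $U(1)$, and the converse follows from $L^0$-homogeneity, since for $z\in\delta U$ one has $|f(z)|=\delta|f(u)|\le\delta$ for every $f\in U^0$ and every $\delta\in L^0_{++}(\mathcal{F})$. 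In particular each $E\in\mathcal{E}$ is $\sigma_c(X_c^{\ast},X)$-bounded: if $E\subset U^0$ then by Lemma 3.31(1) the set $U^0$ is $\sigma_c$-bounded, because its polar $U^{00}\supset U$ is $L^0$-absorbent and hence a $\sigma_c(X,X_c^{\ast})$--$L^0$-barrel. Thus $\mathcal{T}_{\mathcal{E}}$ is genuinely defined (and $\langle X,X_c^{\ast}\rangle$ is a random duality pair, points of $X$ being separated by applying Corollary 3.10 with $M=\{\theta\}$), so it suffices to prove the two inclusions $\mathcal{T}_{\mathcal{E}}\subset\mathcal{T}_c$ and $\mathcal{T}_c\subset\mathcal{T}_{\mathcal{E}}$.

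For $\mathcal{T}_{\mathcal{E}}\subset\mathcal{T}_c$ I would argue directly from equicontinuity. Fix $E\in\mathcal{E}$ and $\delta\in L^0_{++}(\mathcal{F})$; by equicontinuity there is a $\mathcal{T}_c$-neighborhood $U$ of $\theta$ with $|f(x)|\le\delta$ for all $x\in U$ and all $f\in E$, whence $\|x\|_E=\bigvee\{|f(x)|:f\in E\}\le\delta$ for every $x\in U$. Therefore $U\subset\{x:\|x\|_E\le\delta\}$, so each basic $\mathcal{T}_{\mathcal{E}}$-neighborhood of $\theta$ contains a $\mathcal{T}_c$-neighborhood, and by translation invariance $\mathcal{T}_{\mathcal{E}}\subset\mathcal{T}_c$.

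The substance of the theorem, and the step I expect to be the main obstacle, is the reverse inclusion $\mathcal{T}_c\subset\mathcal{T}_{\mathcal{E}}$, which is a bipolar-type statement. Let $U=N_\theta(Q,\varepsilon)=\{x:\|x\|_Q\le\varepsilon\}$ be a basic $\mathcal{T}_c$-neighborhood, with $Q\in\mathcal{P}_f$ and $\varepsilon\in L^0_{++}(\mathcal{F})$; it is $L^0$-balanced, $L^0$-absorbent and $L^0$-convex, and since $\|\cdot\|_Q$ is $\mathcal{T}_{\varepsilon,\lambda}$-continuous it is also $\mathcal{T}_{\varepsilon,\lambda}$-closed. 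Its polar $E:=U^0$ belongs to $\mathcal{E}$, and the corresponding $\mathcal{T}_{\mathcal{E}}$-neighborhood is exactly $\{x:\|x\|_{U^0}\le 1\}=U^{00}$, so the goal reduces to proving $U^{00}\subset U$. I would do this by contraposition: if $x\notin U$, then, $U$ being a nonempty $\mathcal{T}_{\varepsilon,\lambda}$-closed $L^0$-convex set, Corollary 3.10 provides $f\in X_c^{\ast}$ and $B\in\mathcal{F}$ with $P(B)>0$ separating $x$ from $U$; since $U$ is moreover $L^0$-balanced, the reformulation in Remark 3.11 (relations (3) and (4)), after multiplying $f$ by a suitable element of $L^0(\mathcal{F},K)$, yields an $f\in X_c^{\ast}$ with $\bigvee\{|f(y)|:y\in U\}\le 1$ and $|f(x)|\nleqslant 1$. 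The first property gives $f\in U^0$, and the second gives $\|x\|_{U^0}\ge|f(x)|$, so $\|x\|_{U^0}\nleqslant 1$, i.e.\ $x\notin U^{00}$. Hence $U^{00}\subset U$, the neighborhood $U$ contains the $\mathcal{T}_{\mathcal{E}}$-neighborhood $U^{00}$, and $\mathcal{T}_c\subset\mathcal{T}_{\mathcal{E}}$ follows.

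The point that makes the argument go through without assuming that $X$ has the countable concatenation property is precisely the use of Corollary 3.10 in place of the random bipolar theorem (Theorem 3.26): Corollary 3.10 separates a point from a $\mathcal{T}_{\varepsilon,\lambda}$-closed $L^0$-convex set with no concatenation hypothesis, and the basic neighborhoods $N_\theta(Q,\varepsilon)$ are exactly of that form. Finally, the concluding assertion that $\mathcal{T}_{\mathcal{E}}$ is random admissible is then immediate, since every $f\in X_c^{\ast}$ is $\mathcal{T}_c$-continuous gives $\sigma_c(X,X_c^{\ast})\subset\mathcal{T}_c=\mathcal{T}_{\mathcal{E}}$, which is the defining condition for admissibility.
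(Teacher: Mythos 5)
Your proposal is correct, but the decisive step is carried out by a different tool than in the paper. For the easy inclusion $\mathcal{T}_{\mathcal{E}}\subset\mathcal{T}_c$ both arguments are essentially the same (the paper phrases equicontinuity of $E$ as $\|\cdot\|_E\leq\xi\|\cdot\\|_{\mathcal{Q}}$ for some $\xi\in L^0_+(\mathcal{F})$ and $\mathcal{Q}\in\mathcal{P}_f$, which is your $E\subset U^0$ reformulation). For the reverse inclusion the paper's proof is a two-line appeal to the random Hahn--Banach theorem: for each $\|\cdot\|\in\mathcal{P}$ it takes $E=\{f\in X^\ast_c: |f(x)|\leq\|x\|\ \forall x\}$ and uses the existence of norm-attaining functionals to get the exact identity $\|\cdot\|=\|\cdot\|_E$, whence $\mathcal{T}_c\subset\mathcal{T}_{\mathcal{E}}$. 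You instead prove the set inclusion $U^{00}\subset U$ for the basic neighborhoods $U=N_\theta(Q,\varepsilon)$ by contraposition, using Corollary 3.10 (separation of a point from a $\mathcal{T}_{\varepsilon,\lambda}$-closed $L^0$-convex set, no concatenation hypothesis) together with the normalization of Remark 3.11 to produce $f\in U^0$ with $|f(x)|\nleqslant 1$. Both routes correctly avoid the random bipolar theorem (Theorem 3.26) and hence any countable concatenation assumption on $X$; your observation that $N_\theta(Q,\varepsilon)$ is $\mathcal{T}_{\varepsilon,\lambda}$-closed (not merely $\mathcal{T}_c$-closed) is exactly what makes Corollary 3.10 applicable and is the point worth recording. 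What the paper's route buys is brevity and the sharper conclusion $\|\cdot\|_{U^0}=\|\cdot\|_Q\wedge$-type equality of seminorms; what your route buys is that it stays entirely inside the separation machinery of Subsection 3.1 and makes transparent, via the explicit bipolar computation, why $\mathcal{T}_{\mathcal{E}}$ is a topology of random uniform convergence in the sense of Definition 3.28.
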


\begin{proof} It is clear that $E\in\mathcal{E}$ iff there are $\xi\in L^{0}_{+}(\mathcal{F})$ and a finite subset $\mathcal{Q}$ of $\mathcal{P}$ such that $\|x\|_{E}:=\bigvee\{|f(x)|:f\in E\}\leq \xi(\bigvee\{\|x\|:\|\cdot\|\in\mathcal{Q}\})$ for all $x\in X$, so $\mathcal{T}_{\mathcal{E}}\subset\mathcal{T}_{c}$.

Conversely, for each $\|\cdot\|\in\mathcal{P}$, let $E=\{f\in X^{\ast}_{c}~|~|f(x)|\leq \|x\|$ for all $x\in X\}$, then from the random Hahn-Banach theorem of \cite{TXG-JFA} one can easily see that $\|\cdot\|=\|\cdot\|_E$, so $\mathcal{T}_{c}\subset\mathcal{T}_{\mathcal{E}}$. \hfill $\square$
\end{proof}

\begin{corollary} Let $\langle X,Y\rangle$ be a random duality pair over $K$ with base $(\Omega,\mathcal{F},P)$. Then every random compatible topology  $\mathcal{T}$ on $X$ is random admissible.
\end{corollary}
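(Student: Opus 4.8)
The plan is to obtain Corollary 3.38 directly from Theorem 3.37, the only real task being to recast the hypothesis ``$\mathcal{T}$ is random compatible with $Y$'' into the hypotheses of that theorem. First I would unwind the definitions. By Definition 3.20, a random compatible topology $\mathcal{T}$ is the locally $L^0$--convex topology $\mathcal{T}_c$ induced by some family $\mathcal{P}$ of $L^0$--seminorms for which $(X,\mathcal{P})$ is an $RLC$ module and $(X,\mathcal{P})^\ast_c=Y$. Under the standing identification of each $y\in Y$ with the functional $\langle\cdot,y\rangle$ fixed after Definition 3.19, the equality $(X,\mathcal{P})^\ast_c=Y$ says precisely that $X^\ast_c=Y$ as submodules of $X^\#$ and that the natural evaluation pairing $\langle X,X^\ast_c\rangle$ appearing in Theorem 3.37 coincides with the given duality pairing $\langle X,Y\rangle$; in particular $\sigma_c(X,X^\ast_c)=\sigma_c(X,Y)$.

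I would then apply Theorem 3.37 to the $RLC$ module $(X,\mathcal{P})$. With $\mathcal{E}$ the family of all subsets of $X^\ast_c=Y$ that are equicontinuous from $(X,\mathcal{T}_c)$ to $(L^0(\mathcal{F},K),\mathcal{T}_c)$, Theorem 3.37 yields both $\mathcal{T}=\mathcal{T}_c=\mathcal{T}_{\mathcal{E}}$ and the assertion that $\mathcal{T}_{\mathcal{E}}$ is a random admissible topology. The first equality already exhibits $\mathcal{T}$ as a topology of random uniform convergence on the family $\mathcal{E}$, which is what makes the predicate ``random admissible'' of Definition 3.29 applicable to $\mathcal{T}$ at all; the second then gives $\mathcal{T}=\mathcal{T}_{\mathcal{E}}\supset\sigma_c(X,Y)$, which is the desired conclusion.

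Because Theorem 3.37 is already available, there is essentially no obstacle left; the substantive content sits inside that theorem, and the only point that deserves care is the clause ``$\mathcal{T}_{\mathcal{E}}$ is, clearly, a random admissible topology,'' which hides two verifications. The first is that every equicontinuous $E\subset Y$ is $\sigma_c(Y,X)$--bounded, so that $\mathcal{E}$ is a legitimate defining family: for such an $E$ there are $\xi\in L^0_+(\mathcal{F})$ and $\mathcal{Q}\in\mathcal{P}_f$ with $\bigvee\{|\langle x,f\rangle|:f\in E\}\leq\xi\|x\|_{\mathcal{Q}}$ for all $x\in X$, so $\{\langle x,f\rangle:f\in E\}$ is a.s. bounded in $L^0(\mathcal{F},K)$ for each $x\in X$, whence $E$ is $\sigma_c(Y,X)$--bounded by Theorem 2.35 applied to the pair $\langle Y,X\rangle$ (using Theorem 3.22 to identify $(Y,\sigma(Y,X))^\ast_c=X$). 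The second is that $\sigma_c(X,Y)\subset\mathcal{T}_{\mathcal{E}}$: each finite subset $\mathcal{Q}'$ of $Y=X^\ast_c$ is equicontinuous, since $\|\cdot\|_{\mathcal{Q}'}=\bigvee\{|\langle\cdot,y\rangle|:y\in\mathcal{Q}'\}$ is a finite supremum of $\mathcal{T}_c$--continuous $L^0$--seminorms and hence $\mathcal{T}_c$--continuous, so $Y_f\subset\mathcal{E}$ and therefore $\sigma_c(X,Y)=\mathcal{T}_{Y_f}\subset\mathcal{T}_{\mathcal{E}}$ by monotonicity of the uniform--convergence topology in its defining family. Either route delivers the same conclusion.
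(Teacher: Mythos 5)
Your proposal is correct and follows essentially the same route as the paper: unwind Definition~3.20 to identify $\langle X,Y\rangle$ with the natural pairing $\langle X,X^\ast_c\rangle$ and then invoke Theorem~3.37 to get $\mathcal{T}=\mathcal{T}_{\mathcal{E}}$, which is random admissible. The extra verifications you supply (equicontinuous sets are $\sigma_c(Y,X)$--bounded, and $\sigma_c(X,Y)=\mathcal{T}_{Y_f}\subset\mathcal{T}_{\mathcal{E}}$) are sound and merely make explicit what the paper leaves to the word ``clearly.''
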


\begin{proof} By Definition 3.20, there is a family $\mathcal{P}$ of $L^0$--seminorms on $X$ such that $(X,\mathcal{P})$ becomes an $RLC$ module over $K$ with base $(\Omega,\mathcal{F},P)$ and $(X,\mathcal{P})^{\ast}_{c}=Y$, $\mathcal{T}$ is just induced by $\mathcal{P}$, at which time $\langle X,Y\rangle$ is exactly $\langle X,X^{\ast}_{c}\rangle$ and $\mathcal{T}=\mathcal{T}_{\mathcal{E}}$ by Theorem 3.37. \hfill $\square$
\end{proof}

The proof of Theorem 3.39 below (namely the resonance theorem) is omitted since it is the same as that of the classical case.

\begin{theorem} Let $(E,\mathcal{P})$ be an $RLC$ module over $K$ with base $(\Omega,\mathcal{F},P)$ and $H\subset E^{\ast}_{c}$. Then we have the following:

\noindent $(1)$. If $(E,\mathcal{T}_c)$ is $L^0$--barreled, then $H$ is equicontunuous from $(E,\mathcal{T}_c)$ to $(L^{0}(\mathcal{F},K),\mathcal{T}_c)$ iff $H$ is $\sigma_c(E^{\ast}_{c},E)$--bounded.

\noindent $(2)$. If $(E,\mathcal{T}_c)$ is $L^0$--pre-barreled and $E$ has the countable concatenation property, then $H$ is equicontinuous from $(E,\mathcal{T}_c)$ to $(L^{0}(\mathcal{F},K),\mathcal{T}_c)$ iff $H$ is $\sigma_c(E^{\ast}_{c},E)$--bounded.
\end{theorem}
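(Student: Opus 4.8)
The plan is to imitate the classical Banach--Steinhaus argument, replacing the usual polar/barrel mechanism by its $L^0$--module counterpart and letting the polar $H^0:=\{x\in E~|~|f(x)|\leq 1~\hbox{for all}~f\in H\}$ play the role of the barrel. I would observe first that $H$ is equicontinuous from $(E,\mathcal{T}_c)$ to $(L^0(\mathcal{F},K),\mathcal{T}_c)$ if and only if $H^0$ is a $\mathcal{T}_c$--neighborhood of $\theta$: if $H$ is equicontinuous there is a neighborhood $U$ of $\theta$ with $\bigvee\{|f(x)|:f\in H\}\leq 1$ for all $x\in U$, so $U\subset H^0$; conversely, for $\varepsilon\in L^0_{++}(\mathcal{F})$ the neighborhood $\varepsilon H^0=\{y~|~|f(y)|\leq\varepsilon~\hbox{for all}~f\in H\}$ witnesses equicontinuity at level $\varepsilon$. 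This reduces both (1) and (2) to deciding when $H^0$ is a neighborhood of $\theta$.

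For the direction ``equicontinuous $\Rightarrow$ $\sigma_c(E^\ast_c,E)$--bounded'', which needs no (pre-)barreledness, I would fix $x\in E$ and use that the neighborhood $U$ above is $L^0$--absorbent: choosing $\xi\in L^0_{++}(\mathcal{F})$ with $\xi x\in U$ gives $|\xi|\bigvee\{|f(x)|:f\in H\}=\bigvee\{|f(\xi x)|:f\in H\}\leq 1$, whence $\bigvee\{|f(x)|:f\in H\}\leq|\xi|^{-1}\in L^0_+(\mathcal{F})$. By Lemma 2.37 this says exactly that $\{f(x):f\in H\}$ is $\mathcal{T}_c$--bounded for each $x$, i.e. $H$ is $\sigma_c(E^\ast_c,E)$--bounded.

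The substantive direction is ``$\sigma_c(E^\ast_c,E)$--bounded $\Rightarrow$ equicontinuous'', where I would show that $H^0$ is an $L^0$--barrel and then invoke barreledness. Now $H^0$ is the intersection of the sets $\{x~|~|f(x)|\leq 1\}$, each of which is $L^0$--convex, $L^0$--balanced and (since every $f\in E^\ast_c$ is $\mathcal{T}_c$--continuous) $\mathcal{T}_c$--closed; hence so is $H^0$. The only property that uses the hypothesis is $L^0$--absorbency: for $x\in E$ put $\eta_x=\bigvee\{|f(x)|:f\in H\}$, which lies in $L^0_+(\mathcal{F})$ by $\sigma_c$--boundedness, and take $\xi=(1+\eta_x)^{-1}\in L^0_{++}(\mathcal{F})$; then $|\eta|\leq\xi$ forces $|f(\eta x)|=|\eta|\,|f(x)|\leq\xi\eta_x\leq 1$ for all $f\in H$, so $\eta x\in H^0$. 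Thus $H^0$ is an $L^0$--barrel, and if $(E,\mathcal{T}_c)$ is $L^0$--barreled it is automatically a neighborhood of $\theta$, which proves (1).

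For (2) the weaker hypothesis of $L^0$--pre-barreledness only guarantees that $L^0$--barrels \emph{with the countable concatenation property} are neighborhoods, so the extra and decisive step is to verify that $H^0$ has this property; here the assumption that $E$ has the countable concatenation property is used. Given a sequence $\{x_n,n\in N\}$ in $H^0$ and a countable partition $\{A_n,n\in N\}$ of $\Omega$ to $\mathcal{F}$, the countable concatenation property of $E$ furnishes $x=\sum_{n=1}^\infty\tilde{I}_{A_n}x_n\in E$; since each $f\in H$ is $L^0$--linear and hence local, (1) of Lemma 3.16 gives $f(x)=\sum_{n=1}^\infty\tilde{I}_{A_n}f(x_n)$, so $|f(x)|=\sum_{n=1}^\infty\tilde{I}_{A_n}|f(x_n)|\leq 1$ and $x\in H^0$. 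Hence $H^0$ is an $L^0$--barrel with the countable concatenation property, and $L^0$--pre-barreledness makes it a neighborhood of $\theta$, completing (2). I expect the main obstacle to be precisely this last point: securing the countable concatenation property of the polar $H^0$, which is the genuinely ``random'' ingredient and the exact reason that pre-barreledness, rather than full barreledness, suffices.
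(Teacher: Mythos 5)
Your proof is correct and is exactly the classical Banach--Steinhaus/polar argument that the paper has in mind (the paper omits the proof of Theorem 3.39 precisely on the grounds that "it is the same as that of the classical case"). The one genuinely non-classical step — establishing the countable concatenation property of $H^0$ via the locality of $L^0$--linear maps so that pre-barreledness suffices in part (2) — is the right additional ingredient and you handle it correctly.
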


In the classical case, for a locally convex space $(E,\mathcal{T})$, a subset $H\subset E^{\ast}$ is equicontinuous, then it must be $\sigma(E^{\ast},E)$--relatively compact. However the classical Banach-Alaoglu theorem universally fails to hold in the case of $RN$ modules under the $(\varepsilon,\lambda)$--topology (cf. \cite{TXG-Alao}), the same, of course, occurs for the locally $L^{0}$--convex topology, so we can not generalize the construction of the classical Mackey topology to the random setting.

\subsubsection{A characterization for a random locally convex module to be $L^0$--pre-barreled}

The main result of this subsection is Theorem 3.40 below.

\begin{theorem}  Let $(E,\mathcal{P})$ be an $RLC$ module over $K$ with base $(\Omega,\mathcal{F},P)$ such that $E$ has the countable concatenation property. Then $(E,\mathcal{T}_c)$ is $L^0$--pre-barreled iff $\mathcal{T}_c=\beta(E,E^{\ast}_c)$.
\end{theorem}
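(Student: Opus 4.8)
The plan is to exploit the natural random duality pair $\langle E, E^{\ast}_c\rangle$, for which $\mathcal{T}_c$ is by construction a random compatible topology (since $(E,\mathcal{P})^{\ast}_c = E^{\ast}_c$), and to reduce the whole statement to a single comparison of topologies. First I would record the inclusion $\mathcal{T}_c \subset \beta(E,E^{\ast}_c)$, which holds with no extra hypothesis: by Theorem 3.37 one has $\mathcal{T}_c = \mathcal{T}_{\mathcal{E}}$, where $\mathcal{E}$ is the family of equicontinuous subsets of $E^{\ast}_c$; every such set is $\sigma_c(E^{\ast}_c,E)$--bounded, so $\mathcal{E}\subset\mathscr{B}(E^{\ast}_c,E)$ and hence $\mathcal{T}_c=\mathcal{T}_{\mathcal{E}}\subset\mathcal{T}_{\mathscr{B}(E^{\ast}_c,E)}=\beta(E,E^{\ast}_c)$ by the monotonicity in Proposition 3.33. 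I would also note, via Corollary 3.38, that $\mathcal{T}_c$ is random admissible, so $\mathcal{T}_c\supset\sigma_c(E,E^{\ast}_c)$. Thus the theorem reduces to showing that $L^0$--pre-barreledness is precisely the condition under which $\beta(E,E^{\ast}_c)\subset\mathcal{T}_c$.

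For the necessity direction, assume $(E,\mathcal{T}_c)$ is $L^0$--pre-barreled. Since $E$ has the countable concatenation property, Theorem 3.36 tells me that the $\sigma_c(E,E^{\ast}_c)$--$L^0$--barrels with the countable concatenation property form a local base at $\theta$ of $\beta(E,E^{\ast}_c)$. I would take an arbitrary such barrel $U$: being $\sigma_c(E,E^{\ast}_c)$--closed and $\mathcal{T}_c\supset\sigma_c(E,E^{\ast}_c)$, it is also $\mathcal{T}_c$--closed, hence an $L^0$--barrel in $(E,\mathcal{T}_c)$ carrying the countable concatenation property. Pre-barreledness then makes $U$ a $\mathcal{T}_c$--neighborhood of $\theta$. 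As $U$ runs over this local base of $\beta(E,E^{\ast}_c)$, I obtain $\beta(E,E^{\ast}_c)\subset\mathcal{T}_c$, and combined with the general inclusion this yields $\mathcal{T}_c=\beta(E,E^{\ast}_c)$.

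For the sufficiency direction, assume $\mathcal{T}_c=\beta(E,E^{\ast}_c)$ and let $U$ be any $L^0$--barrel in $(E,\mathcal{T}_c)$ with the countable concatenation property. The key move is to pass its closedness from $\mathcal{T}_c$ to $\sigma_c(E,E^{\ast}_c)$: since $U$ is $\mathcal{T}_c$--closed, $L^0$--convex and carries the countable concatenation property, the random compatible invariance of such sets (the discussion following Theorem 3.26, resting on Corollary 3.9) shows that its closure is the same in every random compatible topology, so $U$ is $\sigma_c(E,E^{\ast}_c)$--closed, i.e.\ a $\sigma_c(E,E^{\ast}_c)$--$L^0$--barrel with the countable concatenation property. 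By Theorem 3.36 it then lies in the local base of $\beta(E,E^{\ast}_c)=\mathcal{T}_c$ and is therefore a $\mathcal{T}_c$--neighborhood of $\theta$. Hence every $L^0$--barrel with the countable concatenation property is a neighborhood of $\theta$, which is exactly $L^0$--pre-barreledness.

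The step I expect to be the main obstacle — and the one the hypotheses are tailored to overcome — is matching the two closures on the relevant barrels: showing that an $L^0$--barrel with the countable concatenation property is simultaneously $\mathcal{T}_c$--closed and $\sigma_c(E,E^{\ast}_c)$--closed. This is precisely where the countable concatenation property of $E$ and the random compatible invariance of Corollary 3.9 are indispensable, and it also explains why the characterization is forced to speak of barrels \emph{with} the countable concatenation property, mirroring the gap between pre-barreled and barreled modules. Once this closure-matching and the local-base description of $\beta(E,E^{\ast}_c)$ in Theorem 3.36 are in hand, the remainder is routine bookkeeping of the inclusions $\sigma_c(E,E^{\ast}_c)\subset\mathcal{T}_c\subset\beta(E,E^{\ast}_c)$.
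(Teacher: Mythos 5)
Your proposal is correct and follows essentially the same route as the paper: both directions rest on Theorem 3.36 (the $\sigma_c(E,E^{\ast}_c)$--$L^0$--barrels with the countable concatenation property form a local base of $\beta(E,E^{\ast}_c)$) together with the random compatible invariance of closed $L^0$--convex sets with the countable concatenation property coming from Corollary 3.9. The only cosmetic difference is that you first record the unconditional inclusion $\mathcal{T}_c\subset\beta(E,E^{\ast}_c)$ and then prove the reverse inclusion, whereas the paper shows the two topologies share a common local base of barrels with the countable concatenation property; the substance is identical.
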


\begin{proof} $\mathcal{T}_c$ has a local base at $\theta$ consisting of $\{N_{\theta}(\mathcal{Q},\varepsilon)~|~\mathcal{Q}\in\mathcal{P}_f,\varepsilon\in L^{0}_{++}(\mathcal{F})\}$, where $N_{\theta}(\mathcal{Q},\varepsilon)=\{x\in E~|~\|x\|_{\mathcal{Q}}\leq\varepsilon\}$. It is obvious that every $N_{\theta}(\mathcal{Q},\varepsilon)$ is an $\mathcal{T}_c$--$L^0$--barreled with the countable concatenation property. Thus, if $\mathcal{T}_c$ is $L^0$--pre-barreled, then the family of $\mathcal{T}_c$--$L^0$--barrels with the countable concatenation property forms a local base at $\theta$ of $\mathcal{T}_c$. By Corollary 3.9, $L^0$--barrels with the countable concatenation property are random compatible invariants, namely the family of $\mathcal{T}_c$--$L^0$--barrels with the countable concatenation property coincides with the family of $\sigma_c(E,E_c^{\ast})$--$L^0$--barrels with the countable concatenation property, so $\mathcal{T}_c=\beta(E,E_c^{\ast})$ by Theorem 3.36 if $\mathcal{T}_c$ is $L^0$--pre-barreled.

Conversely, if $\mathcal{T}_c=\beta(E,E_c^{\ast})$, then ,since every $\mathcal{T}_c$--$L^0$--barrel with the countable concatenation property is $\sigma_c(E,E_c^{\ast})$--$L^0$--barrel by Corollary 3.9, and hence a $\beta(E,E^{\ast}_c)$--neighborhood of $\theta$ by Theorem 3.36, namely a $\mathcal{T}_c$--neighborhood of $\theta$, that is to say, $(E,\mathcal{T}_c)$ is $L^0$--pre-barreled. ~~~~~~~\hfill $\square$
\end{proof}

\begin{corollary} Let $(E,\|\cdot\|)$ be a $\mathcal{T}_c$--complete $RN$ module over $K$ with $(\Omega,\mathcal{F},P)$ such that $E$ has the countable concatenation property. Then $(E,\mathcal{T}_c)$ is $L^0$--pre-barreled.
\end{corollary}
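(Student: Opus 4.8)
The plan is to deduce the corollary from Theorem 3.40: since $E$ has the countable concatenation property, it suffices to prove that $\mathcal{T}_c=\beta(E,E^\ast_c)$, where we regard $\langle E,E^\ast_c\rangle$ as a random duality pair and recall that for an $RN$ module $E^\ast_c=E^\ast=B(E,L^0(\mathcal{F},K))$ by Corollary 2.21. Thus the work reduces to comparing the given locally $L^0$--convex topology with the strong topology of random uniform convergence on $\sigma_c(E^\ast_c,E)$--bounded sets.

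One inclusion is soft. Since $(E,\|\cdot\|)^\ast_c=E^\ast_c$, the topology $\mathcal{T}_c$ is random compatible, hence random admissible by Corollary 3.38; because $\beta(E,E^\ast_c)$ is the strongest random admissible topology, we get $\mathcal{T}_c\subseteq\beta(E,E^\ast_c)$. (Equivalently, every equicontinuous subset of $E^\ast_c$ is $\sigma_c(E^\ast_c,E)$--bounded, so $\mathcal{T}_c=\mathcal{T}_{\mathcal{E}}\subseteq\mathcal{T}_{\mathscr{B}(E^\ast_c,E)}=\beta(E,E^\ast_c)$ by Theorem 3.37 and Proposition 3.33.)

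For the reverse inclusion $\beta(E,E^\ast_c)\subseteq\mathcal{T}_c$, I would take an arbitrary $A\in\mathscr{B}(E^\ast_c,E)$, i.e.\ a $\sigma_c(E^\ast_c,E)$--bounded subset of $E^\ast_c$, and show that its defining $L^0$--seminorm $\|\cdot\|_A=\bigvee\{|f(\cdot)|:f\in A\}$ is $\mathcal{T}_c$--continuous. By definition $\sigma_c(E^\ast_c,E)$--boundedness of $A$ says exactly that $\{f(x):f\in A\}$ is $\mathcal{T}_c$--bounded in $(L^0(\mathcal{F},K),\mathcal{T}_c)$ for every $x\in E$. This is where the hypotheses enter: applying the random resonance theorem (Theorem 2.34) with $E_1=E$, which is $\mathcal{T}_c$--complete and has the countable concatenation property, and $E_2=L^0(\mathcal{F},K)$, upgrades this pointwise boundedness to norm boundedness of $A$ in $E^\ast=B(E,L^0(\mathcal{F},K))$, that is $\xi:=\bigvee\{\|f\|:f\in A\}\in L^0_+(\mathcal{F})$. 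Then $\|x\|_A\leq\xi\,\|x\|$ for all $x\in E$, so $\|\cdot\|_A$ is $\mathcal{T}_c$--continuous; equivalently $\{x\in E:\|x\|\leq(\xi+1)^{-1}\}\subseteq A^0$, exhibiting the basic $\beta$--neighborhood $A^0$ as a $\mathcal{T}_c$--neighborhood of $\theta$. Combining the two inclusions yields $\mathcal{T}_c=\beta(E,E^\ast_c)$, and Theorem 3.40 then gives that $(E,\mathcal{T}_c)$ is $L^0$--pre-barreled.

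The main obstacle is precisely this resonance step: converting the pointwise ($\sigma_c$--) boundedness of a family of continuous module homomorphisms into uniform (norm) boundedness. This is the only place where both $\mathcal{T}_c$--completeness and the countable concatenation property of $E$ are genuinely used, and neither can be dropped; in particular completeness alone does not suffice, since $\mathcal{T}_c$ is not a metrizable (Baire) topology, which is exactly why one routes through the module-theoretic resonance theorem rather than a classical Baire-category argument. An alternative, more hands-on route avoiding Theorem 3.40 would fix an $L^0$--barrel $U$ with the countable concatenation property, use Corollary 3.9 to see that it is $\sigma_c(E,E^\ast_c)$--closed and the random bipolar theorem (Theorem 3.26) to get $U=U^{00}$, note that $U^0$ is $\sigma_c(E^\ast_c,E)$--bounded because $U$ is $L^0$--absorbent, and then apply the same resonance theorem to $U^0$ to conclude that $U=U^{00}$ contains $\{x\in E:\|x\|\leq(\xi+1)^{-1}\}$ and is therefore a $\mathcal{T}_c$--neighborhood of $\theta$; either way the resonance theorem is the crux.
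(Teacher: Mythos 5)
Your proposal is correct and follows essentially the same route as the paper's own proof: both directions are established exactly as you do, with $\mathcal{T}_c\subseteq\beta(E,E^\ast_c)$ coming from random compatibility via Corollary 3.38, and the reverse inclusion from upgrading the pointwise $\sigma_c$--boundedness of each $A\in\mathscr{B}(E^\ast_c,E)$ to a.s.\ boundedness via the random resonance theorem (Theorem 2.34), yielding $\|\cdot\|_A\leq\xi_A\|\cdot\|$. Your closing remarks on where completeness and the countable concatenation property enter, and the alternative bipolar-theorem route, are accurate but not part of the paper's argument.
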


\begin{proof}  We only need to verify that $\mathcal{T}_c=\beta(E,E_c^{\ast})$.

First, the locally $L^0$--convex topology $\mathcal{T}_c$ induced by $\|\cdot\|$ is a random compatible topology with respect to the natural random duality pair $\langle E,E_c^{\ast}\rangle$, so $\mathcal{T}_c\subset\beta(E,E_c^{\ast})$ by Corollary 3.38.

Conversely, $\beta(E,E_c^{\ast})$ is induced by $\{\|\cdot\|_A:A\in \mathscr{B}(E_c^{\ast},E)\}$, please recall that $\|\cdot\|_A:E\rightarrow L^{0}_{+}(\mathcal{F})$ is given by $\|x\|_A=\bigvee\{|f(x)|:f\in A\}$ for all $x\in E$ and $A\in \mathscr{B}(E_c^{\ast},E)$. Thus we only need to prove that each $\|\cdot\|_A$ is continuous from $(E,\mathcal{T}_c)$ to $(L^0(\mathcal{F},K),\mathcal{T}_c)$. $A\in \mathscr{B}(E_c^{\ast},E)$ means that $\{f(x):f\in A\}$ is $\mathcal{T}_c$--bounded in $(L^0(\mathcal{F},K),\mathcal{T}_c)$ for each $x\in E$, then, by Theorem 2.34 $A$ is $\mathcal{T}_c$--bounded in $E_c^{\ast}$, namely a.s. bounded, and hence there is $\xi_A\in L^0_+(\mathcal{F})$ such that $\|f\|\leq\xi_A$ for all $f\in A$. This shows that $\|x\|_A=\bigvee\{|f(x)|:f\in A\}\leq\bigvee\{\|f\|\cdot\|x\|:f\in A\}\leq(\bigvee\{\|f\|:f\in A\})\|x\|\leq\xi_A\|x\|$ for all $x\in E$, namely $\beta(E,E_c^{\ast})\subset \mathcal{T}_c$. \hfill $\square$
\end{proof}

\begin{corollary} For each $p\in[1,+\infty]$, $(L^{p}_{\mathcal{F}}(\mathcal{E}),\mathcal{T}_c)$ is $L^0$--pre-barreled.
\end{corollary}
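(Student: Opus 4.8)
The plan is to reduce the claim to Corollary 3.41, which already delivers the conclusion for any $\mathcal{T}_c$--complete $RN$ module that has the countable concatenation property. By Example 2.9 each $(L^p_{\mathcal{F}}(\mathcal{E}), |||\cdot|||_p)$ is an $RN$ module over $R$ with base $(\Omega, \mathcal{F}, P)$, so it suffices to verify the two remaining hypotheses of Corollary 3.41: that $L^p_{\mathcal{F}}(\mathcal{E})$ is $\mathcal{T}_c$--complete and that it has the countable concatenation property.

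I would establish both of these at one stroke through Proposition 2.26, which asserts that an $RLC$ module is $\mathcal{T}_{\varepsilon,\lambda}$--complete if and only if it simultaneously has the countable concatenation property and is $\mathcal{T}_c$--complete. Hence it is enough to show that $L^p_{\mathcal{F}}(\mathcal{E})$ is $\mathcal{T}_{\varepsilon,\lambda}$--complete, i.e.\ that it is a random Banach module; once this is known, Proposition 2.26 supplies both hypotheses of Corollary 3.41, and one application of that corollary finishes the proof. (As a sanity check, the countable concatenation property can also be seen directly: $L^p_{\mathcal{F}}(\mathcal{E})$ sits inside $L^0(\mathcal{E})$, which is closed under countable concatenation, and the locality of $|||\cdot|||_p$ gives $|||\sum_{n=1}^{\infty} \tilde{I}_{A_n} x_n|||_p = \sum_{n=1}^{\infty} \tilde{I}_{A_n}|||x_n|||_p \in L^0_+(\mathcal{F})$ whenever each $x_n \in L^p_{\mathcal{F}}(\mathcal{E})$, so the concatenation again lies in $L^p_{\mathcal{F}}(\mathcal{E})$.)

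The essential point, and the step I expect to be the main obstacle, is the $\mathcal{T}_{\varepsilon,\lambda}$--completeness of $L^p_{\mathcal{F}}(\mathcal{E})$. This is the known fact that these model spaces are $\mathcal{T}_{\varepsilon,\lambda}$--complete $RN$ modules, which I would invoke from the references where they are constructed (\cite{FKV, TXG-JFA}); a self-contained argument proceeds by observing that the $\mathcal{T}_{\varepsilon,\lambda}$--topology on $L^p_{\mathcal{F}}(\mathcal{E})$ refines the convergence in measure inherited from $L^0(\mathcal{E})$, so a $\mathcal{T}_{\varepsilon,\lambda}$--Cauchy sequence converges in measure to some $x \in L^0(\mathcal{E})$, after which a conditional Fatou argument shows $x \in L^p_{\mathcal{F}}(\mathcal{E})$ and that the convergence holds in $\mathcal{T}_{\varepsilon,\lambda}$, with the case $p = +\infty$ requiring the separate treatment appropriate to the conditional essential supremum $|||\cdot|||_\infty$. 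Since $p \in [1, +\infty]$ is arbitrary, this shows that $(L^p_{\mathcal{F}}(\mathcal{E}), \mathcal{T}_c)$ is $L^0$--pre-barreled for every such $p$.
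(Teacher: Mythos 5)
Your proposal is correct and follows essentially the same route as the paper: the paper's proof is a one-line reduction to Corollary 3.41, asserting without further argument that $L^{p}_{\mathcal{F}}(\mathcal{E})$ is $\mathcal{T}_c$--complete and has the countable concatenation property. Your additional step of deriving both hypotheses at once from the $\mathcal{T}_{\varepsilon,\lambda}$--completeness of $L^{p}_{\mathcal{F}}(\mathcal{E})$ via Proposition 2.26 is a valid and slightly more explicit way of justifying what the paper takes for granted.
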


\begin{proof} Since $L^{p}_{\mathcal{F}}(\mathcal{E})$ is $\mathcal{T}_c$--complete and has the countable concatenation property, then it immediately follows from Corollary 3.41.  \hfill $\square$
\end{proof}

\subsection*{3.4. Continuity and subdifferentiability theorems in $L^0$--pre-barreled modules}

\quad We will state the results in this subsection under the framework of locally $L^0$--convex modules since the proofs of these results do not necessarily depend on the family of $L^0$--seminorms. Continuity and subdifferentiability theorems in $L^0$--barreled modules were already proved in \cite{FKV}. As shown in \cite{FKV}, the proofs in the random setting are very similar to those in the corresponding classical cases. Thus we omit some details of the proofs in order to save space for some discussions on the relation between the topological structure and stratification structure of a locally $L^0$--convex module.

In the subsection, a locally $L^0$--convex module (a topological $L^0$--module) means a locally $L^0$--convex $L^0({\mathcal{F},R})$--module (resp., a topological $L^0({\mathcal{F},R})$--module). To state our main results, let us first recall the following:

\begin{definition}(See \cite{FKV}.) Let $(E,\mathcal{T})$ be a locally $L^0$--convex module and $f:E\rightarrow\bar{L}^0(\mathcal{F})$ a proper $L^0$--convex function. $f$ is subdifferentiable at $x\in dom(f)$ if there is $u\in E^{\ast}_{c}$ such that $u(y-x)\leq f(y)-f(x)$ for all $y\in E$, at which time $u$ is called a subgradient of $f$ at $x$. The set of subgradients of $f$ at $x$ is denoted by $\partial f(x)$.
\end{definition}

We can now state our main results as follows:

\begin{theorem} Let $(E,\mathcal{T})$ be a real $L^0$--pre-barreled module such that $E$ has the countable concatenation property. Then a proper lower semicontinuous $L^0$--convex function $f:E\rightarrow \bar{L}^{0}(\mathcal{F})$ is continuous on $Int(dom(f)):=$ the interior of $dom(f)$, namely $f$ is continuous from $(Int(dom(f)),\mathcal{T})$ to $(L^0(\mathcal{F}),\mathcal{T}_c)$.
\end{theorem}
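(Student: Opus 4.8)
The plan is to follow the classical scheme for the continuity of lower semicontinuous convex functions on barreled spaces, transported to the $L^0$--module setting, the one genuinely new point being the verification that the barrel we build carries the countable concatenation property. First I would fix an arbitrary $x_0\in Int(dom(f))$ and reduce to continuity at $\theta$ for the translated function $g\colon E\to\bar{L}^{0}(\mathcal{F})$ defined by $g(x)=f(x+x_0)-f(x_0)$; since $x_0\in dom(f)$ we have $f(x_0)\in L^0(\mathcal{F})$, and $g$ is again proper, $\mathcal{T}$--lower semicontinuous and $L^0$--convex, with $g(\theta)=0$ and $\theta\in Int(dom(g))$. Translation by $x_0$ and subtraction of $f(x_0)$ preserve the countable concatenation property of $E$ as well as the local property of $g$ (Lemma 3.14), so it suffices to prove that $g$ is bounded above near $\theta$ and hence continuous at $\theta$.

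The core step is to produce a $\mathcal{T}$--neighborhood of $\theta$ on which $g\leq 1$. For this I would set $C=\{x\in E~|~g(x)\leq 1\}$ and $D=C\cap(-C)=C\cap\{x\in E~|~g(-x)\leq 1\}$. Then $D$ is $L^0$--convex (from $L^0$--convexity of $g$), $\mathcal{T}$--closed (from lower semicontinuity, since both $g$ and $x\mapsto g(-x)$ are lower semicontinuous), $L^0$--balanced (from $L^0$--convexity, symmetry and $\theta\in D$), and $L^0$--absorbent (from $\theta\in Int(dom(g))$, via the stratified one--dimensional argument of \cite{FKV}, using $g(\theta)=0$ and convexity along rays). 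Hence $D$ is an $L^0$--barrel. The decisive point, and exactly where $L^0$--pre-barreledness rather than $L^0$--barreledness is enough, is that $D$ has the countable concatenation property: because $E$ has this property and $g$ has the local property, each sublevel set $\{x\in E~|~g(x)\leq r\}$ has the countable concatenation property (if $g(x_n)\leq r$ for all $n$ and $\{A_n,n\in N\}$ is a countable partition, then $x=\sum_n\tilde{I}_{A_n}x_n\in E$ and $g(x)=\sum_n\tilde{I}_{A_n}g(x_n)\leq r$ by (1) of Lemma 3.16), and likewise for $-C$, so the intersection $D$ inherits it. Since $(E,\mathcal{T})$ is $L^0$--pre-barreled, $D$ is therefore a $\mathcal{T}$--neighborhood of $\theta$, and $D\subset C$ gives $g\leq 1$ on a neighborhood of $\theta$.

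Finally I would derive continuity at $\theta$ from this local upper bound by the usual convexity estimates rendered in $L^0$. Choosing a basic $L^0$--balanced, $L^0$--convex, $L^0$--absorbent neighborhood $U\subset D$, for $\lambda\in L^0_+(\mathcal{F})$ with $0\leq\lambda\leq 1$ and $u\in U$ one gets $g(\lambda u)\leq\lambda g(u)\leq\lambda$ from $L^0$--convexity and $g(\theta)=0$, while writing $\theta=\tfrac{1}{1+\lambda}(\lambda u)+\tfrac{\lambda}{1+\lambda}(-u)$ and using $-u\in U$ yields $g(\lambda u)\geq-\lambda g(-u)\geq-\lambda$; hence $|g(x)|\leq\lambda$ for $x\in\lambda U$. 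Given $\varepsilon\in L^0_{++}(\mathcal{F})$, taking $\lambda=\varepsilon\wedge 1$ shows that $\lambda U$ (a $\mathcal{T}$--neighborhood of $\theta$, since scaling by the invertible element $\lambda$ is a homeomorphism of $(E,\mathcal{T})$) is mapped by $g$ into $\{\eta\in L^0(\mathcal{F})~|~|\eta|\leq\varepsilon\}$, which is precisely $\mathcal{T}_c$--continuity of $g$ at $\theta$. As $x_0$ was arbitrary, $f$ is continuous on $Int(dom(f))$. The main obstacle, and the only place the argument departs from the $L^0$--barreled case of \cite{FKV}, is the verification that the barrel $D$ carries the countable concatenation property; this is forced by the weaker hypothesis and is exactly what the local property of $L^0$--convex functions together with the countable concatenation property of $E$ delivers.
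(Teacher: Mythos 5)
Your proposal is correct and follows essentially the same route as the paper: translate to a point of $Int(dom(f))$, form the symmetric sublevel set $C\cap(-C)$, check it is an $L^0$--barrel whose countable concatenation property follows from the local property of the $L^0$--convex function together with that of $E$, and invoke $L^0$--pre-barreledness to get a neighborhood on which $f$ is bounded above, whence continuity. The only cosmetic differences are that you normalize $g(\theta)=0$ and use the threshold $1$ where the paper uses an arbitrary $Y_0>f(x_0)$, and you spell out the final boundedness-implies-continuity step that the paper delegates to its Lemmas 3.46--3.48.
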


\begin{theorem} Let $(E,\mathcal{T})$ be a real $L^0$--pre-barreled module such that $E$ has the countable concatenation property. Then, for a proper lower semicontinuous $L^0$--convex function $f:E\rightarrow\bar{L}^{0}(\mathcal{F})$, $\partial f(x)\neq\emptyset$ for all $x\in Int(dom(f))$.
\end{theorem}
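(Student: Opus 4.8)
The plan is to deduce subdifferentiability from the continuity theorem just established together with the separation theorem for two $L^0$--convex sets with one of them open (Theorem 3.12), exactly mimicking the classical epigraph argument but paying attention to the stratification structure. Fix $x_0\in Int(dom(f))$. By the preceding continuity theorem $f$ is continuous on $Int(dom(f))$, so $f(x_0)\in L^0(\mathcal{F})$ and, working in the product locally $L^0$--convex module $(E\times L^0(\mathcal{F}),\mathcal{T}\times\mathcal{T}_c)$ whose random conjugate space is $E^\ast_c\times L^0(\mathcal{F})$, the set $G:=Int(eip(f))$ is a nonempty $L^0$--convex open set with $eip(f)\subset\overline{G}$ (an $L^0$--convex set with nonempty interior lies in the closure of its interior, via the usual segment argument with $L^0$--convex combinations).

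First I would verify the hypothesis of Theorem 3.12 for $M:=\{(x_0,f(x_0))\}$ and $G$, namely that $\tilde{I}_A M\cap\tilde{I}_A G=\emptyset$ for every $A\in\mathcal{F}$ with $P(A)>0$. If this failed there would be $(y,r)\in G$ with $\tilde{I}_A x_0=\tilde{I}_A y$ and $\tilde{I}_A f(x_0)=\tilde{I}_A r$; since $(y,r)$ is interior we get $f(y)<r$ on $\Omega$, while the local property of $f$ (Lemma 3.14) gives $\tilde{I}_A f(y)=\tilde{I}_A f(\tilde{I}_A y)=\tilde{I}_A f(\tilde{I}_A x_0)=\tilde{I}_A f(x_0)=\tilde{I}_A r$, contradicting $\tilde{I}_A f(y)<\tilde{I}_A r$ on $A$. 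Hence Theorem 3.12 yields a nonzero $h\in(E\times L^0(\mathcal{F}))^\ast_c$ with $h(y,r)<h(x_0,f(x_0))$ on $\Omega$ for all $(y,r)\in G$, and by passing to the closure $h(y,r)\le h(x_0,f(x_0))$ for all $(y,r)\in eip(f)$. Writing $h(y,r)=u(y)+\lambda r$ with $u\in E^\ast_c$ and $\lambda\in L^0(\mathcal{F})$, and testing against the vertical rays $(x_0,r)$ with $r\ge f(x_0)$ inside $eip(f)$, I obtain $\lambda\le 0$.

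The main obstacle is upgrading $\lambda\le 0$ to $\lambda<0$ on all of $\Omega$, since in the module setting $\lambda$ is a random variable that could a priori vanish on a positive--measure stratum. Suppose $\lambda=0$ on some $A$ with $P(A)>0$. Then $\tilde{I}_A h$ depends only on the $E$--variable, and the separation inequality restricted to $A$ reads $\tilde{I}_A u(y)\le\tilde{I}_A u(x_0)$ for all $y\in dom(f)$. Because $x_0\in Int(dom(f))$ there is an $L^0$--balanced, $L^0$--absorbent neighbourhood $V$ of $\theta$ with $x_0\pm V\subset dom(f)$; applying the inequality to $x_0+z$ and $x_0-z$ for $z\in V$ and adding forces $\tilde{I}_A u(z)=0$ for every $z\in V$, whence $\tilde{I}_A u=0$ by $L^0$--absorbency of $V$. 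Then $\tilde{I}_A h=0$, contradicting $h(\cdot)<h(x_0,f(x_0))$ on $\Omega$. Therefore $[\lambda=0]$ is null, i.e. $\lambda<0$ on $\Omega$.

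Finally, since $\lambda<0$ on $\Omega$ it is invertible in $L^0(\mathcal{F})$, and I would set $u_0:=-\lambda^{-1}u$, which lies in $E^\ast_c$ because $E^\ast_c$ is an $L^0(\mathcal{F})$--module. Testing $h(y,r)\le h(x_0,f(x_0))$ at $(y,f(y))\in eip(f)$ gives $u(y-x_0)\le-\lambda(f(y)-f(x_0))$, and multiplying by $-\lambda^{-1}>0$ yields $u_0(y-x_0)\le f(y)-f(x_0)$ for all $y\in dom(f)$; for $y\notin dom(f)$ the inequality is trivial after a routine localization over $[f(y)<+\infty]$ using the local property. Thus $u_0\in\partial f(x_0)$ and $\partial f(x_0)\neq\emptyset$. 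I expect the sign analysis of $\lambda$, ruling out its vanishing on any stratum, to be the only genuinely non--classical point; the remaining steps parallel the classical subdifferentiability argument and the proof in \cite{FKV} for $L^0$--barreled modules, the difference being that here the $L^0$--pre-barreled hypothesis enters only through the preceding continuity theorem.
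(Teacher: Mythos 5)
Your proposal is correct and follows essentially the same route as the paper: the paper's proof of this theorem is a one-line reduction to Lemma 3.50, Lemma 3.51 and Theorem 3.12 "by the same reasoning as in the proof of Theorem 3.7 of \cite{FKV}", and your argument is precisely that reasoning written out — nonemptiness of $Int(epi(f))$ via the continuity theorem, the stratified disjointness of $(x_0,f(x_0))$ from $\tilde{I}_A\,Int(epi(f))$ (which is exactly Lemma 3.50(2), reproved inline), separation by Theorem 3.12, and the stratum-by-stratum sign analysis showing $\lambda<0$ on $\Omega$.
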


To prove Theorem 3.44, we needs the following known lemmas:

\begin{lemma} ($See$ \cite{FKV}.) Let $E$ be a topological $L^0$--module. If in some neighborhood of an element $x_{0}\in E$ a proper $L^0$--convex function $f:E\rightarrow\bar{L}^{0}(\mathcal{F})$ is bounded above by some $\xi_0\in L^{0}(\mathcal{F})$, then $f$ is continuous at $x_{0}$.
\end{lemma}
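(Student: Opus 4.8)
The plan is to transcribe the classical argument — that a convex function bounded above in a neighborhood of a point is continuous there — into the randomized setting, watching the two places where the scalar ring $L^0(\mathcal{F})$ behaves differently from $R$. First I would reduce to the case $x_0=\theta$ and $f(x_0)=0$. Since $x_0$ lies in the neighborhood on which $f\leq\xi_0$ and $f$ is proper, $f(x_0)\in L^0(\mathcal{F})$ is finite, so I may replace $f$ by $g(x):=f(x+x_0)-f(x_0)$; translation preserves both the local property and $L^0$--convexity (a one-line check using $\tilde I_A(x+x_0)=\tilde I_Ax+\tilde I_Ax_0$), $g(\theta)=0$, $g$ is bounded above by $c:=\xi_0-f(x_0)$ on a neighborhood $V$ of $\theta$, and $g$ is continuous at $\theta$ iff $f$ is continuous at $x_0$. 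Because $\theta\in V$ we get $0=g(\theta)\leq c$, so $c\geq 0$. I would then pass to the symmetric neighborhood $V_0:=V\cap(-V)$, which is again a neighborhood of $\theta$ since $x\mapsto -x$ is a homeomorphism of $E$, and on which $g\leq c$ still holds.

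Next I would establish a two-sided bound on $V_0$. For $v\in V_0$ we have $-v\in V_0$, so from $\theta=\tfrac12 v+\tfrac12(-v)$ and $L^0$--convexity with coefficient $\tfrac12$, $0=g(\theta)\leq\tfrac12 g(v)+\tfrac12 g(-v)\leq\tfrac12 g(v)+\tfrac12 c$, giving $g(v)\geq -c$. Together with $g\leq c$ this yields $|g(v)|\leq c$ on $V_0$; in particular $g$ is finite on $V_0$, so no $\pm\infty$ arithmetic intervenes in what follows.

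For continuity at $\theta$, fix $\varepsilon\in L^0_{++}(\mathcal{F})$ and set $\xi:=\varepsilon/(c+\varepsilon)$. Since $c\geq 0$, the denominator is everywhere finite and strictly positive, so $\xi\in L^0_{++}(\mathcal{F})$ with $0<\xi\leq 1$ and $\xi c\leq\varepsilon$. Because $\xi\in L^0_{++}(\mathcal{F})$, multiplication by $\xi$ is a homeomorphism of $E$ (its inverse being multiplication by $\xi^{-1}\in L^0_{++}(\mathcal{F})$), so $W:=\xi V_0$ is a neighborhood of $\theta$. For $h=\xi v\in W$ I would use $L^0$--convexity twice: from $h=\xi v+(1-\xi)\theta$, $g(h)\leq\xi g(v)\leq\xi c\leq\varepsilon$; and from $\theta=\tfrac{1}{1+\xi}h+\tfrac{\xi}{1+\xi}(-v)$ with $-v\in V_0$, $0=g(\theta)\leq\tfrac{1}{1+\xi}g(h)+\tfrac{\xi}{1+\xi}c$, which after multiplying by $1+\xi\in L^0_{++}(\mathcal{F})$ gives $g(h)\geq-\xi c\geq-\varepsilon$. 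Hence $|g(h)|\leq\varepsilon$ on $W$, proving continuity at $\theta$ and thus the lemma.

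The genuinely non-classical points, where I would concentrate the care, are exactly two. First, the bound $c$ may vanish on a set of positive measure; the choice $\xi=\varepsilon/(c+\varepsilon)$ is designed so that $\xi$ stays in $L^0_{++}(\mathcal{F})$ everywhere while still satisfying $\xi c\leq\varepsilon$ and $\xi\leq 1$, which is what keeps the convexity estimates valid and legitimizes dividing by $1+\xi$. Second, one must know that scaling by $\xi\in L^0_{++}(\mathcal{F})$ and reflection $x\mapsto -x$ are homeomorphisms, so that $V_0$ and $W=\xi V_0$ are bona fide neighborhoods of $\theta$; this rests only on the joint continuity of the module operations in the definition of a topological $L^0(\mathcal{F},R)$--module. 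Everything else is the classical convexity calculation read off in the partially ordered ring $L^0(\mathcal{F})$, which is why the result can be stated, as in the excerpt, without reference to any family of $L^0$--seminorms.
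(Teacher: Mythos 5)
Your proof is correct. The paper itself gives no proof of this lemma --- it is quoted from \cite{FKV} --- and your argument is precisely the standard randomization of the classical ``bounded above near a point implies continuous there'' proof that \cite{FKV} uses: reduce to $x_0=\theta$, $f(\theta)=0$, symmetrize the neighborhood to get the two-sided bound $|g|\leq c$, and then scale by $\xi=\varepsilon/(c+\varepsilon)\in L^0_{++}(\mathcal{F})$ so that $\xi c\leq\varepsilon$ even on the set where $c$ vanishes. The two points you flag as non-classical (keeping $\xi$ in $L^0_{++}(\mathcal{F})$, and the fact that reflection and scaling by an invertible element of $L^0_{++}(\mathcal{F})$ are homeomorphisms by the joint continuity of the module operations) are exactly the right ones, and your handling of them is sound.
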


\begin{lemma} ($See$ \cite{FKV}.) Let $E$ be a topological $L^0$--module and $f:E\rightarrow\bar{L}^{0}(\mathcal{F})$ a proper $L^0$--convex function. Then the following statements are equivalent:

\noindent $(1)$. There is a nonempty open set $O\subset E$ on which $f$ is bounded above by some $\xi_0\in L^{0}(\mathcal{F})$.

\noindent $(2)$. $f$ is continuous on $Int(dom(f))$ and $Int(dom(f))\neq\emptyset$.
\end{lemma}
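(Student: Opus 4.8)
The plan is to prove the equivalence by establishing the two implications separately, mirroring the classical convex-analysis argument (cf.\ \cite{ET}) but carrying out all scalar operations in $L^0(\mathcal{F})$ and invoking the joint continuity of the module multiplication supplied by Definition 2.13. The implication $(2)\Rightarrow(1)$ is immediate: fixing any $x_0\in Int(dom(f))$, one has $f(x_0)\in L^0(\mathcal{F})$, and continuity of $f$ from $(E,\mathcal{T})$ to $(L^0(\mathcal{F}),\mathcal{T}_c)$ at $x_0$ produces an open neighborhood $O$ of $x_0$ with $|f(y)-f(x_0)|\leq 1$ on $\Omega$ for every $y\in O$, so that $f\leq f(x_0)+1=:\xi_0$ on $O$.

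The substance is in $(1)\Rightarrow(2)$. First I would observe that properness gives $f>-\infty$ on $\Omega$, so the set $O$ on which $f\leq\xi_0$ is contained in $dom(f)$; being open and nonempty, it shows $Int(dom(f))\supset O\neq\emptyset$. Fixing $a\in O$, Lemma 3.46 already yields continuity of $f$ at $a$. To obtain continuity at an arbitrary $b\in Int(dom(f))$, by Lemma 3.46 it suffices to produce a neighborhood of $b$ on which $f$ is bounded above by some element of $L^0(\mathcal{F})$; the idea is to realize $b$ as an $L^0$-convex combination of $a$ and a point $c$ lying slightly beyond $b$ on the ray issuing from $a$ through $b$, and then transport the bound $\xi_0$ across this combination by $L^0$-convexity.

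Concretely, I would consider the map $\rho\mapsto a+\rho(b-a)$ from $(L^0(\mathcal{F}),\mathcal{T}_c)$ to $(E,\mathcal{T})$, which is continuous by the joint continuity of the module multiplication (Definition 2.13) and equals $b\in Int(dom(f))$ at $\rho=1$. Since $Int(dom(f))$ is open, some $\rho=1+\delta$ with $\delta\in L^0_{++}(\mathcal{F})$ gives $c:=a+\rho(b-a)\in dom(f)$, whence $f(c)\in L^0(\mathcal{F})$; setting $\lambda:=\rho^{-1}$ one gets $0<\lambda<1$ on $\Omega$ and $b=(1-\lambda)a+\lambda c$. Choosing a neighborhood $W$ of $\theta$ with $a+W\subset O$, and using that multiplication by $1-\lambda$ is a homeomorphism of $(E,\mathcal{T})$ (its inverse being multiplication by $(1-\lambda)^{-1}$, a genuine inverse because $1-\lambda>0$ on $\Omega$), the set $N:=b+(1-\lambda)W$ is a neighborhood of $b$. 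Every $y\in N$ has the form $y=(1-\lambda)(a+w)+\lambda c$ with $w\in W$, so $L^0$-convexity gives $f(y)\leq(1-\lambda)f(a+w)+\lambda f(c)\leq(1-\lambda)\xi_0+\lambda f(c)=:\xi_1\in L^0(\mathcal{F})$. Thus $f\leq\xi_1$ on $N$, and Lemma 3.46 delivers continuity of $f$ at $b$; as $b$ was arbitrary, $f$ is continuous on $Int(dom(f))$.

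The hard part is the construction of the auxiliary point $c$: unlike the classical case, the scalar $\rho$ must be chosen in $L^0_{++}(\mathcal{F})$ with $\rho>1$ on \emph{all} of $\Omega$ (so that $\lambda=\rho^{-1}$ is a legitimate $L^0$-convex weight with $0<\lambda<1$ on $\Omega$), and this uniformity is precisely what the joint continuity of $(\alpha,x)\mapsto\alpha x$ secures once $b$ is an interior point. The companion fact that $(1-\lambda)W$ remains a neighborhood of $\theta$ likewise rests on $1-\lambda$ being invertible on $\Omega$; both facts are automatic from the topological $L^0$-module structure, so no countable concatenation hypothesis is required at this step.
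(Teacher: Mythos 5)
Your proof is correct, and it follows essentially the same route as the source: the paper itself states this lemma without proof (citing \cite{FKV}, and remarking that such proofs are close to the classical ones), and the argument in \cite{FKV} is exactly the randomized version of the classical transport-of-boundedness trick you carry out — extend the ray from $a\in O$ past $b$ using the joint continuity of the module multiplication to get $c=a+(1+\delta)(b-a)\in dom(f)$ with $\delta\in L^0_{++}(\mathcal{F})$, write $b$ as the $L^0$-convex combination $(1-\lambda)a+\lambda c$ with $\lambda=(1+\delta)^{-1}$, and push the bound onto the neighborhood $b+(1-\lambda)W$. Your explicit attention to why $\delta$ can be taken strictly positive on all of $\Omega$ and why $(1-\lambda)W$ remains a neighborhood of $\theta$ is exactly the point where the random setting differs from the classical one, and you handle it correctly.
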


\begin{lemma} ($See$ \cite{FKV}.) Let $E$ be a topological $L^0$--module and $x\in E$. Then every proper $L^0$--convex function $f:Span_{L^0}(x)\rightarrow \bar{L}^{0}(\mathcal{F})$ is continuous on $Int(dom(f))$, where $Span_{L^0}(x)$ is the $L^0$--module spanned by $x$ and endowed with the relative topology.
\end{lemma}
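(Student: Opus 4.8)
The plan is to reduce the assertion, by means of Lemma 3.47, to producing a single nonempty relatively open subset of $Span_{L^0}(x)$ on which $f$ is bounded above by an element of $L^0(\mathcal{F})$, and to manufacture that bound from the one-dimensional $L^0$--convexity of $f$ along the direction $x$. If $Int(dom(f))=\emptyset$ there is nothing to prove, so I assume $Int(dom(f))\neq\emptyset$ and fix $x_0\in Int(dom(f))$ (necessarily $x_0=\xi_0 x$ for some $\xi_0\in L^0(\mathcal{F})$). Since $x_0$ is interior, there is a $\mathcal{T}$--neighborhood $V$ of $\theta$ with $(x_0+V)\cap Span_{L^0}(x)\subset dom(f)$; by continuity of $\mu_x\colon\eta\mapsto\eta x$ at $0$ there is $\varepsilon\in L^0_{++}(\mathcal{F})$ with $\eta x\in V$ whenever $|\eta|\leq\varepsilon$, whence $x_0\pm\varepsilon x\in dom(f)$ and $f(x_0\pm\varepsilon x)\in L^0(\mathcal{F})$.

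The heart of the argument is the following estimate, the exact $L^0$--analogue of the classical fact that a convex function on a segment is dominated by the larger of its two endpoint values. For any $\eta\in L^0(\mathcal{F})$ with $|\eta|\leq\varepsilon$, set $\lambda=\tfrac12(1+\eta\varepsilon^{-1})$; then $\lambda\in L^0_+(\mathcal{F})$ with $0\leq\lambda\leq 1$ and $\lambda(x_0+\varepsilon x)+(1-\lambda)(x_0-\varepsilon x)=x_0+\eta x$, so $L^0$--convexity yields
\[
f(x_0+\eta x)\leq\lambda f(x_0+\varepsilon x)+(1-\lambda)f(x_0-\varepsilon x)\leq f(x_0+\varepsilon x)\vee f(x_0-\varepsilon x)=:\xi_0\in L^0(\mathcal{F}).
\]
Thus $f\leq\xi_0$ on the ``$L^0$--segment'' $S_\varepsilon:=\{x_0+\eta x~|~|\eta|\leq\varepsilon\}\subset dom(f)$. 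Once $S_\varepsilon$ is known to contain a nonempty relatively open set, Lemma 3.47, applied to the proper $L^0$--convex $f$ on the topological $L^0$--module $Span_{L^0}(x)$, upgrades this upper bound to continuity of $f$ on all of $Int(dom(f))$; equivalently, if $S_\varepsilon$ is itself a relative neighborhood of $x_0$, Lemma 3.46 gives continuity at $x_0$ directly. Either route needs the same neighborhood claim.

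Before attacking that claim I would reduce to the case that $x$ has full support: writing $D=[\bigvee\{\|x\|~|~\|\cdot\|\in\mathcal{P}\}>0]$ (the largest $A\in\mathcal{F}$ with $\tilde{I}_{A^c}x=\theta$), one has $\tilde{I}_{D^c}(\eta x)=\theta$ for all $\eta$, so $Span_{L^0}(x)=\tilde{I}_D\cdot Span_{L^0}(x)$ and the whole problem localizes to $D$; I may therefore assume $\tilde{I}_A x\neq\theta$ for every $A$ with $P(A)>0$.

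The genuine obstacle is precisely to show that $S_\varepsilon$ is a relative neighborhood of $x_0$, i.e.\ that the relative topology on the singly generated module $Span_{L^0}(x)$ is controlled by the scalar $\eta$ through $\mu_x$. The difficulty is that $\mu_x$, though continuous, need not be an open map, and a single defining $L^0$--seminorm (or random gauge) $p$ may vanish on a set of positive measure even when $x$ has full support, so that the identity $p(\eta x)=|\eta|\,p(x)$ fails to bound $|\eta|$ there. I would resolve this using the stratification structure: since $x$ has full support there is a countable partition $\{A_n,n\in N\}$ of $\Omega$ and finite $\mathcal{Q}_n\subset\mathcal{P}$ with $\|x\|_{\mathcal{Q}_n}>0$ on $A_n$, and on each $A_n$ the condition $\eta x\in V$ forces a bound $|\eta|\leq\varepsilon_n$ through $\|\cdot\|_{\mathcal{Q}_n}$; a countable concatenation then recovers a relatively open sub-segment of $S_\varepsilon$ about $x_0$ on which $f\leq\xi_0$. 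This interplay between the countable concatenation (stratification) structure and the locally $L^0$--convex topology --- exactly the phenomenon this paper repeatedly stresses --- is where the real work lies, the convexity estimate and the appeal to Lemmas 3.46--3.47 being then routine.
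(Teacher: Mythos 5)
The paper offers no proof of this lemma (it is quoted from \cite{FKV}), so your proposal has to stand on its own. Your convexity estimate is correct and is surely the intended core: with $\lambda=\frac12(1+\eta\varepsilon^{-1})$ one gets $f\leq f(x_0+\varepsilon x)\vee f(x_0-\varepsilon x)=:\xi_0$ on $S_\varepsilon=\{x_0+\eta x~|~|\eta|\leq\varepsilon\}$, and you have also correctly isolated the real difficulty, namely whether $S_\varepsilon$ is a neighborhood of $x_0$ for the \emph{relative} topology of $Span_{L^0}(x)$. But your resolution of that difficulty does not work. First, it invokes a family $\mathcal{P}$ of $L^0$--seminorms and the quantities $\|x\|_{\mathcal{Q}_n}$, which are simply not available under the hypothesis of the lemma: a topological $L^0$--module need not carry any $L^0$--seminorms. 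Second, even in an $RLC$ module the stratification argument cannot close the gap: a relative neighborhood of $x_0$ must contain $(x_0+N_\theta(\mathcal{Q},\delta))\cap Span_{L^0}(x)$ for a \emph{single} finite $\mathcal{Q}$, and on $[\|x\|_{\mathcal{Q}}=0]$ (which may have positive measure even when $x$ has full support) the condition $\eta x\in N_\theta(\mathcal{Q},\delta)$ puts no constraint on $\eta$; a countable concatenation over strata $A_n$ with varying $\mathcal{Q}_n$ cannot be reassembled into one such $\mathcal{Q}$. Indeed the neighborhood claim can genuinely fail: take $E=L^0(\mathcal{F})$, $A_n\nearrow\Omega$ with each $P(A_n)<1$, $\mathcal{P}=\{\tilde{I}_{A_n}|\cdot|~:~n\in N\}$ (an $RLC$ module), $x=1$ and $f(z)=z\vee 0$. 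Then $f$ is proper and $L^0$--convex with $dom(f)=Span_{L^0}(x)=E$, yet $f$ is not $\mathcal{T}_c$--continuous at $\theta$, since every basic neighborhood $\{z~|~\tilde{I}_{A_n}|z|\leq\delta\}$ contains $(\varepsilon+1)\tilde{I}_{A_n^c}$, whose image under $f$ is not $\leq\varepsilon$. So the obstruction you flagged is not a technicality to be engineered around; for the literal relative topology the route (and the statement) breaks down.

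What is provable, and what the paper actually uses in the proof of Theorem 3.44 (only the assertion that $f(\frac{x}{Y})<Y_0$ on $\Omega$ for $Y$ large), is the scalar form of the lemma: the map $g(\eta):=f(x_0+\eta x)$, viewed as a proper $L^0$--convex function on the topological $L^0$--module $(L^0(\mathcal{F}),\mathcal{T}_c)$, is continuous on the interior of its domain. For that version your own estimate finishes the argument at once: $\{\eta~|~|\eta-\eta_0|\leq\varepsilon\}$ \emph{is} a $\mathcal{T}_c$--neighborhood in $L^0(\mathcal{F})$, $g$ is bounded above there by $\xi_0$, and Lemma 3.46 yields continuity; this suffices for the application because $\frac{1}{Y}\to 0$ in $(L^0(\mathcal{F}),\mathcal{T}_c)$. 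I would either prove and use the scalar form, or add a hypothesis guaranteeing that the relative topology on $Span_{L^0}(x)$ is no finer than the image of $\mathcal{T}_c$ under $\eta\mapsto\eta x$.
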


We can now prove Theorem 3.44.

\newproof{pot2}{Proof of Theorem 3.44}

\begin{pot2}

Assume that there is $x_{0}\in Int(dom(f))$. By translation, we may assume $x_{0}=0$ and further take $Y_0\in L^{0}(\mathcal{F})$ such that $f(0)<Y_0$ on $\Omega$. Since $f$ is lower semicontinuous, the set $C:=\{x\in E~|~f(x)\leq Y_0\}$ is closed. Further, for all $x\in E$, the net $\{\frac{x}{Y}:Y\in L^{0}_{++}(\mathcal{F})\}$ converge to $\theta$. By Lemma 3.48, the restriction of $f$ to $Span_{L^0}(x)$ is continuous at $\theta$, hence $f(\frac{x}{Y})<Y_0$ on $\Omega$ for large $Y$, which means that $C$ is $L^0$--absorbent. Hence $C\bigcap(-C)$ is an $L^{0}$--barrel. Since $E$ has the countable concatenation property and $f$ has the local property, it is easy to observe that $C\bigcap(-C)$ is an $L^0$--barrel with the countable concatenation property and in turn a neighborhood of $\theta\in E$, so $f$ is continuous on $Int(dom(f))$ by Lemma 3.47.   \hfill $\square$
\end{pot2}

To prove Theorem 3.45, we need the following three lemmas.

Lemma 3.49 below is a slight generalization of Lemma 3.17 of \cite{TXG-JFA}, whereas their proofs are the same, so the proof of Lemma 3.49 is omitted.

\begin{lemma} Let $(E,\mathcal{T})$ be a locally $L^0$--convex $L^0(\mathcal{F},K)$--module and $A\in \cal{F}$ with $P(A)>0$. If $G$ and $M$ are an open set and a closed set of $E$, respectively, such that $\tilde{I}_A G+\tilde{I}_{A^{c}} G\subset G$ and $\tilde{I}_A M+\tilde{I}_{A^{c}} M\subset M$, then $\tilde{I}_A G$ is relatively open in $\tilde{I}_A E$ and $\tilde{I}_A M$ is relatively closed in $\tilde{I}_A E$.
\end{lemma}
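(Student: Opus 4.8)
The plan is to reduce both assertions to the continuity of a single auxiliary ``splicing'' map built from the stability hypotheses, together with the fact that multiplication by the fixed idempotent $\tilde{I}_A$ is continuous. Since $(E,\mathcal{T})$ is a topological $L^0(\mathcal{F},K)$--module, addition and module multiplication are jointly continuous; in particular, for the fixed element $\tilde{I}_A\in L^0(\mathcal{F},K)$ the map $x\mapsto \tilde{I}_A x$ is $\mathcal{T}$--continuous, and so is every translation. Both $\tilde{I}_A G$ and $\tilde{I}_A M$ are subsets of the submodule $\tilde{I}_A E$, which I equip with the relative topology. If $G$ (resp. $M$) is empty the corresponding conclusion is trivial, so I fix once and for all a base point $g_0\in G$ (resp. $m_0\in M$).

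For the open set I would introduce $\psi\colon E\to E$ defined by $\psi(x)=\tilde{I}_A x+\tilde{I}_{A^c}g_0$, which is $\mathcal{T}$--continuous by the remarks above, so that $\psi^{-1}(G)$ is $\mathcal{T}$--open. The crux is the identity $\tilde{I}_A G=\psi^{-1}(G)\cap\tilde{I}_A E$. Indeed, if $z\in\tilde{I}_A G$, say $z=\tilde{I}_A g$ with $g\in G$, then $\psi(z)=\tilde{I}_A g+\tilde{I}_{A^c}g_0\in \tilde{I}_A G+\tilde{I}_{A^c}G\subset G$ by hypothesis, so $z\in\psi^{-1}(G)$; conversely, if $z\in\tilde{I}_A E$ with $\psi(z)\in G$, then applying the operator $x\mapsto\tilde{I}_A x$ to $\psi(z)$ and using $\tilde{I}_A(\tilde{I}_{A^c}g_0)=\theta$ together with $\tilde{I}_A z=z$ gives $z=\tilde{I}_A\psi(z)\in\tilde{I}_A G$. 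Hence $\tilde{I}_A G$ is the intersection of the $\mathcal{T}$--open set $\psi^{-1}(G)$ with $\tilde{I}_A E$, i.e. relatively open in $\tilde{I}_A E$.

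For the closed set I would argue symmetrically with $\phi\colon E\to E$, $\phi(x)=\tilde{I}_A x+\tilde{I}_{A^c}m_0$, which is again continuous, so that $\phi^{-1}(M)$ is $\mathcal{T}$--closed. The same two inclusions, now using $\tilde{I}_A M+\tilde{I}_{A^c}M\subset M$, yield $\tilde{I}_A M=\phi^{-1}(M)\cap\tilde{I}_A E$: for $z=\tilde{I}_A m$ one gets $\phi(z)=\tilde{I}_A m+\tilde{I}_{A^c}m_0\in M$, while $\phi(z)\in M$ forces $z=\tilde{I}_A\phi(z)\in\tilde{I}_A M$. Therefore $\tilde{I}_A M$ is relatively closed in $\tilde{I}_A E$.

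The argument is genuinely short, and I do not expect a serious obstacle; the only point requiring care---and the single place where the hypotheses $\tilde{I}_A G+\tilde{I}_{A^c}G\subset G$ and $\tilde{I}_A M+\tilde{I}_{A^c}M\subset M$ enter---is the verification of the two set identities, in particular the ``forward'' inclusions $\tilde{I}_A G\subset\psi^{-1}(G)$ and $\tilde{I}_A M\subset\phi^{-1}(M)$, where one splices the chosen base point on $A^c$ with an arbitrary element of the set on $A$ and invokes the stability property. The continuity of $x\mapsto\tilde{I}_A x$, inherited from the joint continuity of the module multiplication over $(L^0(\mathcal{F},K),\mathcal{T}_c)$, is what makes $\psi$ and $\phi$ continuous and thereby does all of the topological work.
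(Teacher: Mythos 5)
Your proposal is correct. The paper itself omits the proof of this lemma, referring instead to Lemma 3.17 of \cite{TXG-JFA} and noting the argument is unchanged; the expected proof there is exactly the splicing idea you use, namely that for a point of $\tilde{I}_AG$ one glues a fixed element of $G$ on $A^c$, invokes the stability hypothesis $\tilde{I}_AG+\tilde{I}_{A^c}G\subset G$, and pulls back along the continuous map $x\mapsto\tilde{I}_Ax+\tilde{I}_{A^c}g_0$. Your packaging of this as the clean set identity $\tilde{I}_AG=\psi^{-1}(G)\cap\tilde{I}_AE$ (and likewise for $M$), with continuity of $\psi$ coming from the joint continuity of the module multiplication for the \emph{fixed} scalar $\tilde{I}_A$ plus a translation, is complete and handles both assertions uniformly.
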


From Lemma 3.49 one can see that $(\tilde{I}_{A}E,\mathcal{T}|_{\tilde{I}_{A}E})$ is still a locally $L^0$--convex $L^0(\mathcal{F}_A,K)$--module, where $\mathcal{F}_A=A\bigcap\mathcal{F}:=\{A\bigcap B~|~B\in\mathcal{F}\}$ is the $\sigma$--algebra of $(A,\mathcal{F}_A,P(\cdot|A))$.

\begin{lemma} Let $(E,\mathcal{T})$ be a locally $L^0$--convex module, $A\in\mathcal{F}$ with $P(A)>0$ and $f:E\rightarrow \bar{L}^0(\mathcal{F})$ a proper $L^0$--convex function. If, we define $f_A:\tilde{I}_{A}E\rightarrow \tilde{I}_{A}\bar{L}^0(\mathcal{F})$ by $f_A(\tilde{I}_{A}x)=\tilde{I}_{A}f(\tilde{I}_{A}x)$ for all $x\in E$, then we have:

\noindent $(1)$. For all $x\in dom(f)$, $\tilde{I}_A(x,f(x))\in \partial(epi(f_A))$, where $\partial(epi(f_A))$ denotes the boundary of $epi(f_A)$ in $(\tilde{I}_{A}E,\mathcal{T}|_{\tilde{I}_{A}E})\times(\tilde{I}_{A}L^0(\mathcal{F}),\mathcal{T}_c|$ $_{\tilde{I}_A L^0{(\mathcal{F})}})$. Here, let us recall that $\mathcal{T}_c$ is the locally $L^0$--convex topology on $L^0(\mathcal{F})$ induced by $|\cdot|$.

\noindent $(2)$. For all $x\in dom(f)$, $\tilde{I}_{A}(x,f(x))\not\in \tilde{I}_{A}(Int(epi(f)))$.
\end{lemma}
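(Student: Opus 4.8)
The plan is to reduce both assertions to the explicit description of neighborhoods in the locally $L^0$--convex topology $\mathcal{T}_c$ on $L^0(\mathcal{F})$ together with the local property of $f$. First I would record that $f_A$ is well defined: since $f$ is $L^0$--convex it is local by Lemma 3.14, so $\tilde{I}_A f(x)=\tilde{I}_A f(\tilde{I}_A x)$, and hence $\tilde{I}_A x=\tilde{I}_A y$ forces $f_A(\tilde{I}_A x)=f_A(\tilde{I}_A y)$. For $x\in dom(f)$ we have $f(x)\in L^0(\mathcal{F})$, so $\tilde{I}_A(x,f(x))=(\tilde{I}_A x,f_A(\tilde{I}_A x))$ is a genuine point of $\tilde{I}_A E\times\tilde{I}_A L^0(\mathcal{F})$, and $epi(f_A)=\{(\tilde{I}_A x,r)\mid r\in\tilde{I}_A L^0(\mathcal{F}),\ f_A(\tilde{I}_A x)\le r\}$.

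For (1), writing $\partial(epi(f_A))=\overline{epi(f_A)}\setminus Int(epi(f_A))$, I note first that $(\tilde{I}_A x,f_A(\tilde{I}_A x))\in epi(f_A)\subset\overline{epi(f_A)}$ trivially, so it remains to show this point is not interior. Here I would use that a $\mathcal{T}_c|_{\tilde{I}_A L^0(\mathcal{F})}$--neighborhood base at $f_A(\tilde{I}_A x)$ is given by the sets $\{r\mid |r-f_A(\tilde{I}_A x)|\le\varepsilon\}$ with $\varepsilon\in\tilde{I}_A L^0_{++}(\mathcal{F})$; for any such neighborhood the point $(\tilde{I}_A x,\ f_A(\tilde{I}_A x)-\varepsilon)$ lies in it but fails the epigraph inequality, since $f_A(\tilde{I}_A x)-\varepsilon<f_A(\tilde{I}_A x)$ on $A$. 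Thus every $\mathcal{T}$--neighborhood of $\tilde{I}_A(x,f(x))$ meets the complement of $epi(f_A)$, so the point is not interior and therefore lies on the boundary.

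For (2), I would argue by contradiction. Suppose $\tilde{I}_A(x,f(x))\in\tilde{I}_A(Int(epi(f)))$, i.e. there is $(y,s)\in Int(epi(f))$ with $\tilde{I}_A y=\tilde{I}_A x$ and $\tilde{I}_A s=\tilde{I}_A f(x)$. Because the product topology on $(E,\mathcal{T})\times(L^0(\mathcal{F}),\mathcal{T}_c)$ has basic neighborhoods of product form with the $\mathcal{T}_c$--factor described as above, interiority of $(y,s)$ yields some $\varepsilon\in L^0_{++}(\mathcal{F})$ with $(y,s-\varepsilon)\in epi(f)$, that is $f(y)\le s-\varepsilon$. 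Multiplying by $\tilde{I}_A$ and invoking the local property gives $\tilde{I}_A f(y)=\tilde{I}_A f(\tilde{I}_A y)=\tilde{I}_A f(\tilde{I}_A x)=\tilde{I}_A f(x)=\tilde{I}_A s$, whence $\tilde{I}_A s\le\tilde{I}_A s-\tilde{I}_A\varepsilon$ and so $\tilde{I}_A\varepsilon\le 0$ on $A$. This contradicts $\varepsilon>0$ on $\Omega$ together with $P(A)>0$, which establishes (2).

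The individual steps are routine; the main obstacle I anticipate is the bookkeeping in (2), namely correctly translating membership in $\tilde{I}_A(Int(epi(f)))$ — where the interior is taken in the full product $E\times L^0(\mathcal{F})$ \emph{before} applying $\tilde{I}_A$ — into the usable strict inequality $f(y)\le s-\varepsilon$, and then cleanly passing to the base set $A$ via the local property so that the strict slack $\varepsilon$ survives as $\tilde{I}_A\varepsilon>0$. Keeping the restricted topology used in (1) carefully separate from the full-space interior used in (2) is the one place where attention is required.
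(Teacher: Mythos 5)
Your proof is correct. Part (1) matches the paper's idea exactly: the point $(\tilde{I}_A x, f_A(\tilde{I}_A x))$ lies in $epi(f_A)$, and since every basic $\mathcal{T}_c|_{\tilde{I}_A L^0(\mathcal{F})}$--neighborhood of the second coordinate contains $f_A(\tilde{I}_A x)-\varepsilon$ with $\varepsilon>0$ on $A$, the point cannot be interior; the paper states this more tersely (asserting $(x,f(x))\in\partial(epi(f))$ in the full space and transferring to $\tilde{I}_A E$), but the mechanism is the same. For part (2) you take a genuinely different route. The paper deduces (2) from (1) together with Lemma 3.49: it observes $epi(f_A)=\tilde{I}_A(epi(f))$, uses Lemma 3.49 to see that $\tilde{I}_A(Int(epi(f)))$ is open in the restricted product, hence $\tilde{I}_A(Int(epi(f)))\subset Int_A(epi(f_A))$, and then concludes from the fact that the point is not in $Int_A(epi(f_A))$. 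You instead unpack membership in $\tilde{I}_A(Int(epi(f)))$ directly: a witness $(y,s)\in Int(epi(f))$ yields $f(y)\le s-\varepsilon$ for some $\varepsilon\in L^0_{++}(\mathcal{F})$, and the local property of $f$ (which holds since $f$ is $L^0$--convex, Lemma 3.14) collapses this to $\tilde{I}_A\varepsilon\le 0$ on $A$, a contradiction. Your version is self-contained and avoids invoking Lemma 3.49 (and does not even need part (1)), at the cost of repeating the ``go down by $\varepsilon$'' computation; the paper's version is shorter given its surrounding machinery and makes the containment $\tilde{I}_A(Int(epi(f)))\subset Int_A(epi(f_A))$ explicit, which is the structurally useful fact. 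Both are sound.
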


\begin{proof} (1). It is easy to see that $(x,f(x))\in\partial(epi(f))$ for all $x\in dom(f)$. By the local property of $f$, $\tilde{I}_{A}(x,f(x))=(\tilde{I}_{A}x, \tilde{I}_{A}f(x))=(\tilde{I}_{A}x,f_A(\tilde{I}_{A}x))$ for all $x\in E$. So, if we consider the corresponding problem in $(\tilde{I}_{A}E,\mathcal{T}|_{\tilde{I}_{A}E})$, then we have that $\tilde{I}_{A}(x,f(x))=(\tilde{I}_{A}x,f_A(\tilde{I}_{A}x))\in\partial(epi(f_A))$ for all $x\in dom(f)$.

(2). By the above (1), it is , of course, that $\tilde{I}_{A}(x,f(x))\not\in Int_A(epi(f_A))$, where $Int_A(epi(f_A))$ denotes the interior of $epi(f_A)$ in $(\tilde{I}_{A}E,\mathcal{T}|_{\tilde{I}_{A}E})\times(\tilde{I}_{A}L^0(\mathcal{F}),$ $\mathcal{T}_c|_{\tilde{I}_A  L^0{(\mathcal{F})}})$. It is obvious that $epi(f_A)=\tilde{I}_A(epi(f))$. By Lemma 3.49, $\tilde{I}_{A}(Int(epi(f)))$ is an open set in $(\tilde{I}_{A}E,\mathcal{T}|_{\tilde{I}_{A}E})\times(\tilde{I}_{A}L^0(\mathcal{F}),$ $\mathcal{T}_c|_{\tilde{I}_A L^0{(\mathcal{F})}})$, so $Int_A(epi(f_A))=Int_A(\tilde{I}_A epi(f))\supset \tilde{I}_A(Int(epi(f)))$, which implies that $\tilde{I}_A(x,$ $f(x))\not\in \tilde{I}_A(Int$ $(epi(f)))$ for all $x\in dom(f)$. \hfill $\square$
\end{proof}

Proof of Lemma 3.51 below is the same as that of Lemma 3.14 of \cite{FKV}, so is omitted.

\begin{lemma} Let $(E,\mathcal{T})$ be a locally $L^{0}$--convex module and $f:E\rightarrow\bar{L}^{0}(\mathcal{F})$ a proper lower semicontinuous $L^{0}$--convex function. Then $Int(epi(f))\neq\emptyset$ implies $Int(dom(f))\neq\emptyset$. Furthermore, if, in addition, $(E,\mathcal{T})$ is $L^0$--pre-barreled such that $E$ has the countable concatenation property, then $Int$ $(dom(f))$ $\neq\emptyset$ iff $Int(epi(f))\neq\emptyset$.
\end{lemma}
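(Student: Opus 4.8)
The plan is to prove the two assertions separately, since the first implication needs no extra hypotheses while the equivalence rests on the continuity theorem just established. So the structure mirrors the statement: one direction is a soft topological fact about the epigraph and domain, the other direction is where the pre-barreledness and countable concatenation assumptions actually do work.

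First I would treat the implication $Int(epi(f))\neq\emptyset\Rightarrow Int(dom(f))\neq\emptyset$. The key observation is that the canonical projection $\pi\colon E\times L^0(\mathcal{F})\to E$ is an open map, and that $\pi(epi(f))\subseteq dom(f)$: indeed, if $(x,r)\in epi(f)$ then $r\in L^0(\mathcal{F})$ is finite-valued with $f(x)\leq r$, while properness of $f$ forces $f(x)>-\infty$ on $\Omega$, so that $|f(x)|<+\infty$ on $\Omega$, i.e. $x\in dom(f)$. Hence if $O$ is any nonempty open subset of $epi(f)$, then $\pi(O)$ is a nonempty open subset of $dom(f)$, giving $Int(dom(f))\neq\emptyset$. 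No use of the pre-barreled or countable concatenation hypotheses is made here.

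For the converse, under the additional assumption that $(E,\mathcal{T})$ is $L^0$-pre-barreled with $E$ having the countable concatenation property, I would invoke Theorem 3.44 to pass from $Int(dom(f))\neq\emptyset$ to the continuity of $f$ on $Int(dom(f))$. Fix $x_0\in Int(dom(f))$ and apply the $\mathcal{T}_c$-continuity of $f$ at $x_0$ with the tolerance $\varepsilon=1\in L^0_{++}(\mathcal{F})$: this yields a $\mathcal{T}$-open neighborhood $U\subseteq Int(dom(f))$ of $x_0$ on which $f(x)\leq f(x_0)+1$. Setting $r_0=f(x_0)+2$ and $O=U\times\{r\in L^0(\mathcal{F})\mid r>f(x_0)+1~\text{on}~\Omega\}$, one checks that the second factor is $\mathcal{T}_c$-open (for $r_1>f(x_0)+1$ on $\Omega$ the element $r_1-(f(x_0)+1)$ lies in $L^0_{++}(\mathcal{F})$, so an $L^0$-ball of radius $\tfrac12(r_1-f(x_0)-1)$ about $r_1$ stays inside), whence $O$ is open in the product, contains $(x_0,r_0)$, and satisfies $O\subseteq epi(f)$ because $f(x)\leq f(x_0)+1<r$ for every $(x,r)\in O$. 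Thus $(x_0,r_0)\in Int(epi(f))$, and combined with the first half this gives the equivalence.

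The routine projection argument carries the first half, so the conceptual weight sits entirely in recognizing that continuity of $f$ on $Int(dom(f))$ — supplied by Theorem 3.44, and exactly the reason the pre-barreledness and countable concatenation hypotheses enter — is the bridge to a nonempty epigraph interior. The one genuinely $L^0$-flavored point to verify carefully is that the half-space $\{r\mid r>f(x_0)+1~\text{on}~\Omega\}$ is $\mathcal{T}_c$-open and that $\mathcal{T}_c$-continuity must be invoked with an $L^0_{++}(\mathcal{F})$-valued tolerance rather than a scalar one, since $\mathcal{T}_c$ is in general not a linear topology and the scalar $\epsilon$-$\delta$ reasoning of the classical case is unavailable.
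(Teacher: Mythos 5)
Your proof is correct: the open-projection argument cleanly gives $Int(epi(f))\neq\emptyset\Rightarrow Int(dom(f))\neq\emptyset$ without the extra hypotheses, and the converse correctly invokes Theorem 3.44 (whose hypotheses are exactly the additional ones assumed here) together with a valid check that $\{r\in L^0(\mathcal{F})~|~r>f(x_0)+1~\hbox{on}~\Omega\}$ is $\mathcal{T}_c$--open, so that $U\times\{r~|~r>f(x_0)+1~\hbox{on}~\Omega\}$ is an open subset of $epi(f)$. The paper omits its own proof, deferring to Lemma 3.14 of \cite{FKV}, and your argument is essentially that standard one transplanted to the $L^0$--pre-barreled setting, including the correct observation that the tolerance in the continuity step must be taken in $L^0_{++}(\mathcal{F})$.
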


We can now prove Theorem 3.45.

\newproof{pot3}{Proof of Theorem 3.45}

\begin{pot3}

By Lemmas 3.50 and 3.51 together with Theorem 3.12, one can complete the proof by the same reasoning as in the proof of Theorem 3.7 of \cite{FKV}. \hfill $\square$
\end{pot3}

If the hypothesis ``that $(E,\mathcal{T})$ is $L^0$--pre-barreled module such that $E$ has the countable concatenation property" in Theorems 3.44 and 3.45 is replaced by the one ``that $(E,\mathcal{T})$ is $L^0$--barreled", then Theorems 3.44 and 3.45 change to Proposition 3.5 and Theorem 3.7 of \cite{FKV}, respectively, so we naturally present the following open problem:

\noindent {\bf Open problem:} If $E$ has the countable concatenation property, then is $(E,\mathcal{T})$ $L^0$--barreled if $(E,\mathcal{T})$ is $L^0$--pre-barreled?

\section{Random convex analysis over random locally convex modules under the $(\varepsilon,\lambda)$--topology}

\subsection{Lower semicontinuous $L^0$--convex functions under the $(\varepsilon,\lambda)$--topology}

\begin{definition} Let $(E,\mathcal{P})$ be an $RLC$ module over $R$ with base $(\Omega,\mathcal{F},P)$ and $f:E\rightarrow \bar{L}^{0}(\mathcal{F})$ a proper $L^{0}$--convex function. $f$ is $\mathcal{T}_{\varepsilon,\lambda}$--lower semicontinuous if $epi(f)$ is closed in $(E,\mathcal{T}_{\varepsilon,\lambda})\times(L^0(\mathcal{F}),\mathcal{T}_{\varepsilon,\lambda})$.
\end{definition}

As usual, let $(E,\mathcal{P})$ be an $RLC$ module over $R$ with base $(\Omega,\mathcal{F},P)$ and $f:E\rightarrow L^0(\mathcal{F})$, $f$ is $\mathcal{T}_{\varepsilon,\lambda}$--continuous if $f$ is continuous from $(E,\mathcal{T}_{\varepsilon,\lambda})$ to $(L^0(\mathcal{F}),\mathcal{T}_{\varepsilon,\lambda})$. If $\mathcal{T}_{\varepsilon,\lambda}$ is replaced by $\mathcal{T}_{c}$, then we have the notion of $\mathcal{T}_{c}$--continuity, which is just a special case of the notion of continuity for a function from a locally $L^0$--convex module $(E,\mathcal{T})$ to $(L^0(\mathcal{F}),\mathcal{T}_{c})$.

As to why we adopt Definition 4.1 for the $\mathcal{T}_{\varepsilon,\lambda}$--lower semicontinuity of an $L^0$--convex function, we interpret this as follows. If we define the $\mathcal{T}_{\varepsilon,\lambda}$--lower semicontinuity of a proper function $f:(E,\mathcal{P})\rightarrow\bar{L}^0({\mathcal{F}})$ via ``$\{x\in E~|~f(x)\leq r\}$ is $\mathcal{T}_{\varepsilon,\lambda}$--closed for all $r\in L^0(\mathcal{F})$", then this notion is too weak to meet some natural needs of other topics as in \cite{TXG-YJY}. If we define $f$ to be lower semicontinuous via ``$\underline{lim}_\alpha f(x_\alpha):=\bigvee_{\beta\in\Gamma}(\bigwedge_{\alpha\geq\beta}f(x_{\alpha}))\geq f(x)$ for all nets $\{x_{\alpha},\alpha\in\Lambda\}$ in $E$ such that it converges in the $(\varepsilon,\lambda)$--topology to some $x\in E$", the notion is, however, meaningless in the random setting, since we can construct a real $RLC$ module $(E,\mathcal{P})$ and a $\mathcal{T}_{\varepsilon,\lambda}$--continuous $L^0$--convex function $f$ from $E$ to $L^0(\mathcal{F})$, whereas $f$ is not a lower semicontinuous function under this notion. In fact, Definition 4.1 has been proved natural and fruitful, see \cite{TXG-YJY} or this subsection.

To connect the $L^0$--convexity and ordinary convexity, we introduce the following terminology:

\begin{definition} Let $(E,\mathcal{P})$ be an $RLC$ module over $R$ with base $(\Omega,\mathcal{E},P)$, $\mathcal{F}$ a $\sigma$--subalgebra of $\mathcal{E}$ and $f:E\rightarrow \bar{L}^0(\mathcal{E})$. $f$ is $L^0(\mathcal{F})$--convex if $f(\xi x+(1-\xi)y)\leq \xi f(x)+(1-\xi)f(y)$ for all $x,y\in E$ and $\xi\in L^0_+(\mathcal{F})$ with $0\leq\xi\leq 1$. $f$ is $\mathcal{F}$--local if $\tilde{I}_A f(x)=\tilde{I}_A f(\tilde{I}_A x)$ for all $x\in E$ and $A\in \mathcal{F}$.
\end{definition}

Let $(\Omega,\mathcal{E},P)$ be a probability space and $\mathcal{F}$ a $\sigma$--subalgebra of $\mathcal{E}$. For any $p\in[1,+\infty]$, $L^p(\mathcal{E}):=L^p(\Omega,\mathcal{E},P)$ denotes the ordinary function space. We can also similarly introduce the $L^0(\mathcal{F})$--convexity and $\mathcal{F}$--locality for a function $f$ from $L^p(\mathcal{E})$ to $\bar{L}^0(\mathcal{E})$.

\begin{proposition} Let $(E,\mathcal{P})$ be an $RLC$ module over $R$ with base $(\Omega,\mathcal{E},P)$, $f$ a function from $E$ to $\bar{L}^0(\mathcal{E})$ and $\mathcal{F}$ a $\sigma$--subalgebra of $\mathcal{E}$. Then we have the following statements:

\noindent (1). If $f$ is $L^0(\mathcal{F})$--convex, then $f$ is convex (namely, $f$ is a convex function in the ordinary sense) and $\mathcal{F}$--local.

\noindent (2). If $f$ is a $\mathcal{T}_{\varepsilon,\lambda}$--continuous function from $E$ to $L^0(\mathcal{E})$, then $f$ is $L^0(\mathcal{F})$--convex iff $f$ is convex and $\mathcal{F}$--local.
\end{proposition}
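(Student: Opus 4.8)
The plan is to treat the two assertions separately, proving (1) by direct manipulation of the defining inequality and (2) by first reducing to simple coefficients and then invoking continuity.

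For (1), ordinary convexity is immediate: given a real $\lambda\in[0,1]$, the constant function $\lambda$ lies in $L^0_+(\mathcal F)$ with $0\le\lambda\le 1$, so the $L^0(\mathcal F)$-convexity inequality specializes to $f(\lambda x+(1-\lambda)y)\le\lambda f(x)+(1-\lambda)f(y)$. For $\mathcal F$-locality I would fix $A\in\mathcal F$ and use the coefficient $\xi=\tilde I_A$ twice. Applying the inequality to $x$ and $\theta$ gives $f(\tilde I_Ax)=f(\tilde I_Ax+\tilde I_{A^c}\theta)\le\tilde I_Af(x)+\tilde I_{A^c}f(\theta)$, and multiplying by $\tilde I_A$ (with the conventions $0\cdot(\pm\infty)=0$, so that $\tilde I_A\tilde I_{A^c}f(\theta)=0$) yields $\tilde I_Af(\tilde I_Ax)\le\tilde I_Af(x)$. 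Conversely, writing $x=\tilde I_A(\tilde I_Ax)+\tilde I_{A^c}x$ and applying the inequality with $\xi=\tilde I_A$ gives $f(x)\le\tilde I_Af(\tilde I_Ax)+\tilde I_{A^c}f(x)$, whence multiplying by $\tilde I_A$ gives $\tilde I_Af(x)\le\tilde I_Af(\tilde I_Ax)$. Combining the two inequalities produces $\tilde I_Af(x)=\tilde I_Af(\tilde I_Ax)$, which is exactly $\mathcal F$-locality.

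For (2), the ``only if'' direction is precisely assertion (1), and continuity is not needed there. For the ``if'' direction I would first establish the $L^0(\mathcal F)$-convexity inequality for every simple coefficient $\xi=\sum_{i=1}^n\lambda_i\tilde I_{A_i}$, where $\{A_i:1\le i\le n\}$ is a finite $\mathcal F$-partition of $\Omega$ and each $\lambda_i\in[0,1]$. Since $\tilde I_{A_i}(\xi x+(1-\xi)y)=\tilde I_{A_i}(\lambda_i x+(1-\lambda_i)y)$, $\mathcal F$-locality gives $\tilde I_{A_i}f(\xi x+(1-\xi)y)=\tilde I_{A_i}f(\lambda_i x+(1-\lambda_i)y)$, and ordinary convexity (multiplied by $\tilde I_{A_i}\ge 0$) bounds this by $\tilde I_{A_i}(\xi f(x)+(1-\xi)f(y))$. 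Summing over $i$ and using $\sum_i\tilde I_{A_i}=1$ yields $f(\xi x+(1-\xi)y)\le\xi f(x)+(1-\xi)f(y)$ for every simple $\xi$.

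The final and most delicate step is the passage to an arbitrary $\xi\in L^0_+(\mathcal F)$ with $0\le\xi\le1$. I would choose simple $\mathcal F$-measurable $\xi_n$ with values in $[0,1]$ and $\xi_n\to\xi$ almost surely, hence in the $(\varepsilon,\lambda)$-topology. Because $(E,\mathcal T_{\varepsilon,\lambda})$ is a topological module over $(L^0(\mathcal F,K),\mathcal T_{\varepsilon,\lambda})$, the elements $\xi_n x+(1-\xi_n)y$ converge to $\xi x+(1-\xi)y$ in $\mathcal T_{\varepsilon,\lambda}$; the $\mathcal T_{\varepsilon,\lambda}$-continuity of $f$ then forces $f(\xi_n x+(1-\xi_n)y)\to f(\xi x+(1-\xi)y)$ in measure, while $\xi_n f(x)+(1-\xi_n)f(y)\to\xi f(x)+(1-\xi)f(y)$ in measure by continuity of the ring operations on $L^0$. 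Passing to a subsequence converging almost surely, the simple-coefficient inequality survives in the limit, giving the desired $L^0(\mathcal F)$-convexity. The main obstacle is exactly this limiting argument: continuity of $f$ is indispensable, since a merely convex and $\mathcal F$-local function need not be $L^0(\mathcal F)$-convex for non-simple $\xi$, and one must ensure the inequality is preserved in the limit, which is why I would drop to an almost surely convergent subsequence rather than argue directly in the $(\varepsilon,\lambda)$-topology.
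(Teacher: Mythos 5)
Your proposal is correct and follows essentially the same route as the paper: locality is extracted from the $L^0(\mathcal{F})$--convexity inequality with $\xi=\tilde{I}_A$ (the paper delegates this to the necessity part of Lemma 3.14), the inequality is first verified for simple coefficients via convexity plus $\mathcal{F}$--locality, and then extended to general $\xi$ by approximation and $\mathcal{T}_{\varepsilon,\lambda}$--continuity. The only cosmetic difference is that the paper approximates $\xi$ by a nondecreasing sequence of simple functions converging in the essential supremum norm, whereas you use a.s.\ convergence and pass to an a.s.\ convergent subsequence to preserve the inequality in the limit; both are valid.
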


\begin{proof} (1). It is clear that $f$ is convex, and the proof of the $\mathcal{F}$--local property of $f$ is similar to the proof of necessity of Lemma 3.14.

(2). We only need to prove that $f$ is $L^0(\mathcal{F})$--convex if it is convex and $\mathcal{F}$--local. In fact, for any $\xi\in L_{+}^{0}(\mathcal{F})$ with $0\leq\xi\leq 1$, there is a nondecreasing sequence $\{\xi_n,n\in N\}$ of simple functions in $L^{0}_{+}(\mathcal{F})$ such that $0\leq\xi_n\leq\xi$ for each $n\in N$ and $\{\xi_n,n\in N\}$ converges to $\xi$ with respect to the essentially maximal norm $\|\cdot\|_{\infty}$. Since $f$ is convex and $\mathcal{F}$--local, it is easy to see that $f(\xi_n x+(1-\xi_n)y)\leq \xi_n f(x)+(1-\xi_n)f(y)$ for all $x,y\in E$ and $n\in N$, letting $n\rightarrow\infty$ will yield that $f(\xi x+(1-\xi)y)\leq \xi f(x)+(1-\xi)f(y)$. \hfill $\square$
\end{proof}

By the same reasoning, one can prove that for a continuous function $f$ from $(L^p(\mathcal{E}),\|\cdot\|_p)$ to $(L^0(\mathcal{E}),\mathcal{T}_{\varepsilon,\lambda})$ or from $(L^p(\mathcal{E}),\|\cdot\|_p)$ to $(L^r(\mathcal{E}),\|\cdot\|_r)$, $f$ is $L^0(\mathcal{F})$--convex iff $f$ is convex and $\mathcal{F}$--local, where $1\leq r,p\leq+\infty$.

The above discussions clarify the relation between $L^0$--convexity and ordinary convexity. Theorem 4.4 below further clarifies the relation between the $\mathcal{T}_{\varepsilon,\lambda}$--lower semicontinuity and $\mathcal{T}_{c}$--lower semicontinuity.

\begin{theorem} Let $(E,\mathcal{P})$ be an $RLC$ module over $R$ with base $(\Omega,\mathcal{F},P)$ and $f$ a local proper function from $E$ to $\bar{L}^0(\mathcal{F})$. Then we have the following statements:

\noindent $(1)$. If $E$ has the countable concatenation property, then $f$ is $\mathcal{T}_{\varepsilon,\lambda}$--lower semicontinuous iff $epi(f)$ is closed in $(E,\mathcal{T}_c)\times(L^{0}(\mathcal{F}),\mathcal{T}_c)$.

\noindent  $(2)$. If both $E$ and $\mathcal{P}$ have the countable concatenation property, then $f$ is $\mathcal{T}_{\varepsilon,\lambda}$--lower semicontinuous iff $f$ is $\mathcal{T}_{c}$--lower semicontinuous (namely $\{x\in E~|~f(x)\leq r\}$ is $\mathcal{T}_c$--closed for all $r\in L^0(\mathcal{F})$).
\end{theorem}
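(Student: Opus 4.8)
The plan is to reduce both parts to a single structural fact: the epigraph $epi(f)$ inherits the countable concatenation property from $E$, after which closedness of $epi(f)$ in the two topologies can be compared directly. I would establish part (1) first and then obtain part (2) by combining it with Proposition 3.15.

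For part (1), I would work in the product random locally convex module $(E\times L^0(\mathcal{F}),\tilde{\mathcal{P}})$ with $\tilde{\mathcal{P}}=\{\|\cdot\|+|\cdot|:\|\cdot\|\in\mathcal{P}\}$, exactly the construction already used in the proof of Theorem 3.13, so that the $(\varepsilon,\lambda)$--topology and the locally $L^0$--convex topology on $E\times L^0(\mathcal{F})$ are the corresponding product topologies. The first and principal step is to check that $epi(f)$ has the countable concatenation property. Given a sequence $\{(x_n,r_n),n\in N\}$ in $epi(f)$ and a countable partition $\{A_n,n\in N\}$ of $\Omega$ to $\mathcal{F}$, the countable concatenation property of $E$ (and of $L^0(\mathcal{F})$) produces $x=\sum_{n=1}^{\infty}\tilde{I}_{A_n}x_n$ and $r=\sum_{n=1}^{\infty}\tilde{I}_{A_n}r_n$. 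Since $f$ is local and $E$ has the countable concatenation property, Lemma 3.16(1) yields $f(x)=\sum_{n=1}^{\infty}\tilde{I}_{A_n}f(x_n)$; together with $f(x_n)\leq r_n$ for each $n$ this gives $f(x)=\sum_{n=1}^{\infty}\tilde{I}_{A_n}f(x_n)\leq\sum_{n=1}^{\infty}\tilde{I}_{A_n}r_n=r$, so $(x,r)\in epi(f)$.

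Once $epi(f)$ is known to have the countable concatenation property, Proposition 2.27 applied to $G=epi(f)$ in $(E\times L^0(\mathcal{F}),\tilde{\mathcal{P}})$ gives $\overline{epi(f)}_{\varepsilon,\lambda}=\overline{epi(f)}_c$, whence $epi(f)$ is $\mathcal{T}_{\varepsilon,\lambda}$--closed iff it is $\mathcal{T}_c$--closed. Since by Definition 4.1 the function $f$ is $\mathcal{T}_{\varepsilon,\lambda}$--lower semicontinuous precisely when $epi(f)$ is closed in $(E,\mathcal{T}_{\varepsilon,\lambda})\times(L^0(\mathcal{F}),\mathcal{T}_{\varepsilon,\lambda})$, this is exactly assertion (1). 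For part (2), the extra hypothesis that $\mathcal{P}$ has the countable concatenation property allows me to invoke the equivalence of (1) and (2) in Proposition 3.15, which states that for a proper local $f$ the $\mathcal{T}_c$--lower semicontinuity of $f$ is equivalent to closedness of $epi(f)$ in $(E,\mathcal{T}_c)\times(L^0(\mathcal{F}),\mathcal{T}_c)$. Chaining this with part (1) gives that $f$ is $\mathcal{T}_{\varepsilon,\lambda}$--lower semicontinuous iff $epi(f)$ is $\mathcal{T}_c$--closed iff $f$ is $\mathcal{T}_c$--lower semicontinuous, which is (2).

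The main obstacle is the first step of part (1): verifying the countable concatenation property of $epi(f)$, since this is the only place where the locality of $f$ and Lemma 3.16(1) are genuinely used, and it is precisely this property that makes Proposition 2.27 available to bridge the two topologies (note it is also the reason part (1) needs only $E$, not $\mathcal{P}$, to be countably concatenated). The remaining subtlety is the routine but necessary observation that $\tilde{\mathcal{P}}$ induces the product topologies under both $\mathcal{T}_{\varepsilon,\lambda}$ and $\mathcal{T}_c$, so that closedness of $epi(f)$ in the product space coincides with its closedness in the $RLC$ module $(E\times L^0(\mathcal{F}),\tilde{\mathcal{P}})$ to which Proposition 2.27 is applied.
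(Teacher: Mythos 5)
Your proposal is correct and follows essentially the same route as the paper: deduce the countable concatenation property of $epi(f)$ from the locality of $f$ and the countable concatenation property of $E$, apply Proposition 2.27 in the product module to identify the two closures for part (1), and invoke Proposition 3.15 to convert closedness of the epigraph into $\mathcal{T}_c$--lower semicontinuity for part (2). The only difference is that you spell out the verification of the countable concatenation property of $epi(f)$ and the product-topology bookkeeping, which the paper treats as immediate.
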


\begin{proof} (1). Since $E$ has the countable concatenation property and $f$ is local, $epi(f)$ has the countable concatenation property. Then, by Proposition 2.27, $epi(f)$ is $\mathcal{T}_{\varepsilon,\lambda}$--closed iff it is $\mathcal{T}_{c}$--closed.

(2). If bath $E$ and $\mathcal{P}$ have the countable concatenation property, then , by Proposition 3.15, $f$ is $\mathcal{T}_c$--lower semicontinuous iff $epi(f)$ is $\mathcal{T}_c$--closed, namely closed in $(E,\mathcal{T}_c)\times (L^0(\mathcal{F}),\mathcal{T}_c)$, which together with (1) ends the proof of (2). \hfill\hfill $\square$
\end{proof}

\subsection{Fenchel-Moreau dual representation theorem under the $(\varepsilon,\lambda)$--topology}

Theorem 4.5 below is the main result of this subsection.

\begin{theorem} Let $(E,\mathcal{P})$ be an $RLC$ module over $R$ with base $(\Omega,\mathcal{F},P)$ and $f:E\rightarrow \bar{L}^0(\mathcal{F})$ a proper $\mathcal{T}_{\varepsilon,\lambda}$--lower semicontinuous $L^0$--convex function. The $f^{\ast\ast}_{\varepsilon,\lambda}=f$. Here, $f^{\ast}_{\varepsilon,\lambda}:E^{\ast}_{\varepsilon,\lambda}\rightarrow\bar{L}^{0}(\mathcal{F})$ is defined by $f^{\ast}_{\varepsilon,\lambda}(g)=\bigvee\{g(x)-f(x)~|~x\in E\}$ for all $g\in E^{\ast}_{\varepsilon,\lambda}$, and $f^{\ast\ast}_{\varepsilon,\lambda}:E\rightarrow\bar{L}^0(\mathcal{F})$ by $f^{\ast\ast}_{\varepsilon,\lambda}(x)=\bigvee\{g(x)-f^{\ast}_{\varepsilon,\lambda}(g)~|~g\in E^{\ast}_{\varepsilon,\lambda}\}$ for all $x\in E$.
\end{theorem}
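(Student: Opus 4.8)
The plan is to establish the two inequalities $f^{\ast\ast}_{\varepsilon,\lambda}\leq f$ and $f\leq f^{\ast\ast}_{\varepsilon,\lambda}$ separately, the whole argument resting on the $(\varepsilon,\lambda)$--separation theorem (Theorem 3.6) applied to the epigraph of $f$. The point of working under $\mathcal{T}_{\varepsilon,\lambda}$ rather than $\mathcal{T}_c$ is precisely that Theorem 3.6 needs no countable concatenation hypothesis on $E$, so that, in contrast with Theorem 3.13, no such hypothesis has to be imposed in the statement. The inequality $f^{\ast\ast}_{\varepsilon,\lambda}\leq f$ is immediate from the Fenchel--Young inequality: for every $g\in E^{\ast}_{\varepsilon,\lambda}$ one has $f^{\ast}_{\varepsilon,\lambda}(g)\geq g(x)-f(x)$, hence $g(x)-f^{\ast}_{\varepsilon,\lambda}(g)\leq f(x)$ for all $x\in E$, and taking the supremum over $g\in E^{\ast}_{\varepsilon,\lambda}$ gives the claim.

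For the reverse inequality I would argue by contradiction inside the product $RLC$ module $(E\times L^0(\mathcal{F}),\tilde{\mathcal{P}})$, where $\tilde{\mathcal{P}}=\{\|\cdot\|+|\cdot|:\|\cdot\|\in\mathcal{P}\}$ and $(E\times L^0(\mathcal{F}))^{\ast}_{\varepsilon,\lambda}=E^{\ast}_{\varepsilon,\lambda}\times L^0(\mathcal{F})$. By Lemma 3.14 the $L^0$--convexity of $f$ makes $f$ local and $epi(f)$ an $L^0$--convex set, which by Definition 4.1 is moreover nonempty and $\mathcal{T}_{\varepsilon,\lambda}$--closed. Suppose $f^{\ast\ast}_{\varepsilon,\lambda}(x_0)<f(x_0)$ on some $A\in\mathcal{F}$ with $P(A)>0$. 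After localizing to $\tilde{I}_AE$ (again an $RLC$ module by the remark following Lemma 3.49, over which all constructions restrict compatibly because $f$, $f^{\ast}_{\varepsilon,\lambda}$ and $f^{\ast\ast}_{\varepsilon,\lambda}$ are local in the sense of Lemma 3.16) I may assume the strict inequality holds throughout and choose $r_0\in L^0(\mathcal{F})$ with $f^{\ast\ast}_{\varepsilon,\lambda}(x_0)<r_0<f(x_0)$, so that $(x_0,r_0)\notin epi(f)$. Applying Theorem 3.6 furnishes $(g,s)\in E^{\ast}_{\varepsilon,\lambda}\times L^0(\mathcal{F})$ and a set $D$ with $P(D)>0$ on which $g(x_0)+sr_0>\bigvee\{g(x)+sr:(x,r)\in epi(f)\}$; since $epi(f)$ is upward closed in the second coordinate, replacing $r$ by $r+t$ and letting $t\to+\infty$ forces $s\leq 0$ on $D$.

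On $D\cap[s<0]$ I would divide by $-s$ using the generalized inverse: setting $h=\tilde{I}_{D\cap[s<0]}(-s)^{-1}g\in E^{\ast}_{\varepsilon,\lambda}$ and taking $r=f(x)$ for $x\in dom(f)$ gives $h(x_0)-f^{\ast}_{\varepsilon,\lambda}(h)\geq r_0>f^{\ast\ast}_{\varepsilon,\lambda}(x_0)$ there, contradicting $f^{\ast\ast}_{\varepsilon,\lambda}(x_0)=\bigvee\{u(x_0)-f^{\ast}_{\varepsilon,\lambda}(u):u\in E^{\ast}_{\varepsilon,\lambda}\}$. The genuine obstacle is the degenerate set $D\cap[s=0]$, which can only meet $[f(x_0)=+\infty]$; there the separating functional is horizontal and yields no contradiction by itself. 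To deal with it I would first prove that $f^{\ast}_{\varepsilon,\lambda}$ is proper, i.e.\ that some $g_0\in E^{\ast}_{\varepsilon,\lambda}$ has $f^{\ast}_{\varepsilon,\lambda}(g_0)<+\infty$ on $\Omega$.

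This properness is itself proved by separating $(x_1,f(x_1)-1)$ from $epi(f)$ for a fixed $x_1\in dom(f)$ (where $s<0$ is automatic, since $s=0$ would give $g(x_1)>g(x_1)$), then exhausting $\Omega$ by a countable partition and gluing the resulting local functionals via $E^{\ast}_{\varepsilon,\lambda}=H_{cc}(E^{\ast}_c)$ (Theorem 2.30) together with the local property of $f^{\ast}_{\varepsilon,\lambda}$ recorded in Lemma 3.16; this exhaustion is the technical heart of the argument. Granting such a $g_0$, on $D\cap[s=0]$ the functionals $g_0+\mu g$ satisfy $f^{\ast}_{\varepsilon,\lambda}(g_0+\mu g)\leq f^{\ast}_{\varepsilon,\lambda}(g_0)+\mu\bigvee\{g(x):x\in dom(f)\}$, whence $f^{\ast\ast}_{\varepsilon,\lambda}(x_0)\geq g_0(x_0)-f^{\ast}_{\varepsilon,\lambda}(g_0)+\mu(g(x_0)-\bigvee\{g(x):x\in dom(f)\})\to+\infty$ as $\mu\in L^0_{++}(\mathcal{F})$ grows, so $f^{\ast\ast}_{\varepsilon,\lambda}(x_0)=+\infty$ there, again contradicting $f^{\ast\ast}_{\varepsilon,\lambda}(x_0)<f(x_0)$. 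Since $P(D)>0$ and $D=(D\cap[s<0])\cup(D\cap[s=0])$, one of the two cases produces a contradiction on a set of positive measure, so $f\leq f^{\ast\ast}_{\varepsilon,\lambda}$, completing the proof.
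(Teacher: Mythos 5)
Your argument is correct and is essentially the proof the paper intends: its own proof of this theorem is a one-line reference back to the first part of the proof of Theorem 3.13 (itself modelled on Theorem 3.8 of \cite{FKV}), namely Fenchel--Young for the inequality $f^{\ast\ast}_{\varepsilon,\lambda}\leq f$, and for the converse the separation of a point $(x_0,r_0)$ from $epi(f)$ in the product module $E\times L^{0}(\mathcal{F})$ via Theorem 3.6, with the case split on the sign of the vertical component $s$ and the properness of $f^{\ast}_{\varepsilon,\lambda}$ disposing of the degenerate stratum $[s=0]$. Your extra care with the set $[d^{\ast}>0]$ (on which Theorem 3.6 only separates with positive probability) and the countable-partition exhaustion glued through $H_{cc}(E^{\ast}_{c})=E^{\ast}_{\varepsilon,\lambda}$ is precisely the detail the $(\varepsilon,\lambda)$--setting requires and the paper leaves implicit.
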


\begin{proof}\noindent It is complete similar to the first part of proof of Theorem 3.13 only by using Theorem 3.6 in the place of Theorem 3.5, so is omitted. \hfill $\square$
\end{proof}

\begin{remark} From Theorem 4.5 we can derive Theorem 3.13. In fact, since $\mathcal{P}_{cc}$ and $\mathcal{P}$ induce the same $(\varepsilon,\lambda)$--topology $\mathcal{T}_{\varepsilon,\lambda}$ on $E$, Theorem 4.5 holds for $(E,\mathcal{P}_{cc})$. We first consider the proof of Theorem 3.13 for $(E,\mathcal{P}_{cc})$: let $\mathcal{T}^{\prime}_{c}$ be the locally $L^{0}$--convex topology induced by $\mathcal{P}_{cc}$ and $\mathcal{T}_c$ still denote the locally $L^0$--convex topology induced by $\mathcal{P}$, then $f$ must be $\mathcal{T}^{\prime}_{c}$--lower semicontinuous since $f$ is $\mathcal{T}_c$--lower semicontinuous, and hence also $\mathcal{T}_{\varepsilon,\lambda}$--lower semicontinuous by (2) of Theorem 4.4 since both $E$ and $\mathcal{P}_{cc}$ have the countable concatenation property. Now, by Theorem 4.5 $f^{\ast\ast}_{\varepsilon,\lambda}=f$, further by $E^{\ast}_{\varepsilon,\lambda}=H_{cc}(E^{\ast}_c)$ and (2) of Lemma 3.16, $f^{\ast\ast}_{c}=f^{\ast\ast}_{\varepsilon,\lambda}$, so $f^{\ast\ast}_{c}=f$. We should also point out that Theorem 4.5 is more convenient in use since it has the same shape as the classical Fenchel-Moreau dual Theorem!
\end{remark}

In classical convex analysis, people very often need to consider the Fenchel-Moreau dual representation theorem for a not necessarily proper extended real-valued function, where the notion of a closed function is important. Let $(E,\mathcal{T})$ be a locally convex space. $f:E\rightarrow[-\infty,+\infty]$ is closed if either $f\equiv+\infty$, or $f\equiv-\infty$, or $f$ is a proper lower semicontinuous, cf. \cite{ET}. Thus we should also define and study closed functions in the random setting. In fact, D. Filipovi\'{c}, M. Kupper and N. Vogepoth already studied the problem for a special class of $RN$ module $L^{p}_{\mathcal{F}}(\mathcal{E})$ for financial applications. Here, we make use of Theorem 4.5 to give a unified treatment for the problem.

Let $(E,\mathcal{P})$ be an $RLC$ module over $R$ with base $(\Omega,\mathcal{F},P)$ and $f:E\rightarrow \bar{L}^{0}(\mathcal{F})$ an local function. Let us first give the following notation:

$\mathscr{A}=\{A\in \mathcal{F}~|~$there is $x\in E$ such that $\tilde{I}_A f(x)=\tilde{I}_A(-\infty)\}$;

$\mathscr{B}=\{A\in \mathcal{F}~|~\tilde{I}_A f=\tilde{I}_A(+\infty),$ namely $\tilde{I}_A f(x)=\tilde{I}_A(+\infty)$ for all $x\in E\}$;

$MI(f)=esssup(\mathscr{A})$;

$PI(f)=esssup(\mathscr{B})$;

$BP(f)=\Omega\setminus(MI(f)\bigcup PI(f))$;

$\mathscr{D}=\{A\subset BP(f)~|~A\in \mathcal{F}$ is such that there are $D\in\mathcal{F}$ with $D\subset A$ and $x\in E$ satisfying $f(x)<+\infty$ on $D\}$.

Here, $esssup(\mathcal{H})$ denotes the essential supremum of a subfamily $\mathcal{H}$ of $\mathcal{F}$, cf. \cite{FKV,TXG-JFA}. We can think that $MI(f)$ and $PI(f)$ are disjoint.

It is obvious that $\tilde{I}_{PI(f)}f=\tilde{I}_{PI(f)}(+\infty)$ and $f(x)>-\infty$ on $BP(f)$ for all $x\in E$. Since $\mathscr{A}$ and $\mathscr{D}$ are upward directed, one can use the essential supremum theorem to prove Proposition 4.7 below.

\begin{proposition} We have the following statements:

\noindent $(1)$. There are a countable partition $\{A_n,n\in N\}$ of $MI(f)$ to $\mathcal{F}$ and a sequence $\{y_n,n\in N\}$ in $E$ such that $\tilde{I}_{A_n}f(y_n)=\tilde{I}_{A_n}(-\infty)$ for each $n\in N$.

\noindent $(2)$. There are a countable partition $\{D_n,n\in N\}$ of $BP(f)$ to $\mathcal{F}$ and a sequence $\{x_n,n\in N\}$ in $E$ such that $f(x_n)<+\infty$ on $D_n$ for each $n\in N$ (namely, each $\tilde{I}_{D_n}f$ is proper). Further, if, in addition, $P(BP(f))>0$, then each $D_n$ can be chosen such that $P(D_n)>0$.
\end{proposition}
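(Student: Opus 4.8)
The plan is to prove both parts by applying the essential supremum theorem to suitable upward directed subfamilies of $\mathcal{F}$, exploiting the local property of $f$ to splice witnesses together. Recall that for an upward directed family $\mathscr{H}\subset\mathcal{F}$ the essential supremum theorem provides a nondecreasing sequence $\{B_n,n\in N\}$ in $\mathscr{H}$ with $\bigcup_n B_n=esssup(\mathscr{H})$ a.s.; disjointifying such a sequence will yield the required countable partitions, and the witnesses attached to the $B_n$ will restrict to witnesses on the partition pieces.

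For (1), I would first check that $\mathscr{A}$ is upward directed. Given $A,B\in\mathscr{A}$ with witnesses $x,y\in E$ (so $\tilde{I}_A f(x)=\tilde{I}_A(-\infty)$ and $\tilde{I}_B f(y)=\tilde{I}_B(-\infty)$), set $z=\tilde{I}_A x+\tilde{I}_{A^c} y$. The local property gives $\tilde{I}_A f(z)=\tilde{I}_A f(\tilde{I}_A x)=\tilde{I}_A f(x)=\tilde{I}_A(-\infty)$ and, since $B\setminus A\subset B$, also $\tilde{I}_{B\setminus A} f(z)=\tilde{I}_{B\setminus A} f(y)=\tilde{I}_{B\setminus A}(-\infty)$, so $A\cup B\in\mathscr{A}$. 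Since $MI(f)=esssup(\mathscr{A})$ by definition, the essential supremum theorem produces a nondecreasing $\{B_n\}\subset\mathscr{A}$ with $\bigcup_n B_n=MI(f)$ and witnesses $y_n$. Setting $A_1=B_1$ and $A_n=B_n\setminus B_{n-1}$ for $n\geq 2$ gives a countable partition of $MI(f)$, and because $A_n\subset B_n$ we retain $\tilde{I}_{A_n} f(y_n)=\tilde{I}_{A_n}(-\infty)$, which is (1).

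For (2), I would work with $\mathscr{D}$, or more precisely with the witnessing sets $D\subset BP(f)$ in $\mathcal{F}$ on which some $f(x)$ is finite. The same splicing $z=\tilde{I}_{D_1} x_1+\tilde{I}_{D_2\setminus D_1} x_2$ shows, by locality, that this family is upward directed. The crucial step is the identification $esssup(\mathscr{D})=BP(f)$: clearly $esssup(\mathscr{D})\subset BP(f)$, and if $C:=BP(f)\setminus esssup(\mathscr{D})$ had $P(C)>0$ then, since $C$ is disjoint from $PI(f)=esssup(\mathscr{B})$, we would have $C\notin\mathscr{B}$, so that some $f(x)$ is finite on a positive measure subset $D\subset C$ (here one uses $f>-\infty$ on $BP(f)$); but then $D\in\mathscr{D}$ with $P(D)>0$ and $D$ disjoint from $esssup(\mathscr{D})$, a contradiction. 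The essential supremum theorem then gives a nondecreasing $\{B_n\}\subset\mathscr{D}$ covering $BP(f)$ with witnesses $x_n$ satisfying $f(x_n)<+\infty$ on $B_n$; disjointifying as before yields the partition $\{D_n\}$ with $f(x_n)<+\infty$ on each $D_n$. Finally, if $P(BP(f))>0$, I would discard the null pieces, absorbing them into a single piece $D_{n_0}$ with $P(D_{n_0})>0$ (this alters $D_{n_0}$ only by a null set, so the a.s. finiteness of $f(x_{n_0})$ is preserved), and relabel to obtain a partition all of whose members have positive measure.

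I expect the main obstacle to lie in part (2): one must take the family to consist of sets on which $f$ is genuinely finite (the bare membership $A\in\mathscr{D}$ only forces finiteness on a subset of $A$, which would not cover $BP(f)$ after disjointification), and, hand in hand with this, establish $esssup(\mathscr{D})=BP(f)$ through the maximality of $PI(f)=esssup(\mathscr{B})$. Part (1), together with the two disjointification steps and the null set cleanup, is routine once the local property has been used to verify upward directedness.
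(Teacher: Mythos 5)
Your proof is correct and takes exactly the route the paper intends: the paper offers no written proof of this proposition beyond the one-line remark that $\mathscr{A}$ and $\mathscr{D}$ are upward directed and that the essential supremum theorem then applies, and your argument is a complete and accurate elaboration of that sketch (upward directedness via splicing witnesses with the local property, extraction of a nondecreasing covering sequence, disjointification, and the identification of the essential supremum with $MI(f)$ resp.\ $BP(f)$). Your decision to work with the sets on which $f$ is genuinely finite, rather than with $\mathscr{D}$ as literally defined, is a necessary and correct reading of the paper's definition, since membership in $\mathscr{D}$ as written only guarantees finiteness on a subset and would not survive disjointification.
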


Let us observe that if $E$ has the countable concatenation property then the local property of $f$ can be used to prove: there are $y\in E$ such that $\tilde{I}_{MI(f)}f(y)=\tilde{I}_{MI(f)}(-\infty)$, and $x\in E$ such that $f(x)<+\infty$ on $BP(f)$, namely $\tilde{I}_{BP(f)}f$ is proper.

For each $D\in\mathcal{F}$, let $E^{D}=\tilde{I}_{D}E:=\{\tilde{I}_{D}x~|~x\in E\}$ and $\|\cdot\|^{D}=$ the restriction of $\|\cdot\|$ to $E^{D}$ for each $\|\cdot\|\in\mathcal{P}$. Then $(E^{D},\mathcal{P}^{D})$ can , of course, be regarded as an $RLC$ module over $R$ with base $(D,D\bigcap\mathcal{F},P(\cdot|D))$ if $P(D)>0$, where $\mathcal{P}^{D}=\{\|\cdot\|^{D}~|~\|\cdot\|\in\mathcal{P}\}$. Further, $f_D:E^{D}\rightarrow \tilde{I}_{D}\bar{L}^{0}(\mathcal{F})$ is defined by $f_{D}(\tilde{I}_{D}x)=\tilde{I}_{D}f(\tilde{I}_{D} x)$ for all $x\in E$.

We can now introduce the notion of a closed function. We can assume, without loss of generality, that $P(BP(f))>0$ for the function $f$ in discussion.

\begin{definition} Let $(E,\mathcal{P})$ be an $RLC$ module over $R$ with base $(\Omega,\mathcal{F},P)$, $f:E\rightarrow \bar{L}^{0}(\mathcal{F})$ a local function. Then $f$ is $\mathcal{T}_{\varepsilon,\lambda}$ (resp., $\mathcal{T}_{c}$)-closed if $\tilde{I}_{MI(f)}f=\tilde{I}_{MI(f)}(-\infty)$ and if $f_A$ is
$L^{0}(A\cap\mathcal{F})-$convex and $\mathcal{T}_{\varepsilon,\lambda}$ (resp., $\mathcal{T}_{c}$)-lower semicontinuous for each $A\in\mathcal{F}$ with $A\subset BP(f)$ and $P(A)>0$ such that $f_A$ is proper.
\end{definition}

\begin{remark} First, $A$ in Definition 4.8 universally exists, for example, let $\{D_n,n\in N\}$ be the same as in (2) of Proposition 4.7, then each $f_{D_{n}}$ is proper. Furthermore, if $f$ is a closed function then $f=\tilde{I}_{PI(f)}(+\infty)+\tilde{I}_{MI(f)}(-\infty)+\sum_{n=1}^{\infty}\tilde{I}_{D_n}f$ with each $\tilde{I}_{D_n}f$ (namely $f_{D_n}$) is proper $L^{0}-$convex lower semicontinuous, so our definition of a closed function is not only very similar to the classical definition of a closed function but also more complicated than the latter. By the way, it is easy to see that a closed function must be $L^{0}-$convex. Secondly, the notion of a $\mathcal{T}_c-$closed function in the sense of Definition 4.8 is more general than that introduced in \cite{FKV-appro}: \cite{FKV-appro} only considered the special case when $E=L^{p}_{\cal F}(\cal E)$, in which case $\tilde{I}_{BP(f)}f$ is proper, whereas $\tilde{I}_{BP(f)}f$ is not necessarily proper in our general case and the study of our general case needs a decomposition of $BP(f)$ as in (2) of Proposition 4.7. Besides, \cite{FKV-appro} employed the strongest notion of a $\mathcal{T}_c-$lower semicontinuous function, whereas we employ the weakest one.
\end{remark}

\begin{proposition} Let $(E,\mathcal{P})$ be the same as in Definition 4.8, $\{f_\alpha,\alpha\in\Gamma\}$ a family of $\mathcal{T}_{\varepsilon,\lambda}$ (resp., $\mathcal{T}_{c}$)-closed functions from $(E,\mathcal{P})$ to $\bar{L}^{0}(\mathcal{F})$ and define $f=\bigvee\{f_\alpha:\alpha\in \Gamma\}$ by $f(x)=\bigvee\{f_\alpha(x):\alpha\in \Gamma\}$ for all $x\in E$. Then $f$ is still $\mathcal{T}_{\varepsilon,\lambda}$ (resp., $\mathcal{T}_{c}$)-closed.
\end{proposition}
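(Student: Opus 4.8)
The plan is to verify Definition 4.8 directly for the pointwise supremum $f=\bigvee\{f_\alpha:\alpha\in\Gamma\}$, handling the $\mathcal{T}_{\varepsilon,\lambda}$-- and $\mathcal{T}_c$--cases in parallel. First I would record the two structural facts that make $f$ an admissible candidate. By (4) of Lemma 3.16 the supremum of local functions is local, so $f$ is local; and since every closed $f_\alpha$ is $L^0$--convex (Remark 4.9), the chain $f(\xi x+(1-\xi)y)=\bigvee_\alpha f_\alpha(\xi x+(1-\xi)y)\leq\bigvee_\alpha[\xi f_\alpha(x)+(1-\xi)f_\alpha(y)]\leq\xi f(x)+(1-\xi)f(y)$, valid for $\xi\in L^0_+(\mathcal{F})$ with $0\leq\xi\leq1$, shows $f$ is $L^0$--convex, so automatically every restriction $f_A$ is $L^0(A\cap\mathcal{F})$--convex.

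Next I would compare the three regions $MI(f)$, $PI(f)$, $BP(f)$ with those of the $f_\alpha$. Because $f\geq f_\alpha$, whenever $\tilde{I}_B f(x)=\tilde{I}_B(-\infty)$ we also have $\tilde{I}_B f_\alpha(x)=\tilde{I}_B(-\infty)$, so $B\subset MI(f_\alpha)$ and hence $MI(f)\subset MI(f_\alpha)$ for every $\alpha$. Combined with the closedness relation $\tilde{I}_{MI(f_\alpha)}f_\alpha=\tilde{I}_{MI(f_\alpha)}(-\infty)$ this yields $\tilde{I}_{MI(f)}f=\bigvee_\alpha\tilde{I}_{MI(f)}f_\alpha=\tilde{I}_{MI(f)}(-\infty)$, which is exactly the first clause of Definition 4.8. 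Dually, if $f_\alpha\equiv+\infty$ on a set then $f\equiv+\infty$ there, so $PI(f_\alpha)\subset PI(f)$; consequently, for any $A\in\mathcal{F}$ with $A\subset BP(f)$ one has $A\cap PI(f_\alpha)=\emptyset$ for all $\alpha$, a fact I shall use repeatedly below.

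The heart of the matter is the lower semicontinuity of the proper restrictions. Fix $A\subset BP(f)$ with $P(A)>0$ and $f_A$ proper, and restrict the identity $f=\bigvee_\alpha f_\alpha$ to obtain $f_A=\bigvee_\alpha(f_\alpha)_A$. Then the relevant closed gadget of $f_A$ is an intersection over $\alpha$: for $\mathcal{T}_{\varepsilon,\lambda}$ one uses $epi(f_A)=\bigcap_\alpha epi((f_\alpha)_A)$ (Definition 4.1), while for $\mathcal{T}_c$ one uses the sublevel sets $\{f_A\leq r\}=\bigcap_\alpha\{(f_\alpha)_A\leq r\}$ (the general definition of $\mathcal{T}_c$--lower semicontinuity). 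Since an intersection of closed sets is closed, it suffices to prove that, for each single $\alpha$, the corresponding gadget of $(f_\alpha)_A$ is closed. Here I would split $A=(A\cap MI(f_\alpha))\sqcup(A\cap BP(f_\alpha))$, the piece $A\cap PI(f_\alpha)$ being null by the previous paragraph. Over $A\cap MI(f_\alpha)$ the function is $-\infty$, so its epigraph (resp.\ sublevel set) is the whole relative ambient space and is trivially closed; over $A\cap BP(f_\alpha)$ I would invoke (2) of Proposition 4.7 to choose a countable partition $\{D^\alpha_n\}$ of $BP(f_\alpha)$ on whose pieces $f_\alpha$ is proper, so that on each $A\cap D^\alpha_n$ the restriction is proper and, by closedness of $f_\alpha$, $L^0$--convex and lower semicontinuous, giving a relatively closed piece. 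The gadget of $(f_\alpha)_A$ is then the countable concatenation of these closed pieces along the partition $\{A\cap MI(f_\alpha)\}\cup\{A\cap D^\alpha_n\}_n$ of $A$.

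The main obstacle is precisely this concatenation-of-closed-sets-is-closed step, together with the bookkeeping caused by the decompositions $MI(f_\alpha),BP(f_\alpha),PI(f_\alpha)$ moving with $\alpha$ over a possibly uncountable index set. I expect to dispatch the concatenation step uniformly for both topologies as follows: if $x$ is a limit (along a sequence for $\mathcal{T}_{\varepsilon,\lambda}$, a net for $\mathcal{T}_c$) of points in the gadget, then multiplying by each $\tilde{I}_{B_k}$, which is continuous for the module multiplication in both topologies, keeps the corresponding limit inside the relatively closed $k$--th piece; reassembling via the identity $x=\sum_k\tilde{I}_{B_k}x$ (legitimate since $\{B_k\}$ partitions $A$ and $x$ already exists) then places $x$ back in the concatenation. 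This argument should go through without assuming that $E$ has the countable concatenation property, the local property of $f$ being what guarantees that the gadgets decompose piecewise; the delicate points requiring care are the relative closedness of the restricted pieces in $(\tilde{I}_{A}E,\mathcal{T}|_{\tilde{I}_{A}E})$ (for which Proposition 2.27 and (1) of Theorem 2.28 are the natural tools) and the verification that properness of $f_\alpha$ on $D^\alpha_n$ descends to $A\cap D^\alpha_n$ so that the lower-semicontinuity clause of the closedness of $f_\alpha$ is genuinely applicable.
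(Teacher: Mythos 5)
Your proof is correct and its skeleton is the one the paper uses: reduce to $epi(f_A)=\bigcap_{\alpha}epi((f_\alpha)_A)$, then close each $epi((f_\alpha)_A)$ by splitting $A=(A\cap MI(f_\alpha))\cup(A\cap BP(f_\alpha))$, the piece over $PI(f_\alpha)$ being null. The one place you diverge is the treatment of the piece $A\cap BP(f_\alpha)$: you re-partition it via (2) of Proposition 4.7 into countably many sets on which $f_\alpha$ is proper and then argue that a countable concatenation of relatively closed pieces is closed (which does work, most cleanly seen by writing the concatenated set as $\bigcap_k$ of preimages of the relatively closed pieces under the continuous maps $(x,r)\mapsto\tilde I_{B_k}(x,r)$, so no sequential or net argument and no countable concatenation property of $E$ is needed). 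The paper avoids this detour entirely with one observation you did not exploit: since $f_A$ is proper there is $x$ with $f(x)<+\infty$ on $A$, hence $f_\alpha(x)\leq f(x)<+\infty$ on $A\cap BP(f_\alpha)$, while $f_\alpha>-\infty$ there by definition of $BP(f_\alpha)$; thus $(f_\alpha)_{A\cap BP(f_\alpha)}$ is itself proper and the lower-semicontinuity clause in the closedness of $f_\alpha$ applies directly to the single set $A\cap BP(f_\alpha)$. Your route costs an extra concatenation lemma and the bookkeeping you flag at the end; the paper's costs one line. Both are sound.
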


\begin{proof} It is easy to see that $MI(f)=essinf\{MI(f_{\alpha}),\alpha\in\Gamma\}$, $PI(f)=esssup\{PI(f_{\alpha}),\alpha\in\Gamma\}$ and $\tilde{I}_{MI(f)}f=\tilde{I}_{MI(f)}(-\infty)$. It remains to show that $f_A$ is $L^{0}(A\cap\cal F)-$convex and $\mathcal{T}_{\varepsilon,\lambda}$ (resp. $\mathcal{T}_c$)$-$lower semicontinuous for each $A\in\cal F$ with $A\subset BP(f)$ and $P(A)>0$ such that $f_A$ is proper. We only gives the proof for the $(\varepsilon,\lambda)-$topology since the case for the locally $L^{0}-$convex topology is similar.

Since each $f_{\alpha}$ is $\mathcal{T}_{\varepsilon,\lambda}-$closed, each $f_{\alpha}$ is $L^{0}-$convex, then $f$ is $L^{0}-$convex, so $f_A$ is $L^{0}(A\cap\cal F)-$convex. Further, since $epi(f_A)=\cap_{\alpha\in\Gamma}epi((f_{\alpha})_A)$, we only need to check that each $epi((f_{\alpha})_A)$ is $\mathcal{T}_{\varepsilon,\lambda}-$closed in $\tilde{I}_A(E\times L^{0}(\cal F))$. In fact, for any fixed $\alpha\in\Gamma$, $A$ must be a subset of $(PI(f_{\alpha}))^c$ since $A\subset BP(f)$, so $A=(A\cap BP(f_{\alpha}))\cup(A\cap MI(f_{\alpha}))$. According to the fact that $\tilde{I}_{MI(f_{\alpha})}f_{\alpha}=\tilde{I}_{MI(f_{\alpha})}(-\infty), epi((f_{\alpha})_A)=epi((f_{\alpha})_{A\cap BP(f_{\alpha})})+\tilde{I}_{A\cap MI(f_{\alpha})}(E\times L^{0}(\cal F))$. Since $f_A$ is proper, it is obvious that $(f_{\alpha})_{A\cap BP(f_{\alpha})}$ is also proper, which shows that $epi((f_{\alpha})_{A\cap BP(f_{\alpha})})$ is $\mathcal{T}_{\varepsilon,\lambda}-$closed in $\tilde{I}_{A\cap BP(f_{\alpha})}(E\times L^{0}(\cal F))$ since $f_{\alpha}$ is a $\mathcal{T}_{\varepsilon,\lambda}-$closed function. Again by noting the fact that $A\cap BP(f_{\alpha})$ and $A\cap MI(f_{\alpha})$ are disjoint we have that $epi((f_{\alpha})_A)$ is $\mathcal{T}_{\varepsilon,\lambda}-$closed.\hfill $\square$
\end{proof}

\begin{definition} Let $(E,\mathcal{P})$ and $f$ be the same as in Definition 4.8. The greatest $\mathcal{T}_{\varepsilon,\lambda}$ (resp., $\mathcal{T}_{c}$)-closed function majorized by $f$, denoted by $Cl_{\varepsilon,\lambda}(f)$ (resp., $Cl_c(f)$), is the $\mathcal{T}_{\varepsilon,\lambda}$ (resp., $\mathcal{T}_c$)-closure of $f$.
\end{definition}

\begin{lemma} Let $(E,\mathcal{P})$ and $f$ be the same as in Definition 4.8. If $f$ is $\mathcal{T}_{\varepsilon,\lambda}$--closed, then $f^{\ast\ast}_{\varepsilon,\lambda}=f$.
\end{lemma}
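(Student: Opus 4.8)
The plan is to reduce the identity $f^{\ast\ast}_{\varepsilon,\lambda}=f$ to the proper case already settled in Theorem 4.5, exploiting the canonical decomposition $\Omega=MI(f)\cup PI(f)\cup BP(f)$ together with the local property. First I would record that both $f^{\ast}_{\varepsilon,\lambda}$ and $f^{\ast\ast}_{\varepsilon,\lambda}$ are local: for fixed $x$ the map $g\mapsto g(x)-f(x)$ and for fixed $g$ the map $x\mapsto g(x)-f^{\ast}_{\varepsilon,\lambda}(g)$ are local, because every $g\in E^{\ast}_{\varepsilon,\lambda}$ is an $L^0(\mathcal{F})$--module homomorphism and $f$ is local, so (4) of Lemma 3.16 applies to the suprema defining $f^{\ast}_{\varepsilon,\lambda}$ and $f^{\ast\ast}_{\varepsilon,\lambda}$. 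Consequently it suffices to verify $\tilde{I}_A f^{\ast\ast}_{\varepsilon,\lambda}=\tilde{I}_A f$ on each block $A$ of a suitable countable partition of $\Omega$ and then reassemble, since any element of $\bar{L}^0(\mathcal{F})$ coincides with the concatenation of its restrictions to a countable partition.

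On $MI(f)$ and $PI(f)$ the computation is immediate from the conventions for $\pm\infty$. Since $\tilde{I}_{MI(f)}f=\tilde{I}_{MI(f)}(-\infty)$, for every $g$ and every $x$ one has $\tilde{I}_{MI(f)}(g(x)-f(x))=\tilde{I}_{MI(f)}(+\infty)$, whence $\tilde{I}_{MI(f)}f^{\ast}_{\varepsilon,\lambda}(g)=\tilde{I}_{MI(f)}(+\infty)$ and therefore $\tilde{I}_{MI(f)}f^{\ast\ast}_{\varepsilon,\lambda}(x)=\tilde{I}_{MI(f)}(-\infty)=\tilde{I}_{MI(f)}f(x)$. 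Dually, from $\tilde{I}_{PI(f)}f=\tilde{I}_{PI(f)}(+\infty)$ one gets $\tilde{I}_{PI(f)}f^{\ast}_{\varepsilon,\lambda}(g)=\tilde{I}_{PI(f)}(-\infty)$ and hence $\tilde{I}_{PI(f)}f^{\ast\ast}_{\varepsilon,\lambda}(x)=\tilde{I}_{PI(f)}(+\infty)=\tilde{I}_{PI(f)}f(x)$. Thus only the block $BP(f)$ remains to be treated.

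On $BP(f)$ I would invoke (2) of Proposition 4.7 to obtain a countable partition $\{D_n,n\in N\}$ of $BP(f)$ with $P(D_n)>0$ and each $f_{D_n}$ proper; by Definition 4.8 each $f_{D_n}$ is then a proper, $L^0(D_n\cap\mathcal{F})$--convex, $\mathcal{T}_{\varepsilon,\lambda}$--lower semicontinuous function on the $RLC$ module $(E^{D_n},\mathcal{P}^{D_n})$ over base $(D_n,D_n\cap\mathcal{F},P(\cdot|D_n))$, so Theorem 4.5 yields $(f_{D_n})^{\ast\ast}_{\varepsilon,\lambda}=f_{D_n}$. The remaining work is to commute the (bi)conjugate with restriction to $D_n$. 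Using locality one checks that $\tilde{I}_{D_n}f^{\ast}_{\varepsilon,\lambda}(g)=(f_{D_n})^{\ast}_{\varepsilon,\lambda}(g_{D_n})$, where $g_{D_n}$ denotes $\tilde{I}_{D_n}g$ regarded as an element of $(E^{D_n})^{\ast}_{\varepsilon,\lambda}$, and hence $\tilde{I}_{D_n}f^{\ast\ast}_{\varepsilon,\lambda}(x)=\bigvee\{g_{D_n}(\tilde{I}_{D_n}x)-(f_{D_n})^{\ast}_{\varepsilon,\lambda}(g_{D_n})\mid g\in E^{\ast}_{\varepsilon,\lambda}\}$. Provided the map $g\mapsto g_{D_n}$ carries $E^{\ast}_{\varepsilon,\lambda}$ onto $(E^{D_n})^{\ast}_{\varepsilon,\lambda}$, this supremum equals $(f_{D_n})^{\ast\ast}_{\varepsilon,\lambda}(\tilde{I}_{D_n}x)=f_{D_n}(\tilde{I}_{D_n}x)=\tilde{I}_{D_n}f(x)$, which is exactly the required identity on $D_n$.

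The main obstacle is precisely this surjectivity of the localization map $g\mapsto g_{D_n}$ onto the conjugate space of $E^{D_n}$. Restriction clearly sends $E^{\ast}_{\varepsilon,\lambda}$ into $(E^{D_n})^{\ast}_{\varepsilon,\lambda}$; for the converse I would, given $h\in(E^{D_n})^{\ast}_{\varepsilon,\lambda}$, define $g(x)=h(\tilde{I}_{D_n}x)$ for all $x\in E$ and verify directly that $g$ is $L^0(\mathcal{F})$--linear, $\mathcal{T}_{\varepsilon,\lambda}$--continuous, and satisfies $g_{D_n}=h$, the $L^0(\mathcal{F})$--linearity resting on the facts that $h$ takes values in $\tilde{I}_{D_n}L^0(\mathcal{F})$ and is $L^0(D_n\cap\mathcal{F})$--linear. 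Once the three pieces $MI(f)$, $PI(f)$ and all the $D_n$ are in hand, reassembling by the local property of $f$ and $f^{\ast\ast}_{\varepsilon,\lambda}$ together with (1) of Lemma 3.16 yields $f^{\ast\ast}_{\varepsilon,\lambda}=f$ on all of $\Omega$.
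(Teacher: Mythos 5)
Your proposal is correct and follows essentially the same route as the paper's proof: handle $MI(f)$ and $PI(f)$ directly from the conventions for $\pm\infty$, decompose $BP(f)$ via (2) of Proposition 4.7 into blocks $D_n$ on which $f_{D_n}$ is proper, apply Theorem 4.5 on each $E^{D_n}$, and reassemble by locality. The only difference is that you spell out the identification $\tilde{I}_{D_n}f^{\ast\ast}_{\varepsilon,\lambda}=(f_{D_n})^{\ast\ast}_{\varepsilon,\lambda}$ (via surjectivity of the localization map $g\mapsto g_{D_n}$), a step the paper dismisses as obvious; your justification of it is sound.
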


\begin{proof} Since $f$ is $\mathcal{T}_{\varepsilon,\lambda}$--closed, it is obvious that $\tilde{I}_{MI(f)}f^{\ast\ast}_{\varepsilon,\lambda}=\tilde{I}_{MI(f)}f=\tilde{I}_{MI(f)}(-\infty)$ and $\tilde{I}_{PI(f)}f^{\ast\ast}_{\varepsilon,\lambda}=\tilde{I}_{PI(f)}f=\tilde{I}_{PI(f)}(+\infty)$. Let $\{D_n,n\in N\}$ be the same as in (2) of Proposition 4.7 with $P(D_n)>0$ for all $n\in N$,then each $f_{D_n}$ is a proper $L^{0}(D_n\bigcap\mathcal{F})$--convex $\mathcal{T}_{\varepsilon,\lambda}$--lower semicontinuous on $E^{D_n}$. It is also obvious that $\tilde{I}_{D_n}f^{\ast\ast}_{\varepsilon,\lambda}=f^{\ast\ast}_{D_n}=f_{D_n}=\tilde{I}_{D_n} f$ for each $n\in N$ by Theorem 4.5, so $f^{\ast\ast}_{\varepsilon,\lambda}=f$. \hfill $\square$
\end{proof}

\begin{theorem} Let $(E,\mathcal{P})$ and $f$ be the same as in Definition 4.8. Then $f^{\ast\ast}_{\varepsilon,\lambda}=Cl_{\varepsilon,\lambda}(f)$.
\end{theorem}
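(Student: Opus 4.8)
The plan is to establish the two inequalities $f^{\ast\ast}_{\varepsilon,\lambda}\leq Cl_{\varepsilon,\lambda}(f)$ and $Cl_{\varepsilon,\lambda}(f)\leq f^{\ast\ast}_{\varepsilon,\lambda}$ separately, using that conjugation is order-reversing and hence biconjugation is order-preserving, together with Lemma 4.12 and Proposition 4.10. Throughout I would record the elementary fact that $f^{\ast\ast}_{\varepsilon,\lambda}\leq f$: for every $x_0\in E$ and $g\in E^{\ast}_{\varepsilon,\lambda}$ one has $f^{\ast}_{\varepsilon,\lambda}(g)\geq g(x_0)-f(x_0)$ by definition of the conjugate, so $g(x_0)-f^{\ast}_{\varepsilon,\lambda}(g)\leq f(x_0)$, and taking the supremum over $g$ gives $f^{\ast\ast}_{\varepsilon,\lambda}(x_0)\leq f(x_0)$.

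For the first inequality I would show that $f^{\ast\ast}_{\varepsilon,\lambda}$ is itself $\mathcal{T}_{\varepsilon,\lambda}$-closed in the sense of Definition 4.8. Writing $f^{\ast\ast}_{\varepsilon,\lambda}=\bigvee\{h_g:g\in E^{\ast}_{\varepsilon,\lambda}\}$ with $h_g(x):=g(x)-f^{\ast}_{\varepsilon,\lambda}(g)$, each $h_g$ is an $L^0$-affine function: on the set $[f^{\ast}_{\varepsilon,\lambda}(g)=+\infty]$ it equals $-\infty$, on $[f^{\ast}_{\varepsilon,\lambda}(g)=-\infty]$ it equals $+\infty$, while on $[-\infty<f^{\ast}_{\varepsilon,\lambda}(g)<+\infty]$ it is a proper, $L^0(\mathcal{F})$-convex, $\mathcal{T}_{\varepsilon,\lambda}$-continuous (hence $\mathcal{T}_{\varepsilon,\lambda}$-lower semicontinuous) function, so $h_g$ is $\mathcal{T}_{\varepsilon,\lambda}$-closed. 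By Proposition 4.10 the supremum $f^{\ast\ast}_{\varepsilon,\lambda}$ is then $\mathcal{T}_{\varepsilon,\lambda}$-closed. Since also $f^{\ast\ast}_{\varepsilon,\lambda}\leq f$, and $Cl_{\varepsilon,\lambda}(f)$ is by Definition 4.11 the greatest $\mathcal{T}_{\varepsilon,\lambda}$-closed function majorized by $f$, we obtain $f^{\ast\ast}_{\varepsilon,\lambda}\leq Cl_{\varepsilon,\lambda}(f)$.

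For the reverse inequality, note that $Cl_{\varepsilon,\lambda}(f)$ is $\mathcal{T}_{\varepsilon,\lambda}$-closed, hence $L^0$-convex (Remark 4.9) and therefore local (Lemma 3.14), so Lemma 4.12 applies and gives $(Cl_{\varepsilon,\lambda}(f))^{\ast\ast}_{\varepsilon,\lambda}=Cl_{\varepsilon,\lambda}(f)$. From $Cl_{\varepsilon,\lambda}(f)\leq f$ and the order-preservation of biconjugation it follows that $Cl_{\varepsilon,\lambda}(f)=(Cl_{\varepsilon,\lambda}(f))^{\ast\ast}_{\varepsilon,\lambda}\leq f^{\ast\ast}_{\varepsilon,\lambda}$. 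Combining the two inequalities yields $f^{\ast\ast}_{\varepsilon,\lambda}=Cl_{\varepsilon,\lambda}(f)$. The very existence of $Cl_{\varepsilon,\lambda}(f)$ as the greatest closed minorant is itself guaranteed by Proposition 4.10 applied to the (nonempty, since $-\infty$ is closed) family of all $\mathcal{T}_{\varepsilon,\lambda}$-closed functions dominated by $f$.

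I expect the main obstacle to lie not in this abstract two-inequality scheme but in verifying rigorously that $f^{\ast\ast}_{\varepsilon,\lambda}$, and each affine slice $h_g$, is closed in the precise stratified sense of Definition 4.8. This requires correctly identifying $MI(\cdot)$, $PI(\cdot)$ and $BP(\cdot)$ for these functions, handling the conventions $0\cdot(\pm\infty)=0$ and $+\infty-(+\infty)=+\infty$ when $f^{\ast}_{\varepsilon,\lambda}(g)$ takes infinite values on sets of positive measure, and checking that the hypotheses of Proposition 4.10 (each summand $\mathcal{T}_{\varepsilon,\lambda}$-closed, the local property preserved under $\bigvee$) are genuinely met. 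Once the closedness of $f^{\ast\ast}_{\varepsilon,\lambda}$ is secured, the remaining steps are formal consequences of Lemma 4.12 and the monotonicity of the conjugation operator.
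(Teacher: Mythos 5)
Your proposal is correct and follows essentially the same route as the paper: one direction from $f^{\ast\ast}_{\varepsilon,\lambda}\leq f$ together with the $\mathcal{T}_{\varepsilon,\lambda}$--closedness of $f^{\ast\ast}_{\varepsilon,\lambda}$, the other from $Cl_{\varepsilon,\lambda}(f)\leq f$, Lemma 4.12 and the monotonicity of biconjugation. The only difference is that the paper dismisses the closedness of $f^{\ast\ast}_{\varepsilon,\lambda}$ as obvious, whereas you usefully spell it out as a supremum of closed $L^0$--affine functions via Proposition 4.10.
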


\begin{proof} It is obvious that $f^{\ast\ast}_{\varepsilon,\lambda}\leq f$ and $f^{\ast\ast}_{\varepsilon,\lambda}$ is $\mathcal{T}_{\varepsilon,\lambda}$--closed, so $f^{\ast\ast}_{\varepsilon,\lambda}\leq Cl_{\varepsilon,\lambda}(f)$. On the other hand, $Cl_{\varepsilon,\lambda}(f)\leq f$, then $Cl_{\varepsilon,\lambda}(f)=(Cl_{\varepsilon,\lambda}(f))^{\ast\ast}_{\varepsilon,\lambda}\leq f^{\ast\ast}_{\varepsilon,\lambda}$ by Lemma 4.12. \hfill $\square$
\end{proof}

\begin{corollary} Let $(E,\mathcal{P})$ be an $RLC$ module over $R$ with base $(\Omega,\mathcal{F},P)$ such that $E$ has the countable concatenation property and $f:E\rightarrow\bar{L}^{0}(\mathcal{F})$ a local function, then $f^{\ast\ast}_{c}=Cl_{c}(f)$.
\end{corollary}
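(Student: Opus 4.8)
The plan is to deduce Corollary 4.14 from Theorem 4.13 by transporting both sides of the identity $f^{\ast\ast}_{\varepsilon,\lambda}=Cl_{\varepsilon,\lambda}(f)$ from the $(\varepsilon,\lambda)$--framework to the locally $L^0$--convex framework. Concretely, I would prove the two auxiliary identities $f^{\ast\ast}_{c}=f^{\ast\ast}_{\varepsilon,\lambda}$ and $Cl_{\varepsilon,\lambda}(f)=Cl_{c}(f)$, and then chain them with Theorem 4.13 to obtain $f^{\ast\ast}_{c}=f^{\ast\ast}_{\varepsilon,\lambda}=Cl_{\varepsilon,\lambda}(f)=Cl_{c}(f)$.

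For the first identity, I would argue exactly as in the last part of the proof of Theorem 3.13. Since $E^{\ast}_{c}\subset E^{\ast}_{\varepsilon,\lambda}$ and both conjugates are given by the same formula $\bigvee\{g(x)-f(x)\mid x\in E\}$, one has $f^{\ast}_{c}=f^{\ast}_{\varepsilon,\lambda}\mid_{E^{\ast}_{c}}$; moreover $f^{\ast}_{\varepsilon,\lambda}$ is local by Lemma 3.16(4), because each $g\mapsto g(x)-f(x)$ is local in $g$ (here one only uses that $g$ is a module homomorphism). Consequently $g\mapsto g(x)-f^{\ast}_{\varepsilon,\lambda}(g)$ is local for every fixed $x$, and since $E^{\ast}_{\varepsilon,\lambda}=H_{cc}(E^{\ast}_{c})$ by Theorem 2.30, Lemma 3.16(2) lets me replace the supremum over $E^{\ast}_{c}$ by the supremum over $E^{\ast}_{\varepsilon,\lambda}$, giving $f^{\ast\ast}_{c}(x)=f^{\ast\ast}_{\varepsilon,\lambda}(x)$ for all $x\in E$. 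Note that Lemma 3.16 is applied with the ambient module taken to be $E^{\ast}_{\varepsilon,\lambda}$, which always has the countable concatenation property.

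The second identity is where the countable concatenation property of $E$ is genuinely needed, and I expect it to be the main obstacle. By Definition 4.8 the notions of $\mathcal{T}_{\varepsilon,\lambda}$--closed and $\mathcal{T}_{c}$--closed function share the condition $\tilde I_{MI(g)}g=\tilde I_{MI(g)}(-\infty)$ together with the $L^0$--convexity and locality of every proper restriction $g_{A}$ on $E^{A}=\tilde I_{A}E$; they differ only in requiring $\mathcal{T}_{\varepsilon,\lambda}$-- versus $\mathcal{T}_{c}$--lower semicontinuity of these $g_{A}$. Thus it suffices to show that, for a local function $g$ with $g\le f$, the two lower--semicontinuity conditions on each proper $g_{A}$ coincide. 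I would first check the routine fact that $E^{A}$ inherits the countable concatenation property from $E$ and that $g_{A}$ is local, and then invoke Theorem 4.4(1): for such $g_{A}$, being $\mathcal{T}_{\varepsilon,\lambda}$--lower semicontinuous is equivalent to $epi(g_{A})$ being $\mathcal{T}_{c}$--closed, so the $\mathcal{T}_{\varepsilon,\lambda}$-- and $\mathcal{T}_{c}$--closedness conditions agree. Hence the class of $\mathcal{T}_{\varepsilon,\lambda}$--closed functions coincides with the class of $\mathcal{T}_{c}$--closed functions; since, by Definition 4.11 and Proposition 4.10, both $Cl_{\varepsilon,\lambda}(f)$ and $Cl_{c}(f)$ are the greatest function in this common class that is majorized by $f$, they are equal.

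The delicate point, and the step I would scrutinize most, is the compatibility of the lower--semicontinuity notion used in Definition 4.8 with Theorem 4.4: because the corollary assumes only that $E$ (not $\mathcal{P}$) has the countable concatenation property, I must route the argument through the epigraph characterization of Theorem 4.4(1) rather than the level--set equivalence of Theorem 4.4(2), and verify that this is legitimate on each stratum $E^{A}$. Once this compatibility is secured, the remaining bookkeeping, namely the preservation of the topology--independent decomposition $\Omega=MI(g)\cup PI(g)\cup BP(g)$ and of the properness of $g_{A}$ when passing between the two topologies, is routine and completes the proof.
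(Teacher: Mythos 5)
Your proposal is correct and follows exactly the route the paper intends: the paper's own proof of this corollary is omitted with the remark that it is ``similar to Remark 4.6'', which is precisely your transport argument --- combine Theorem 4.13 with $f^{\ast\ast}_{c}=f^{\ast\ast}_{\varepsilon,\lambda}$ (via Lemma 3.16(2) and $E^{\ast}_{\varepsilon,\lambda}=H_{cc}(E^{\ast}_{c})$ from Theorem 2.30) and with the coincidence of the two closure operations. The one delicate point you flag --- reconciling the level-set definition of $\mathcal{T}_{c}$-lower semicontinuity with the epigraph characterization when only $E$ (not $\mathcal{P}$) has the countable concatenation property, which is resolved by passing to $\mathcal{P}_{cc}$ and using that level sets and epigraphs of local functions inherit that property --- is exactly the detail the paper leaves implicit, and your handling of it is sound.
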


\begin{proof} It is similar to Remark 4.6, so is omitted. \hfill $\square$\end{proof}

Let $(E,\mathcal{P})$ be an $RLC$ module over $R$ with base $(\Omega,\mathcal{F},P)$ such that $E$ has the countable concatenation property and $f:E\rightarrow\bar{L}^{0}(\mathcal{F})$ a local function, then by (2) of Lemma 3.16 one can easily observe that $f^{\ast\ast}_{\varepsilon,\lambda}= f_c^{\ast\ast}$, furthermore if ,in addition, $\mathcal{P}$ has the countable concatenation property, then $E^{\ast}_{\varepsilon,\lambda}=E^{\ast}_c$, so $f^{\ast}_{\varepsilon,\lambda}=f_c^{\ast}$, in which case we always denote $f^{\ast}_{\varepsilon,\lambda}$ or $f_c^{\ast}$ by $f^{\ast}$.

Finally, let us conclude the subsection with the two representation theorems for $L^{p}_{\mathcal{F}}(\mathcal{E})$--condition risk measures, where $L^{p}_{\mathcal{F}}(\mathcal{E})$ are the $RN$ modules as constructed in Example 2.8 ($1\leq p\leq +\infty$). Definition 4.15 below was introduced for the case when $1\leq p<+\infty$, here we also introduce it for the case when $p=+\infty$ since it will be used in this paper.

\begin{definition} Let $1\leq p\leq+\infty$. A proper function $f:L^{p}_{\mathcal{F}}(\mathcal{E})\rightarrow \bar{L}^{0}(\mathcal{F})$ is said to be:

\noindent (1). monotone if $f(x)\leq f(y)$ for all $x,y\in L^{p}_{\mathcal{F}}(\mathcal{E})$ such that $x\geq y$;

\noindent (2). cash invariant if $f(x+y)=f(x)-y$ for all $x\in L^{p}_{\mathcal{F}}(\mathcal{E})$ and $y\in L^{0}(\mathcal{F})$.

\noindent Further, a proper, monotone and cash invariant function $f$ from $L^{p}_{\mathcal{F}}(\mathcal{E})$ to $\bar{L}^{0}(\mathcal{F})$ is called an $L^{p}_{\mathcal{F}}(\mathcal{E})$--conditional risk measure.
\end{definition}

Since $L^{p}_{\mathcal{F}}(\mathcal{E})$ has the countable concatenation property, and when $1\leq p<+\infty$ $(L^{p}_{\mathcal{F}}(\mathcal{E}))^{\ast}\cong L^{q}_{\mathcal{F}}(\mathcal{E})$, where $p$ and $q$ are a pair of H\"{o}lder conjugate numbers. Further, since $(L^{p}_{\mathcal{F}}(\mathcal{E}),|||\cdot|||_p)$ is an $RN$ module, a proper $L^0(\mathcal{F})$--convex function $f$ from $L^{p}_{\mathcal{F}}(\mathcal{E})$ to $\bar{L}^{0}(\mathcal{F})$ is $\mathcal{T}_{\varepsilon,\lambda}$--lower semicontinuous iff it is $\mathcal{T}_{c}$--lower semicontinuous ($1\leq p<+\infty$). In \cite{FKV-appro}, D. Filipovi\'{c}, M. Kupper and N. Vogelpoth essentially used Theorem 3.13 as well as the typical techniques from conditional risk measures to obtain the following representation theorem:

\begin{theorem}($See$ \cite{FKV-appro}.) Let $1\leq p<+\infty$ and $1<q\leq +\infty$ be a pair of H\"{o}lder conjugate numbers and $f:L^{p}_{\mathcal{F}}(\mathcal{E})\rightarrow \bar{L}^{0}(\mathcal{F})$ a $\mathcal{T}_c$ (or equivalently, $\mathcal{T}_{\varepsilon,\lambda}$)-lower semicontinuous $L^{0}(\mathcal{F})$--convex $L^{p}_{\mathcal{F}}(\mathcal{E})$--conditional risk measure. Then $f(x)=\bigvee\{E(xy|\mathcal{F})-f^{\ast}(y)~|~y\in L^{q}_{\mathcal{F}}(\mathcal{E})$, $y\leq 0$ and $E(y|\mathcal{F})=-1\}$ for all $x\in L^{p}_{\mathcal{F}}(\mathcal{E})$.
\end{theorem}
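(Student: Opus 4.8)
The plan is to deduce Theorem 4.16 from the Fenchel--Moreau dual representation theorem (Theorem 3.13) by identifying the random conjugate space and then shrinking the index set of the supremum with the help of the two structural properties of a conditional risk measure. First I would verify the hypotheses of Theorem 3.13: $L^p_{\mathcal{F}}(\mathcal{E})$ is an $RN$ module, hence has the countable concatenation property, and $f$ is proper and $L^0(\mathcal{F})$--convex; since $f$ is $\mathcal{T}_c$--lower semicontinuous, Theorem 3.13 yields $f^{\ast\ast}_c=f$, that is,
$$f(x)=\bigvee\{g(x)-f^\ast_c(g)\mid g\in (L^p_{\mathcal{F}}(\mathcal{E}))^\ast_c\}\quad\text{for all }x\in L^p_{\mathcal{F}}(\mathcal{E}).$$
Using $(L^p_{\mathcal{F}}(\mathcal{E}))^\ast_c\cong L^q_{\mathcal{F}}(\mathcal{E})$ with the pairing $g_y(x)=E(xy\mid\mathcal{F})$, this rewrites as
$$f(x)=\bigvee\{E(xy\mid\mathcal{F})-f^\ast(y)\mid y\in L^q_{\mathcal{F}}(\mathcal{E})\},$$
where $f^\ast(y)=\bigvee\{E(xy\mid\mathcal{F})-f(x)\mid x\in L^p_{\mathcal{F}}(\mathcal{E})\}$.

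Next I would restrict the supremum to $\mathscr{C}:=\{y\in L^q_{\mathcal{F}}(\mathcal{E})\mid y\leq 0,\ E(y\mid\mathcal{F})=-1\}$. Since $\mathscr{C}\subset L^q_{\mathcal{F}}(\mathcal{E})$, the restricted supremum is automatically $\leq f(x)$, so the real work is the reverse inequality, which amounts to showing that only elements of $\mathscr{C}$ can contribute. The key observation is that $E(xy\mid\mathcal{F})-f^\ast(y)$ equals $-\infty$ on $[f^\ast(y)=+\infty]$, so only the behaviour of $y$ on $B_y:=[f^\ast(y)<+\infty]$ matters, and on $B_y$ monotonicity and cash invariance force both constraints. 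Indeed, for $z\geq0$ in $L^p_{\mathcal{F}}(\mathcal{E})$ the relation $f(x+\lambda z)\leq f(x)$ gives $f^\ast(y)\geq [E(xy\mid\mathcal{F})-f(x)]+\lambda E(zy\mid\mathcal{F})$ for all $\lambda\in L^0_+(\mathcal{F})$, so finiteness of $f^\ast(y)$ on $B_y$ forces $E(zy\mid\mathcal{F})\leq0$ there for every such $z$, hence $y\leq0$ on $B_y$; similarly cash invariance $f(x+\eta)=f(x)-\eta$ for $\eta\in L^0(\mathcal{F})$ yields $f^\ast(y)\geq[E(xy\mid\mathcal{F})-f(x)]+\eta(E(y\mid\mathcal{F})+1)$, and letting $|\eta|\to+\infty$ (localized to the relevant sets) shows $E(y\mid\mathcal{F})=-1$ on $B_y$.

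Finally I would convert these local constraints into a genuine element of $\mathscr{C}$ by the concatenation technique underlying the whole paper. For a given $y$, set $\tilde{y}=\tilde{I}_{B_y}y+\tilde{I}_{B_y^c}(-1)$; since the constant $-1$ lies in $\mathscr{C}$, the constraints now hold on all of $\Omega$, so $\tilde{y}\in\mathscr{C}$, and because $f^\ast$ and, for fixed $x$, the functional $y\mapsto E(xy\mid\mathcal{F})$ are $\mathcal{F}$--local, part (1) of Lemma 3.16 gives $\tilde{I}_{B_y}(E(x\tilde{y}\mid\mathcal{F})-f^\ast(\tilde{y}))=\tilde{I}_{B_y}(E(xy\mid\mathcal{F})-f^\ast(y))$, while on $B_y^c$ the original term was already $-\infty$. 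Hence the contribution of each $y$ is dominated by that of some $\tilde{y}\in\mathscr{C}$, which delivers the reverse inequality and completes the representation. I expect the main obstacle to be precisely this localization: the constraints $y\leq0$ and $E(y\mid\mathcal{F})=-1$ are only guaranteed where $f^\ast(y)$ is finite, and it is the interplay between the local property of the penalty function (Lemma 3.16) and the countable concatenation hull that must be handled with care to legitimately replace $y$ by an element of $\mathscr{C}$ without altering the value of the supremum.
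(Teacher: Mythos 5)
Your proposal is correct and follows essentially the route the paper itself indicates for this result (which it attributes to Filipovi\'{c}--Kupper--Vogelpoth and does not re-prove in detail): apply the Fenchel--Moreau theorem (Theorem 3.13) using that $L^{p}_{\mathcal{F}}(\mathcal{E})$ is a $\mathcal{T}_c$--complete $RN$ module with the countable concatenation property and $(L^{p}_{\mathcal{F}}(\mathcal{E}))^{\ast}_c\cong L^{q}_{\mathcal{F}}(\mathcal{E})$, and then shrink the dual index set to $\{y\leq 0,\ E(y|\mathcal{F})=-1\}$ via monotonicity and cash invariance. Your careful handling of the localization to $[f^{\ast}(y)<+\infty]$ and the replacement of $y$ by $\tilde{I}_{B_y}y+\tilde{I}_{B_y^c}(-1)$ using the local property of $f^{\ast}$ is exactly the ``typical technique from conditional risk measures'' the paper alludes to.
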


We consider the natural random duality pair $\langle L^{1}_{\mathcal{F}}(\mathcal{E}),L^{\infty}_{\mathcal{F}}(\mathcal{E})\rangle$, since $L^{1}_{\mathcal{F}}(\mathcal{E})$ and $L^{\infty}_{\mathcal{F}}(\mathcal{E})$ both have the countable concatenation property, a proper $L^{0}(\mathcal{F})$--convex function $f$ from $L^{\infty}_{\mathcal{F}}(\mathcal{E})$ to $\bar{L}^{0}(\mathcal{F})$ is $\sigma_{\varepsilon,\lambda}(L^{\infty}_{\mathcal{F}}(\mathcal{E}),L^{1}_{\mathcal{F}}(\mathcal{E}))$--lower semicontinuous iff it is $\sigma_{c}(L^{\infty}_{\mathcal{F}}(\mathcal{E}),L^{1}_{\mathcal{F}}(\mathcal{E}))$--lower semicontinuous. Further,  $(L^{\infty}_{\mathcal{F}}(\mathcal{E}),
$ $\sigma(L^{\infty}_{\mathcal{F}}(\mathcal{E}),L^{1}_{\mathcal{F}}(\mathcal{E})))^{\ast}_{\varepsilon,\lambda}=
(L^{\infty}_{\mathcal{F}}(\mathcal{E}),\sigma(L^{\infty}_{\mathcal{F}}(\mathcal{E}),L^{1}_{\mathcal{F}}(\mathcal{E})))^{\ast}_{c}= L^{1}_{\mathcal{F}}(\mathcal{E})$. Thus by Theorem 3.13 (or Theorem 4.5) and the similar techniques in \cite{FKV-appro}, we can obtain Theorem 4.17 below.

\begin{theorem} Let $f:L^{\infty}_{\mathcal{F}}(\mathcal{E})\rightarrow \bar{L}^{0}(\mathcal{F})$ be a $\sigma_{\varepsilon,\lambda}(L^{\infty}_{\mathcal{F}}(\mathcal{E}),L^{1}_{\mathcal{F}}(\mathcal{E}))$ (or equivalently, $\sigma_{c}(L^{\infty}_{\mathcal{F}}(\mathcal{E}),L^{1}_{\mathcal{F}}(\mathcal{E}))$)-lower semicontinuous $L^0(\mathcal{F})$--convex $L^{\infty}_{\mathcal{F}}(\mathcal{E})$-- conditional risk measure. Then $f(x)=\bigvee\{E(xy|\mathcal{F})-f^{\ast}(y)~|~ y\in L^{1}_{\mathcal{F}}(\mathcal{E}), y\leq0$ and $E(y|\mathcal{F})=-1\}$ for all $x\in L^{\infty}_{\mathcal{F}}(\mathcal{E})$.
\end{theorem}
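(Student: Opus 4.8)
The plan is to deduce the representation from the Fenchel--Moreau dual representation theorem already established in this paper, and then to shrink the dual domain from all of $L^1_{\mathcal F}(\mathcal E)$ down to the constrained set $\mathcal C:=\{y\in L^1_{\mathcal F}(\mathcal E)~|~y\leq 0~\hbox{and}~E(y|\mathcal F)=-1\}$ by using the two defining properties of a conditional risk measure. First I would fix the natural random duality pair $\langle L^\infty_{\mathcal F}(\mathcal E),L^1_{\mathcal F}(\mathcal E)\rangle$ under the pairing $\langle x,y\rangle=E(xy|\mathcal F)$, recalling from the paragraph preceding the statement that $(L^\infty_{\mathcal F}(\mathcal E),\sigma(L^\infty_{\mathcal F}(\mathcal E),L^1_{\mathcal F}(\mathcal E)))^\ast_c=L^1_{\mathcal F}(\mathcal E)$ and that, since both modules have the countable concatenation property, (2) of Theorem 4.4 makes the $\sigma_{\varepsilon,\lambda}$-- and $\sigma_c$--lower semicontinuity hypotheses equivalent. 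Applying Theorem 3.13 (equivalently Theorem 4.5) to the proper, $L^0(\mathcal F)$--convex, $\sigma_c$--lower semicontinuous function $f$ on this weakly topologized module then yields $f=f^{\ast\ast}$, that is, $f(x)=\bigvee\{E(xy|\mathcal F)-f^\ast(y)~|~y\in L^1_{\mathcal F}(\mathcal E)\}$ for all $x$, where $f^\ast(y)=\bigvee\{E(xy|\mathcal F)-f(x)~|~x\in L^\infty_{\mathcal F}(\mathcal E)\}$.

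The heart of the argument is to show that the penalty function $f^\ast$ is $+\infty$ off $\mathcal C$. Writing $A:=[f^\ast(y)<+\infty]\in\mathcal F$, I would exploit monotonicity by substituting $x\mapsto x+z$ with $z\geq 0$ in the definition of $f^\ast$: since $f(x+z)\leq f(x)$ and $x+z$ still ranges over $L^\infty_{\mathcal F}(\mathcal E)$, one obtains $f^\ast(y)\geq f^\ast(y)+E(zy|\mathcal F)$, whence $E(zy|\mathcal F)\leq 0$ on $A$ for every $z\geq 0$; testing against $z=\tilde I_B$ for $B\in\mathcal E$ forces $\tilde I_A y\leq 0$. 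Exploiting cash invariance by substituting $x\mapsto x-c$ with $c\in L^0(\mathcal F)$ gives, via $f(x-c)=f(x)+c$, the identity $f^\ast(y)=f^\ast(y)-c(E(y|\mathcal F)+1)$ for all such $c$, which forces $E(y|\mathcal F)=-1$ on $A$. Thus on the finiteness set $A$ every admissible $y$ already obeys the two constraints defining $\mathcal C$.

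Finally I would reassemble the representation over $\mathcal C$ from these local constraints, which is where the analytic skill highlighted in the abstract enters. Noting that $f^\ast$ is local (pull $\tilde I_A\geq 0$ into the supremum and use the $\mathcal F$--locality of $f$ guaranteed by (1) of Proposition 4.3) and hence that $y\mapsto E(xy|\mathcal F)-f^\ast(y)$ is local for each fixed $x$, I would, given an arbitrary $y$ with $A=[f^\ast(y)<+\infty]$, form $y':=\tilde I_A y+\tilde I_{A^c}(-1)$. Since $-1\in\mathcal C$ while $\tilde I_A y\leq 0$ and $\tilde I_A E(y|\mathcal F)=-\tilde I_A$, the concatenated element $y'$ lies in $\mathcal C$ (using the countable concatenation property of $L^1_{\mathcal F}(\mathcal E)$ and of $\mathcal C$), and locality gives $\tilde I_A(E(xy'|\mathcal F)-f^\ast(y'))=\tilde I_A(E(xy|\mathcal F)-f^\ast(y))$, while on $A^c$ the original term equals $-\infty$. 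Hence every term in the unrestricted supremum is dominated by one coming from $\mathcal C$, so the supremum may be taken over $\mathcal C$ alone, which is exactly the claimed formula. The main obstacle I anticipate is precisely this last patching step: the constraints $y\leq 0$ and $E(y|\mathcal F)=-1$ hold only where $f^\ast(y)$ is finite, so one cannot merely restrict the index set, and it is the combination of the local property of $f^\ast$ with the countable concatenation hull that legitimately repairs $y$ into an element of $\mathcal C$ without altering its contribution.
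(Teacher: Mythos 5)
Your proposal is correct and follows exactly the route the paper intends: the paper offers no written proof of this theorem beyond the remark that it follows ``by Theorem 3.13 (or Theorem 4.5) and the similar techniques in \cite{FKV-appro}'', and your argument --- biconjugation over the weak pairing $\langle L^{\infty}_{\mathcal{F}}(\mathcal{E}),L^{1}_{\mathcal{F}}(\mathcal{E})\rangle$, then using monotonicity and cash invariance to force $y\leq 0$ and $E(y|\mathcal{F})=-1$ on the finiteness set of $f^{\ast}$, and finally repairing a general $y$ into the constraint set by concatenating with $-1$ off that set via the local property of $f^{\ast}$ --- is precisely those techniques written out in full. The patching step you flag as the main obstacle is handled correctly and is indeed the point where the countable concatenation property and locality (the ``analytic skill'' the paper emphasizes) are genuinely needed.
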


\subsection{Extension theorem for $L^{\infty}$--conditional risk measures}

In the sequel of this paper $(\Omega,\mathcal{E},P)$ always denotes a probability space, $\mathcal{F}$ a $\sigma$--subalgebra of $\mathcal{E}$, $L^{\infty}(\mathcal{E}):=L^{\infty}(\Omega,\mathcal{E},P)$ and $L^{\infty}(\mathcal{F}):=L^{\infty}(\Omega,\mathcal{F},P)$.

\begin{definition} (See \cite{Nadal,DS}.) A function $f:L^{\infty}(\mathcal{E})\rightarrow L^{\infty}(\mathcal{F})$ is said to be:

\noindent (1). monotone if $f(x)\leq f(y)$ for all $x,y\in L^{\infty}(\mathcal{E})$ such that $x\geq y$;

\noindent (2). cash invariant if $f(x+y)=f(x)-y$ for all $x\in L^{\infty}(\mathcal{E})$ and $y\in L^{\infty}(\mathcal{F})$.

\noindent Furthermore, a monotone and cash invariant function from $L^{\infty}(\mathcal{E})$ to $L^{\infty}(\mathcal{F})$ is called an $L^{\infty}$--conditional risk measure.
\end{definition}

Let $\mathcal{P}$ be the set of all the probability measures $Q$ on $(\Omega,\mathcal{E})$ such that $Q$ is absolutely continuous with respect to $P$ on $\mathcal{E}$ and $\mathcal{P}_{\mathcal{F}}=\{Q\in \mathcal{P}~|~Q=P$ on $\mathcal{F}\}$.

For an $L^{\infty}$--conditional risk measure $f:L^{\infty}(\mathcal{E})\rightarrow L^{\infty}(\mathcal{F})$, $\alpha:\mathcal{P}_{\mathcal{F}}\rightarrow \bar{L}^{0}(\mathcal{F})$ defined by $\alpha(Q)=\bigvee\{E_{Q}(-x|\mathcal{F})-f(x):x\in L^{\infty}(\mathcal{E})\}$ for all $Q\in \mathcal{P}_{\mathcal{F}}$, is called the random penalty function of $f$, where $E_{Q}(\cdot|\mathcal{F})$ denotes the conditional expectation given $\mathcal{F}$ under $Q$. The following representation theorem is known.

\begin{theorem} ($See$ \cite{Nadal,DS}.) Let $f:L^{\infty}(\mathcal{E})\rightarrow L^{\infty}(\mathcal{F})$ be an $L^{0}(\mathcal{F})$--convex $L^{\infty}$--conditional risk measure. Then the following statements are equivalent:

\noindent (1). $f$ is continuous from above, namely $f(x_n)\nearrow f(x)$ whenever $x_n\searrow x$;

\noindent (2). $f$ has the ``Fatou property": for any bounded sequence $\{x_n,n\in N\}$ which converges P-a.s. to some $x$, then $f(x)\leq \underline{lim}_{n}f(x_n)$;

\noindent (3). $f(x)=\bigvee\{E_{Q}(-x|\mathcal{F})-\alpha(Q)~|~Q\in\mathcal{P}_{\mathcal{F}}\}$ for all $x\in L^{\infty}(\mathcal{E})$.
\end{theorem}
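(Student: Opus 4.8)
The plan is to derive this classical result from the module-level representation Theorem 4.17 by extending $f$ to the model space $L^\infty_{\mathcal{F}}(\mathcal{E})$ and identifying the dual variables with measures in $\mathcal{P}_{\mathcal{F}}$. I would first dispose of the equivalence (1)$\Leftrightarrow$(2), which is elementary and uses only monotonicity. For (1)$\Rightarrow$(2), given a norm-bounded sequence $\{x_n,n\in N\}$ with $x_n\to x$ a.s., I set $y_m=\bigvee_{n\geq m}x_n\in L^\infty(\mathcal{E})$, so that $y_m\searrow x$; monotonicity gives $f(x_n)\geq f(y_m)$ for $n\geq m$, and continuity from above gives $f(y_m)\nearrow f(x)$, whence $f(x)\leq\varliminf_n f(x_n)$. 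For (2)$\Rightarrow$(1), if $x_n\searrow x$ then $\{f(x_n),n\in N\}$ is nondecreasing and bounded above by $f(x)$, while the Fatou property forces $f(x)\leq\varliminf_n f(x_n)=\lim_n f(x_n)$, so $f(x_n)\nearrow f(x)$.

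For (3)$\Rightarrow$(2) I would observe that each affine map $x\mapsto E_Q(-x|\mathcal{F})-\alpha(Q)$ has the Fatou property, indeed with equality: for a bounded a.s.-convergent sequence, conditional dominated convergence gives $E_Q(-x_n|\mathcal{F})\to E_Q(-x|\mathcal{F})$. Since the Fatou inequality is stable under suprema in the complete lattice $\bar{L}^0(\mathcal{F})$, the function $f=\bigvee_{Q}(E_Q(-\cdot|\mathcal{F})-\alpha(Q))$ inherits it.

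The substance lies in (2)$\Rightarrow$(3). Since $f$ is $L^0(\mathcal{F})$--convex it is $\mathcal{F}$--local (Proposition 4.3(1)); using $L^\infty_{\mathcal{F}}(\mathcal{E})=H_{cc}(L^\infty(\mathcal{E}))$ and Lemma 3.16(1), I would first define the canonical local extension $\bar{f}\colon L^\infty_{\mathcal{F}}(\mathcal{E})\to\bar{L}^0(\mathcal{F})$ by $\bar{f}(x)=\sum_{n=1}^\infty\tilde{I}_{A_n}f(x_n)$ for any canonical representation $x=\sum_{n=1}^\infty\tilde{I}_{A_n}x_n$ with $x_n\in L^\infty(\mathcal{E})$, locality of $f$ making this independent of the representation; one checks directly that $\bar{f}$ is an $L^0(\mathcal{F})$--convex $L^\infty_{\mathcal{F}}(\mathcal{E})$--conditional risk measure restricting to $f$. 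The key step is to upgrade the Fatou property of $f$ to the $\sigma_{\varepsilon,\lambda}(L^\infty_{\mathcal{F}}(\mathcal{E}),L^1_{\mathcal{F}}(\mathcal{E}))$-- (equivalently $\sigma_c$--) lower semicontinuity of $\bar{f}$ required by Theorem 4.17. Granting this, Theorem 4.17 yields
$$\bar{f}(x)=\bigvee\{E(xy|\mathcal{F})-\bar{f}^{\ast}(y)~|~y\in L^1_{\mathcal{F}}(\mathcal{E}),\ y\leq 0,\ E(y|\mathcal{F})=-1\}.$$
I would then exploit the bijection $y\leftrightarrow Q$ given by $-y=dQ/dP$: since $E(-y|\mathcal{F})=1$, the measure $Q$ is a probability measure with $Q=P$ on $\mathcal{F}$, i.e.\ $Q\in\mathcal{P}_{\mathcal{F}}$, and conversely every $Q\in\mathcal{P}_{\mathcal{F}}$ has density $dQ/dP\in L^1_{\mathcal{F}}(\mathcal{E})$ because its conditional expectation equals $1$. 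The conditional Bayes formula gives $E(xy|\mathcal{F})=E_Q(-x|\mathcal{F})$, and since both $\bar{f}$ and $x\mapsto E(xy|\mathcal{F})$ are local, Lemma 3.16(2) together with $L^\infty_{\mathcal{F}}(\mathcal{E})=H_{cc}(L^\infty(\mathcal{E}))$ lets me replace the supremum defining $\bar{f}^{\ast}(y)$ over $L^\infty_{\mathcal{F}}(\mathcal{E})$ by the supremum over $L^\infty(\mathcal{E})$, identifying $\bar{f}^{\ast}(y)=\alpha(Q)$. Restricting the displayed formula to $x\in L^\infty(\mathcal{E})$ then produces exactly (3).

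The main obstacle is the lower-semicontinuity transfer in (2)$\Rightarrow$(3): the Fatou property concerns bounded a.s.-convergent sequences, whereas Theorem 4.17 demands closedness of the sublevel sets of $\bar{f}$ in the weak module topology. In the scalar case this is resolved by the Krein--Smulian/Dieudonn\'{e} theorem, testing weak-$\ast$ closedness on norm balls; I expect the delicate point to be producing a conditional, $L^0$-valued version valid for the stratified sublevel sets of $\bar{f}$. Here the countable-concatenation skill emphasized in the paper --- decomposing along an $\mathcal{F}$--partition into pieces on which $\bar{f}$ is bounded, running the bounded-sequence argument stratum by stratum, and reassembling by concatenation --- should be precisely what makes the argument go through.
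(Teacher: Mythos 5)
First, a point of reference: the paper does not prove this statement at all --- it is imported verbatim from Bion--Nadal and Detlefsen--Scandolo and used as a black box (the proof of Theorem 4.25 opens with ``(1)$\Leftrightarrow$(2)$\Leftrightarrow$(3) is just Theorem 4.19''). You are therefore attempting something the paper deliberately sidesteps, namely re-deriving the classical conditional representation from the module machinery. Much of your plan is sound: (1)$\Leftrightarrow$(2) and (3)$\Rightarrow$(2) are correct and elementary, and the back end of your (2)$\Rightarrow$(3) --- the local extension $\bar f$, the identification of $\{y\in L^1_{\mathcal{F}}(\mathcal{E}):y\leq 0,\ E(y|\mathcal{F})=-1\}$ with $\mathcal{P}_{\mathcal{F}}$ via densities, the Bayes formula $E(xy|\mathcal{F})=E_Q(-x|\mathcal{F})$, and the identity $\bar f^{\ast}(y)=\alpha(Q)$ via Lemma 3.16(2) and $L^{\infty}_{\mathcal{F}}(\mathcal{E})=H_{cc}(L^{\infty}(\mathcal{E}))$ --- all checks out and mirrors the paper's own Lemmas 4.22 and 4.24 and the remark following Theorem 4.19.

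The genuine gap is the step you yourself flag and then defer: passing from the Fatou property of $f$ to the $\sigma_{\varepsilon,\lambda}(L^{\infty}_{\mathcal{F}}(\mathcal{E}),L^{1}_{\mathcal{F}}(\mathcal{E}))$--lower semicontinuity of $\bar f$ required by Theorem 4.17. This is not a technicality that the countable concatenation skill can be ``expected'' to handle; it is the entire analytic content of (2)$\Rightarrow$(3). Classically it is settled by the Krein--\v{S}mulian theorem (a convex subset of $L^{\infty}$ is weak${}^{\ast}$ closed iff its intersection with every norm ball is, and on bounded convex sets closedness in probability suffices), and the paper supplies no module-level substitute --- on the contrary, it explicitly warns that the Banach--Alaoglu theorem fails for $RN$ modules under both topologies, so the compactness input behind Krein--\v{S}mulian is unavailable in the stratified setting. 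Worse, any attempt to fill the hole using the paper's own Theorem 4.25 is circular: there the weak lower semicontinuity (6) is deduced from the representation (5), which comes from (4), which comes from (3) --- that is, from the very classical theorem you are trying to prove; the paper never establishes (2)$\Rightarrow$(6) directly. Until you give an actual argument that the Fatou property forces the sublevel sets of $\bar f$ (or of $f$, followed by a concatenation step) to be closed in the relevant weak topology, the proof is incomplete precisely at its crux.
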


For an $L^{0}(\mathcal(F))$--convex $L^{\infty}$--conditional risk measure $f:L^{\infty}(\mathcal{E})\rightarrow L^{\infty}(\mathcal{F})$, $f^{\ast}:L^{1}_{\mathcal{F}}(\mathcal{E})\rightarrow \bar{L}^{0}(\mathcal{F})$ defined by $f^{\ast}(y)=\bigvee\{E(xy|\mathcal{F})-f(x)~|~x\in L^{\infty}(\mathcal{E})\}$ for all $y\in L^{1}_{\mathcal{F}}(\mathcal{E})$, is called the random conjugate function of $f$, where $E(\cdot|\mathcal{F})$ denotes the conditional expectation given $\mathcal{F}$ under $P$.

By identifying $\mathcal{P}_{\mathcal{F}}$ with $\{y~|~y\in L^{1}_{+}(\mathcal{E}),E(y|\mathcal{F})=1\}$, then (3) of Theorem 4.19 amounts to the following (4).

(4). $f(x)=\bigvee\{E(xy|\mathcal{F})-f^{\ast}(y)~|~y\in L^{1}(\mathcal{E}),y\leq 0$ and $E(y|\mathcal{F})=-1\}$.

\begin{theorem} (\cite{TXG-reflesive,TXG-SBL}.) Let $(E,\|\cdot\|)$ be an $RN$ module over $K$ with base $(\Omega,\mathcal{F},P)$ and $1\leq p\leq +\infty$. Let $L^{p}(E)=\{x\in E~|~\|x\|_{p}<+\infty\}$, where $\|\cdot\|_{p}:E\rightarrow[0,+\infty]$ is defined by

\[ \|x\|_{p}=\left\{
   \begin{array}{ll}
   (\int_{\Omega} \|x\|^{p}dP)^{\frac{1}{p}},    &\mbox{when $1\leq p<+\infty$};\\
    inf\{M\in[0,+\infty]~|~\|x\|\leq M\},  &\mbox{when $p=+\infty$,}
   \end{array}
   \right.
\]\\
for all $x\in E$.

\noindent Then $(L^{p}(E),\|\cdot\|_{p})$ is a normed space and $L^{p}(E)$ is $\mathcal{T}_{\varepsilon,\lambda}$--dense in $E$.
\end{theorem}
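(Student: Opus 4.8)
The plan is to establish the two assertions in turn: first that $(L^{p}(E),\|\cdot\|_p)$ is a normed space, and then that $L^{p}(E)$ is $\mathcal{T}_{\varepsilon,\lambda}$--dense in $E$.

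First I would observe that $\|\cdot\|_p$ is simply the composition of the $L^0$--norm $\|\cdot\|\colon E\to L^0_+(\mathcal{F})$ with the classical $L^p$--functional applied to the nonnegative scalar random variable $\|x\|$; consequently the axioms ($RN$--1)--($RN$--3) together with ($RNM$--1) transfer directly. Concretely, homogeneity $\|\alpha x\|_p=|\alpha|\,\|x\|_p$ is immediate from ($RN$--1) and the scalar homogeneity of the $L^p$--functional, while the triangle inequality is obtained by combining ($RN$--3), namely $\|x+y\|\leq\|x\|+\|y\|$ in $L^0_+(\mathcal{F})$, with the monotonicity of the scalar $L^p$--norm and the classical Minkowski inequality, yielding $\|x+y\|_p\leq\|x\|_p+\|y\|_p$. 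These two estimates also show that $L^{p}(E)$ is closed under scalar multiplication and addition, hence is a linear subspace of $E$ on which $\|\cdot\|_p$ is finite by definition. Finally, positive definiteness follows from ($RN$--2): if $\|x\|_p=0$ then $\|x\|=0$ in $L^0_+(\mathcal{F})$ (for $p=+\infty$ the essential supremum of $\|x\|$ vanishes), so $x=\theta$. Thus $(L^{p}(E),\|\cdot\|_p)$ is a genuine normed space.

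For density the essential device is truncation through the module structure. Given $x\in E$, put $A_n=[\|x\|\leq n]\in\mathcal{F}$ and set $x_n:=\tilde{I}_{A_n}x\in E$. By ($RNM$--1) one has $\|x_n\|=\tilde{I}_{A_n}\|x\|\leq n$, so $\|x_n\|_p\leq n<+\infty$ (on a probability space $\|\cdot\|_p\leq\|\cdot\|_\infty$), i.e.\ every $x_n$ lies in $L^\infty(E)\subseteq L^{p}(E)$. Next I would estimate $x-x_n=\tilde{I}_{A_n^c}x$: since $\|x-x_n\|=\tilde{I}_{A_n^c}\|x\|$ vanishes off $A_n^c=[\|x\|>n]$, we obtain $[\,\|x-x_n\|\geq\varepsilon\,]\subseteq A_n^c$ for every $\varepsilon>0$, whence $P\{\|x-x_n\|\geq\varepsilon\}\leq P\{\|x\|>n\}$. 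Because $\|x\|\in L^0_+(\mathcal{F})$ is almost surely finite, $P\{\|x\|>n\}\to0$ as $n\to\infty$; hence for any prescribed $\varepsilon>0$ and $0<\lambda<1$ there is some $n$ with $x-x_n\in N_{\theta}(\{\|\cdot\|\},\varepsilon,\lambda)$. Therefore $x_n\to x$ in $\mathcal{T}_{\varepsilon,\lambda}$, and $L^{p}(E)$ is $\mathcal{T}_{\varepsilon,\lambda}$--dense in $E$.

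The argument is almost entirely routine; its only real content is the recognition that the $L^0(\mathcal{F},K)$--module structure supplies the approximating sequence via multiplication by the indicator classes $\tilde{I}_{A_n}$, a device unavailable in the classical normed--space setting. I expect the two points needing mild care to be the transfer of Minkowski's inequality through the random norm (one must first invoke monotonicity of the scalar $L^p$--functional on $L^0_+(\mathcal{F})$ and only then apply Minkowski) and the almost sure finiteness of $\|x\|$, which is exactly what forces $P\{\|x\|>n\}\to0$ and hence convergence in the $(\varepsilon,\lambda)$--topology. Neither is a genuine obstacle, and the truncation even yields the slightly sharper fact that the approximants may be chosen in $L^\infty(E)$.
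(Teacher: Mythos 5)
Your proof is correct, and since the paper simply imports this result from \cite{TXG-reflesive,TXG-SBL} without reproving it, there is nothing in the text to diverge from: the truncation $x_n=\tilde{I}_{[\|x\|\leq n]}x$ combined with the a.s.\ finiteness of $\|x\|$ is exactly the standard argument used in those references, and your verification of the norm axioms via monotonicity of the scalar $L^p$--functional plus Minkowski is the routine one. No gaps.
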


\begin{theorem} Let $f:L^{\infty}(\mathcal{E})\rightarrow L^{\infty}(\mathcal{F})$ be an $L^{\infty}$--conditional risk measure. Then there is a unique $L^{\infty}_{\mathcal{F}}(\mathcal{E})$--conditional risk measure $\bar{f}:L^{\infty}_{\mathcal{F}}(\mathcal{E})\rightarrow L^0(\cal{F})$ such that $|\bar{f}(x)-\bar{f}(y)|\leq |||x-y|||_{\infty}$ for all $x,y\in L^{\infty}_{\mathcal{F}}(\mathcal{E})$ and $\bar{f}|_{L^{\infty}(\mathcal{E})}=f$.
\end{theorem}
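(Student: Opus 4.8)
The plan is to first extract from the two defining properties of $f$ a contraction (Lipschitz) estimate with respect to $|||\cdot|||_{\infty}$, and then to transport $f$ to all of $L^{\infty}_{\mathcal{F}}(\mathcal{E})$ by a countable-concatenation (patching) argument based on the resulting local property, exactly the skill emphasized in the Introduction.

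First I would prove the key lemma: for all $x,y\in L^{\infty}(\mathcal{E})$ one has $|f(x)-f(y)|\leq|||x-y|||_{\infty}$. Writing $\eta=|||x-y|||_{\infty}$, which lies in $L^{\infty}(\mathcal{F})$ because $x,y$ are bounded, the inequality $x\leq y+\eta$ and monotonicity give $f(y+\eta)\leq f(x)$, while cash invariance gives $f(y+\eta)=f(y)-\eta$; hence $f(y)-f(x)\leq\eta$, and by symmetry $|f(x)-f(y)|\leq\eta$. An immediate consequence is that $f$ is local on $L^{\infty}(\mathcal{E})$: if $\tilde{I}_{A}x=\tilde{I}_{A}x'$ then $|||x-x'|||_{\infty}$ vanishes on $A$, so $\tilde{I}_{A}f(x)=\tilde{I}_{A}f(x')$; taking $x'=\tilde{I}_{A}x$ yields $\tilde{I}_{A}f(x)=\tilde{I}_{A}f(\tilde{I}_{A}x)$ for every $A\in\mathcal{F}$.

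Next I would construct $\bar{f}$. Since $L^{\infty}_{\mathcal{F}}(\mathcal{E})=L^{0}(\mathcal{F})\cdot L^{\infty}(\mathcal{E})$ by Example 2.8, every $z\in L^{\infty}_{\mathcal{F}}(\mathcal{E})$ is a countable patching of bounded elements: with $A_{n}=[\,n-1\leq|||z|||_{\infty}<n\,]\in\mathcal{F}$ one has $\tilde{I}_{A_{n}}z\in L^{\infty}(\mathcal{E})$ for each $n$. I then set $\bar{f}(z)=\sum_{n}\tilde{I}_{A_{n}}f(\tilde{I}_{A_{n}}z)\in L^{0}(\mathcal{F})$. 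The point I expect to be the main technical obstacle is well-definedness, i.e. independence of the admissible partition: given a second such partition $\{B_{m}\}$, on each $A_{n}\cap B_{m}$ the local property of $f$ forces $\tilde{I}_{A_{n}\cap B_{m}}f(\tilde{I}_{A_{n}}z)=\tilde{I}_{A_{n}\cap B_{m}}f(\tilde{I}_{A_{n}\cap B_{m}}z)=\tilde{I}_{A_{n}\cap B_{m}}f(\tilde{I}_{B_{m}}z)$, so the two candidate values agree on the common refinement, hence everywhere. Taking the trivial partition $A_{1}=\Omega$ shows $\bar{f}|_{L^{\infty}(\mathcal{E})}=f$.

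Finally I would verify the required properties by reducing each to $f$ on the pieces of a partition refined so that all relevant slices are bounded (refining by $|||z|||_{\infty}\vee|||w|||_{\infty}$, and by $|||z|||_{\infty}\vee|\eta|$ when $\eta\in L^{0}(\mathcal{F})$). The contraction estimate $|\bar{f}(z)-\bar{f}(w)|\leq|||z-w|||_{\infty}$ follows slice-by-slice from the key lemma; monotonicity follows from that of $f$ applied to $\tilde{I}_{A_{n}}z\geq\tilde{I}_{A_{n}}w$; and cash invariance from that of $f$ together with $\tilde{I}_{A_{n}}\eta\in L^{\infty}(\mathcal{F})$. This makes $\bar{f}$ a proper, monotone, cash-invariant map into $L^{0}(\mathcal{F})$, i.e. an $L^{\infty}_{\mathcal{F}}(\mathcal{E})$-conditional risk measure in the sense of Definition 4.15. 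For uniqueness, any competitor $g$ satisfying the contraction estimate is itself local by the same argument used for $f$, so for every $z$ and admissible partition $\tilde{I}_{A_{n}}g(z)=\tilde{I}_{A_{n}}g(\tilde{I}_{A_{n}}z)=\tilde{I}_{A_{n}}f(\tilde{I}_{A_{n}}z)$, which forces $g=\bar{f}$. The genuine crux is the passage from ``monotone and cash invariant'' to the contraction estimate, as every step downstream is the countable-concatenation and local-property bookkeeping recorded in Lemma 3.16.
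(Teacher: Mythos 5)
Your proposal is correct, and the key lemma — deriving the contraction estimate $|f(x)-f(y)|\leq|||x-y|||_{\infty}$ from monotonicity and cash invariance — is exactly the one the paper uses. Where you genuinely diverge is in how the extension is produced. The paper notes that the contraction estimate makes $f$ uniformly $\mathcal{T}_{\varepsilon,\lambda}$--continuous from $(L^{\infty}(\mathcal{E}),|||\cdot|||_{\infty})$ into $(L^{0}(\mathcal{F}),|\cdot|)$, invokes Theorem 4.20 to get that $L^{\infty}(\mathcal{E})=L^{\infty}(L^{\infty}_{\mathcal{F}}(\mathcal{E}))$ is $\mathcal{T}_{\varepsilon,\lambda}$--dense in $L^{\infty}_{\mathcal{F}}(\mathcal{E})$, and then extends by continuity (using completeness of $L^{0}(\mathcal{F})$), only afterwards using $L^{\infty}_{\mathcal{F}}(\mathcal{E})=H_{cc}(L^{\infty}(\mathcal{E}))$ (Lemma 4.22) to check monotonicity and cash invariance of $\bar f$. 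You instead build $\bar f$ explicitly by patching $f$ over the partition $A_{n}=[\,n-1\leq|||z|||_{\infty}<n\,]$ and prove well-definedness from the local property, which is precisely the constructive scheme the paper reserves for Theorem 4.28 in the $L^{p}$ case, where uniform continuity is unavailable. Your route is more elementary — it needs neither the density result nor completeness of the target, and it makes explicit the locality of $f$ (which the paper asserts without proof) — while the paper's route gets existence and the contraction bound for $\bar f$ in one stroke from standard extension of uniformly continuous maps and makes the uniqueness transparent via density. Both arguments are sound; your uniqueness step (any competitor satisfying the contraction estimate is local, hence determined on each $A_{n}$ by its restriction to $L^{\infty}(\mathcal{E})$) is a valid substitute for the paper's density argument.
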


\begin{proof} Let us first recall the $L^{0}$--norm $|||\cdot|||_{\infty}:L^{\infty}_{\mathcal{F}}(\mathcal{E})\rightarrow L^0_+(\cal{F})$, which is defined by $|||x|||_{\infty}=\bigwedge\{\xi\in \bar{L}^{0}_{+}(\mathcal{F})~|~|x|\leq\xi\}$ for all $x\in L^{\infty}_{\mathcal{F}}(\mathcal{E})$, then it is obvious that $|||x|||_{\infty}\in L^{\infty}_{+}(\mathcal{F})$ for all $x\in L^{\infty}(\mathcal{E})$.

Since $x=y+x-y\leq y+|x-y|\leq y+|||x-y|||_{\infty}$ for all $x,y\in L^{\infty}(\mathcal{E})$, $f(x)\geq f(y+|||x-y|||_{\infty})=f(y)-|||x-y|||_{\infty}$, namely $f(y)-f(x)\leq|||x-y|||_{\infty}$, which shows that $|f(y)-f(x)|\leq|||x-y|||_{\infty}$ for all $x,y\in L^{\infty}(\mathcal{E})$.

Since $L^{\infty}(\mathcal{E})=L^{\infty}(L^{\infty}_{\mathcal{F}}(\mathcal{E}))$, $L^{\infty}(\mathcal{E})$ is $\mathcal{T}_{\varepsilon,\lambda}$--dense in $L^{\infty}_{\mathcal{F}}(\mathcal{E})$ by Theorem 4.20. Further, it is clear that $f$ is uniformly $\mathcal{T}_{\varepsilon,\lambda}$--continuous from $(L^{\infty}(\mathcal{E}),|||\cdot|||_{\infty})$ to $(L^{\infty}(\mathcal{F}),|\cdot|)$ (namely $L^{\infty}(\mathcal{E})$ is regarded as a subspace of $(L^{\infty}_{\mathcal{F}}(\mathcal{E}),|||\cdot|||_{\infty})$ and $L^{\infty}({\mathcal{F}})$ as a subspace of $(L^{0}(\mathcal{F}),|\cdot|)$). Thus $f$ has a unique extension $\bar{f}:L^{\infty}_{\mathcal{F}}(\mathcal{E})\rightarrow L^{0}(\mathcal{F})$ such that $|\bar{f}(x)-\bar{f}(y)|\leq|||x-y|||_{\infty}$ for all $x,y\in L^{\infty}_{\mathcal{F}}(\mathcal{E})$. Since $f$ has the $\mathcal{F}$--local property, $\bar{f}$ must have this property. Further, by Lemma 4.22 below every $x\in L^{\infty}_{\mathcal{F}}(\mathcal{E})$ can be expressed as $x=\Sigma_{n=1}^{\infty}\tilde{I}_{A_n}x_n$ for some countable partition $\{A_n,n\in N\}$ of $\Omega$ to $\mathcal{F}$ and some sequence $\{x_n,n\in N\}$ in $L^{\infty}(\mathcal{E})$ and every $y\in L^{0}(\mathcal{F})$ as $y=\Sigma_{n=1}^{\infty}\tilde{I}_{B_n}y_n$ for some countable partition $\{B_n,n\in N\}$ of $\Omega$ to $\cal{F}$ and some sequence $\{y_n,n\in N\}$ in $L^{\infty}(\cal{F})$, where the first series converges in $\mathcal{T}_{\varepsilon,\lambda}$ in $(L^{\infty}(\mathcal{E}),|||\cdot|||_{\infty})$ and the second does in $(L^{0}(\mathcal{F}),|\cdot|)$. Thus one can easily see that $\bar{f}$ is monotone and cash invariant in the sense of Definition 4.15.  \hfill $\square$
\end{proof}

In Theorem 4.21, when $f$ is $L^{0}(\mathcal{F})$--convex, it is clear that $\bar{f}$ is also $L^{0}(\mathcal{F})$--convex.

Lemma 4.22 below is a stronger proposition than the fact that $L^p(\mathcal{E})$ is $\mathcal{T}_{\varepsilon,\lambda}$--dense in $(L^{p}_{\mathcal{F}}(\mathcal{E}),|||\cdot|||_p)$, which will be used in the proofs of Theorems 4.25 and 4.31 below.

\begin{lemma} Let $1\leq p\leq+\infty$, then $L^{p}_{\mathcal{F}}(\mathcal{E})=H_{cc}(L^{p}(\mathcal{E}))$.\end{lemma}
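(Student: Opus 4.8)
The plan is to prove the equality by establishing the two inclusions separately, using only the definition of $|||\cdot|||_p$ from Example 2.8, the notion of countable concatenation hull from Definition 2.23, and the \emph{local property} of an $L^0$-norm. Before starting I would record two preliminary facts. First, $L^p(\mathcal{E})\subseteq L^p_{\mathcal{F}}(\mathcal{E})$: if $x\in L^p(\mathcal{E})$ then $E[|x|^p]<+\infty$ (or $|x|\le\|x\|_\infty$ when $p=+\infty$), so $|||x|||_p\in L^0_+(\mathcal{F})$. Second, $|||\cdot|||_p$ is local, i.e.\ $\tilde{I}_A|||x|||_p=|||\tilde{I}_Ax|||_p$ for every $A\in\mathcal{F}$; this is immediate from the $L^0(\mathcal{F})$-homogeneity of the $L^0$-norm applied to the multiplier $\tilde{I}_A$. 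I would also note that any countable concatenation $\sum_{n=1}^{\infty}\tilde{I}_{A_n}g_n$ is a well-defined element of $L^0(\mathcal{E})$.

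For the inclusion $H_{cc}(L^p(\mathcal{E}))\subseteq L^p_{\mathcal{F}}(\mathcal{E})$, I would take $x=\sum_{n=1}^{\infty}\tilde{I}_{A_n}g_n$ with $\{A_n,n\in N\}$ a countable $\mathcal{F}$-partition of $\Omega$ and each $g_n\in L^p(\mathcal{E})$. Applying the local property gives $\tilde{I}_{A_n}|||x|||_p=|||\tilde{I}_{A_n}x|||_p=|||\tilde{I}_{A_n}g_n|||_p=\tilde{I}_{A_n}|||g_n|||_p$, which is finite a.s.\ since $g_n\in L^p(\mathcal{E})\subseteq L^p_{\mathcal{F}}(\mathcal{E})$. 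Because $\{A_n\}$ covers $\Omega$, $|||x|||_p$ is finite a.s., so $|||x|||_p\in L^0_+(\mathcal{F})$ and $x\in L^p_{\mathcal{F}}(\mathcal{E})$.

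For the reverse inclusion $L^p_{\mathcal{F}}(\mathcal{E})\subseteq H_{cc}(L^p(\mathcal{E}))$, I would take $x\in L^p_{\mathcal{F}}(\mathcal{E})$, so that $|||x|||_p\in L^0_+(\mathcal{F})$ is finite a.s., and slice $\Omega$ by the $\mathcal{F}$-measurable level sets $A_n=[\,n-1\le |||x|||_p<n\,]$, which are disjoint and cover $\Omega$ by finiteness. Setting $g_n=\tilde{I}_{A_n}x$, the claim is that each $g_n\in L^p(\mathcal{E})$. When $1\le p<+\infty$ the tower property yields $\int_\Omega|g_n|^p\,dP=E[\tilde{I}_{A_n}|x|^p]=E[\tilde{I}_{A_n}E[|x|^p|\mathcal{F}]]=E[\tilde{I}_{A_n}|||x|||_p^{\,p}]\le n^pP(A_n)<+\infty$; when $p=+\infty$, since $|||x|||_\infty$ is an $\mathcal{F}$-measurable upper bound of $|x|$, we get $|g_n|=\tilde{I}_{A_n}|x|\le\tilde{I}_{A_n}|||x|||_\infty<n$, so $g_n\in L^\infty(\mathcal{E})$. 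Finally $x=\sum_{n=1}^{\infty}\tilde{I}_{A_n}x=\sum_{n=1}^{\infty}\tilde{I}_{A_n}g_n\in H_{cc}(L^p(\mathcal{E}))$.

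I expect the only delicate points to be the two regime-dependent estimates rather than anything structural: for $1\le p<+\infty$ the interchange $E[\tilde{I}_{A_n}|x|^p]=E[\tilde{I}_{A_n}|||x|||_p^{\,p}]$ through the tower property (legitimate precisely because $A_n\in\mathcal{F}$), and for $p=+\infty$ the fact that the conditional essential supremum $|||x|||_\infty$ is a genuine a.s.\ upper bound of $|x|$ (which holds because the defining family $\{\xi\in\bar{L}^0_+(\mathcal{F}):|x|\le\xi\}$ is downward directed, so its infimum is approached from above by a decreasing dominating sequence). The main obstacle is therefore nothing deep, only the bookkeeping needed to keep the cases $1\le p<+\infty$ and $p=+\infty$ uniform; alternatively one could bypass the direct truncation by invoking the representation $L^p_{\mathcal{F}}(\mathcal{E})=L^0(\mathcal{F})\cdot L^p(\mathcal{E})$ of Example 2.8, writing $x=\xi g$ and slicing on $A_n=[\,n-1\le|\xi|<n\,]$ instead.
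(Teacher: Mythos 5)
Your proof is correct, but your main route to the inclusion $L^p_{\mathcal{F}}(\mathcal{E})\subseteq H_{cc}(L^p(\mathcal{E}))$ differs from the paper's. The paper invokes the factorization $L^p_{\mathcal{F}}(\mathcal{E})=L^0(\mathcal{F})\cdot L^p(\mathcal{E})$ recorded in Example 2.8, writes $x=\xi g$ with $\xi\in L^0(\mathcal{F})$ and $g\in L^p(\mathcal{E})$, and slices $\Omega$ by the level sets $A_n=\{\,n-1\leq|\xi^0|<n\,\}$ of the multiplier, so that each piece $\tilde{I}_{A_n}\xi g$ lies in $L^p(\mathcal{E})$ because $\tilde{I}_{A_n}\xi$ is bounded; this is exactly the alternative you mention in your last sentence. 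You instead slice by the level sets of the $L^0$--norm $|||x|||_p$ itself and verify membership of each truncation $\tilde{I}_{A_n}x$ in $L^p(\mathcal{E})$ by hand, via the tower property $E[\tilde{I}_{A_n}|x|^p]=E[\tilde{I}_{A_n}|||x|||_p^{\,p}]\leq n^pP(A_n)$ when $p<+\infty$ and via the a.s.\ bound $|x|\leq|||x|||_\infty$ when $p=+\infty$ (both estimates check out; the downward-directedness argument for the conditional essential supremum is the right justification). What your approach buys is self-containedness: it uses only the definition of $|||\cdot|||_p$ and does not lean on the unproved factorization from Example 2.8. What it costs is the case split between $1\leq p<+\infty$ and $p=+\infty$, which the paper's one-line argument avoids entirely since boundedness of $\tilde{I}_{A_n}\xi$ works uniformly in $p$. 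For the easy inclusion $H_{cc}(L^p(\mathcal{E}))\subseteq L^p_{\mathcal{F}}(\mathcal{E})$ the two arguments are essentially the same; the paper declares it obvious from the countable concatenation property of $L^p_{\mathcal{F}}(\mathcal{E})$, and your locality computation $\tilde{I}_{A_n}|||x|||_p=\tilde{I}_{A_n}|||g_n|||_p$ is just that property made explicit.
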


\begin{proof} Since $L^{p}_{\mathcal{F}}(\mathcal{E})$ has the countable concatenation property, $H_{cc}(L^{p}(\mathcal{E}))\subset L^{p}_{\mathcal{F}}(\mathcal{E})$ is obvious. Conversely, since $L^{p}_{\mathcal{F}}(\mathcal{E})=L^{0}(\mathcal{F})\cdot L^{p}(\mathcal{E}):=\{\xi g~|~\xi\in L^{0}(\mathcal{F})$ and $g\in L^{p}(\mathcal{E})\}$, for any $x=\xi g\in L^{p}_{\mathcal{F}}(\mathcal{E})$ for some $\xi\in L^{0}(\mathcal{F})$ and $g\in L^{p}(\mathcal{E})$, let $\xi^{0}$ be any chosen representative of $\xi$ and $A_n=\{\omega\in\Omega~|~n-1\leq|\xi^{0}(\omega)|<n\}$ for each $n\in N$, then it is clear that $\xi=\Sigma_{n=1}^{\infty}\tilde{I}_{A_n}\xi$, and hence $x=\xi g=(\Sigma_{n=1}^{\infty}\tilde{I}_{A_n}\xi)g=\Sigma_{n=1}^{\infty}\tilde{I}_{A_n}(\tilde{I}_{A_n}\xi g)\in H_{cc}(L^{p}(\mathcal{E}))$ by an easy observation that each $\tilde{I}_{A_n}\xi g\in L^{p}(\mathcal{E})$.  \hfill $\square$
\end{proof}

\begin{remark} Let $x\in L^{p}_{\mathcal{F}}(\mathcal{E})$, $\{A_n,n\in N\}$ be a countable partition of $\Omega$ to $\mathcal{F}$ and $\{x_n,n\in N\}$ a sequence in $L^p(\mathcal{E})$ such that $x=\Sigma_{n=1}^{\infty}\tilde{I}_{A_n}x_n$. Since it is obvious that $\Sigma_{n=1}^{\infty}\tilde{I}_{A_n}|||x_n|||_p$ converges in probability measure $P$, $\Sigma_{n=1}^{\infty}\tilde{I}_{A_n}x_n$ both converges in $\mathcal{T}_{\varepsilon,\lambda}$ to $x$ and unconditionally converges in $\mathcal{T}_{\varepsilon,\lambda}$ to $x$ in $(L^{p}_{\mathcal{F}}(\mathcal{E}),|||\cdot|||_p)$.
\end{remark}

Lemma 4.24 below is also crucial for the proofs of Theorems 4.25 and 4.31 below.

\begin{lemma} Let $1\leq q \leq+\infty$ and $\gamma$ be any positive number. Then we have the following statements:

\noindent (1). Let $G_1=\{y\in L^{q}_{\mathcal{F}}(\mathcal{E})~|~y\leq 0$ and $E(y|\mathcal{F})=-1\}$ and $G_2=\{y\in L^{q}(\mathcal{E})~|~y\leq 0$ and $E(y|\mathcal{F})=-1\}$, then $G_1=H_{cc}(G_2)$.

\noindent (2). Let $G_1$ be the same as in (1) above, $G_3=\{y\in L^{q}(\mathcal{E})~|~y\leq 0, E(y|\mathcal{F})=-1$ and $E(|y|^q|\mathcal{F})\in L^{\infty}(\mathcal{F})\}$ and $G_4=\{y\in L^{q}(\mathcal{E})~|~y\leq 0, E(y|\mathcal{F})=-1$ and $E(|y|^q|\mathcal{F})\in L^{\gamma}(\mathcal{F})\}$, then $G_1=H_{cc}(G_3)=H_{cc}(G_4)$.
\end{lemma}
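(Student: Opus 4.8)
The plan is to reduce all the asserted equalities to Lemma 4.22, using throughout one recurring device: given $y$ in the larger set, I would restrict it to the members of a suitable $\mathcal{F}$--partition $\{A_n\}$ and, on the complement of each piece, splice in the fixed reference element $z_0=-1$. This $z_0$ lies simultaneously in $G_2$, $G_3$ and $G_4$, since $-1\in L^q(\mathcal{E})$, $-1\leq 0$, $E(-1|\mathcal{F})=-1$, and $E(|-1|^q|\mathcal{F})=1\in L^\infty(\mathcal{F})\subset L^\gamma(\mathcal{F})$. I shall freely use that $E(\tilde{I}_A y|\mathcal{F})=\tilde{I}_A E(y|\mathcal{F})$ for $A\in\mathcal{F}$ and that an element of an $L^0(\mathcal{F})$--module is determined by its restrictions to a countable $\mathcal{F}$--partition.

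For part (1) I would first dispatch the easy inclusion $H_{cc}(G_2)\subset G_1$: if $y=\sum_{n}\tilde{I}_{A_n}y_n$ with $y_n\in G_2$, then $y\in L^q_{\mathcal{F}}(\mathcal{E})$ by the countable concatenation property of $L^q_{\mathcal{F}}(\mathcal{E})$, plainly $y\leq 0$, and the local property gives $\tilde{I}_{A_n}E(y|\mathcal{F})=\tilde{I}_{A_n}E(y_n|\mathcal{F})=-\tilde{I}_{A_n}$ for every $n$, hence $E(y|\mathcal{F})=-1$. For the reverse inclusion I would take $y\in G_1$ and invoke Lemma 4.22 to obtain a countable $\mathcal{F}$--partition $\{A_n\}$ and $x_n\in L^q(\mathcal{E})$ with $\tilde{I}_{A_n}x_n=\tilde{I}_{A_n}y$. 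Setting $y_n=\tilde{I}_{A_n}y+\tilde{I}_{A_n^c}(-1)$, one checks $y_n\in L^q(\mathcal{E})$, that $y_n$ equals $y\leq 0$ on $A_n$ and $-1$ on $A_n^c$ so $y_n\leq 0$, and that $E(y_n|\mathcal{F})=\tilde{I}_{A_n}E(y|\mathcal{F})+\tilde{I}_{A_n^c}(-1)=-1$; thus $y_n\in G_2$ and, since $\tilde{I}_{A_n}y_n=\tilde{I}_{A_n}y$, we conclude $y\in H_{cc}(G_2)$.

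For part (2) I would first record the trivial chain $G_3\subset G_4\subset G_2$ (on a probability space $L^\infty(\mathcal{F})\subset L^\gamma(\mathcal{F})$), which together with part (1) yields $H_{cc}(G_3)\subset H_{cc}(G_4)\subset H_{cc}(G_2)=G_1$; it then remains only to prove $G_1\subset H_{cc}(G_3)$. Here the partition must be chosen more carefully. For $y\in G_1$ I would set $\eta=E(|y|^q|\mathcal{F})=|||y|||_q^q\in L^0_+(\mathcal{F})$, fix a representative $\eta^0$, and take $A_n=\{\,n-1\leq\eta^0<n\,\}$. With $y_n=\tilde{I}_{A_n}y+\tilde{I}_{A_n^c}(-1)$ as before one still has $y_n\leq 0$ and $E(y_n|\mathcal{F})=-1$, and in addition $E(|y_n|^q|\mathcal{F})=\tilde{I}_{A_n}\eta+\tilde{I}_{A_n^c}\leq\max\{n,1\}$, which is bounded; hence $y_n\in G_3$ and $y=\sum_n\tilde{I}_{A_n}y_n\in H_{cc}(G_3)$.

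The only delicate point, and the main obstacle, is precisely the choice of partition in these reverse inclusions: $\Omega$ must be sliced so that each restriction $\tilde{I}_{A_n}y$ lands back in $L^q(\mathcal{E})$ (part (1), furnished by Lemma 4.22) and, for part (2), so that each piece additionally has a bounded conditional $q$--th moment (furnished by slicing along the level sets of $\eta$). Everything else is bookkeeping, kept legitimate by the reference element $-1$, which ensures the spliced functions never leave the smaller sets $G_3$ and $G_4$. For $q=+\infty$ I would read $E(|y|^q|\mathcal{F})$ as $|||y|||_\infty$ and slice along its level sets in the same way, so the argument covers the whole range $1\leq q\leq+\infty$.
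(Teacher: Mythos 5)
Your proof is correct and follows essentially the same route as the paper's: splice the reference element $-1$ onto the complement of each piece of a countable $\mathcal{F}$--partition chosen so that each $y_n=\tilde{I}_{A_n}y+\tilde{I}_{A_n^c}(-1)$ lands in the smaller set. The only cosmetic difference is that the paper uses the level sets of a representative of $|||y|||_q$ as the partition in both parts, whereas you invoke Lemma 4.22 for part (1) and switch to the level sets of $E(|y|^q|\mathcal{F})$ for part (2); both choices do the job.
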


\begin{proof} (1). It is obvious that $G_1$ has the countable concatenation property and $G_2\subset G_1$, so $H_{cc}(G_2)\subset G_1$. Conversely, let $y$ be a given element in $G_1$, $\xi^0$ any chosen representative of $|||y|||_q$, $A_n=\{\omega\in\Omega~|~n-1\leq\xi^0(\omega)<n\}$ and $y_n=\tilde{I}_{A_n}y+\tilde{I}_{A_n^c}(-1)$ for all $n\in N$, then it is clear that each $y_n\in G_2$ and $y=(\Sigma_{n=1}^{\infty}\tilde{I}_{A_n})y=\Sigma_{n=1}^{\infty}\tilde{I}_{A_n}y=\Sigma_{n=1}^{\infty}\tilde{I}_{A_n}y_n$, so $y\in H_{cc}(G_2)$. Thus $G_1=H_{cc}(G_2)$.

(2). It is obvious that $G_3\subset G_4\subset G_1$, so $H_{cc}(G_3)\subset H_{cc}(G_4)\subset G_1$, it remains to prove that $G_1\subset H_{cc}(G_3)$ as follows: let $y\in G_1$, $\xi^0$, $A_n$ and $y_n$ be the same as in the proof of (1) for each $n\in N$, then it is very easy to observe that each $y_n$ also belongs to $G_3$, so $y=\Sigma_{n=1}^{\infty}\tilde{I}_{A_n}y_n\in H_{cc}(G_3)$, which shows that $G_1\subset H_{cc}(G_3)$. \hfill $\square$
\end{proof}

\begin{theorem} Let $f:L^{\infty}(\mathcal{E})\rightarrow L^{\infty}(\mathcal{F})$ be an $L^{0}(\mathcal{F})$--convex $L^{\infty}$--conditional risk measure and $\bar{f}:L^{\infty}_{\mathcal{F}}(\mathcal{E})\rightarrow L^{0}(\mathcal{F})$ the unique extension as determined in Theorem 4.21. Then the following statements are equivalent:

\noindent $(1)$. $f$ is continuous from above;

\noindent $(2)$. $f$ has the Fatou property;

\noindent $(3)$. $f(x)=\bigvee\{E(xy|\mathcal{F})-f^{\ast}(y)~|~y\in L^{1}(\mathcal{E}), y\geq 0$ and $E(y|\mathcal{F})=-1\}$ for all $x\in L^{\infty}(\mathcal{E})$;

\noindent $(4)$. $f(x)=\bigvee\{E(xy|\mathcal{F})-f^{\ast}(y)~|~y\in L^{1}_{\mathcal{F}}(\mathcal{E}),y\leq 0$ and $E(y|\mathcal{F})=-1\}$ for all $x\in L^{\infty}(\mathcal{E})$;

\noindent $(5)$. $\bar{f}(x)=\bigvee\{E(xy|\mathcal{F})-\bar{f}^{\ast}(y)~|~y\in L^{1}_{\mathcal{F}}(\mathcal{E}),y\leq 0$ and $E(y|\mathcal{F})=-1\}$ for all $x\in L^{\infty}_{\mathcal{F}}(\mathcal{E})$;

\noindent $(6)$. $\bar{f}$ is $\sigma_{\varepsilon,\lambda}(L^{\infty}_{\mathcal{F}}(\mathcal{E}),L^{1}_{\mathcal{F}}(\mathcal{E}))$ (or equivalently, $\sigma_{c}(L^{\infty}_{\mathcal{F}}(\mathcal{E}),$ $L^{1}_{\mathcal{F}}(\mathcal{E}))$)-lower semicontinuous.

\end{theorem}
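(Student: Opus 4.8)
The plan is to establish the chain $(1)\Leftrightarrow(2)\Leftrightarrow(3)\Leftrightarrow(4)\Leftrightarrow(5)\Leftrightarrow(6)$, where the first block is a citation and each remaining link is carried out by combining the countable concatenation hull with the local property. The equivalence $(1)\Leftrightarrow(2)\Leftrightarrow(3)$ is exactly Theorem 4.19: statement (3) of that theorem, namely $f(x)=\bigvee\{E_Q(-x|\mathcal{F})-\alpha(Q)~|~Q\in\mathcal{P}_{\mathcal{F}}\}$, becomes statement (3) here once $\mathcal{P}_{\mathcal{F}}$ is identified with $\{y\in L^1_+(\mathcal{E})~|~E(y|\mathcal{F})=1\}$, which turns the penalty $\alpha(Q)$ into the conjugate $f^\ast(y)$, precisely as recorded in the discussion between Theorems 4.19 and 4.20.

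For $(3)\Leftrightarrow(4)$ I would fix $x\in L^\infty(\mathcal{E})$ and view $g_x(y):=E(xy|\mathcal{F})-f^\ast(y)$ as a function of $y$ on $L^1_{\mathcal{F}}(\mathcal{E})$. Since $\tilde{I}_AE(xy|\mathcal{F})=E(x\,\tilde{I}_Ay|\mathcal{F})$ for all $A\in\mathcal{F}$, and $f^\ast$ has the local property (it is the random conjugate of the local function $f$), the map $g_x$ is local. Writing $G_2=\{y\in L^1(\mathcal{E})~|~y\leq0,\,E(y|\mathcal{F})=-1\}$ and $G_1=\{y\in L^1_{\mathcal{F}}(\mathcal{E})~|~y\leq0,\,E(y|\mathcal{F})=-1\}$, Lemma 4.24(1) with $q=1$ gives $G_1=H_{cc}(G_2)$, so Lemma 3.16(2) yields $\bigvee\{g_x(y):y\in G_2\}=\bigvee\{g_x(y):y\in G_1\}$, which is exactly the passage between (3) and (4).

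The step $(4)\Leftrightarrow(5)$ rests on the identity $\bar{f}^\ast=f^\ast$ on $L^1_{\mathcal{F}}(\mathcal{E})$. For the inequality $\bar{f}^\ast\geq f^\ast$ I would use $\bar{f}|_{L^\infty(\mathcal{E})}=f$ from Theorem 4.21; for the reverse I note that $x\mapsto E(xy|\mathcal{F})-\bar{f}(x)$ is local, that $L^\infty_{\mathcal{F}}(\mathcal{E})=H_{cc}(L^\infty(\mathcal{E}))$ by Lemma 4.22, and apply Lemma 3.16(2) to collapse the supremum defining $\bar{f}^\ast(y)$ down to $L^\infty(\mathcal{E})$, where it equals $f^\ast(y)$. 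Granting $\bar{f}^\ast=f^\ast$, the function $g(x):=\bigvee\{E(xy|\mathcal{F})-f^\ast(y):y\in G_1\}$ is local by Lemma 3.16(4); statement (4) says $g=\bar{f}$ on $L^\infty(\mathcal{E})$, and since $\bar{f}$ is also local, Lemma 3.16(3) propagates this equality to $H_{cc}(L^\infty(\mathcal{E}))=L^\infty_{\mathcal{F}}(\mathcal{E})$, giving (5). Conversely, restricting (5) to $x\in L^\infty(\mathcal{E})$ and using $\bar{f}^\ast=f^\ast$ recovers (4).

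Finally $(5)\Leftrightarrow(6)$ is the Fenchel--Moreau package specialized to the random duality pair $\langle L^\infty_{\mathcal{F}}(\mathcal{E}),L^1_{\mathcal{F}}(\mathcal{E})\rangle$. For $(6)\Rightarrow(5)$ I would invoke Theorem 4.17: by Theorem 4.21 and the remark following it $\bar{f}$ is a proper $L^0(\mathcal{F})$-convex $L^\infty_{\mathcal{F}}(\mathcal{E})$-conditional risk measure, so its $\sigma$-lower semicontinuity delivers exactly the representation (5). For $(5)\Rightarrow(6)$, each affine map $x\mapsto E(xy|\mathcal{F})-\bar{f}^\ast(y)$ with $y\in L^1_{\mathcal{F}}(\mathcal{E})$ is $\sigma_{\varepsilon,\lambda}(L^\infty_{\mathcal{F}}(\mathcal{E}),L^1_{\mathcal{F}}(\mathcal{E}))$-continuous (being the duality pairing minus a constant), hence lower semicontinuous with closed epigraph; as $epi(\bar{f})$ is the intersection of these closed epigraphs it is closed, so $\bar{f}$ is $\sigma_{\varepsilon,\lambda}$-lower semicontinuous, and the stated equivalence with $\sigma_c$-lower semicontinuity is the fact recalled just before Theorem 4.17. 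I expect the main obstacle to be the bookkeeping in $(4)\Leftrightarrow(5)$: establishing $\bar{f}^\ast=f^\ast$ and transporting the dual representation from the dense submodule $L^\infty(\mathcal{E})$ to all of $L^\infty_{\mathcal{F}}(\mathcal{E})$ requires checking the local property at each stage so that Lemmas 4.22 and 3.16 may be legitimately combined, which is precisely the "countable concatenation hull plus local property" technique emphasized in the introduction.
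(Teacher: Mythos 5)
Your proposal is correct and follows essentially the same route as the paper: Theorem 4.19 for $(1)\Leftrightarrow(2)\Leftrightarrow(3)$, Lemma 4.24(1) plus Lemma 3.16(2) for $(3)\Leftrightarrow(4)$, the identity $\bar{f}^{\ast}=f^{\ast}$ via Lemma 4.22 and Lemma 3.16 for $(4)\Leftrightarrow(5)$, and Theorem 4.17 for $(6)\Rightarrow(5)$. The only difference is that you spell out $(5)\Rightarrow(6)$ (epigraph of a supremum of continuous affine maps is an intersection of closed sets), which the paper simply declares clear.
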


\begin{proof} (1)$\Leftrightarrow$(2)$\Leftrightarrow$(3) is just Theorem 4.19.

(3)$\Leftrightarrow$(4). The equivalence relation is easily seen by applying (1) of Lemma 4.24 for $q=1$ and (2) of Lemma 3.16 since $E(xy|\mathcal{F})-f^{\ast}(y)$ is a local function of $y$ for each fixed $x\in L^{\infty}(\mathcal{E})$.

(5)$\Rightarrow$(4). Before the proof, let us first notice that $f^{\ast}(y)=\bar{f}^{\ast}(y)$ for all $y\in L^{1}_{\mathcal{F}}(\mathcal{E})$: since $f^{\ast}(y)=\bigvee\{E(xy|\mathcal{F})-f(x)~|~x\in L^{\infty}(\mathcal{E})\}$, $\bar
{f}^{\ast}(y)=\bigvee\{E(xy|\mathcal{F})-\bar{f}(x)~|~x\in L^{\infty}_{\mathcal{F}}(\mathcal{E})\}$ and $L^{\infty}_{\mathcal{F}}(\mathcal{E})=H_{cc}(L^{\infty}(\mathcal{E}))$ by Lemma 4.22, then $\bar{f}^{\ast}(y)=\bigvee\{E(xy|\mathcal{F})-f(x)~|~x\in L^{\infty}(\mathcal{E})\}=f^{\ast}(y)$ by noticing that both $E(xy|\mathcal{F})$ and $\bar{f}(x)$ are local functions of $x\in L^{\infty}_{\mathcal{F}}(\mathcal{E})$ for each fixed $y\in L^{1}_{\mathcal{F}}(\mathcal{E})$ and applying (2) of Lemma 3.16 to the local function $g:L^{\infty}_{\mathcal{F}}(\mathcal{E})\rightarrow \bar{L}^{0}(\mathcal{F})$ defined by $g(x)=E(xy|\mathcal{F})-\bar{f}(x)$.

(4)$\Rightarrow$(5). by (4) of Lemma 3.16 it is clear that $\bar{g}:L^{\infty}_{\mathcal{F}}(\mathcal{E})\rightarrow \bar{L}^{0}(\mathcal{F})$ defined by $\bar{g}(x)=\bigvee\{E(xy|\mathcal{F})-\bar{f}^{\ast}(y)~|~y\in L^{1}_{\mathcal{F}}(\mathcal{E}),y\leq 0$ and $E(y|\mathcal{F})=-1\}$ for any $x\in L^{\infty}_{\mathcal{F}}(\mathcal{E})$ is local since $E(xy|\mathcal{F})-\bar{f}^{\ast}(y)$ is a local function of $x$ for each fixed $y$. By applying (3) of Lemma 3.16 to $\bar{f}$ and $\bar{g}$ one can see that $\bar{f}=\bar{g}$ since $L^{\infty}_{\mathcal{F}}(\mathcal{E})= H_{cc}(L^{\infty}(\mathcal{E}))$ and it is just (4) that $\bar{f}(x)=\bar{g}(x)$ for all $x\in L^{\infty}(\mathcal{E})$.

(5)$\Rightarrow$(6) is clear.

(6)$\Rightarrow$(5) is by Theorem 4.17. \hfill $\square$
\end{proof}

\subsection*{4.4 Extension theorem for $L^p$--conditional risk measures}

Motivated by the work in \cite{Zowe,SGP}, D. Filipovi\'{c}, M. Kupper and N. Vogelpoth introduced the following $L^p$--conditional risk measures in \cite{FKV-appro} as follows:

\begin{definition} (See \cite{FKV-appro}.) Let $1\leq r\leq p<+\infty$ and $f$ be a function from $L^p(\mathcal{E})$ to $L^{r}(\mathcal{F})$. $f$ is an $L^p$--conditional risk measure if the following two conditions are satisfied:

\noindent (1). $f(x)\leq f(y)$ for all $x,y\in L^p(\mathcal{E})$ with $x\geq y$;

\noindent (2). $f(x+y)=f(x)-y$ for all $x\in L^p(\mathcal{E})$ and $y\in L^p(\mathcal{F})$.
\end{definition}

The following representation result is known:

\begin{proposition} ($See$ \cite{FKV-appro}.) Let $f$ be an $L^{0}(\mathcal{F})$--convex $L^p$--conditional risk measure from $L^p(\mathcal{E})$ to $L^{r}(\mathcal{F})$ and $1\leq r\leq p<+\infty$. If $f$ is continuous from $(L^p(\mathcal{E}),\|\cdot\|_p)$ to $(L^r(\mathcal{E}),\|\cdot\|_r)$, then $f(x)=\bigvee\{E(xy|\mathcal{}F)-f^{\ast}(y)~|~y\in L^{q}(\mathcal{E}),y\leq 0,E(|y|^{q}|\mathcal{F})\in L^{\frac{r(p-1)}{p-r}}(\mathcal{F})$ and $E(y|\mathcal{F})=-1\}$ for all $x\in L^p(\mathcal{E})$, where $\frac{1}{p}+\frac{1}{q}=1$ and $\frac{r(p-1)}{p-r}=+\infty$ when $p=r$ and $f^{\ast}:L^{q}_{\mathcal{F}}(\mathcal{E})\rightarrow \bar{L}^0(\mathcal{F})$ is defined by $f^{\ast}(y)=\bigvee\{E(xy|\mathcal{F})-f(x)~|~x\in L^p(\mathcal{E})\}$ for all $y\in L^{q}_{\mathcal{F}}(\mathcal{E})$.
\end{proposition}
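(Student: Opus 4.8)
The plan is to obtain this conditional Fenchel--Moreau representation by lifting $f$ to the $RN$ module $L^p_{\mathcal F}(\mathcal E)$, applying the module representation theorem (Theorem 4.16) there, and then descending back to $L^p(\mathcal E)$. The organizing fact is Lemma 4.22, $L^p_{\mathcal F}(\mathcal E)=H_{cc}(L^p(\mathcal E))$, which exhibits $L^p_{\mathcal F}(\mathcal E)$ as the countable concatenation hull of the Banach space $L^p(\mathcal E)$; since $f$ is $\mathcal F$--local (being $L^0(\mathcal F)$--convex, by Proposition 4.3(1)), it admits a canonical local extension to this hull. All of the residual reductions will be bookkeeping with $H_{cc}(\cdot)$ and the local property via Lemma 3.16; the one genuinely technical point, deferred to the end, is the lower semicontinuity of the extension.

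First I would construct $\bar f\colon L^p_{\mathcal F}(\mathcal E)\to L^0(\mathcal F)$: for $x=\sum_{n=1}^{\infty}\tilde I_{A_n}x_n$ with $x_n\in L^p(\mathcal E)$ and $\{A_n,n\in N\}$ a countable partition, set $\bar f(x)=\sum_{n=1}^{\infty}\tilde I_{A_n}f(x_n)$. Well--definedness and the local property follow from Lemma 3.16(1), while monotonicity, cash invariance and $L^0(\mathcal F)$--convexity transfer from $f$ by the same concatenation--plus--locality reasoning used in the proof of Theorem 4.21. Granting that $\bar f$ is $\mathcal T_{\varepsilon,\lambda}$--lower semicontinuous (see the last paragraph), $\bar f$ is a proper $L^0(\mathcal F)$--convex lower semicontinuous $L^p_{\mathcal F}(\mathcal E)$--conditional risk measure, so Theorem 4.16 yields
\[
\bar f(x)=\bigvee\{E(xy\mid\mathcal F)-\bar f^{\ast}(y)\mid y\in G_1\}\qquad(x\in L^p_{\mathcal F}(\mathcal E)),
\]
where $G_1=\{y\in L^q_{\mathcal F}(\mathcal E)\mid y\le 0,\ E(y\mid\mathcal F)=-1\}$ and $\bar f^{\ast}$ is the module conjugate.

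Next I would descend to $L^p(\mathcal E)$. For $x\in L^p(\mathcal E)$ we have $\bar f(x)=f(x)$ by construction, and for every $y\in L^q_{\mathcal F}(\mathcal E)$ one has $\bar f^{\ast}(y)=f^{\ast}(y)$: the map $x\mapsto E(xy\mid\mathcal F)-\bar f(x)$ is local, so by Lemma 4.22 and Lemma 3.16(2) the supremum defining $\bar f^{\ast}(y)$ over $L^p_{\mathcal F}(\mathcal E)=H_{cc}(L^p(\mathcal E))$ collapses to the supremum over $L^p(\mathcal E)$, i.e. $f^{\ast}(y)$. It then remains to shrink the dual index set from $G_1$ to $G_4:=\{y\in L^q(\mathcal E)\mid y\le0,\ E(y\mid\mathcal F)=-1,\ E(|y|^q\mid\mathcal F)\in L^{\frac{r(p-1)}{p-r}}(\mathcal F)\}$: this is precisely Lemma 4.24(2), which gives $G_1=H_{cc}(G_4)$ (and $G_1=H_{cc}(G_3)$ in the degenerate case $p=r$, where the exponent is $+\infty$). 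Since $y\mapsto E(xy\mid\mathcal F)-f^{\ast}(y)$ is local in $y$ for fixed $x$, Lemma 3.16(2) converts $\bigvee_{G_1}$ into $\bigvee_{G_4}$, which is the asserted formula. The particular exponent is the natural one: by conditional and ordinary H\"older the constraint $E(|y|^q\mid\mathcal F)\in L^{\frac{r(p-1)}{p-r}}(\mathcal F)$ is equivalent to $|||y|||_q\in L^{\frac{rp}{p-r}}(\mathcal F)$, which is exactly what guarantees $E(xy\mid\mathcal F)\in L^r(\mathcal F)$ for all $x\in L^p(\mathcal E)$, matching the codomain $L^r(\mathcal F)$ of $f$.

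The main obstacle is the semicontinuity invoked above: the hypothesis only gives norm--continuity of $f$ on $(L^p(\mathcal E),\|\cdot\|_p)$, whereas Theorem 4.16 needs $\mathcal T_{\varepsilon,\lambda}$--lower semicontinuity of $\bar f$, and $\mathcal T_{\varepsilon,\lambda}$ is strictly coarser than the $\|\cdot\|_p$--topology, so continuity does not transfer for free. The hard part will be to bridge this gap, and I would do so by the paper's signature localization technique. Given $x_n\to x$ in $\mathcal T_{\varepsilon,\lambda}$, i.e. $|||x_n-x|||_p\to0$ in probability, pass to a subsequence with $|||x_n-x|||_p\to0$ a.s.; applying Egorov to these $\mathcal F$--measurable functions and refining by the $\mathcal F$--level sets $\{m-1\le|||x|||_p<m\}$ produces a single countable $\mathcal F$--partition $\{\Omega_j\}$ on each piece of which $\tilde I_{\Omega_j}x,\tilde I_{\Omega_j}x_n\in L^p(\mathcal E)$ for large $n$ and $\|\tilde I_{\Omega_j}(x_n-x)\|_p\to0$ in the genuine $L^p(\mathcal E)$--norm. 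Norm--continuity of $f$ then gives $\tilde I_{\Omega_j}\bar f(x_n)=\tilde I_{\Omega_j}f(\tilde I_{\Omega_j}x_n)\to\tilde I_{\Omega_j}\bar f(x)$ in probability on each $\Omega_j$; assembling over the countable partition gives $\bar f(x_n)\to\bar f(x)$ in probability, and a routine subsequence argument upgrades this to full $\mathcal T_{\varepsilon,\lambda}$--continuity of $\bar f$, whence the required lower semicontinuity. This double localization, turning convergence in probability into piecewise norm convergence, is the only step that is not pure bookkeeping.
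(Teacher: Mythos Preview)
Your proposal is correct and follows essentially the same route as the paper. The paper packages the construction of $\bar f$ as Theorem~4.28, the Egorov--localization argument for $\mathcal T_{\varepsilon,\lambda}$--continuity of $\bar f$ as Theorem~4.29, the identity $f^\ast=\bar f^\ast$ as Lemma~4.30, and then assembles the chain $(6)\Leftrightarrow(5)\Leftrightarrow(4)\Leftrightarrow(3)$ via Theorem~4.16 and Lemmas~3.16, 4.22, 4.24 into Theorem~4.31; Proposition~4.27 then drops out as the implication ``$f$ norm--continuous $\Rightarrow$ (6) $\Rightarrow$ (3)''. Your outline reproduces exactly this architecture, including the identification of the Egorov step as the only nontrivial point.
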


The aim of this subsection is to give an extension theorem for any $L^0(\mathcal{F})$--convex $L^p$--conditional risk measure, in particular, in this process we also improve Proposition 4.27 in that we can give a sufficient and necessary condition that any $L^0(\mathcal{F})$--convex $L^p$--conditional risk measure can be represented as in Proposition 4.27, in fact, a new and shorter proof of Proposition 4.27 will be also given.

An important special case of $L^0(\mathcal{F})$--convex $L^p$--conditional risk measures is the following conditional risk measure derived from the solution of backward stochastic differential equations (BSDE, for short).

Let $(B_t)_{t\geq 0}$ be a standard $d$--dimensional Brown motion defined on a probability space $(\Omega,\mathcal{F},P)$ and $(\mathcal{F}_t)_{t\geq 0}$ the augmented filtration generated by $(B_t)_{t\geq 0}$.

Let a function $g:R^{+}\times\Omega\times R^d\rightarrow R$, $(t,\omega,z)\rightarrow g(t,\omega,z)$ (briefly,$g(t,z)$) satisfy the following conditions:

\noindent (A). $g$ is Lipschitz in $z$, i.e., there exists a constant $\mu>0$ such that we have $dt\times dP$--a.s., for any $z_0,z_1\in R^d$, $|g(t,z_0)-g(t,z_1)|\leq \mu\|z_0-z_1\|$.

\noindent (B). For all $z\in R^d$, $g(\cdot,z)$ is a predictable process such that for any $T>0$, $E[\int^{T}_{0}g(t,\omega,z)^2 dt]<+\infty$ for any $z\in R^d$.

\noindent (C). $dt\times dP$--a.s., $g(t,0)=0$.

\noindent (D). $g$ is convex in $z$: $\forall \alpha\in[0,1]$, $\forall z_0,z_1\in R^d$, $dt\times dP$--a.s., $g(t,\alpha z_0+(1-\alpha)z_1)\leq \alpha g(t,z_0)+(1-\alpha)g(t,z_1)$.

Then, for any $T>0$, the following BSDE:

\[ \left\{
   \begin{array}{ll}
   -dY_t=g(t,Z_t)dt-Z_tdB_t,    \\
    Y_T=\xi,
   \end{array}
   \right.
\]\\

\noindent where $\xi\in L^{2}(\Omega,\mathcal{F}_T,P)$, has a unique solution $(Y_t,Z_t)_{t\in[0,T]}$ consisting of predictable stochastic processes such that $E[\int^{T}_{0}Y_t^{2}dt]<+\infty$ and $E[\int^{T}_{0}\|Z_t\|^{2}dt]<+\infty$, cf. \cite{DPG,TXG-dual}. Peng defined the conditional $g$--expectation of $\xi$ at time $t$ as $$\mathcal{E}_g(\xi|\mathcal{F}_t):=Y_t.$$

Now, for any fixed $t\in[0,T]$, define $\rho^{g}_{t}(\cdot):L^2(\mathcal{F}_T)\rightarrow L^2(\mathcal{F}_t)$ by $\rho^{g}_{t}(x)=\mathcal{E}_g(-x|\mathcal{F}_t)$ for all $x\in L^2(\mathcal{F}_T)$, then $\rho^{g}_{t}$ is an $L^{0}(\mathcal{F}_t)$--convex $L^2$--conditional risk measure. By Theorem 3.2 of \cite{SGP}, $\rho^{g}_{t}$ is continuous from $(L^2(\mathcal{F}_T),\|\cdot\|_2)$ to $(L^2(\mathcal{F}_t),\|\cdot\|_2)$. Again by Theorem 3.2 of \cite{SGP}, $|\rho^{g}_{t}(x)-\rho^{g}_{t}(y)|\leq c(E[|x-y|^2|\mathcal{F}_t])^{\frac{1}{2}}$ for all $x,y\in L^2(\mathcal{F}_T)$, where $c=e^{8(1+\mu^2)(T-t)}$, namely when $L^2(\mathcal{F}_T)$ is regarded as a subspace of the $RN$ module $(L^2_{\mathcal{F}_t}(\mathcal{F}_T),|||\cdot|||_2)$ and $L^2(\mathcal{F}_t)$ is regarded as a subspace of the $RN$ module $(L^0(\mathcal{F}_t),|\cdot|)$, $\rho^{g}_t$ is Lipschitz with respect to the $L^0$--norms. So, $\rho^{g}_t$ can be uniquely extended to an $L^0(\mathcal{F}_t)$--convex $L^2_{\mathcal{F}_t}(\mathcal{F}_T)$--conditional risk measure $\bar{\rho}^{g}_t$ such that $|\bar{\rho}^{g}_{t}(x)-\bar{\rho}^{g}_{t}(y)|\leq c(E[|x-y|^2|\mathcal{F}_t])^{\frac{1}{2}}$ for all $x,y\in L^2_{\mathcal{F}_t}(\mathcal{F}_T)$ since $L^2(\mathcal{F}_T)$ is $\mathcal{T}_{\varepsilon,\lambda}$--dense in $(L^2_{\mathcal{F}_t}(\mathcal{F}_T),|||\cdot|||_2)$ (note: $L^2(\mathcal{F}_T)=L^2(L^2_{\mathcal{F}_t}(\mathcal{F}_T))$ and use Theorem 4.20), the proof is the same as that of Theorem 4.21.

Since a general $L^0(\mathcal{F})$--convex $L^p$--conditional risk measure from $L^p(\mathcal{E})$ to $L^r(\mathcal{F})$ may not necessarily $\mathcal{T}_{\varepsilon,\lambda}$--uniformly continuous when $L^p(\mathcal{E})$ is regarded a subspace of $(L^p_{\mathcal{F}}(\mathcal{E}),|||\cdot|||_p)$ and $L^{r}(\mathcal{F})$ as a subspace of $(L^0(\mathcal{F}),|\cdot|)$, we are forced to use a constructive way to obtain a unique extension, namely Theorem 4.28 below.

\begin{theorem} Let $f:L^p(\mathcal{E})\rightarrow L^r(\mathcal{F})$ be an $L^0(\mathcal{F})$--convex $L^p$--conditional risk measure. Then there is a unique $L^0(\mathcal{F})$--convex $L^p_{\mathcal{F}}(\mathcal{E})$--conditional risk measure $\bar{f}:L^p_{\mathcal{F}}(\mathcal{E})\rightarrow L^0(\mathcal{F})$ such that $\bar{f}|_{L^{p}(\mathcal{E})}=f$.
\end{theorem}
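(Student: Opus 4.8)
The plan is to construct $\bar f$ explicitly on the countable concatenation hull rather than via a density or continuity argument, precisely because $f$ need not be $\mathcal{T}_{\varepsilon,\lambda}$-uniformly continuous when $L^p(\mathcal{E})$ is viewed inside $(L^p_{\mathcal{F}}(\mathcal{E}),|||\cdot|||_p)$. The starting point is Lemma 4.22, which gives $L^p_{\mathcal{F}}(\mathcal{E})=H_{cc}(L^p(\mathcal{E}))$; the same truncation argument gives $L^0(\mathcal{F})=H_{cc}(L^p(\mathcal{F}))$, which I will need for cash invariance. For $x\in L^p_{\mathcal{F}}(\mathcal{E})$ I fix a representation $x=\sum_{n=1}^{\infty}\tilde{I}_{A_n}x_n$ with $\{A_n,n\in N\}$ a countable partition of $\Omega$ to $\mathcal{F}$ and $\{x_n,n\in N\}\subset L^p(\mathcal{E})$, and set $\bar f(x):=\sum_{n=1}^{\infty}\tilde{I}_{A_n}f(x_n)$. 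Since each $f(x_n)\in L^r(\mathcal{F})\subset L^0(\mathcal{F})$ is finite, this concatenation is a well-defined finite element of $L^0(\mathcal{F})$, so $\bar f$ is proper and, taking the trivial partition, restricts to $f$ on $L^p(\mathcal{E})$.

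The first and most delicate step is well-definedness, and this is exactly where the local property enters. Being $L^0(\mathcal{F})$-convex, $f$ is $\mathcal{F}$-local by Proposition 4.3(1). Given a second representation $x=\sum_m\tilde{I}_{B_m}y_m$, on $A_n\cap B_m$ both $x_n$ and $y_m$ agree with $x$, whence $\tilde{I}_{A_n\cap B_m}x_n=\tilde{I}_{A_n\cap B_m}y_m$; locality then forces $\tilde{I}_{A_n\cap B_m}f(x_n)=\tilde{I}_{A_n\cap B_m}f(y_m)$, and summing over $m$ and $n$ shows the two candidate values coincide (equivalently one invokes Lemma 3.16). The identical computation shows $\bar f$ is itself $\mathcal{F}$-local.

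The three conditional-risk-measure axioms are then verified by passing to a common refinement $\{C_k\}$ of the partitions involved and transferring each global axiom of $f$ to the individual pieces. For monotonicity, given $x\geq y$ with $x=\sum_k\tilde{I}_{C_k}u_k$ and $y=\sum_k\tilde{I}_{C_k}v_k$, I replace $u_k$ by $u_k'=\tilde{I}_{C_k}u_k+\tilde{I}_{C_k^c}v_k$ so that $u_k'\geq v_k$ holds globally; monotonicity of $f$ gives $f(u_k')\leq f(v_k)$, while locality gives $\tilde{I}_{C_k}f(u_k')=\tilde{I}_{C_k}f(u_k)$, so $\tilde{I}_{C_k}\bar f(x)\leq\tilde{I}_{C_k}\bar f(y)$ for every $k$. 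Cash invariance uses $L^0(\mathcal{F})=H_{cc}(L^p(\mathcal{F}))$ to write $y=\sum_k\tilde{I}_{C_k}w_k$ with $w_k\in L^p(\mathcal{F})$, and then applies cash invariance of $f$ piecewise to $u_k+w_k\in L^p(\mathcal{E})$. The $L^0(\mathcal{F})$-convexity of $\bar f$ is immediate from that of $f$ on each piece, noting that any $\xi\in L^0_+(\mathcal{F})$ with $0\leq\xi\leq1$ is bounded, hence $\xi u_k+(1-\xi)v_k\in L^p(\mathcal{E})$.

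Uniqueness follows from Lemma 3.16(3): any $L^0(\mathcal{F})$-convex $L^p_{\mathcal{F}}(\mathcal{E})$-conditional risk measure is $\mathcal{F}$-local by Lemma 3.14, so two extensions agreeing with $f$ on $L^p(\mathcal{E})$ must agree on $H_{cc}(L^p(\mathcal{E}))=L^p_{\mathcal{F}}(\mathcal{E})$. I expect the main obstacle to be the well-definedness step, together with the ``modify off $C_k$'' device needed to convert the local (on-$C_k$) hypotheses into the genuinely global inequalities to which the axioms of $f$ literally apply; once that device is in place, each axiom transfers routinely, and the construction embodies precisely the interplay between the countable concatenation hull and the local property emphasized in the introduction.
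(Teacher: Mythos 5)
Your proposal is correct and follows essentially the same route as the paper: the same constructive definition $\bar f\bigl(\sum_n\tilde{I}_{A_n}x_n\bigr)=\sum_n\tilde{I}_{A_n}f(x_n)$ on $H_{cc}(L^p(\mathcal{E}))=L^p_{\mathcal{F}}(\mathcal{E})$, well-definedness via the common refinement and the $\mathcal{F}$-locality of $f$, piecewise transfer of the axioms, and uniqueness via Lemma 3.16(3). Your ``modify off $C_k$'' device for monotonicity and the explicit use of $L^0(\mathcal{F})=H_{cc}(L^p(\mathcal{F}))$ for cash invariance merely spell out details the paper leaves to the reader.
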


\begin{proof} Define $\bar{f}:L^p_{\mathcal{F}}(\mathcal{E})\rightarrow L^0(\mathcal{F})$ by $\bar{f}(x)=\Sigma_{n=1}^{\infty}\tilde{I}_{A_n}f(x_n)$ for any canonical representation $\Sigma_{n=1}^{\infty}\tilde{I}_{A_n}x_n$ of $x$,

First, $\bar{f}$ is well defined. In fact, for any two canonical representations $\Sigma_{n=1}^{\infty}\tilde{I}_{A_n}x_n$ and $\Sigma_{n=1}^{\infty}\tilde{I}_{B_n}y_n$ of $x$, $\Sigma_{n=1}^{\infty}\tilde{I}_{A_n}f(x_n)=\Sigma_{i,j=1}^{\infty}\tilde{I}_{A_i\bigcap B_j}f(x_i)=\Sigma_{i,j=1}^{\infty}$ $\tilde{I}_{A_i\bigcap B_j}f(y_j)=\Sigma_{n=1}^{\infty}\tilde{I}_{B_n}f(y_n)$.

Second, $\bar{f}$ is $L^0(\mathcal{F})$--convex: let $\xi\in L^0_+(\mathcal{F})$ such that $0\leq\xi\leq1$ and $x,y\in L^p_{\mathcal{F}}(\mathcal{E})$ have the canonical representations $\Sigma_{n=1}^{\infty}\tilde{I}_{A_n}x_n$ and $\Sigma_{n=1}^{\infty}\tilde{I}_{B_n}y_n$, respectively. Then $x=\Sigma_{i,j=1}^{\infty}\tilde{I}_{A_i\bigcap B_j}x_i$ and $y=\Sigma_{i,j=1}^{\infty}\tilde{I}_{A_i\bigcap B_j}y_j$, and thus $\bar{f}(\xi x+(1-\xi)y)=\bar{f}(\Sigma_{i,j=1}^{\infty}\tilde{I}_{A_i\bigcap B_j}(\xi x_i+(1-\xi)y_j))=\Sigma_{i,j=1}^{\infty}\tilde{I}_{A_i\bigcap B_j}f(\xi x_i+(1-\xi)y_j)\leq \xi\Sigma_{i,j=1}^{\infty}\tilde{I}_{A_i\bigcap B_j}f(x_i)+(1-\xi)\Sigma_{i,j=1}^{\infty}\tilde{I}_{A_i\bigcap B_j}f(y_j)=\xi \bar{f}(x)+(1-\xi)\bar{f}(y)$.

Similarly, $\bar{f}$ is also monotone. Further, by Lemma 4.22 and the local property of $\bar{f}$, $\bar{f}$ is also cash invariant in the sense of Definition 4.15.

Finally, any $L^0(\mathcal{F})$--convex $L^p_{\mathcal{F}}(\mathcal{E})$--conditional risk measure $g$ with $g|_{L^p(\mathcal{E})}$ $=f$ must coincide with $\bar{f}$ since $g$ has the local property by $L^{p}_{\mathcal{F}}(\mathcal{E})=H_{cc}(L^p(\mathcal{E}))$ and applying (3) of Lemma 3.16 to $\bar{f}$ and $g$, which proves the uniqueness. \hfill $\square$
\end{proof}

Theorem 4.31 below shows that the continuity of $f$ in Proposition 4.27 can be weakened to that $\bar{f}$ is $\mathcal{T}_{\varepsilon,\lambda}$ (or equivalently, $\mathcal{T}_{c}$)-lower semicontinuous, whereas the implication of the continuity of $f$ is reflected to some extent by Theorem 4.29 below.

\begin{theorem} Let $f$ and $\bar{f}$ be the same as in Theorem 4.28. If $f$ is continuous from $(L^p(\mathcal{E}),\|\cdot\|_p)$ to $(L^r(\mathcal{F}),\|\cdot\|_r)$, then $\bar{f}$ is $\mathcal{T}_{\varepsilon,\lambda}$--continuous from $(L^p_{\mathcal{F}}(\mathcal{E}),|||\cdot|||_p)$ to $(L^0(\mathcal{F}),|\cdot|)$.
\end{theorem}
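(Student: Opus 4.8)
The plan is to reduce the statement to sequential continuity and then exploit the stratification structure of $L^p_{\mathcal F}(\mathcal E)$. Since the $(\varepsilon,\lambda)$--topology on an $RN$ module is induced by a single $L^0$--norm, it is metrizable — convergence is exactly convergence in probability of the $L^0$--norms — so it suffices to show that $\bar f(x^{(k)})\to\bar f(x)$ in probability whenever $|||x^{(k)}-x|||_p\to 0$ in probability. Because convergence in probability obeys the subsequence principle, I would fix an arbitrary subsequence, pass to a further subsequence (still written $\{x^{(k)}\}$) along which $|||x^{(k)}-x|||_p\to 0$ $P$--a.s., and aim to prove $\bar f(x^{(k)})\to\bar f(x)$ in probability for this a.s.--convergent subsequence; the subsequence principle then returns the claim for the original sequence.

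The key device is a countable $\mathcal F$--partition on which everything lives in $L^p(\mathcal E)$. Set $\zeta=\bigvee_k|||x^{(k)}-x|||_p$, which is finite a.s. since the sequence tends to $0$ a.s., and put $\rho=|||x|||_p+\zeta\in L^0_+(\mathcal F)$, so that $|||x^{(k)}|||_p\le\rho$ for all $k$. Let $A_m=[m-1\le\rho<m]$, a countable partition of $\Omega$ to $\mathcal F$. Using $E[|\tilde I_{A_m}y|^p]=E[I_{A_m}|||y|||_p^p]\le m^pP(A_m)<+\infty$, one checks that $\tilde I_{A_m}x$, $\tilde I_{A_m}x^{(k)}$ and $\tilde I_{A_m}(x^{(k)}-x)$ all belong to $L^p(\mathcal E)$ for every $m,k$.

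On each stratum I would first upgrade convergence in probability to norm convergence: since $I_{A_m}|||x^{(k)}-x|||_p^p\to0$ a.s. and is dominated by the integrable $m^pI_{A_m}$, dominated convergence gives $\|\tilde I_{A_m}(x^{(k)}-x)\|_p^p=E[I_{A_m}|||x^{(k)}-x|||_p^p]\to0$. Norm--continuity of $f$ then yields $\|f(\tilde I_{A_m}x^{(k)})-f(\tilde I_{A_m}x)\|_r\to0$, hence $f(\tilde I_{A_m}x^{(k)})\to f(\tilde I_{A_m}x)$ in probability. By the local property of $\bar f$ together with $\bar f|_{L^p(\mathcal E)}=f$ (Theorem 4.28), $\tilde I_{A_m}\bar f(x^{(k)})=\tilde I_{A_m}f(\tilde I_{A_m}x^{(k)})$, so $\tilde I_{A_m}\bigl(\bar f(x^{(k)})-\bar f(x)\bigr)\to0$ in probability for each $m$. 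Finally I would glue the strata: given $\varepsilon,\delta>0$, choose $M$ with $P(\bigcup_{m>M}A_m)<\delta/2$; the finitely many remaining pieces each converge in probability, so $\limsup_k P(|\bar f(x^{(k)})-\bar f(x)|>\varepsilon)\le\delta/2$, and letting $\delta\to0$ gives convergence in probability on all of $\Omega$.

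The main obstacle is precisely the mismatch of topologies: $f$ is only assumed continuous for the norms $\|\cdot\|_p,\|\cdot\|_r$, whereas the target continuity is for the (locally non--convex) $(\varepsilon,\lambda)$--topology, i.e.\ convergence in probability, and on all of $L^p_{\mathcal F}(\mathcal E)$ the $L^0$--norm $|||\cdot|||_p$ may be unbounded so that convergence in probability does not by itself force $L^p$--norm convergence. The stratification onto the sets $A_m$, where $\rho$ is bounded, is exactly what lets dominated convergence bridge this gap, and combining it with the local property of $\bar f$ — the analytic skill emphasized in the Introduction — is the heart of the argument. One should also verify that the a.s.\ passage and the finiteness of $\zeta$ are legitimate, which is why reducing to an a.s.--convergent subsequence at the very outset is essential.
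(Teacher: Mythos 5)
Your argument is correct. Every step checks out: the reduction to sequential continuity is legitimate because the $(\varepsilon,\lambda)$--topology of an $RN$ module has a countable local base and convergence in probability is metrizable, so the subsequence principle applies at both ends of the argument; the supremum $\zeta=\bigvee_k|||x^{(k)}-x|||_p$ is indeed a.s.\ finite along an a.s.--convergent subsequence; the computation $E[|\tilde I_{A_m}y|^p]=E[I_{A_m}|||y|||_p^p]\leq m^pP(A_m)$ correctly places $\tilde I_{A_m}x$, $\tilde I_{A_m}x^{(k)}$ and their differences in $L^p(\mathcal E)$; dominated convergence, the norm continuity of $f$, the local property of $\bar f$ (which holds since $\bar f$ is $L^0(\mathcal F)$--convex by Theorem 4.28), and the gluing over finitely many strata are all applied correctly.

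Your route differs in structure from the paper's, though the underlying ideas (localize to an $\mathcal F$--measurable set on which the $L^0$--norms are bounded, upgrade to $L^p$--norm convergence by dominated convergence, invoke the norm continuity of $f$, and transfer back via locality) are the same. The paper proceeds in two stages: it first proves $\mathcal T_{\varepsilon,\lambda}$--continuity of $f$ on $L^p(\mathcal E)$ alone, using the Egoroff theorem to extract a subsequence converging \emph{uniformly} on a set $\Omega\setminus H_\delta$ of probability $>1-\delta$, and then extends to all of $L^p_{\mathcal F}(\mathcal E)$ by writing $x$ and each $x_k$ through canonical representations $\Sigma_n\tilde I_{A_n^k}x_n^k$ and truncating each at a level $m_k$ chosen so that the tail has probability $<1/k$. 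Your single stratification by the level sets $A_m=[m-1\leq\rho<m]$ of $\rho=|||x|||_p+\zeta$ absorbs both stages at once: it simultaneously controls $x$, all the $x^{(k)}$ and their differences, and places all the truncations directly into $L^p(\mathcal E)$, thereby avoiding both the Egoroff step and the bookkeeping with canonical representations of the $x_k$. The price is that your partition depends on the whole (sub)sequence rather than on $x$ alone, but since it is a fixed countable $\mathcal F$--partition this costs nothing. What the paper's version buys is a free-standing intermediate statement (the $\mathcal T_{\varepsilon,\lambda}$--continuity of $f$ on $L^p(\mathcal E)$ itself) that is reusable; what yours buys is a shorter, more uniform argument.
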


\begin{proof} When $L^p(\mathcal{E})$ is regard as a subspace $(L^p_{\mathcal{F}}(\mathcal{E}),|||\cdot|||_p)$ and $(L^r(\mathcal{F}),\|\cdot\|_r)$ is regarded as a subspace of $(L^0(\mathcal{F}),|\cdot|)$, we first prove that $f$ is $\mathcal{T}_{\varepsilon,\lambda}$--continuous from $(L^p(\mathcal{E}),|||\cdot|||_p)$ to $(L^r(\mathcal{F}),|\cdot|)$. For this, we only need to prove that , for each fixed $x_{0}\in L^p(\mathcal{E})$ and each sequence $\{x_n,n\in N\}$ in $L^p(\mathcal{E})$ such that $\{E[|x_n-x_{0}|^p|\mathcal{F}]^{\frac{1}{p}}:n\in N\}$ converges in probability measure $P$ to 0, there exists a subsequence $\{x_{n_k},k\in N\}$ of $\{x_n,n\in N\}$ such that $\{f(x_{n_k}),k\in N\}$ converges in probability measure $P$ to $f(x_{0})$. Since $f$ is $\mathcal{F}$--local, we only need to prove that, for any positive number $\delta$, there exist an $\mathcal{F}$--measurable subset $H_{\delta}$ of $\Omega$ and a subsequence $\{x_{n_k},k\in N\}$ of $\{x_n,n\in N\}$ such that $P(\Omega\setminus H_{\delta})>1-\delta$ and $\{f(x_{n_k}),k\in N\}$ converges in probability measure $P$ to $f(x_{0})$ on $\Omega\setminus H_{\delta}$. In fact, by the Egoroff theorem there are such $H_{\delta}$ and $\{x_{n_k},k\in N\}$ such that $\{E[|x_{n_k}-x_{0}|^{p}|\mathcal{F}]^{\frac{1}{p}},k\in N\}$ converges uniformly to 0 on $\Omega\setminus H_{\delta}$, so that $\{\tilde{I}_{\Omega\setminus H_{\delta}}x_{n_k},k\in N\}$ converges to $\tilde{I}_{\Omega\setminus H_{\delta}}x_{0}$ in the usual $L^p$--norm $\|\cdot\|_p$ by the Lebesgue domination convergence theorem, hence $\{\tilde{I}_{\Omega\setminus H_{\delta}}f(x_{n_k}),k\in N\}$ converges in the $L^r$--norm to $\tilde{I}_{\Omega\setminus H_{\delta}}f(x_{0})$, which implies that $\{f(x_{n_k}),k\in N\}$ converges in probability measure $P$ to $f(x_{0})$ on $\Omega\setminus H_{\delta}$.

We can now prove that $\bar{f}$ is $\mathcal{T}_{\varepsilon,\lambda}$--continuous. Let $\{x_k,k\in N\}$ be a sequence in $L^p_{\mathcal{F}}(\mathcal{E})$ convergent in $\mathcal{T}_{\varepsilon,\lambda}$ to $x\in L^p_{\mathcal{F}}(\mathcal{E})$, where $\mathcal{T}_{\varepsilon,\lambda}$ is the $(\varepsilon,\lambda)$--topology on $L^p_{\mathcal{F}}(\mathcal{E})$ induced by $|||\cdot|||_p$, then for any canonical representation $\Sigma_{n=1}^{\infty}\tilde{I}_{A_n}x_n$ of $x$, we only need to prove that $\{\bar{f}(x_k),k\in N\}$ converges in probability $P$ to $\bar{f}(x)$ on each fixed $A_n$. Now, let $n_0\in N$ be fixed. For each given canonical representation $\Sigma_{n=1}^{\infty}\tilde{I}_{A_n^k}x_n^k$ of $x_k$, we choose $m_k\in N$ such that $P(\Sigma_{n\geq m_k} A_n^k)<\frac{1}{k}$, then $\{\Sigma_{n=1}^{m_k}\tilde{I}_{A_n^k}x_n^k,k\in N\}$ still converges in $\mathcal{T}_{\varepsilon,\lambda}$ to $x$, hence $\{\tilde{I}_{A_{n_0}}\Sigma_{n=1}^{m_k}\tilde{I}_{A_n^k}x_n^k,k\in N\}$, of course, converges in $\mathcal{T}_{\varepsilon,\lambda}$ to $\tilde{I}_{A_{n_0}}x (=\tilde{I}_{A_{n_0}}x_{n_0})$. By what we have proved, $\{f(\tilde{I}_{A_{n_0}}\Sigma_{n=1}^{m_k}\tilde{I}_{A_n^k}x_n^k),k\in N\}$ converges in the probability measure $P$ to $f(\tilde{I}_{A_{n_0}}x_{n_0})$. By the $\mathcal{F}$--local property of $f$, $\tilde{I}_{A_{n_0}}f(\Sigma_{n=1}^{m_k}\tilde{I}_{A_n^k}x_n^k)=\tilde{I}_{A_{n_0}}f(\tilde{I}_{A_{n_0}}\Sigma_{n=1}^{m_k}\tilde{I}_{A_n^k}x_n^k)$ and $\tilde{I}_{A_{n_0}}f(x_{n_0})=\tilde{I}_{A_{n_0}}f(\tilde{I}_{A_{n_0}}x_{n_0})$, then $\{f(\tilde{I}_{A_{n_0}}\Sigma_{n=1}^{m_k}\tilde{I}_{A_n^k}x_n^k),k\in N\}$ converges in the probability measure $P$ to $f(x_{n_0})$ on $A_{n_0}$.

Finally, since $\bar{f}(x_k)=\tilde{I}_{\bigcup_{n=1}^{m_k}A_{n}^{k}}f(\Sigma_{n=1}^{m_k}\tilde{I}_{A_n^k}x_n^k))+\Sigma_{n\geq m_k}^{\infty}\tilde{I}_{A_n^k}f(x_n^k)$ and $\tilde{I}_{A_{n_0}}$ $f(x)=\tilde{I}_{A_{n_0}}\bar{f}(x)=\tilde{I}_{A_{n_0}}\bar{f}(\tilde{I}_{A_{n_0}}x)=\tilde{I}_{A_{n_0}}f(x_{n_0})$, we have that $\{\bar{f}(x_k),k\in N\}$ converges in the probability measure $P$ to $\bar{f}(x)$ on $A_{n_0}$ by noticing that $P(\Sigma_{n\geq m_k}^{\infty}A_n^k)\rightarrow 0$ when $k\rightarrow \infty$. \hfill $\square$
\end{proof}

\begin{lemma} Let $f:L^p({\mathcal{E}})\rightarrow L^r({\mathcal{F}})$ be an $L^0(\mathcal{F})$--convex $L^p$--conditional risk measure and $\bar{f}:L^{p}_{\mathcal{F}}(\mathcal{E})\rightarrow L^0(\mathcal{F})$ its unique extension. Then we have that $f^{\ast}(y)=\bar{f}^{\ast}(y)$ for all $y\in L^{q}_{\mathcal{F}}(\mathcal{E})$.
\end{lemma}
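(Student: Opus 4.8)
The plan is to fix $y\in L^q_{\mathcal F}(\mathcal E)$, realize both $f^\ast(y)$ and $\bar f^\ast(y)$ as suprema of one and the same local function taken over two nested domains, and then collapse the larger domain onto the smaller one via the countable concatenation hull. Concretely, I would define $g\colon L^p_{\mathcal F}(\mathcal E)\to\bar L^0(\mathcal F)$ by $g(x)=E(xy|\mathcal F)-\bar f(x)$, so that $\bar f^\ast(y)=\bigvee\{g(x)\mid x\in L^p_{\mathcal F}(\mathcal E)\}$. This $g$ is well defined because the conditional H\"older inequality gives $E(|xy|\,|\mathcal F)\le|||x|||_p\,|||y|||_q\in L^0_+(\mathcal F)$, whence $E(xy|\mathcal F)\in L^0(\mathcal F)$ is finite on $\Omega$ for every $x\in L^p_{\mathcal F}(\mathcal E)$.

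The first key step is to verify that $g$ has the local property. The map $x\mapsto E(xy|\mathcal F)$ is local since $\tilde I_A$ is $\mathcal F$--measurable and can therefore be pulled inside the conditional expectation, $\tilde I_A E(xy|\mathcal F)=E(\tilde I_A x\cdot y|\mathcal F)$ for all $A\in\mathcal F$. The map $\bar f$ is local as well: $f$ is $\mathcal F$--local by hypothesis, and the extension $\bar f$ constructed in Theorem 4.28 is local (this is precisely what makes its value on a canonical representation independent of the chosen representation). Since a difference of local functions is local, $g$ is local.

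The second key step is the collapse. By Lemma 4.22 we have $L^p_{\mathcal F}(\mathcal E)=H_{cc}(L^p(\mathcal E))$, and $L^p_{\mathcal F}(\mathcal E)$ has the countable concatenation property (Example 2.8), so (2) of Lemma 3.16 applies to $g$ and the set $G=L^p(\mathcal E)$ and yields
$$\bar f^\ast(y)=\bigvee\{g(x)\mid x\in H_{cc}(L^p(\mathcal E))\}=\bigvee\{g(x)\mid x\in L^p(\mathcal E)\}.$$
Finally, since $\bar f|_{L^p(\mathcal E)}=f$, for $x\in L^p(\mathcal E)$ we have $g(x)=E(xy|\mathcal F)-f(x)$, so the last supremum is exactly $f^\ast(y)$; this gives $\bar f^\ast(y)=f^\ast(y)$ for every $y\in L^q_{\mathcal F}(\mathcal E)$.

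I do not anticipate a serious obstacle, as this argument is the $L^p$--analogue of the identity $f^\ast=\bar f^\ast$ already used inside the proof of Theorem 4.25 (the implication (5)$\Rightarrow$(4)). The only points demanding care are the verification that $g$ is local and the confirmation that the ambient module $L^p_{\mathcal F}(\mathcal E)$ has the countable concatenation property, so that Lemma 3.16(2) is legitimately invoked; both are routine.
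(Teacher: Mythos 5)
Your proposal is correct and follows essentially the same route as the paper: both define the local function $g(x)=E(xy|\mathcal F)-\bar f(x)$, invoke Lemma 4.22 to write $L^p_{\mathcal F}(\mathcal E)=H_{cc}(L^p(\mathcal E))$, and apply (2) of Lemma 3.16 to collapse the supremum over $L^p_{\mathcal F}(\mathcal E)$ onto $L^p(\mathcal E)$, where $\bar f$ restricts to $f$. Your additional checks (conditional H\"older for well-definedness and the explicit verification that $g$ is local) are details the paper leaves implicit but are entirely consistent with its argument.
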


\begin{proof} let us recall: $f^{\ast}(y)=\bigvee\{E(xy|\mathcal{F})-f(x)~|~x\in L^p(\mathcal{E})\}$ and $\bar{f}^{\ast}(y)=\bigvee\{E(xy|\mathcal{F})-\bar{f}(x)~|~x\in L^p_{\mathcal{F}}(\mathcal{E})\}$. By Lemma 4.22, $L^p_{\mathcal{F}}(\mathcal{E})=H_{cc}(L^p(\mathcal{E}))$, which, together with the local property of $g:L^{p}_{\mathcal{F}}(\mathcal{E})\rightarrow \bar{L}^{0}(\mathcal{F})$ defined by $g(x)=E(xy|\mathcal{F})-\bar{f}(x)$ for all $x\in L^{p}_{\mathcal{F}}(\mathcal{E})$ implies the $f^{\ast}(y)=\bar{f}^{\ast}(y)$ for all $y\in L^q_{\mathcal{F}}(\mathcal{E})$ by applying (2) of Lemma 3.16 to the local function $g$ and $G:=L^{p}(\mathcal{E})$. \hfill $\square$
\end{proof}

\begin{theorem} Let $1\leq r\leq p<+\infty$, $q$ be the H\"{o}lder conjugate number of $p$, $f:L^p({\mathcal{E}})\rightarrow L^r({\mathcal{F}})$ an $L^0(\mathcal{F})$--convex $L^p$--conditional risk measure and $\bar{f}:L^{p}_{\mathcal{F}}(\mathcal{E})\rightarrow L^0(\mathcal{F})$ the unique extension of $f$ determined by Theorem 4.28. Then the following statements are equivalent:

\noindent $(1)$. $f(x)=\bigvee\{E(xy|\mathcal{F})-f^{\ast}(y)~|~y\in L^{q}(\mathcal{E}),y\leq 0,E(|y|^{q}|\mathcal{F})\in L^{\infty}(\mathcal{F})$ and $E(y|\mathcal{F})=-1\}$ for all $x\in L^{p}(\mathcal{E})$.

\noindent $(2)$. For any given positive number $\gamma$, $f(x)=\bigvee\{E(xy|\mathcal{F})-f^{\ast}(y)~|~y\in L^{q}(\mathcal{E}),y\leq 0,E(|y|^{q}|\mathcal{F})\in L^{\gamma}(\mathcal{F})$ and $E(y|\mathcal{F})=-1\}$ for all $x\in L^{p}(\mathcal{E})$.

\noindent $(3)$. $f(x)=\bigvee\{E(xy|\mathcal{F})-f^{\ast}(y)~|~y\in L^{q}(\mathcal{E}),y\leq 0,E(|y|^{q}|\mathcal{F})\in L^{\frac{r(p-1)}{p-r}}(\mathcal{F})$ and $E(y|\mathcal{F})=-1\}$ for all $x\in L^{p}(\mathcal{E})$.

\noindent $(4)$. $f(x)=\bigvee\{E(xy|\mathcal{F})-f^{\ast}(y)~|~y\in L^{q}_{\mathcal{F}}(\mathcal{E}),y\leq 0$ and $E(y|\mathcal{F})=-1\}$ for all $x\in L^{p}(\mathcal{E})$.

\noindent $(5)$. $\bar{f}(x)=\bigvee\{E(xy|\mathcal{F})-\bar{f}^{\ast}(y)~|~y\in L^{q}_{\mathcal{F}}(\mathcal{E}),y\leq 0$ and $E(y|\mathcal{F})=-1\}$ for all $x\in L^{p}_{\mathcal{F}}(\mathcal{E})$.

\noindent $(6)$. $\bar{f}$ is $\mathcal{T}_{\varepsilon,\lambda}$ (or equivalently, $\mathcal{T}_{c}$)-lower semicontinuous.

\noindent $(7)$. $\bar{f}$ is continuous from $(L^{p}_{\mathcal{F}}(\mathcal{E}),\mathcal{T}_c)$ to $ (L^{0}(\mathcal{F}),\mathcal{T}_c)$.
\end{theorem}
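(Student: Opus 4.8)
The plan is to prove the seven conditions equivalent by splitting them into four natural blocks and reducing each block to results already established, namely the representation theorem (Theorem 4.16), the identity $f^{\ast}=\bar f^{\ast}$ (Lemma 4.30), the countable concatenation identities of Lemmas 4.22 and 4.24, the local-property calculus of Lemma 3.16, and the continuity theorem for $L^{0}$--pre-barreled modules (Theorem 3.44 together with Corollary 3.42). Concretely I would establish $(1)\Leftrightarrow(2)\Leftrightarrow(3)\Leftrightarrow(4)$, then $(4)\Leftrightarrow(5)$, then $(5)\Leftrightarrow(6)$, and finally $(6)\Leftrightarrow(7)$.

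For the first block, fix $x\in L^{p}(\mathcal{E})$ and consider the map $y\mapsto E(xy|\mathcal{F})-f^{\ast}(y)$. Since $E(xy|\mathcal{F})$ is local in $y$ and $f^{\ast}$ is a supremum of functions local in $y$, this map is local by (4) of Lemma 3.16. The index set in (4) is $G_{1}=\{y\in L^{q}_{\mathcal{F}}(\mathcal{E})\mid y\le 0,\ E(y|\mathcal{F})=-1\}$, the one in (1) is the set $G_{3}$ of Lemma 4.24, the one in (2) is the set $G_{4}$ for the given $\gamma$, and the one in (3) is $G_{4}$ with $\gamma=\frac{r(p-1)}{p-r}$. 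By Lemma 4.24 all of these sets have the same countable concatenation hull, namely $G_{1}=H_{cc}(G_{3})=H_{cc}(G_{4})$, so by (2) of Lemma 3.16 the suprema over $G_{1}$, $G_{3}$ and $G_{4}$ coincide for every fixed $x$. Hence (1), (2), (3) and (4) assert the same equality and are equivalent.

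For $(4)\Leftrightarrow(5)$ I would lift the representation from $L^{p}(\mathcal{E})$ to $L^{p}_{\mathcal{F}}(\mathcal{E})$. First Lemma 4.30 gives $f^{\ast}(y)=\bar f^{\ast}(y)$ for all $y\in L^{q}_{\mathcal{F}}(\mathcal{E})$, so the right-hand sides of (4) and (5) are literally the same function $\bar g$, where $\bar g(x)=\bigvee\{E(xy|\mathcal{F})-\bar f^{\ast}(y)\mid y\in G_{1}\}$; by (4) of Lemma 3.16 this $\bar g$ is local, and $\bar f$ is local by Theorem 4.28. Since $\bar f|_{L^{p}(\mathcal{E})}=f$, statement (4) says exactly $\bar f=\bar g$ on $L^{p}(\mathcal{E})$; because $L^{p}_{\mathcal{F}}(\mathcal{E})=H_{cc}(L^{p}(\mathcal{E}))$ by Lemma 4.22, (3) of Lemma 3.16 upgrades this to $\bar f=\bar g$ on all of $L^{p}_{\mathcal{F}}(\mathcal{E})$, which is (5); conversely restricting (5) to $L^{p}(\mathcal{E})$ and using $f^{\ast}=\bar f^{\ast}$ returns (4). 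For $(5)\Leftrightarrow(6)$, the implication $(6)\Rightarrow(5)$ is exactly Theorem 4.16 applied to the $L^{0}(\mathcal{F})$--convex $L^{p}_{\mathcal{F}}(\mathcal{E})$--conditional risk measure $\bar f$ (using Theorem 4.4(2) to pass freely between $\mathcal{T}_{\varepsilon,\lambda}$-- and $\mathcal{T}_{c}$--lower semicontinuity on the $RN$ module $L^{p}_{\mathcal{F}}(\mathcal{E})$), while $(5)\Rightarrow(6)$ follows because each $x\mapsto E(xy|\mathcal{F})-\bar f^{\ast}(y)$ is a $\mathcal{T}_{\varepsilon,\lambda}$--continuous affine function (as $E(\cdot\,y|\mathcal{F})\in(L^{p}_{\mathcal{F}}(\mathcal{E}))^{\ast}_{\varepsilon,\lambda}$), so its epigraph is closed and the epigraph of the supremum $\bar f$ is the intersection of these closed sets, giving $\mathcal{T}_{\varepsilon,\lambda}$--lower semicontinuity in the sense of Definition 4.1.

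Finally, $(7)\Rightarrow(6)$ is immediate since continuity implies lower semicontinuity. For $(6)\Rightarrow(7)$, note that $\bar f$ takes values in $L^{0}(\mathcal{F})$, hence $dom(\bar f)=L^{p}_{\mathcal{F}}(\mathcal{E})$ and $Int(dom(\bar f))=L^{p}_{\mathcal{F}}(\mathcal{E})$; since $L^{p}_{\mathcal{F}}(\mathcal{E})$ is $L^{0}$--pre-barreled (Corollary 3.42) and has the countable concatenation property, the continuity theorem Theorem 3.44 applies to the proper $\mathcal{T}_{c}$--lower semicontinuous $L^{0}$--convex function $\bar f$ and yields that $\bar f$ is continuous from $(L^{p}_{\mathcal{F}}(\mathcal{E}),\mathcal{T}_{c})$ to $(L^{0}(\mathcal{F}),\mathcal{T}_{c})$, which is (7). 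The main obstacle I anticipate is not any single deep estimate but the disciplined bookkeeping in the first two blocks: one must verify that every map whose supremum appears is genuinely local in the correct variable and that the countable concatenation hulls match exactly, so that Lemma 3.16 can be invoked to identify suprema taken over $L^{p}(\mathcal{E})$ versus $L^{p}_{\mathcal{F}}(\mathcal{E})$ and over $G_{1}$ versus $G_{3},G_{4}$; the genuinely structural input, pre-barreledness of $L^{p}_{\mathcal{F}}(\mathcal{E})$, is supplied wholesale by Section 3.
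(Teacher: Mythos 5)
Your proposal is correct and follows essentially the same route as the paper: the blocks $(1)\Leftrightarrow(2)\Leftrightarrow(3)\Leftrightarrow(4)$ via Lemma 3.16(2) and Lemma 4.24, $(4)\Leftrightarrow(5)$ via Lemmas 4.30, 4.22 and 3.16(3), $(6)\Rightarrow(5)$ via Theorem 4.16, and $(6)\Rightarrow(7)$ via Theorem 3.44 with Corollary 3.42 are exactly the steps the paper uses. The only difference is that you spell out details (e.g. the closed-epigraph argument for $(5)\Rightarrow(6)$) where the paper simply writes ``is clear.''
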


\begin{proof} For any fixed $x\in L^{p}(\mathcal{E})$, let $g(y)=E(xy|\mathcal{F})-f^{\ast}(y)$ for any $y\in L^{q}_{\mathcal{F}}(\mathcal{E})$, then $g$ has the local property. By applying (2) of Lemma 3.16 and (2) of Lemma 4.24 one can easily see that (1)$\Leftrightarrow$(2)$\Leftrightarrow$(3)$\Leftrightarrow$(4).

By Lemma 4.30, $f^{\ast}=\bar{f}^{\ast}$, so (5)$\Rightarrow$(4) is clear. If (4) is true, let $\bar{g}:L^{p}_{\mathcal{F}}(\mathcal{E})\rightarrow \bar{L}^{0}(\mathcal{F})$ be defined by $\bar{g}(x)=\bigvee\{E(xy|\mathcal{F})-\bar{f}^{\ast}(y)~|~y\in L^{q}_{\mathcal{F}}(\mathcal{E}),y\leq 0$ and $E(y|\mathcal{F})=-1\}$ for any $x\in L^{p}_{\mathcal{F}}(\mathcal{E})$, then by (3) of Lemma 3.16 we have that $\bar{f}=\bar{g}$ since (4) just shows that $\bar{f}(x)=\bar{g}(x)$ for any $x\in L^p(\mathcal{E})$, which shows (4)$\Rightarrow$(5).

(5)$\Rightarrow$(6) is clear.

(6)$\Rightarrow$(5) is by Theorem 4.16.

(6)$\Rightarrow$(7) is by Theorem 3.44.

(7)$\Rightarrow$(6) is clear.  \hfill $\square$
\end{proof}

Theorem 4.31 not only gives a very short proof of D. Filipovi\'{c}, M. Kupper and N. Vogelpoth's Proposition 4.27, whose original proof in \cite{FKV-appro} is somewhat complicated, but also improves Proposition 4.27 in that we give a sufficient and necessary condition for (3) of Theorem 4.31 to hold, namely that $\bar{f}$ is $\mathcal{T}_{\varepsilon,\lambda}$ (or $\mathcal{T}_{c}$)-lower semicontinuous. Besides, (1) of Theorem 4.31 maybe is more interesting.

The whole Section 3 together with Theorems 4.5 and 4.13 and Corollary 4.14 has formed a complete random convex analysis and thus this paper has provided a solid analytic foundation for the module approach to conditional risk measures. Furthermore, Theorems 4.25 and 4.31 not only give the complete relations among three kinds of conditional convex risk measures, which shows that $L^p$--conditional risk measures can be incorporated into $L^{p}_{\mathcal{F}}(\mathcal{E})$--conditional risk measures ($1\leq p\leq+\infty$), but also their proofs provide many useful analytic skills.  Maybe the module approach together with these skills will develop their power in the future study of dynamic risk measures with the model spaces consisting of stochastic processes, this topic just began in \cite{CDK}. Just as classical convex analysis has many other rich applications as well as the application to convex risk measures, cf. \cite{ET}, we may also hope that random convex analysis can be applied in other aspects.

%$L^{0}({\mathcal F},K)$ with $\mathscr {L}^{0}({\mathcal F},K)$.
%Sometimes, the domain of $g^{0}$ is not the whole $E$ but a random

%p102 end

\section*{Acknowledgements}
The first author of this paper thanks Professor Zhenting Hou for invaluable talks and encouragement and Professor Quanhua Xu for kindly providing us the excellent literature \cite{HLR} in February, 2012 to make us know, for the first time, the existence of \cite{HLR} since \cite{HLR} has never been mentioned before in the literature of related fields (e.g. \cite{SS}).

%\vskip0.2in
%\no {\bf References}
%\vskip0.1in

%\section{}
\label{}

%% The Appendices part is started with the command \appendix;
%% appendix sections are then done as normal sections
%% \appendix

%% \section{}
%% \label{}

%% References
%%
%% Following citation commands can be used in the body text:
%% Usage of \cite is as follows:
%%   \cite{key}          ==>>  [#]
%%   \cite[chap. 2]{key} ==>>  [#, chap. 2]
%%   \citet{key}         ==>>  Author [#]

%% References with bibTeX database:

%% Authors are advised to submit their bibtex database files. They are
%% requested to list a bibtex style file in the manuscript if they do
%% not want to use model1a-num-names.bst.

%% References without bibTeX database:

% \begin{thebibliography}{00}

%% \bibitem must have the following form:
%%   \bibitem{key}...
%%

% \bibitem{}

% \end{thebibliography}

\end{document}